\documentclass[11pt,reqno]{amsart}

\pdfoutput=1

\usepackage{LPPSS1_Macros2}
\usepackage{array}
\usepackage[section]{placeins} 
\usepackage{refcount} 

\providebibmacro*{bbx:dashcheck}{}

\allowdisplaybreaks

\title[The degree-one vortex in the abelian YMH model: Spectral Theory and Numerics]{Asymptotic stability of the degree-one vortex \\ in the abelian Yang-Mills-Higgs model: \\ Spectral Theory and Numerics}

\author[J. L\"uhrmann]{Jonas L\"uhrmann}
\address{Department of Mathematics and Computer Science, University of Cologne, Cologne, Germany}
\email{jonas.luehrmann@uni-koeln.de}

\author[J. M. Palacios]{Jos\'e M. Palacios}
\address{Institute of Mathematics, \'Ecole Polytechnique F\'ed\'erale de Lausanne (EPFL), Lausanne, Switzerland}
\email{jose.palaciosarmesto@epfl.ch}

\author[F. Pusateri]{Fabio Pusateri}
\address{Department of Mathematics, University of Toronto, Toronto, Ontario, Canada}
\email{fabiop@mail.math.toronto.edu}

\author[W. Schlag]{Wilhelm Schlag}
\address{Department of Mathematics, Yale University, New Haven, Connecticut, USA}
\email{wilhelm.schlag@yale.edu}

\author[S. Shahshahani]{Sohrab Shahshahani}
\address{Department of Mathematics and Statistics, University of Massachusetts Amherst, Amherst, Massachusetts, USA}
\email{sshahshahani@umass.edu}

\thanks{
J.L.\  was partially supported by NSF CAREER grant DMS-2235233.
F.P.\ is supported in part by a start-up grant from the University of Toronto and NSERC grants RGPIN-2018-0648 and RGPIN-2025-06419. 
J.M.P.\ was partially supported by NSERC grants RGPIN-2018-0648, and is also supported by the Swiss National Science Foundation grant 225701. 
W.S.\ is partially supported by the United States NSF DMS-2350356, and he thanks the Isaac Newton Institute in Cambridge for its hospitality during the preparation of this paper. S.S. was supported by the Simons Foundation grant 639284. 
The authors relied on OpenAI's Codex system for assistance with writing and running the C++ code,
reviewing numerical transcripts, and preparation of reproducibility
materials.  All mathematical arguments, numerical claims, and final verification
decisions are the responsibility of the authors.
}

\begin{document}

\begin{abstract}
This is the first of three papers proving asymptotic stability of the degree-one vortex under equivariant perturbations in the $(1+2)$-dimensional abelian Yang--Mills--Higgs model at self-dual coupling.  In the orthogonal gauge, the linearized dynamics are  governed by a selfadjoint matrix Schr\"odinger operator $\bfM$. The super-symmetric partner operator is a diagonal matrix whose diagonal entries are strongly singular radial Schr\"odinger operators on $\bbR^2$. After a conjugation, this reduces the spectral problem to the analysis of two strongly singular scalar half-line operators. Combining analysis with rigorous interval arithmetic, we prove absence of threshold resonances and show that the discrete spectrum of $\bfM$ consists of exactly one positive gap eigenvalue (internal mode) with a two-dimensional eigenspace.  We also certify that the relevant nonlinear Fermi Golden Rule coefficients form a definite quadratic form, yielding effective nonlinear damping of the internal mode. These spectral inputs  form the basis of the  stability analysis in the subsequent papers.
\end{abstract}

\maketitle 
  
\setcounter{tocdepth}{2}

\tableofcontents

\section{Introduction}

\subsection{The model}
We consider the abelian Yang-Mills-Higgs (Maxwell-Higgs) model on $(1+2)$--dimensional Minkowski space $\bbR^{1+2}$ with coupling parameter $\lambda>0$, described by the Lagrangian action functional  
\begin{equation}
    \calL_\lambda[\phi,A]
    := \int_{\bbR^{1+2}}
    \biggl(
        \frac14 F_{\mu\nu}F^{\mu\nu}
        + \frac12 \bfD_\mu\phi\,\overline{\bfD^\mu\phi}
        + \frac{\lambda}{8}\bigl(1-|\phi|^2\bigr)^2
    \biggr)\,\ud x\,\ud t,
\end{equation}
for a complex-valued field $\phi:\bbR^{1+2}\to\bbC$, usually called the Higgs field, and a real-valued connection $1$-form (or electromagnetic potential)
\begin{equation}
    A = A_0\,\ud t + A_1\,\ud x^1 + A_2\,\ud x^2,
    \qquad A_\mu:\bbR^{1+2}\to\bbR,\quad 0\le \mu\le 2.
\end{equation}
The associated covariant derivative and curvature (or electromagnetic tensor) are
\begin{equation}
    \bfD_\mu := \nabla_\mu - iA_\mu,
    \qquad
    F= \ud A = F_{\mu\nu} \, \ud x^\mu \wedge\ud x^\nu,\quad F_{\mu\nu} := \partial_\mu A_\nu - \partial_\nu A_\mu.
\end{equation}
The parameter $\lambda>0$ is the Higgs self-coupling constant.
We use Einstein's summation convention\footnote{We lower and raise indices with respect to the Minkowski metric $\mathrm{diag}[-1,1,1]$ and its inverse. Greek letters run over $0,1,2$ and Roman indices run over $1,2$.}.

The Euler--Lagrange equations associated with $\calL_\lambda$ are
\begin{equation}\label{equ:EL_general_lambda_smooth}
    \left\{
    \begin{aligned}
        \bfD^\mu \bfD_\mu\phi + \frac{\lambda}{2}\bigl(1-|\phi|^2\bigr)\phi &= 0,\\
        \nabla^\mu F_{\mu\nu} + \Im\bigl(\overline{\phi}\,\bfD_\nu\phi\bigr) &= 0.
    \end{aligned}
    \right.
\end{equation}
A key feature is their invariance under $U(1)$ gauge transformations 
\[
    (\phi,A) \,\mapsto\, \bigl( e^{i\chi} \phi, A + \ud \chi \bigr) \quad \hbox{ for any } \chi \in C^2(\R^{1+2}).
\]
Defining the magnetic field $B$ and the electric field components $E_1,E_2$ by
\begin{equation}
    B := F_{12},\qquad E_1 := F_{01},\qquad E_2 := F_{02},
\end{equation}
we may write \eqref{equ:EL_general_lambda_smooth} more concretely in rectangular coordinates as 
\begin{equation}\label{KG_smooth}
    \left\{
    \begin{aligned}
        -\bfD_0^2\phi + \bfD^j\bfD_j\phi + \frac{\lambda}{2}\bigl(1-|\phi|^2\bigr)\phi &= 0,\\
        \partial_0E_1 + \partial_2B - \Im\bigl(\overline{\phi}\,\bfD_1\phi\bigr) &= 0,\\
        \partial_0E_2 - \partial_1B - \Im\bigl(\overline{\phi}\,\bfD_2\phi\bigr) &= 0,\\
        \partial_1E_1 + \partial_2E_2 - \Im\bigl(\overline{\phi}\,\bfD_0\phi\bigr) &= 0.
    \end{aligned}
    \right.
\end{equation}
The (formally conserved) Ginzburg–Landau (GL) energy is given by
\begin{equation}\label{equ:GL_energy_general_lambda_smooth}
    \mathcal{E}_\lambda[\phi,A]
    := \int_{\bbR^2}
    \biggl(
        \frac12 (B^2 + E_1^2+E_2^2)
        + 
        \frac12|\bfD_0\phi|^2+\frac12|\bfD_1\phi|^2
        + \frac12|\bfD_2\phi|^2
        + \frac{\lambda}{8}\bigl(1-|\phi|^2\bigr)^2
    \biggr)\,\ud x .
\end{equation}
The time-independent critical points of $\mathcal{E}_\lambda[\phi,A]$ are precisely the static solutions of \eqref{equ:EL_general_lambda_smooth} (in a gauge with $A_0=0$), and in particular include the vortex solutions described below. The static Euler-Lagrange equations are as follows: 
\begin{equation} \label{EL}
    \left\{ \begin{aligned}
        \bfD^j \bfD_j\phi+\frac{\lambda}{2}\big(1-\vert\phi\vert^2\big)\phi &= 0, \\
        \partial_2 B - \Im\big(\overline{\phi}\bfD_1\phi\big) &= 0, \\
        \partial_1 B + \Im\big(\overline{\phi}\bfD_2\phi\big) &= 0.
    \end{aligned} \right.
\end{equation}

Finite-energy configurations are forced to approach a vacuum state at spatial infinity. In particular, the potential term requires that the following conditions are met: \[
\vert \phi (x) \vert \to 1 \quad \hbox{and} \quad \bfD\phi(x) \to 0 \quad \hbox{as} \quad \vert x\vert\to +\infty,
\]
which defines $\phi$ on the circle at infinity. Its winding number or degree $\mathrm{deg}(\phi)\in\Z$ as a map $\bbS^1 \to \bbS^1$ is well-defined and gauge-invariant.
On the other hand, for regular solutions, these same asymptotics imply that
\begin{equation} \label{equ:magnetic_flux}
    \int_{\R^2} B \, \ud x = 2\pi \, \mathrm{deg}(\phi),
\end{equation}
i.e., $\mathrm{deg}(\phi)$ is also the flux quantization of the induced magnetic field $B$. 
In the Yang-Mills-Higgs theory, the degree $\mathrm{deg}(\phi)$ is also known as the \textit{first Chern number} (also called the topological charge or magnetic flux) of the associated $U(1)$ principal bundle with connection $A$. 
Moreover, the mapping
\[
    \phi \mapsto \overline{\phi} \quad \hbox{ and } \quad A \mapsto -A
\]
leaves \eqref{EL} invariant and swaps negative degrees for positive degrees.
Thus, in what follows, it is enough to consider solutions with nonnegative degrees.

\subsection{Static solutions}

For all values of the coupling constant $\lambda$ except $\lambda = 1$, all known finite-energy solutions of the stationary  equations \eqref{EL} have equivariant symmetry about some point. 
By translational symmetry, we can choose that point as the origin.
They are thus of the form
\begin{equation} \label{phinAn_smooth}
    \begin{aligned}
        \underline{\Phi}_n(x,y) & = U_n(r) e^{in\theta}, \quad  & & \quad U_n(0)=0, & U_n(+\infty)=1, \\
        \underline{A}_n(x,y) &= a_n(r) \, \ud \theta, \quad & & \quad  a_n(0)=0, & a_n(+\infty)=n,
    \end{aligned}
\end{equation}
where $(r,\theta)$ are the polar coordinates in the plane and where $(U_n, a_n)$ are real-valued, smooth, increasing, and positive solutions of the system of ODEs
\begin{equation} \label{eq_vortex_smooth}
    \left\{ \begin{aligned}
        U_n'' + \dfrac{1}{r} U_n' - \dfrac{(n-a_n)^2}{r^2} U_n + \frac{\lambda}{2}(1-U_n^2) U_n &= 0, & & U_n(0)=0, & & U_n(+\infty)=1, & & U'_n >0, \\
        a_n'' - \tfrac{1}{r} a_n' + (n-a_n) U_n^2 &= 0, & & a_n(0)=0, & & a_n(+\infty)=n, & & a_n' >0.
    \end{aligned} \right.
\end{equation}
These solutions satisfy the asymptotics $U_n(r) \to 1$, $a_n(r) \to n$ exponentially as $r\to\infty$
and near the origin (for some constants $\mathfrak{c}_n, \mathfrak{d}_n > 0$)
\begin{equation}\label{asympt_U_0_smooth}
U_n(r)\approx \mathfrak{c}_n r^n
\quad\text{and}\quad
a_n(r)\approx \mathfrak{d}_n r^2
\quad\text{as}\quad r\to 0.
\end{equation}
We refer to \cite{Plo1,BeCh} for the existence theory of solutions to \eqref{eq_vortex_smooth}.
The basic solution $(\underline{\Phi}_1, \underline{A}_1)$ with winding number $n=1$ is called the degree-one vortex, while we refer to solutions with higher winding number $n > 1$ as $n$-vortices or multi-vortices. They are well-known examples of topological solitons in two space dimensions.
With a slight abuse of notation, we will sometimes refer to $(U_1, a_1)$ as a vortex as well.

For all values $\lambda > 0$ of the coupling constant, the degree-one vortex is (spectrally and variationally) stable, while for $n\geq2$ the $n$-vortices are stable if $\lambda<1$ and unstable if $\lambda>1$ \cite{GusSigVort}. 
The case $\lambda=1$ is called the self-dual case.
In that case, as observed by Bogomolny~\cite{Bogo,BeCh}{bookJT}, the energy functional can be rewritten as a constant plus a sum of squares. Setting the latter equal to zero reduces the Ginzburg-Landau equations to a system of first-order self-dual equations (see \cite{bookJT}).
Indeed, at the critical coupling $\lambda=1$, the static energy functional exhibits the Bogomolny factorization \cite{Bogo}
\begin{equation} \label{EBogo}
    \begin{aligned}
        E_{\mathrm{static}}\bigl[\phi, A \bigr] &= \int_{\bbR^2} \biggl( \frac12 B^2 + \frac12 |\bfD_1\phi|^2 + \frac12 |\bfD_2\phi|^2 + \frac18 \bigl( 1 - |\phi|^2 \bigr)^2 \biggr) \, \ud x \\ 
        &= \frac12 \int_{\bbR^2} \biggl( \Bigl( B \mp \frac12 \bigl( 1 - |\phi|^2 \bigr) \Bigr)^2 + |(\bfD_1 \pm i \bfD_2) \phi|^2 \biggr) \, \ud x \pm \frac12 \int_{\bbR^2} B \, \ud x. 
    \end{aligned}
\end{equation}
The choice of sign corresponds to the sign of the topological degree.
In view of the magnetic flux quantization \eqref{equ:magnetic_flux}, energy minimizers in a fixed degree class must therefore solve the first-order Bogomolny equations. For positive degree these are
\begin{equation} \label{equ:1st_order_Bogomolny_smooth}
    B = \frac12 \bigl( 1 - |\phi|^2 \bigr), \quad (\bfD_1 + i \bfD_2) \phi = 0.
\end{equation}
We look for static equivariant $1$-vortex solutions to \eqref{equ:1st_order_Bogomolny_smooth} with finite energy of the form
\begin{equation}\label{equ:onevortex_sol}
    \underline{\Phi}(r,\theta) = e^{i\theta} U(r), \qquad \underline{A}(r,\theta) = a_\theta(r) \, \ud \theta.
\end{equation}
Passing to polar coordinates, the Bogomolny equations \eqref{equ:1st_order_Bogomolny_smooth} read as
\begin{equation}\label{equ:onevortex_ODEs_smooth}
   \partial_r U  = \frac{1-a_\theta}{r} U, \qquad \partial_r a_\theta = \frac{r}{2} \bigl( 1 - U^2 \bigr).
\end{equation}
The finite energy requirement implies
\begin{equation}
    U(r) \to 1, \qquad a_\theta(r) \to 1 \qquad \text{as } r \to \infty.
\end{equation}
The degree-one ($n=1$) vortex is the unique solution (see \cite{bookJT}) to the system \eqref{equ:onevortex_ODEs_smooth} satisfying the boundary conditions 
\begin{equation}
    U(0) = 0, \quad \lim_{r \to \infty} U(r) = 1, \quad U'(r) > 0, \qquad
    a_\theta(0) = 0, \quad \lim_{r\to\infty} a_\theta(r) = 1, \quad a_\theta'(r) > 0.
\end{equation}

\begin{figure}
\centering
\includegraphics[scale=0.95]{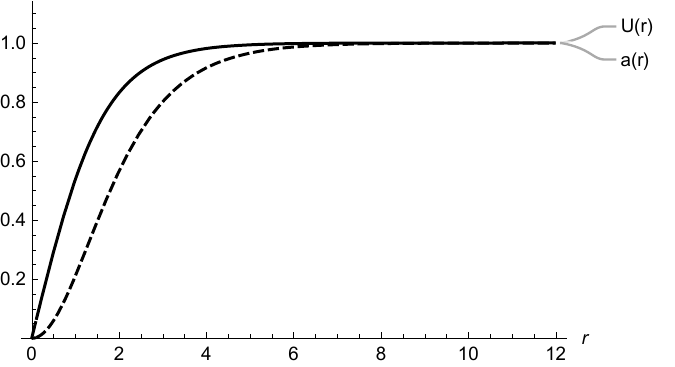}
\caption{Magnetic vortex $(U,a)$ in the case $n=1$.}
\label{figvortex}
\end{figure}


\subsection{Equivariant perturbations of the degree-one vortex solution}

We consider equivariant perturbations of the degree-one vortex \eqref{equ:onevortex_sol} and decompose the Higgs field $\phi$ and the connection form $A$ correspondingly as
\begin{equation}
    \begin{aligned}
        \phi(t,r,\theta) &= \underline{\Phi}(r,\theta) + \varphi(t,r,\theta) =  e^{i\theta} \bigl( U(r) + \alpha(t,r) + i \beta(t,r) \bigr), 
        \\ 
        A(t, r,\theta) &= \underline{A}(r,\theta) + \eta(t,r,\theta) = a_\theta(r) \, \ud \theta + \eta_\theta(t,r) \, \ud \theta + \eta_r(t,r) \, \ud r + \eta_0(t, r) \, \ud t,
    \end{aligned}
\end{equation}
evolving according to the relativistic flow \eqref{KG_smooth}. 
Moreover, we impose Stuart's gauge (see \cite{Stu})
\begin{equation}\label{eq:Stuart}
    \frac{1}{r} \partial_r \bigl( r \eta_r \bigr) = U \beta.
\end{equation}
We introduce the following short-hand notation
\begin{equation}
   \zeta := \frac{\eta_\theta}{r}, \quad \mu := - \eta_r.
\end{equation}
Then by direct computation, see \cite[Section 1.4]{LPPSS3}, we infer from the field equations~\eqref{KG_smooth} 
that the dynamical variables $\alpha$, $\zeta$, $\beta$, $\mu$ satisfy the following evolution equations relative to Stuart's gauge,
\begin{equation} \label{equ:sys3}
    \begin{aligned}
        \pt^2 \begin{pmatrix} \alpha \\ \zeta \\ \beta \\ \mu \end{pmatrix} 
        + 
        \begin{pmatrix}
            L_1 & -2U' & 0 & 0 \\
            -2U' & L_2 & 0 & 0 \\ 
            0 & 0 & L_1 & -2U' \\
            0 & 0 & -2U' & L_2 
        \end{pmatrix}
        \begin{pmatrix} \alpha \\ \zeta \\ \beta \\ \mu \end{pmatrix} 
        = 
        \begin{pmatrix}
            \calN_\alpha \\ \calN_\zeta \\ \calN_\beta \\ \calN_\mu 
        \end{pmatrix},
    \end{aligned}
\end{equation}
coupled to an elliptic equation for the temporal component $\eta_0$,
\begin{equation} \label{sys2}
    \big(-\Delta + U^2\big) \eta_0 = \calN_0.
\end{equation}
Here we use the following notation for the scalar linearized operators
\begin{equation}
    L_1 := -\Delta + \frac{(1-a)^2}{r^2} - \frac{\pr a}{r} + U^2, \quad L_2 := -\Delta + \frac{1}{r^2} + U^2. 
\end{equation}
Note that for the remainder of the paper $a(r)=a_\theta(r)$. 
Defining
\begin{align} \label{eq:bL1L2}
    b := \frac{1-a}{r}, 
\end{align}
the nonlinearities are given by 
\begin{equation} \label{sysN1}
    \begin{aligned}
        \calN_\alpha &:= - \frac32 U \alpha^2 + \frac12 U \beta^2 - \frac12 (\alpha^2+\beta^2)\alpha - 2 \mu \pr\beta + 2 b \zeta \alpha - U (\zeta^2 + \mu^2) - (\zeta^2 + \mu^2) \alpha \\
        &\quad \quad - 2\eta_0 \pt \beta - (\pt \eta_0) \beta + U \eta_0^2 + \eta_0^2 \alpha, \\
        \calN_\zeta & := b (\alpha^2 + \beta^2) - 2 U \zeta \alpha - \zeta (\alpha^2 +\beta^2), \\   
        \calN_\beta & := - 2U \alpha\beta - \frac12 (\alpha^2+\beta^2)\beta + 2\mu \pr \alpha + 2 b \zeta \beta - (\zeta^2+\mu^2) \beta \\
        &\quad \quad + 2\eta_0 \pt \alpha + U (\pt \eta_0) + (\pt \eta_0) \alpha + \eta_0^2 \beta, \\ 
        \calN_\mu & := -\pr \pt \eta_0 - \alpha \pr \beta + \beta \pr \alpha - 2U \alpha \mu - (\alpha^2+\beta^2) \mu,
    \end{aligned}
\end{equation}
and 
\begin{equation} 
    \calN_0 := -\beta \pt \alpha + \alpha \pt \beta - 2 U \eta_0 \alpha - \eta_0 (\alpha^2 + \beta^2).   
\end{equation}

Differentiating \eqref{sys2} in time and inserting the evolution equations \eqref{equ:sys3} gives 
\begin{equation} \label{equ:pt_eta0_equation}
    \begin{aligned}
        \big(-\Delta + U^2\big) \pt \eta_0 &= 
        -\beta \pt^2 \alpha + \alpha \pt^2 \beta + \partial_t \big( - 2 U \eta_0 \alpha - \eta_0 (\alpha^2 + \beta^2) \big)
        \\
        &= \beta L_1\alpha - 2U' \beta \zeta - \alpha L_1 \beta + 2U' \alpha \mu + \calC_0
    \end{aligned}
\end{equation}
with 
\begin{equation}
    \begin{aligned}
        \calC_0 &:= - \beta \calN_\alpha + \alpha \calN_\beta + \pt \bigl( - 2U\eta_0\alpha - \eta_0 (\alpha^2 + \beta^2) \bigr).
    \end{aligned}
\end{equation}
In particular, both $\eta_0$ and $\partial_t \eta_0$ can be 
elliptically recovered from the other four unknowns.

\subsection{Results: Spectral theory and numerics for the dynamics near the degree one vortex}

From now on we write the evolution equations
\eqref{equ:sys3} for the perturbation of the 1-vortex as
\begin{equation}
    (\pt^2 + \bfM) \bmr = \bmN
\end{equation}
with
\begin{equation}\label{M}
    \bmr := \begin{pmatrix} \alpha \\ \zeta \\ \beta \\ \mu \end{pmatrix}, 
    \quad 
    \bmN := \begin{pmatrix} \calN_\alpha \\ \calN_\zeta \\ \calN_\beta \\ \calN_\mu \end{pmatrix},
    \quad
    \bfM := \begin{pmatrix} \bfL & 0 \\ 0 & \bfL \end{pmatrix}, 
    \quad 
    \bfL := \begin{pmatrix} L_1 & -2U' \\ -2U' & L_2 \end{pmatrix}.
\end{equation}
Define $\bfM$ as a symmetric operator on $(L_{\mathrm{rad}}^2(\R^2))^4$ 
with domain $(\calD_0)^4$ with
\[\calD_0:=\{f\in C_{\mathrm{rad}}^2(\R^2)\::\: f=0 \text{\ \ near the origin and near infinity}\}. \] 
Then $\bfM$ is essentially selfadjoint with domain 
$(\calD)^4$ where 
\begin{equation}\label{defcalD} 
\calD:=\big\{ f\in L^2_{\textrm{rad}}(\R^2)\::\: -g''(r)+\frac{3}{4r^2}g(r)\in L^2((0,\infty),dr),\; g(r)=r^{\frac12}f(r)\big\},
\end{equation}
the derivative being in the weak sense.
In this paper we study the spectrum of $\bfM$ 
and, using a combination of analytical and rigorous numerical methods, 
we prove the following result.

From this point on the coupling constant is fixed at one, and $\lambda>0$ denotes the internal frequency.

\begin{theorem}[Spectrum of the linearized operator]\label{maintheo1}
Let $\bfM$ be the self-adjoint operator defined above. Then:

\setlength{\leftmargini}{2em}
\begin{itemize}

\smallskip
\item[(i)] The restriction $\bfM|_{\operatorname{Ran}\bfP_c}$ exhibits purely absolutely continuous spectrum $[1,\infty)$, where $\bfP_c$ denotes the projection onto the continuous spectrum. In particular, the bottom of the continuous spectrum\footnote{The threshold is also not a resonance but characterizing what this means takes some care. This property will manifest itself in the construction of the distorted Fourier transform for $\bfM$ in Part~II, see \cite{LPPSS2}.} is not an eigenvalue.

\smallskip
\item[(ii)] $\bfM$ has a unique gap eigenvalue of multiplicity two (also known as an ``internal" mode, or oscillating mode), which we denote by $\lambda^2$. It satisfies $\lambda^2\in [0.777471875,\,0.77747375]$. The associated eigenspace is spanned by
\begin{align*}
\bmY_1 := \lambda^{-1}(U\psi, -\psi',0,0)^T, \qquad \quad  \bmY_2 := \lambda^{-1}(0, 0, U\psi , -\psi')^T, \qquad \bfM \bmY_j = \lambda^2\bmY_j,
\end{align*}
where $\psi$ is the $L^2_{rdr}$-normalized\footnote{This means $\int_0^\infty \psi(r)^2r\, dr=1$ without a $2\pi$ factor. That $\|\bmY_j\|_2=1$ follows from the eigenvalue equation and integration by parts. } radial ground state  of $-\Delta + U^2$ with energy $\lambda^2$.
\end{itemize}

\end{theorem}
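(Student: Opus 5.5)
The plan is to exploit the supersymmetric factorization mentioned in the abstract. Since $\bfM$ is block diagonal with two copies of $\bfL$, it suffices to study $\bfL$ on $(L^2_{\mathrm{rad}})^2$. The Bogomolny equations \eqref{equ:onevortex_ODEs_smooth} linearize to a first-order operator.

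\textbf{Factorization.} Define
\[
\mathcal{D}\begin{pmatrix}\alpha\\ \zeta\end{pmatrix} := \begin{pmatrix} \partial_r\alpha - b\,\alpha + U\zeta\\ \frac1r\partial_r(r\zeta) + U\alpha\end{pmatrix},
\]
with sign and lower-order conventions chosen so that the following identities hold. Using $U'=bU$, $a'=\frac r2(1-U^2)$ and hence $b'=-\frac{b}{r}-\frac12(1-U^2)$, I will verify by direct computation that
\[
\bfL=\mathcal{D}^*\mathcal{D}+\text{(possibly a shift)} \quad\text{and}\quad \mathcal{D}\mathcal{D}^*=\mathrm{diag}(H_1,H_2).
\]
Here $H_1,H_2$ are radial Schr\"odinger operators of the form $-\Delta+V_j$. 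The abstract says these are strongly singular, for instance $-\Delta+\frac{c_j}{r^2}+U^2+\dots$. One of them should be exactly $-\Delta+U^2$, acting on the appropriate angular mode. Standard supersymmetry then gives $\sigma(\bfL)\setminus\{0\}=\sigma(H_1\oplus H_2)\setminus\{0\}$ with multiplicities. Eigenfunctions transfer via $\mathcal{D}^*$: if $(-\Delta+U^2)\psi=\lambda^2\psi$, then $\mathcal{D}^*(0,\psi)$ or $\mathcal{D}^*(\psi,0)$ is an eigenfunction of $\bfL$, and this yields exactly $(U\psi,-\psi')/\lambda$. Normalization follows from $\|\mathcal{D}^*\psi\|^2=\langle \mathcal{D}\mathcal{D}^*\psi,\psi\rangle=\lambda^2$. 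The kernel of $\mathcal{D}$ must be excluded in $L^2$, i.e.\ no zero eigenvalue in the radial sector, via an explicit ODE argument. Essential selfadjointness and the domain $\calD$ transfer in the same way.

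\textbf{Continuous spectrum.} Since $U^2\to1$ and $b\to0$ exponentially, Weyl's theorem gives $\sigma_{\mathrm{ess}}=[1,\infty)$. Absence of embedded eigenvalues above $1$ follows from Kato/Agmon-type arguments for exponentially decaying potentials, or from ODE asymptotics on the scalar operators: solutions behave like $r^{-1/2}e^{\pm i\sqrt{E-1}r}$, so none lies in $L^2$. Absolute continuity follows from limiting absorption for radial ODEs, e.g.\ Weyl--Titchmarsh theory with Jost solutions.

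\textbf{Threshold.} At the threshold $E=1$ I conjugate $g=r^{1/2}f$ to half-line operators $-\partial_r^2+\frac{\nu_j^2-1/4}{r^2}+W_j(r)$ with $W_j$ exponentially decaying. I then show that the zero-energy solution $(H_j-1)g=0$ that is regular at $0$ grows linearly (or logarithmically) at infinity, rather than being bounded. This is done by rigorous interval-arithmetic integration of the ODE on $[\varepsilon,R]$, with analytic Frobenius control near $0$ and Volterra/asymptotic control for $r>R$, where $U$ is itself certified via the Bogomolny ODE.

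\textbf{Discrete spectrum.} I count eigenvalues below $1$ using Sturm oscillation: the number of eigenvalues of $H_j$ below $1$ equals the number of zeros of the regular threshold solution. A certified count should give one zero for $H=-\Delta+U^2$ and none for the other operator. Positivity of $-\Delta+U^2$ gives $\lambda^2>0$. The bracket $[0.777471875,0.77747375]$ is obtained by a certified shooting argument, showing that the regular solution changes its sign behaviour at infinity (zero count, or sign of the Wronskian with the decaying Jost solution) between the two endpoints. The factor two in the multiplicity comes from the two copies of $\bfL$, giving $\bmY_1$ and $\bmY_2$.

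\textbf{Main obstacle.} I expect the hardest part to be the rigorous numerics. One must certify the vortex profile $U$ itself with error bounds, then handle the singular endpoint $r=0$ and the tail $r\to\infty$ with analytic enclosures glued to interval ODE integration. This is needed both for the threshold non-resonance and for the exclusion of a second eigenvalue of the other partner operator.
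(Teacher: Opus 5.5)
Your proposal is correct, and its skeleton agrees with the paper. Your $\mathcal{D}$ is exactly the paper's $\mathcal{B}$; the factorization $\mathbf{L}=\mathcal{B}^*\mathcal{B}$ is exact, with no shift. The partner operator is $\mathrm{diag}(\mathcal{L}_1,\mathcal{L}_2)$ with $\mathcal{L}_2=-\Delta+U^2$. The triviality $\ker\mathcal{B}=\{0\}$ comes from an integration-by-parts ODE argument (Lemma~\ref{lemB}). Eigenfunctions transfer via $\mathcal{B}^*(0,\psi)=(U\psi,-\psi')$, and the enclosure comes from a certified Wronskian sign change.

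The real difference is the eigenvalue count. The paper obtains ``at most one eigenvalue'' of $\mathcal{L}_2$ from Set\^o's bound, at the cost of certifying a logarithmic double integral of $1-U^2$ (Proposition~\ref{theoLT2}). It gets existence separately (Lemma~\ref{lem:fgr_eigenvalue_box}). For $\mathcal{L}_1$ it proves $V_1-1>0$ pointwise (Lemma~\ref{lem_appV1g1}), so a single positivity argument excludes both eigenvalues and a threshold resonance, with no second shooting.

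Your Sturm-oscillation count is more economical for $\mathcal{L}_2$: the regular threshold solution you certify anyway for non-resonance then gives existence and uniqueness at once. To make it rigorous you need two things.
(a) The count at the edge of $\sigma_{\rm ess}$. For $E<1$ the number $N(E)$ of eigenvalues below $E$ equals the number of zeros of $u_E$. Sturm comparison gives $N(E)\le$ the number of zeros of $u_1$, and persistence of simple zeros gives the reverse inequality for $E$ near $1$. This requires $u_1$ to have finitely many zeros, which the short-range tail ensures.
(b) A certified zero count on all of $(0,\infty)$. For example: Frobenius positivity on $(0,0.1]$, a validated count on $[0.1,R]$, and for $r\ge R$ the fact that $(rf')'=r(1-U^2)f$ with $1-U^2>0$ propagates $f(R)f'(R)>0$. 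That last condition eventually holds, since the paper's Wronskian gives $\widetilde c_{1,*}\approx-0.8<0$.
Set\^o's route, by contrast, needs only integrals of $1-U^2$ and no oscillation theory at a limit-circle endpoint.

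For absolute continuity, the paper uses the distorted Fourier basis of Part~II (Proposition~\ref{propdFT}) together with \cite[Theorem XIII.19]{RS4}. Your scalar Weyl--Titchmarsh argument is more self-contained. You should transfer it by Deift's unitary equivalence of $\mathcal{B}^*\mathcal{B}$ and $\mathcal{B}\mathcal{B}^*$ off their kernels, which are trivial here; this carries spectral type, not just spectra.

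Finally, supersymmetry does not transfer self-adjointness; that is Kato--Rellich (Lemma~\ref{lem:2.1}). What you must check is that $\mathcal{B}\mathcal{B}^*$ realizes $\mathcal{L}_2$ with the Friedrichs condition at the limit-circle endpoint $r=0$. The domain of $\mathcal{B}^*$ forces $g_2'\in L^2(r\,dr)$, which excludes the $\log r$ branch. This matters because both your oscillation count and your Frobenius data use the regular branch.
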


\begin{rem}
This work contains two other main  results that will be crucial for the analysis in \cites{LPPSS2}{LPPSS3}. Stating these  precisely requires more preparation, so they are postponed to later sections. Here we include a brief description:
\begin{enumerate}
    \item We will see that the spectral analysis of $\bfM$ reduces to that of a pair of scalar Schr\"odinger operators $\calL_1$ and $\calL_2$. See \eqref{eq:lemSd1}. In Proposition~\ref{prop:lem_p2_t2} we use rigorous numerics to prove that $\calL_2$ does not have a threshold resonance. See Remark~\ref{rem:c1*notzero} for the corresponding characterization in terms of the threshold asymptotics. Proposition~\ref{prop:lem_p2_t2} also shows the absence of a threshold resonance for $\calL_1$, although this is done purely analytically. That the restriction of $\bfM$ to its continuous spectral subspace exhibits purely ac spectrum follows from the shape of the distorted Fourier basis in Proposition~\ref{propdFT} by standard arguments such as \cite[Theorem XIII.19]{RS4}. To apply that theorem, one expresses the resolvent kernel via the distorted Fourier basis as in~\cite[Section 2.4.4]{LPPSS2}. The remaining statements in Theorem~\ref{maintheo1} are then deduced later from the reduction \eqref{eq:lemSd1} (together with \eqref{eq:LtoH} and \eqref{eq:LBstarB}), Proposition~\ref{prop:lem_p2_t2}, the kernel computation in Lemma~\ref{lemB}, and the eigenvalue enclosure\footnote{Throughout this paper, an
``enclosure'' of a number means a certified interval containing it.  An
enclosure of a function means certified pointwise intervals containing
its values, componentwise, on the stated domain. ``Certified" means that the computations use outward-rounded multiprecision interval arithmetic, and that the finite truncations are justified by analytic truncation and tail estimates so that the resulting intervals provably enclose the exact quantities. }
in Lemma~\ref{lem:fgr_eigenvalue_box}.
    \item In the nonlinear analysis the contribution of the internal modes of $\bfM$ decays in time through a nonlinear mechanism known as radiation damping. For this it is important that certain constants that encode the nonlinear coupling between the discrete and continuous spectra of $\bfM$ are non-vanishing. The verification of this non-vanishing in Proposition~\ref{prop:FGR1} is the other main result of this work. See Section~\ref{sec:FGR} for a more detailed discussion.
    \item[] The bulk of the paper is devoted to the asymptotic analysis and certified computation (via interval arithmetic) of the vortex profile $(U,a)$ and of the spectral and scattering properties of the associated linearized operators built from $(U,a)$, including: absence of threshold resonances, enclosure of the internal mode $\lambda^2$, simplicity of the discrete spectrum, computation of the distorted Fourier basis at energy $4\lambda^2$, and verification of a negative sign for the FGR coefficients. This is technically delicate because the underlying ODEs are singular at $r=0$ and require quantitative tail bounds as $r\to\infty$, and because all steps must be carried out with fully rigorous numerical error control. The computation of $(U,a)$ is delicate due to the fact that there is a unique value of $U'(0)>0$ for which the ODE remains stable, while the linearization about the vortex is exponentially unstable. 
\end{enumerate} 
\end{rem}
As we will explain in Section~\ref{sec:spectral_fourier}, Theorem~\ref{maintheo1} is obtained by first using a super-symmetric or Bogomolny factorization and then studying a pair of relatively simpler ``strongly singular'' (see~\cite{GNZ} for this terminology and a comprehensive study of Sturm-Liouville operators) scalar operators $\mathcal{L}_1$ and $\mathcal{L}_2$. See Proposition~\ref{prop:lem_p2_t2} together with the definitions
\eqref{eq:LtoH}, \eqref{eq:lemSd1} and \eqref{eq:LBstarB}.
By studying the spectral properties of these operators,
and in particular proving the existence of a unique ground state with energy $\lambda^2$ for the operator $\mathcal{L}_2 :=-\Delta + U^2$ we then deduce Theorem~\ref{maintheo1}. Details are provided after the statement of Proposition~\ref{prop:lem_p2_t2}.

For the dynamical decomposition, we choose an orthonormal basis
$\bmY_1,\bmY_2$ of the gap eigenspace in Theorem~\ref{maintheo1},
with each vector supported in its corresponding two-component block.
In view of Theorem~\ref{maintheo1}, we naturally further decompose 
$\bmr$ as follows:
\begin{equation}\label{eq:ruz}
    \bmr(t) = \bmu(t) + z_1(t) \bmY_1 + z_2(t) \bmY_2, \qquad \bmu(t) := \bfP_c \bmr(t), \qquad z_j(t) := \langle \bmY_j, \bmr(t) \rangle.
\end{equation}
Here, $\bfP_c$ denotes the projection to the continuous spectral subspace of $L^2(\bbR^2;\bbR^4)$ with respect to~$\bfM$.

The presence of a bound state for $\bfM$ is a clear obstruction and can, in principle, generate a non-decaying state for the full nonlinear flow, thus making the dynamics too complicated to control for long times. However, it is well-known that, under a suitable generic condition, the solutions $e^{\pm i\lambda t}\bmY_j$ of the linear equation 
 are unstable for the nonlinear flow of \eqref{equ:sys3}. 
 This condition is the so-called nonlinear {\it Fermi Golden Rule} (FGR),
 and was put forth and analyzed in the groundbreaking works  \cite{Sig,BP95}. 
 See also the fundamental work of Soffer-Weinstein \cite{SW} and the discussion at the end of Subsection~\ref{Ssecprevious}. 
 The effect of this FGR is to turn the linear harmonic oscillator into a nonlinearly damped oscillator that exhibits dissipation.

With the decomposition \eqref{eq:ruz} we then obtain the following system of evolution equations for the radiation term and for the internal mode components
\begin{equation}\label{eq:sysuzj}
    \left\{ \begin{aligned}
        (\pt^2 + \bfM) \bmu &= \bfP_c \bmN, \\ 
        (\pt^2 + \lambda^2) z_1 &= \langle \bmY_1, \bmN \rangle, \\ 
        (\pt^2 + \lambda^2) z_2 &= \langle \bmY_2, \bmN \rangle,
    \end{aligned} \right.
\end{equation}
coupled to the elliptic equation \eqref{sys2} for the variable $\eta_0$.

Following \cite{SW} (see also \cite{LP22}), in Section~\ref{sec:FGR} we present the leading order ODEs satisfied by $(z_1,z_2)$. In Section~\ref{sec_NumVer_FGR} we verify via interval arithmetic that the Fermi Golden Rule coefficients yield a definite multilinear form, allowing for the dissipative behavior of the system for $(z_1,z_2)$. In order to calculate these coefficients, we first need to introduce the distorted Fourier transform associated with $\bfM$, which we do in Section~\ref{sec:spectral_fourier}.


\subsection{Previous results}\label{Ssecprevious}
Following common practice, cf.~the classic texts by Jaffe, Taubes~\cite{bookJT} and Manton, Sutcliffe~\cite{bookManSut}, one separates
{\it non--gauged} (or ``non--magnetic'') vortex models, where the scalar field evolves without
an electromagnetic force, from the {\it gauged} (magnetic) abelian Yang-Mills-Higgs or Ginzburg--Landau models,
in which the Higgs field is coupled to a $U(1)$ connection as in Maxwell's theory.  In both settings, one further distinguishes the
static theory focusing on existence and uniqueness, variational structure, and spectral properties,  from the dynamics. The latter typically includes the parabolic gradient flow, as well as conservative Schr\"odinger and relativistic flows. 
From the point of view of dynamics, which is the one we are mostly concerned with here, long-time stability questions for vortices 
and the interaction laws governing multi--vortex motions
constitute an exciting and challenging area
that sits within the broader context of the {\it soliton resolution conjecture} for nonlinear dispersive PDEs.

\subsubsection*{Gauged theories}
 The monograph of Jaffe and Taubes~\cite{bookJT} is a foundational reference for the planar abelian
Yang-Mills-Higgs, or Maxwell--Higgs,  model at the critical coupling $\lambda=1$, where the static energy
admits a Bogomolny factorization (see \eqref{EBogo}) and the Euler--Lagrange equations reduce to the first--order
Bogomolny system (see \eqref{equ:1st_order_Bogomolny_smooth}). 
This type of factorization
is also crucial to the classification of harmonic maps in two dimensions where the second order harmonic maps PDE reduces to a first order Cauchy-Riemann system.

Jaffe and Taubes established the existence and uniqueness of
finite--energy Bogomolny vortices with prescribed vortex set, i.e., the zeros of~$\phi$. These vortices minimize the energy in their  degree class and the proof in~\cite{bookJT} proceeds by  solving the Bogomolny equations via variational methods. For the sake of completeness, as well as for computational reasons, Proposition~\ref{prop:exuniqUa} below contains an independent ODE proof of this fact for the degree~$1$ vortex. The self--dual structure is
also the starting point for the moduli--space viewpoint developed in the physics literature and
systematically explained in~\cite[Ch.~7]{bookManSut}, including the interpretation of slow vortex
motion in terms of geodesic flow on the moduli space due to Manton.

Away from the critical coupling $\lambda\neq1$, vortices are no longer energy minimizers in their degree class and the approach in \cite{bookJT} does not apply. They are typically {\it critical points} and may be stable or unstable depending on their degree~$n$ and whether one is in the type~I ($\lambda<1$, vortices of degree~$1$ attract) or type~II ($\lambda>1$, vortices of degree~$1$ repel) regimes. A detailed spectral and
dynamical picture for magnetic vortices, including transitions between stability and instability, was developed
by Gustafson and Sigal~\cite{GusSigVort,GusSig2} and related works. See also Gustafson~\cite{GusVort0,GusVort}
for earlier contributions. These results connect the variational structure of the static energy
to the spectral properties of the linearized operator and to effective descriptions of vortex
motion.

A key reference for {\it non--selfdual} vortex dynamics in the gauged abelian Higgs model is
Stuart~\cite{Stu}, who analyzed dynamics in the near--Bogomolny regime. Stuart confirmed Nick Manton's  
 adiabatic approximation for the Lorentzian abelian Higgs equations when  $\lambda$ is close to~$1$. Solutions starting near the Bogomolny vortex manifold remain close  to it (modulo the gauge) for long times. Moreover,  the vortex parameters evolve according to an effective finite-dimensional system: geodesic motion on the Bogomolny moduli space perturbed by a small $O(\lambda-1)$ potential that induces attraction or repulsion of vortices of equal sign. 
A crucial ingredient is a gauge choice that eliminates the infinite-dimensional gauge kernel and ensures the coercivity of the linearized operator transverse to the moduli directions. This gauge is now commonly called Stuart's gauge, and it is the gauge we choose to work with in this paper, see \eqref{eq:Stuart}.

For parabolic, i.e., gradient flow dynamics in the magnetic model, Demoulini and
Stuart as well as Zhao~\cite{StuDem, Zhao26} studied the planar gradient flow of the superconducting Ginzburg--Landau
functional, contributing to the rigorous understanding of dissipative vortex evolution in the
gauged setting.

We refer the reader to the monograph of Sandier and Serfaty~\cite{bookSS} on the magnetic Ginzburg--Landau model for vortex
asymptotics, $\Gamma$--convergence, and mean--field limits with many vortices. 

The present paper focuses on the self--dual gauged model and on equivariant perturbations of the degree--one
magnetic vortex, with an emphasis on spectral properties of the linearized operator and their
consequences for the dynamics. This involves careful analysis of the role of the internal modes and the associated nonlinear Fermi Golden
Rule mechanism.  
From the above perspective, this lies at the intersection of (i) the
Jaffe--Taubes self-dual existence and structure theory~\cite{bookJT}, (ii) the spectral stability
program for vortices in the gauged setting~\cite{GusSigVort,GusSig2}, and (iii) the long-time nonlinear stability of topological solitons for dispersive PDEs.

Finally, we mention the papers~\cite{AIMCQ, AIBPMCNOQu} from the applied mathematics and physics communities that are closely related to this work. There, too, spectral properties are studied via analytic and numerical methods to describe the dynamics near abelian Higgs vortices.

\subsubsection*{Non-gauged theories}
On the non--gauged side, one typically studies evolutions for a complex scalar field governed by a 
Ginzburg--Landau Hamiltonian without a gauge potential. In this model, vortices exhibit infinite energy and the effective
interaction between well--separated vortices is governed by a renormalized energy, as proposed by Ovchinnikov-Sigal \cite{OvSig1,OvSig2}.
It is interesting to note that, at least formally, two well-separated vortices of degrees~$+1$, respectively~$-1$ do lead to a finite energy state of degree~$0$ as the logarithmic divergences of the energy in each vortex cancel out. 
In a seminal contribution, Neu~\cite{Neu} derived vortex motion laws in complex scalar field models and clarified how the vortex cores move according to classical mechanical equations. The formal analysis is based on matched asymptotics and energy considerations. 
Ovchinnikov and Sigal~\cite{OvSig1,OvSigNon1,OvSigNon2,OvSig2} made this analysis more rigorous studying static vortex configurations and the
renormalized interaction energy that controls the leading order forces between well--separated
vortices. These authors developed PDE tools in the non-gauged setting such as the renormalized energy, vortex interaction energy, interaction expansions, and reducing the problem to a finite set of vortex parameters, that have since become central tools in the mathematical analysis  of vortex dynamics. 
More recently, del Pino, Juneman, and Musso~\cite{delPJM} applied Lyapunov–Schmidt reduction and gluing techniques to construct entire families of multi-vortex solutions. Their work recovers the leading-order interactions first described by John Neu~\cite{Neu}. In addition, they discover a wide range of nonradial vortex configurations that extend well beyond the familiar symmetric $n$-vortices.
The vortex cores move according to the classical Helmholtz--Kirchhoff system, at least up to some large finite time. They obtain an asymptotic expansion of the vortex positions in terms of the vortex core size and the first correction to the leading order dynamics is determined by the solution of a linear wave equation.

In the context of the conservative Schr\"odinger or Gross-Pitaevskii evolution, Gravejat--Pacherie--Smets~\cite{GPS} proved an orbital stability result for
the degree--one Ginzburg--Landau vortex in the non--gauged setting, combining variational and
spectral information with a Lyapunov functional approach.  
For the non-magnetic Gross-Pitaevskii equations, Collot, Germain, and Pacherie~\cite{CGP} construct a distorted Fourier basis for the linearized operator around the degree--one vortex at all energies. Additionally, the delicate question of embedded eigenvalues in the essential spectrum, which is highly nontrivial for energies bounded away from $0$ and $\infty$, is resolved by an analytical argument. Using oscillatory integral bounds, \cite{CGP} then also establishes dispersive estimates. This is a necessary step in any attempt to answer the asymptotic stability question in the fully nonlinear setting. 
In a contemporaneous work, the first and the last two authors~\cite{LSS} construct a distorted Fourier basis at small energies, which is the most delicate regime. The main focus of~\cite{LSS} is to derive a Stone-type formula for the not self-adjoint linearized operator $\calL$ for small~$\lambda$. This requires particular attention to $\lambda=0$ which turns out to lead to linear growth of $\| e^{t\calL}\|$ due to a nilpotent contribution. This method was applied to the Ginzburg-Landau model on the hyperbolic plane by Landoulsi and the fifth author~\cite{LaSh} for all energies. 
In contrast, Collot, Germain, and Pacherie derive a putative distorted Fourier basis as in~\cite{KriSchNLS} for real energies and justify after the fact that this basis is complete.

The second and third authors~\cite{PaPu} study the linearized operator for equivariant perturbations around the vortex in the relativistic flow,
which is the non-gauged version of the equations considered here. 
They characterize the spectrum, construct the distorted Fourier transform and establish various linear decay estimates.
Compared to the Schr\"odinger case, in the relativistic context the analysis is simplified by the observation 
that 
$\calL$ turns out to be a diagonal operator and thus self-adjoint,
so that the spectral and scattering analysis reduces to the scalar analysis in the spirit of~\cite{KST}.

Finally, we mention the important work of Weinstein-Xin \cite{WX}
who analyzed corotational perturbations of the $n$-vortex for the GL equations and, among other things, established asymptotic stability
under the parabolic flow.

\subsubsection*{Nonlinear dynamics and Fermi Golden Rule (FGR)}
Spectral and linear scattering theory constitute the first fundamental step towards understanding nonlinear dynamics.
In the presence of internal modes for the linearized operator,
which is the scenario we encounter in this paper,
the next important step is the verification of the FGR.
The latter is a non-degeneracy condition that couples the discrete and continuum spectrum, allowing energy to leak from the bound states to 
the scattering states.
This mechanism was identified in groundbreaking work \cite{Sig,BP95},
where it was shown that the persistence of time-periodic or quasi-periodic solutions
is linked to the failure of this type of coupling, which leads to the decay of the amplitude of oscillations and asymptotic stability in the full nonlinear problem.
Following this, in the case of the cubic Klein-Gordon equation in $3d$, Soffer and Weinstein \cite{SW} showed that 
if the FGR holds, then an effective damping mechanism turns linear bound states into metastable (slowly decaying) states for the full nonlinear flow. See also the successive related works \cite{TsaiYau,BamCuc,LP22,LP22b,LeiLiuYang1} and references therein.
As an application of our spectral and linear scattering theory, we use rigorous interval arithmetic to enclose the coefficients that govern the leading-order dynamics of the internal mode amplitudes $z_j$, and to verify that the FGR holds for our system~\eqref{eq:sysuzj}.

\subsubsection*{Topological solitons in one dimension}
As mentioned previously,
interaction laws governing multi--vortex motion and, in general, topological solitons are an exciting and challenging area, related to the {\it soliton resolution conjecture} for nonlinear dispersive and wave equations. 
While very few results on dynamical stability are available in dimensions larger than one, the mathematical literature in $1$d is richer, though many fundamental problems remain open.

In one-dimensional classical field theories such as the sine-Gordon and $\phi^4$-models, topological solitons are known as kinks. They play exactly analogous roles to vortices, and many open questions remain concerning kink stability and kink--antikink solutions in the $\phi^4$ model. The sine-Gordon equation is completely integrable and can be treated by complex analytic methods~\cite{CLL,AMP23,KochYu}, but see also~\cite{LSch,ChenL} for a perturbative approach. Due to the presence of an internal mode in the linearized $\phi^4$~model, there are parallels to the asymptotic stability problem for the vortex, which we address here. The global asymptotic stability problem for the $\phi^4$-kink is open. See, however, the works~\cite{KoMaMu, KoMaMu2, KoMa} which prove local asymptotic stability under odd perturbations.


\subsection{Numerical algorithms}

As already mentioned, the numerical parts of this work are performed on a computer (a modern laptop suffices). We carry out the certification run with C++ code using the CAPD library.  The CAPD package is open-source, well-documented,  and accepted in the computer-assisted proof community. See for example Dahne and Figueras~\cite{DahFig}.  Much of the analytical work in this paper builds the rigorous foundations required for the interval computations. The final section records the precise role of the code in the proof.

Throughout the paper, we use the term {\em certificate} in the usual computer-assisted
proof sense. It is a finite package of interval enclosures, residual\footnote{The residual of an approximate solution is the difference between the two sides of the differential equation after that approximation is substituted. It vanishes for an exact solution, and bounds on it are used to control the error of the approximation. In other words, it is the error in an ODE generated by an approximate solution.} bounds, tail
estimates, and arithmetic inequalities that can be checked independently of the
heuristic computations that motivated them. 

Such error bounds are particularly important in spectral computations.
For infinite-dimensional operators, straightforward discretizations can
produce spurious eigenvalues or fail to detect parts of the spectrum. For
a comprehensive account of these issues, see Colbrook~\cite{Colbrook}.

\subsubsection{Code and certificate repository.}
The computer-assisted certificate files used in this paper are archived in the
GitHub repository~\cite{AYMHPartICertificateRepo}.  The repository contains the
multiprecision C++ code based on CAPD, the passing transcripts, hash manifests, reproducibility instructions, and
documentation. The CAPD transcripts
certify that the computations are carried out in multiprecision interval arithmetic
(using CAPD's internal \texttt{MpInterval}/\texttt{MpFloat} number types) at $220$-bit precision. They also record an overall pass for the corresponding finite certificate blocks.

\section{Overview}\label{sec:overview}

We now provide a road map of the main technical parts of the paper. To start the entire process, which is a hybrid of analysis and numerical certification, we need to approximate the vortex profiles $(U,a)$. All of this hybrid work is done in 
Sections~\ref{sec_app1}--\ref{sec_NumVer_FGR}. To start off, the decomposition~\eqref{eq:ruz} separates the discrete component, which comes from the internal modes, from the radiation. To control these two discrete components, we first need to establish the required spectral properties: we locate the internal eigenvalue\footnote{Concretely, this means enclosing it in an interval of size~$<2\cdot 10^{-6}$.}, approximate its eigenfunction, and exclude threshold resonances. We then study the coupling of the harmonic oscillator with the continuous spectrum, which is precisely the FGR mechanism. At the resonant frequency
\[
  k_\lambda:=\sqrt{4\lambda^2-1},
\]
this involves the distorted Fourier transform associated with~$\bfM$. Computing these dFT basis elements requires several steps. First, we need to approximate both the Weyl solutions~$\Phi_j$ at zero, and the oscillatory Weyl solutions  $\Psi_j$ at infinity associated with the linearized operators~$\mathcal H_j$. Second, we have to solve the connection problem to glue them together via the connection coefficients~$a_j$.

Using the distorted Fourier representation from Proposition~\ref{propdFT}, we rewrite the FGR coefficients in~\eqref{FGRODE14} as radial integrals at frequency~$k_\lambda$. Showing that these integrals are nonzero proves that the internal modes genuinely interact with the radiation. With the sign convention in~\eqref{FGRODE14}, this interaction leads to dissipative cubic terms, i.e., nonlinear damping of the internal modes. Our spectral results are summarized in Theorem~\ref{maintheo1}, and the needed FGR nondegeneracy is proved in Proposition~\ref{prop:FGR1}. Together, these provide the inputs for the linear and nonlinear analyses in our companion papers \cites{LPPSS2}{LPPSS3}.

The order in which we prove the FGR property is
\[
 (U,a,\uprz)
 \ \Longrightarrow\
 (\mathcal L_1,\mathcal L_2)
 \ \Longrightarrow\
 (\lambda^2,\psi_0,\Phi_1,\Phi_2,\Psi_1,\Psi_2)
 \ \Longrightarrow\
 (D_1,D_2,D_{12}).
\]
Here $\uprz=U'(0)$ is the unique slope of the scalar vortex profile $U$, $\psi_0=\psi/\psi(0)$ is the internal eigenfunction normalized to have height $\psi_0(0)=1$ associated with the eigenvalue~$\lambda^2$, and finally $D_1, D_2, D_{12}$ are the coefficients governing the FGR. The ultimate goal is to prove their strict negativity.   

The vortex profiles determine the coefficients of the Schr\"odinger operators~$\mathcal L_1$ and~$\mathcal L_2$.  Every step relies on the earlier ones, so we need accurate approximations with rigorous error bounds throughout. The ODEs we have to solve are singular at $0$, and depend on the parameters~$\uprz$ and~$\lambda^2$. We deal with the singular behavior near $0$ and the decay as $r\to\infty$ analytically, while the C++ code is used on a compact positive interval. To be more specific, we split $(0,\infty)$ as follows:
\[
\begin{array}{rcl}
 (0,0.1] &:&
 \text{Frobenius expansions and analytic remainder bounds},\\
 \left[0.1,R\right] &:&
 \text{ODE propagation via CAPD and interval quadrature},\\
 \left[R,\infty\right) &:&
 \text{analytic comparison and Volterra tail bounds}.
\end{array}
\]
The radius~$R$ depends on the context, with our main choices being $4$ and~$16$. In this way, each problem on the half-line is reduced to finitely many computations on compact intervals, combined with analytic bounds for the singular endpoint and the infinite tail.

On the middle interval $[0.1,R]$, our C++ code computes tubuluar neighborhoods of the true ODE solutions using CAPD's approximations via Taylor polynomials. Initial data and parameters (including~$\upr$ and the spectral parameter) are represented by intervals and all computations are done with outward rounding. The integrator rigorously encloses both the truncation error and the propagated solution, so the output bounds cover the exact solution for 
\emph{every} admissible choice of parameters. We treat radial integrals the same way, using interval quadrature on each interval of the Riemann sums. Thus, we reduce the computational part of the argument to a finite list of interval enclosures and sign checks. The singularity  $r=0$ and the tail $r\to\infty$ are handled by  analytic techniques. Implementation details and the output of the C++ code are described in Section~\ref{sec:capd_certificate}.

\subsection{The vortex profile and its quantitative tails}

We start by approximating the vortex profiles~$U$ and~$a$ with rigorous error bounds, since they feed into the scalar linearized operators and into essentially every ODE solved later on. In particular, it is not enough to compute accurate numerical approximations: we also need (i) a certified enclosure of the shooting parameter, (ii) validated bounds for $U$ and~$a$ on the compact interval where we compute, and (iii) explicit tail estimates as $r\to\infty$. Section~\ref{sec_app1} provides these three pieces.

\paragraph{Step 1: enclose the shooting parameter.}
The vortex system~\eqref{numerics2} has a one-parameter family of regular local solutions, indexed by the initial slope
$\upr=U'(0)$. Imposing the boundary condition at infinity singles out one value, denoted $\uprz$, for which the solution converges to $(1,1)$. Proposition~\ref{prop:exuniqUa} gives an analytic shooting theory before any numerics: increasing~$\upr$ increases~$U$ and decreases~$a$, and $\uprz$ separates two distinct ``crossing'' behaviors. Remark~\ref{rem:J0} turns this qualitative dichotomy into a practical bracketing criterion: if
\[
 a(r_-,\upr_-)>2,
 \qquad
 U(r_+,\upr_+)>2,
\]
then $\uprz\in[\upr_-,\upr_+]$. The code checks these two inequalities for the endpoint slopes, and the analytic criterion then gives our first rigorous enclosure of~$\uprz$.

\paragraph{Step 2: start the ODE away from the singularity.}
To prove the crossing inequalities, we propagate the endpoint solutions outward. Because the vortex equations are singular at $r=0$, we do not start the numerical integration there; instead we start at the fixed radius $\rorigin=0.1$. Lemma~\ref{lem:Wberror} derives  Frobenius expansions for~$U$ and~$a$ near $0$, together with explicit remainder bounds. Evaluating the truncated series with outward-rounded interval arithmetic and adding the analytic remainder produces the initial box~\eqref{eq:vortex_start_box}, which contains the exact values of~$U$ and~$a$ at $\rorigin=0.1$ uniformly for $\upr$ in the slope interval~$J_0$. CAPD then propagates these intervals for the two endpoint slopes, verifies the inequalities in Remark~\ref{rem:J0}, and Proposition~\ref{prop:J0slope} establishes the first rigorous approximation of~$\uprz$.

\paragraph{Step 3: sharpen $\uprz$ for later computations.}
This first enclosure is enough to control the vortex globally, but we found that the later Wronskian and FGR steps require a smaller interval. Lemma~\ref{lem:Newton} achieves this by applying Newton's method to
\[
 F_{20}(\upr):=U(20,\upr)-1.
\]
For this to work, the C++ code propagates the vortex equations together with their $\upr$-derivatives; Lemma~\ref{lem:Wbderror} supplies the initial bounds for these derivatives. Since $\uprz$ is defined by the condition at infinity, the root of $F_{20}$ serves as a proxy for~$\uprz$. Thanks to the explicit tail estimates on $1-U(r)$ we can bound $|F_{20}(\uprz)|$. Combining this with a certified lower bound on $F_{20}'(\upr)=\partial_\upr U(20,\upr)$ produces the sharper enclosure~\eqref{eq:Icert}. From this point on, we keep $\upr$ as an interval parameter in~\eqref{eq:Icert}, so every integration encloses the true vortex.

\paragraph{Step 4: control the tail without identifying the leading constant.}
Finally, we need explicit bounds as $r\to\infty$, since the spectral and FGR arguments involve integrals over $(0,\infty)$. Lemma~\ref{lem:Ua} identifies the modified-Bessel asymptotics of $(U,a)$ and hence the correct decay rate, but it does not give a certified enclosure of the leading coefficient. For our purposes, a purely qualitative estimate (e.g.
$O(r^{-1/2}e^{-r})$) is not enough, because the later Volterra and tail estimates need explicit constants.

Lemma~\ref{lem_1-U2} avoids this issue by building explicit upper and lower comparison profiles. The code checks the required inequalities at  $\rfour=4$ in~\eqref{app_num_condition_a6}; the differential inequalities for the comparison profiles then prevent any subsequent crossings and extend the bounds analytically to all $r>4$. In this way, four inequalities at $r=4$ prove explicit exponential bounds on the entire tail $1-U(r)$ for all~$r\ge4$. In   Corollary~\ref{cor:vortex_tail_bridge} this then leads to pointwise and integral estimates for $1-U^2$, which are used repeatedly later in the spectral comparison, see the Volterra constructions, eigenvalue counting, and the FGR tail estimates.

\subsection{The spectral properties of \texorpdfstring{$\mathcal L_1$ and
$\mathcal L_2$}{L1 and L2}}

With the vortex profile under quantitative control, we turn to the two scalar Schr\"odinger operators $\mathcal L_1$ and~$\mathcal L_2$ whose coefficients derive from $U$ and~$a$. The supersymmetric switch~\eqref{eq:lemSd1} reduces us to the following two spectral questions about these operators:
\begin{itemize}
\item is the threshold of the continuous spectrum resonant?
\item how many eigenvalues lie in the gap $(0,1)$?
\end{itemize}
The first controls the low-frequency behavior of the distorted Fourier basis, while the second precedes any discussion of a FGR.

For the Schr\"odinger operator $\mathcal L_1$, the key input is a uniform positivity bound for its potential above the threshold~$1$. We prove this on the whole half-line by combining three regions, in the same spirit as before: near the origin, a compact middle interval, and the tail. The argument is analytic on $(0,\sqrt6)$  and on $(4,\infty)$, where the explicit tail bounds from Lemma~\ref{lem_1-U2} apply. The only place where we use numerics is the remaining compact interval $[\sqrt6,4]$, see Lemma~\ref{lem_appV1g1}. Putting the three pieces together, positivity of the quadratic form rules out eigenvalues below~$1$. A routine integration-by-parts argument shows that there is no solution that is subordinate at both endpoints at  energy~$1$, and hence there is no threshold resonance.

The Schr\"odinger operator $\mathcal L_2$ is more delicate.
At the threshold~$1$, the resonance question becomes a matching problem. One distinguished solution is selected by regularity at the origin, i.e., by the Friedrichs condition\footnote{No artificial boundary condition is imposed at the origin; the domain of the Friedrichs extension determines the regular solution.}. Another distinguished solution is picked out by subordination at infinity. A threshold resonance would occur exactly when these two solutions are linearly dependent.

We initialize the regular solution~$\Phi_{2,0}^{(0)}$ at $\rorigin=0.1$ using the  Frobenius approximation from Lemma~\ref{lem:Acheck}. Coming from~$\infty$, we use the vortex tail bounds to rewrite the threshold equation as a Volterra equation. The resulting Volterra operator is a contraction on $[16,\infty)$, which yields intervals containing the subordinate solution~$\Phi_{2,\infty}^{(0)}$ and its derivative at $R=16$, see \eqref{eq:threshold_wprime_volterra} and~\eqref{eq:threshold_infinity_Cauchy_box}.

CAPD then propagates $\Phi_{2,0}^{(0)}$ forward from $\rorigin=0.1$ and propagates $\Phi_{2,\infty}^{(0)}$ backward from $R=16$ to the matching point $r_{\rm m}=10$. The Wronskian at $r_{\rm m}$ vanishes if and only if the two solutions are linearly dependent. Since the interval in~\eqref{noresonance_wronskian} stays strictly away from zero, Lemma~\ref{Lemspec0} is valid. 

Turning to the discrete spectrum, 
Proposition~\ref{theoLT2} shows that $\mathcal L_2$ has at most one eigenvalue in $(0,1)$.  The proof applies Set\^o's two-dimensional bound~\cite{seto} to
\[
 \mathcal L_2-1=-\Delta_r-(1-U^2),
\]
where the attractive potential is $1-U^2$. Its total mass is known exactly from~\eqref{eq:vortex_mass}, so the remaining task is to bound the logarithmic double integral~\eqref{eq:seto_Lambda_definition}.

To estimate this integral, we split the domain into three parts: a compact region, a neighborhood of the origin, and the tail. On the compact region, validated enclosures of the vortex give cellwise bounds for the integrand\footnote{A ``cell'' is one subinterval in the 1D partition; each pair of cells $I_i\times I_j$ forms a rectangle in the $(r,s)$-plane for the double Riemann sum.}. We integrate the logarithmic kernel over each rectangle, using an exact formula on diagonal cells to handle the $r=s$ singularity. Near the origin we use the certified Frobenius bounds for $U$, and on the tail we rely on the explicit exponential bounds from Corollary~\ref{cor:vortex_tail_bridge}. The resulting upper bound falls strictly below the threshold in Set\^o's inequality~\eqref{eq:seto_eigenvalue_bound}. Since the eigenvalue count is an integer, this proves that $\mathcal L_2$ has at most one eigenvalue in $(0,1)$.

\subsection{The internal mode and the resonant distorted Fourier basis functions}

Having shown that $\mathcal L_2$ has at most one eigenvalue in $(0,1)$, we now prove that such an eigenvalue exists and enclose it sharply. We also need more than the eigenpair itself: the FGR coefficients are half-line integrals that involve the internal mode and the distorted Fourier basis at the resonant energy.

If the internal mode oscillates at frequency~$\lambda$, then its quadratic self-interaction has a component at frequency~$2\lambda$. This matches the continuous spectral mode with
\[
 \sqrt{1+k_\lambda^2}=2\lambda,
 \qquad
 k_\lambda:=\sqrt{4\lambda^2-1},
\]
so the relevant spectral energy is $1+k_\lambda^2=4\lambda^2$. Accordingly, Section~\ref{sec:internal_mode_dft_data} (i) locates $\lambda^2$, (ii) constructs a certified enclosure of the associated eigenfunction, and (iii) computes certified enclosures of the regular distorted Fourier basis elements $\Phi_1,\Phi_2$ from~\eqref{eq:Phi12} at this energy.

For a trial value $\mu$, let $\psi_0(\cdot,\mu)$ denote the regular solution of~\eqref{eq:evalmu} normalized at the origin. This solution exists for every $\mu$, but it belongs to $L^2_{rdr}$ (equivalently, it decays at infinity) only when $\mu$ is an eigenvalue. Once we have enclosed $\mu=\lambda^2$, the normalized eigenfunction~$\psi$ is just a scalar multiple of $\psi_0(\cdot,\lambda^2)$.

The  equation for the internal eigenfunction and the equations defining $\Phi_1,\Phi_2$ are all regular-singular at $r=0$. After factoring out the leading indicial behavior, the remaining Frobenius factors are analytic in $x=r^2$. Unlike the vortex ODE, the coefficients of these equations depend on $U$ and~$a$. Substituting the vortex series produces the triangular recurrences in~\eqref{eq:certified_frob_recurrences}, with convolution terms coming from the vortex coefficients. As a result, computing a long truncated series is not enough by itself: we also need a certified bound for the infinite coefficient tail in order to rigorously approximate  the functions and their  derivatives at $\rorigin=0.1$.

The purely analytical Lemma~\ref{lem:capd_origin_frobenius}, which is then combined with the C++ code in 
 Lemma~\ref{lem:Acheck},  provides exactly this. If we truncate a Frobenius series at degree~$M$, the code computes coefficients up to degree $M+K$ and verifies the geometric decay test
\[
 A_{M+m}\le A_M\rho^m,
 \qquad 1\le m\le K.
\]
The extra $K$ coefficients are used only to bootstrap an analytic induction: after taking absolute values, the recurrence yields a monotone majorant. The code verifies the finite checks in~\eqref{eq:certified_frob_finite_interval_check} and the bootstrap constants $\beta_\alpha<1$ in~\eqref{eq:certified_frob_bootstrap_constants}. The induction then extends the tail bound
\[
 a_j^\alpha\le T\rho^{j-M}
\]
to all $j>M$; see \eqref{eq:frob_tail_bootstrap}--\eqref{eq:frob_geometric_tail_bound}. Summing the geometric tail gives the explicit remainder estimates~\eqref{eq:value} and~\eqref{eq:deriv}. Adding these remainder bounds to the truncated polynomials yields rigorous intervals for the exact values and the derivatives at $\rorigin=0.1$.
We apply the same  mechanism uniformly to $\psi_0(\cdot,\mu)$, to $\Phi_1,\Phi_2$ (for the energies $E\in4\Lambda_{\rm FGR}$), and to the regular threshold solution used in the previous subsection.

We locate the eigenvalue~$\lambda^2$ by matching a solution that is regular at the origin to a solution that decays at infinity. The regular solution starts from the Frobenius enclosure at $\rorigin=0.1$. For each $\mu$ in an initial interval near the true value of~$\lambda^2$, the decaying solution $\psi_{\infty,\mu}$ is normalized by
\[
 \psi_{\infty,\mu}(r)=K_0\bigl(\sqrt{1-\mu}\,r\bigr)\bigl(1+o(1)\bigr),
 \qquad r\to\infty.
\]
Using the explicit vortex tail, the Volterra equation~\eqref{eq:internal_decaying_volterra} becomes contractive and produces rigorous intervals for $\psi_{\infty,\mu}(16)$ and $\psi_{\infty,\mu}'(16)$. These intervals depend on $K_0$ and $K_1$, so the code encloses the Bessel functions themselves via the integral representation~\eqref{eq:internal_Bessel_integral_representation}, finite upper and lower Riemann sums, and the analytic remainder~\eqref{eq:internal_Bessel_quadrature_tail}. This is to say that no floating-point Bessel evaluation enters.

CAPD then propagates the regular solution forward from $\rorigin=0.1$ and propagates the decaying solution backward from $R=16$ to  $r_{\rm m}=10$. Throughout, both the shooting slope $\upr$ and the trial energy $\mu$ are treated as interval parameters. The Wronskian at $r_{\rm m}$ vanishes exactly when the regular solution at $r=0$ also decays, i.e., when $\mu$ is an eigenvalue. Via iterated bisection the code shrinks the initial trial interval to $\Lambda_{\rm FGR}$. The bisection terminates when it becomes inconclusive.  The opposite  signs of the Wronskian at the endpoints of $\Lambda_{\rm FGR}$ then imply that
\[
 \lambda^2\in\Lambda_{\rm FGR}.
\]
Uniqueness comes from Proposition~\ref{theoLT2}.

For the FGR tail integral, it is not enough to know that the eigenfunction $\psi_0$ decays exponentially: we need an explicit integrable majorant for both $\psi_0$ and $\psi_0'$. Lemma~\ref{lem:psi_K0} provides such a bound for $\psi_0$ via a certified  comparison at one point and a maximum principle argument, and Lemma~\ref{lem:capd_psi_prime_tail} converts it into a bound for $\psi_0'$ using the eigenvalue ODE. With
\[
 \kappa_\lambda:=\sqrt{1-\delta_0-\lambda^2},
 \qquad \delta_0=0.0005,
\]
we obtain the explicit estimates
\[
 0<\psi_0(r)\le1.4K_0(\kappa_\lambda r),
 \qquad
 |\psi_0'(r)|\le1.4K_0(\kappa_\lambda r),
 \qquad r\ge16.
\]
These are the bounds used to control the contribution of $(16,\infty)$ in the FGR integrals.

\subsection{Certification of the Fermi Golden Rule}

Section~\ref{sec_NumVer_FGR} combines the certified vortex profile, the internal eigenpair, and the distorted Fourier basis functions at the resonant energy to finally verify the FGR condition. In our formulation, we must show that the distorted Fourier transform of the quadratic interaction $\bmQ_s(\bmY_i,\bmY_j)$ in~\eqref{eq:bmQsdef} is nonzero at the frequency~$k_\lambda$. We certify this nonvanishing for the three interactions that produce the coefficients $D_1,D_2,D_{12}$ in~\eqref{FGRODE14}, which proves Proposition~\ref{prop:FGR1}.

To carry out this check, we need rigorous  enclosures of all pieces of the distorted Fourier kernel. The Weyl solutions $\Phi_j$ normalized at $0$ determine, via the supersymmetric switch, the matrix columns $\upi_j$ in~\eqref{FGR_num_app_upi}. The kernel also depends on the connection coefficients $a_j$, which are computed from the outgoing Weyl solutions $\Psi_j$ at infinity. These are characterized by
\[
 k^{1/2}e^{-ikr}\Psi_j(r,k)\longrightarrow1,
 \qquad r\to\infty.
\]
The Wronskian identity~\eqref{def:a} writes $a_j$ in terms of $\Phi_j$ and $\Psi_j$. Applying the supersymmetric map to $\Psi_j$ gives the columns $\upsi_j$ in~\eqref{FGR_num_app_upsi_2}, which are also used to control the tail of the FGR integrals. Together, $\upi_j$ and $a_j$ determine the kernel in Proposition~\ref{propdFT}.

Because $\Psi_j$ are defined by their behavior at infinity, we first turn the outgoing condition into numerically verifiable  Cauchy data at a finite radius. Lemma~\ref{lem:capd_outgoing_start} does this at $R=16$: starting from a Poincar\'e series ansatz of order $N=10$, we compute the ODE residual and correct it using a Volterra equation whose kernel is controlled by the vortex tail bounds. A contraction argument then produces rigorous intervals for $\Psi_j(16)$ and $\Psi_j'(16)$, uniformly for $\mu\in\Lambda_{\rm FGR}$. This also gives uniform bounds for the columns $\upsi_j$ on $r\ge16$. CAPD then propagates these intervals of initial data back to the matching radius, where we compute the Wronskians that determine the coefficients~$a_j$.

With all required functions enclosed, we evaluate the radial integrals defining the FGR coefficients. As before, we divide the half-line into three regions. The main contribution comes from $[0.1,16]$, where the code uses interval quadrature to obtain the certified enclosures in~\eqref{numerics_fgr1}. The remaining pieces, $(0,0.1)$ and $(16,\infty)$, are treated as explicit error terms.

Near the origin, Lemma~\ref{lem:capd_fgr_origin} combines certified small-$r$ bounds for the internal mode and for the regular distorted Fourier columns to produce the error bound $B_{\rm origin}$. On the tail, the bounds for $\psi_0,\psi_0'$ and for $\upsi_j$ give the pointwise estimate~\eqref{eq:fgr_tail_pointwise_bound}; the remaining Bessel integral is controlled analytically. Outward-rounded evaluation, uniform for $\mu\in\Lambda_{\rm FGR}$, yields the tail bound $B_{\rm tail}$ in~\eqref{eq:Btail_definition}--\eqref{FGR_rg15_numerics}. Only after all bounds have been verified uniformly on $\Lambda_{\rm FGR}$ do we specialize to $\mu=\lambda^2$.

To obtain a lower bound for the squared norm of the vector integral, we combine the main contribution and the two error terms before taking norms. The triangle inequality in~\eqref{eq:Dhat_triangle_ineq} yields strictly positive lower bounds for the auxiliary quantities $\widehat D_\alpha^{(0)}$ in~\eqref{eq:Dhat_lower_bounds}.

Finally, Remark~\ref{rem:fgr_code_normalization} relates $\widehat D_\alpha^{(0)}$ to the coefficients $D_\alpha$ in~\eqref{FGRODE14}; they differ by an explicit negative factor. The positivity of~\eqref{eq:Dhat_lower_bounds} therefore implies
\[
 D_1<0,\qquad D_2<0,\qquad D_{12}<0.
\]
Via~\eqref{FGRODErho}, this yields $\Gamma_0,\Gamma_1>0$ in the differential inequality for $|Z_1|^2+|Z_2|^2$, i.e., the FGR nondegeneracy used to obtain nonlinear damping of the internal modes.

%
%
%
%
%
%
%
%
%
%
%
%
%
%
%
%
%
%
%
%
%
%
%
%
%
%
%
%
%
%
%
%
%
%
%
%

\section{Spectral Theory and Distorted Fourier Transform} \label{sec:spectral_fourier}

In this section, we describe the spectral and distorted Fourier theory for the linearized operator obtained by considering corotational perturbations around the degree-one vortex solution in the self-dual case:
\begin{equation}\label{eq:ML}
    \bfM := \begin{pmatrix} \bfL & 0 \\ 0 & \bfL \end{pmatrix}, 
    \quad  
    \bfL := \begin{pmatrix} L_1 & -2U' \\ -2U' & L_2 \end{pmatrix},
\end{equation}
where $L_1, L_2$ are given by \begin{align*}
    L_1 := -\Delta + \frac{(1-a_\theta)^2}{r^2} - \frac{\pr a_\theta}{r} + U^2, \qquad L_2 := -\Delta + \frac{1}{r^2} + U^2.
\end{align*}

\subsection{Domains, super-symmetry and spectra}

\begin{lem}\label{lem:2.1}
    Define $\bfL$ as a symmetric operator on $L_{\mathrm{rad}}^2(\R^2)\times L_{\mathrm{rad}}^2(\R^2)$ with domain $\calD_0\times\calD_0$, where 
    \[\calD_0:=\{f\in C_{\mathrm{rad}}^2(\R^2)\:|\: f=0 \text{\ \ near the origin and near infinity }\}\] 
    Then $\bfL$ is essentially selfadjoint, and its closure is selfadjoint with domain $\calD\times\calD$, where $\calD$ is defined in~\eqref{defcalD}. 
\end{lem}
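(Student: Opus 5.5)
We reduce $\bfL$ to the half-line, split it into a diagonal part and a bounded perturbation, and use limit point/limit circle theory for the diagonal part.

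\emph{Step 1: reduction to the half-line.} Passing to $g=r^{1/2}f$ identifies $L^2_{\mathrm{rad}}(\R^2)$ (with measure $r\,dr$) unitarily with $L^2((0,\infty),dr)$. Under this map $-\Delta_{\mathrm{rad}}+\frac{m^2}{r^2}$ becomes $-\partial_r^2+\frac{m^2-1/4}{r^2}$. We write
\[
\bfL=\bfL_0+\bfV,\qquad \bfL_0:=\operatorname{diag}\Bigl(-\Delta+\tfrac{1}{r^2},\,-\Delta+\tfrac1{r^2}\Bigr),
\]
where $\bfV$ collects everything else: the multiplication entries $\frac{(1-a)^2-1}{r^2}-\frac{a'}{r}+U^2$ and $U^2$ on the diagonal, and the off-diagonal entries $-2U'$.

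\emph{Step 2: $\bfV$ is bounded.} Using the asymptotics \eqref{asympt_U_0_smooth}, namely $a\approx \mathfrak d r^2$ near $0$, we get $\frac{(1-a)^2-1}{r^2}=\frac{-2a+a^2}{r^2}$ bounded near $0$. Likewise $a'/r$ is bounded near $0$. Near infinity, $a\to1$ and $U\to1$ exponentially, so all these coefficients are bounded there too. $U'$ is bounded and smooth everywhere. Hence $\bfV$ is a bounded symmetric multiplication operator on $L^2\times L^2$. This uses only the smoothness and the asymptotics of $(U,a)$ from \eqref{eq_vortex_smooth}, in particular $a'(r)=\frac r2(1-U^2)$ from \eqref{equ:onevortex_ODEs_smooth}.

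\emph{Step 3: the diagonal part.} By Kato--Rellich, or simply because bounded symmetric perturbations preserve essential selfadjointness and domains, it suffices to treat the scalar operator $h=-\partial_r^2+\frac{3}{4r^2}$ on $C_c^2(0,\infty)$ in $L^2(dr)$.

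\begin{itemize}
\item At $r=0$ the solutions of $h g=0$ are $r^{3/2}$ and $r^{-1/2}$. Since $r^{-1/2}\notin L^2$ near $0$, we are in the limit point case at $0$. Equivalently, this is the classical fact that $-\partial_r^2+c/r^2$ is limit point at $0$ iff $c\ge 3/4$.
\item At $\infty$ the operator is limit point because the potential is bounded below there (Weyl's criterion).
\end{itemize}

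Hence $h$ is essentially selfadjoint on $C_c^\infty$, and therefore also on the $C^2_c$ functions corresponding to $\calD_0$. Its closure is the maximal operator, with domain $\{g\in L^2:\ -g''+\frac{3}{4r^2}g\in L^2\}$, the derivative taken in the distributional sense. Translated back via $g=r^{1/2}f$, this is exactly $\calD$ in \eqref{defcalD}.

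\emph{Step 4: conclusion.} Essential selfadjointness is stable under bounded symmetric perturbations, and $\overline{\bfL_0+\bfV}=\overline{\bfL_0}+\bfV$ with the same domain. So $\bfL$ is essentially selfadjoint with closure domain $\calD\times\calD$.

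The main point to check carefully is Step 3. One must make sure that the maximal domain of the limit point operator is exactly $\calD$ as written, with no additional boundary condition. This follows because, in the limit point case at both ends, the maximal and minimal operators coincide and the deficiency indices are $(0,0)$. Whether the derivative is taken in the weak sense on $(0,\infty)$ needs no separate treatment beyond the standard Sturm--Liouville framework, e.g. \cite{GNZ}.
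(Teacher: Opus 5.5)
Your proposal is correct and follows essentially the same route as the paper: split off $\mathrm{diag}(-\Delta+r^{-2},-\Delta+r^{-2})$ plus a bounded symmetric matrix potential, conjugate by $r^{1/2}$ to $-\partial_r^2+\frac{3}{4r^2}$, and use the limit point property at both endpoints to get essential selfadjointness with the maximal domain. You then conclude by Kato--Rellich, as the paper does. The only differences are minor: the paper reads off the limit point property from the explicit zero-energy solutions $r^{3/2}$ and $r^{-1/2}$, whereas you use semiboundedness of the potential at infinity, and your remarks on passing from $C_c^\infty$ to the $C^2$ domain and on identifying the closure with the maximal operator spell out details the paper leaves implicit.
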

\begin{proof}
We can write $\bfL=\bfL_0 + \mathbf{U}$ where
$\bfL_0 := \mathrm{diag}(-\Delta+\frac{1}{r^2}, -\Delta+\frac{1}{r^2})$
and $\mathbf{U} \in L^\infty(\R^2)$ is a symmetric matrix. 
Define the unitary transformation $S:L^2((0,\infty),r\,dr)\to L^2((0,\infty),dr)$ as the map $f(r)\mapsto r^{\frac12} f(r)$.
On the domain $\calD_0$,
\[S(-\Delta+r^{-2} )S^{-1}=-\frac{d^2}{dr^2}+\frac{3}{4r^2}=:L_0\]
Since $L_0 (r^{\frac12\pm1})=0$, $L_0$ is limit point at both endpoints as an operator\footnote{For a comprehensive introduction to Sturm-Liouville theory, see the treatise~\cite{GNZ}.} 
on $L^2((0,\infty),dr)$. Therefore, it is essentially selfadjoint by \cite[Theorem X.7]{RS} and \cite[Theorem 9.6]{teschl}. The domain of $L_0$ is the range of its resolvent at energy $-1$, say, which is 
\[S\calD=\Big\{ g\in L^2((0,\infty),dr)\:|\: -g''(r)+\frac{3}{4r^2} g(r)\in L^2((0,\infty),dr)\Big\} 
\]
By the Kato-Rellich theorem~\cite[Theorem 6.4]{teschl},  $\bfL$ is selfadjoint with the same domain as $\bfL_0$ in the radial subspace of $L^2\times L^2(\R^2)$.  
\end{proof}

A significant feature of working in the critical case $\lambda=1$ is the following super-symmetric factorization
\begin{equation}\label{eq:LBstarB}
 \bfL = \calB^\ast \calB ,
\end{equation}
as an identity on $\calD_0\times\calD_0$ where
\begin{equation}
 \calB = \begin{pmatrix} \pr - \frac{1-a_\theta}{r} & U \\ U & \pr + \frac{1}{r} \end{pmatrix}, \quad \calB^\ast = \begin{pmatrix} -\pr - \frac{2-a_\theta}{r} & U \\ U & -\pr \end{pmatrix}.
\end{equation}
The super-symmetric partner operator is given by
\begin{equation}\label{eq:lemSd1}
 \calL := \calB \calB^\ast = \begin{pmatrix} \calL_1 & 0 \\ 0 & \calL_2 \end{pmatrix}
\end{equation}
with $\calL_j = - \Delta + V_j$, $1 \leq j \leq 2$, and 
\begin{equation}\label{eq:lemSd2}
 \begin{aligned}
  V_1 &:= \frac{1}{r^2} + \frac{\partial_r a_\theta}{r} + \frac{(1-a_\theta)(3-a_\theta)}{r^2} + U^2 = \frac{(2-a_\theta)^2}{r^2} + \frac12 (1+U^2), \\
  V_2 &:= U^2.
 \end{aligned}
\end{equation}
We stress that $\mathcal{L}$ is diagonal, which is particularly convenient for constructing the distorted Fourier transform associated with $\bfL$. In fact, this reduces the original (coupled) matrix construction to two independent scalar transforms, one for each diagonal component of $\mathcal{L}$.

To rewrite these two-dimensional operators as self-adjoint one-dimensional operators on the half-line, we introduce the conjugated operators
\begin{equation}\label{eq:LtoH}
    \begin{aligned}
        \bfH &:= r^{\frac12} \cdot \bfM \cdot r^{-\frac12}, \\ 
        \calH &:= r^{\frac12} \cdot \calL \cdot r^{-\frac12}.
    \end{aligned}
\end{equation}
We also use the notation
\begin{equation}\label{eq:Hj}
    \bfH_j := r^{\frac12} \cdot L_j \cdot r^{-\frac12}, \qquad \calH_j := r^{\frac12} \cdot \calL_j \cdot r^{-\frac12}, \quad 1 \leq j \leq 2.
\end{equation}
Explicitly, we have the half-line operators in the ambient Hilbert space $L^2((0,\infty),dr)$
\begin{align}\label{eq:calHjdef1}
\begin{split}
    \calH_1 &= -\pr^2 - \frac{1}{4 r^2} + \frac{(2-a_\theta)^2}{r^2} + \frac12 (1+U^2), \\
    \calH_2 &= -\pr^2 - \frac{1}{4 r^2} + U^2,
\end{split}
\end{align}
initially defined on $\calD_0$. 
In analogy to Lemma~\ref{lem:2.1}, we will work with the following selfadjoint extensions of these operators.

\begin{lem}\label{lem:domains}
    The operator $\calH_1$ is a selfadjoint operator on the domain 
    \[
  \calD(\calH_1):=  \Big\{ f\in L^2((0,\infty),dr)\:|\: -f''(r)+\frac{15}{4r^2} f(r)\in L^2((0,\infty),dr)\Big\}
    \]
    and $\calH_2$ is selfadjoint on 
    $\calD(\calH_2)=r^{\frac12}H^2_{\rm{rad}}(\R^2)$. 
\end{lem}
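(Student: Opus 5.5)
The plan mirrors the proof of Lemma~\ref{lem:2.1}: split each operator into an explicit Bessel-type model operator plus a bounded perturbation, identify the model domain by limit-point/limit-circle analysis, and transfer selfadjointness with Kato--Rellich.

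\emph{The operator $\calH_1$.} Near $r=0$ we have $a_\theta(r)=O(r^2)$, so
\[
\frac{(2-a_\theta)^2}{r^2}=\frac{4}{r^2}+W(r),\qquad W(r)=\frac{-4a_\theta+a_\theta^2}{r^2}.
\]
The remainder $W$ is bounded on $(0,\infty)$. Near the origin this follows from $a_\theta\approx\mathfrak d r^2$. At infinity it follows because $a_\theta\to1$ and the $r^{-2}$ factor decays. We then write
\[
\calH_1=\Big(-\pr^2+\frac{15}{4r^2}\Big)+\Big(W+\tfrac12(1+U^2)\Big),
\]
where the second bracket is a bounded real multiplication operator. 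For the model operator $A_1:=-\pr^2+\frac{15}{4r^2}$, the homogeneous solutions are $r^{5/2}$ and $r^{-3/2}$. The solution $r^{-3/2}$ is not square integrable near $0$, and neither solution is square integrable near $\infty$. So $A_1$ is limit point at both endpoints and hence essentially selfadjoint on $\calD_0$ by \cite[Theorem X.7]{RS} and \cite[Theorem 9.6]{teschl}. Its closure has the maximal domain $\{f\in L^2: -f''+\frac{15}{4r^2}f\in L^2\}$, since in the limit point case the maximal and minimal operators coincide. The Kato--Rellich theorem, in the trivial case of a bounded symmetric perturbation, then gives selfadjointness of $\calH_1$ on the same domain.

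\emph{The operator $\calH_2$.} Here the model is $A_2:=-\pr^2-\frac{1}{4r^2}=S(-\Delta)S^{-1}$ acting on radial functions, with $S$ the unitary map $f\mapsto r^{1/2}f$ from the proof of Lemma~\ref{lem:2.1}. This case is limit circle at $0$: the solutions $r^{1/2}$ and $r^{1/2}\log r$ both lie in $L^2$ near the origin. So $A_2$ is not essentially selfadjoint on $\calD_0$, and we must choose an extension. We take $A_2:=S(-\Delta_{\rm rad})S^{-1}$ with domain $S H^2_{\rm rad}(\R^2)=r^{1/2}H^2_{\rm rad}(\R^2)$. This operator is selfadjoint because $-\Delta$ is selfadjoint on $H^2(\R^2)$ and commutes with rotations, so its restriction to the radial subspace is selfadjoint. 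We should check that it is the Friedrichs extension, i.e. that it selects the regular solution $r^{1/2}$, which corresponds to the constant function; this fits the paper's later use of the Friedrichs condition. Since $U^2$ is bounded, Kato--Rellich again gives selfadjointness of $\calH_2=A_2+U^2$ on $r^{1/2}H^2_{\rm rad}(\R^2)$.

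The main point requiring care is the $\calH_2$ case. Because the endpoint $0$ is limit circle, the statement is really a \emph{choice} of selfadjoint extension, and we must confirm it is the natural one. Concretely, elements of $r^{1/2}H^2_{\rm rad}$ behave like $c\,r^{1/2}+o(r^{1/2})$ near $0$, with no $r^{1/2}\log r$ component. This is consistent with $\calH_2=r^{1/2}\calL_2 r^{-1/2}$ and $\calL_2=-\Delta+U^2$ on $H^2(\R^2)$, which is exactly the operator whose ground state $\psi$ appears in Theorem~\ref{maintheo1}.
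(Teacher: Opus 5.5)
Your proposal is correct and follows essentially the same route as the paper: for $\calH_1$ you treat it as a bounded symmetric perturbation of $-\partial_r^2+\frac{15}{4r^2}$, which is limit point at both endpoints, and for $\calH_2$ you use its unitary equivalence with $-\Delta+U^2$ on $H^2_{\rm rad}(\R^2)$; the Friedrichs remark is a side comment, as in the paper, and is not needed for the lemma. One small slip: $r^{-3/2}$ \emph{is} square integrable near $\infty$, so the limit-point property there should be justified by $r^{5/2}\notin L^2$ near $\infty$ alone (a single non-$L^2$ solution suffices), and with that fix your conclusion stands.
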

\begin{proof}
Since
\[
  \calH_2=S(-\Delta+U^2)S^{-1}
\]
and $U^2\in L^\infty$, the operator $-\Delta+U^2$ is self-adjoint on
$H^2_{\rm rad}(\R^2)$. Hence $\calH_2$ is self-adjoint on
$r^{1/2}H^2_{\rm rad}(\R^2)$. Similarly, $\calH_1$ is a bounded symmetric
perturbation of $-\partial_r^2+15/(4r^2)$, which is limit-point at both
endpoints and has the domain stated above. Finally, the
$r^{1/2}\log r$ branch corresponds under $S^{-1}$ to $\log r$, which does not
belong to the form domain of $-\Delta+U^2$ because it has infinite local
Dirichlet energy near the origin in $\R^2$. Thus the Friedrichs extension
excludes this branch.
\end{proof}

The next proposition summarizes the basic spectral properties of $\calH_1$ and~$\calH_2$. Both operators have continuous spectrum $[1,\infty)$, and Proposition~\ref{propdFT} shows that this spectrum is purely absolutely continuous.

\begin{prop}\label{prop:lem_p2_t2}
     The operator $\mathcal{H}_1$ has no discrete spectrum and no threshold resonance. Moreover, the operator $\mathcal H_2$ has exactly one eigenvalue
in $(0,1)$ and no threshold resonance.
\end{prop}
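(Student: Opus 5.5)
The proof will treat $\calH_1$ analytically and $\calH_2$ by combining analysis with the certified computations outlined in Section~\ref{sec:overview}.

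\emph{The operator $\calH_1$.} By \eqref{eq:lemSd2}, $V_1-1=\frac{(2-a_\theta)^2}{r^2}-\frac12(1-U^2)$. Back in two dimensions, the quadratic form of $\calL_1-1$ is
\[
\int\bigl(|\nabla f|^2+(V_1-1)f^2\bigr).
\]
I will show this form is nonnegative, and in fact that $V_1-1>0$ pointwise. Since $a_\theta$ is increasing with $a_\theta\to1$, we have $(2-a_\theta)^2\ge1$, so it suffices to prove $\frac{1}{r^2}>\frac12(1-U^2)$.

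On $(0,\sqrt6)$ this follows analytically. By \eqref{equ:onevortex_ODEs_smooth}, $\partial_r a_\theta=\frac r2(1-U^2)\ge0$, and near the origin one has the bound $1-U^2\le1<2/r^2$ for $r<\sqrt2$. The remaining range up to $\sqrt6$ needs a slightly sharper argument. One option is to use the full $(2-a_\theta)^2/r^2$ term together with the monotonicity of $U$; another is to defer to the compact check.

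On $[\sqrt6,4]$ I will use the certified vortex enclosure (Lemma~\ref{lem_appV1g1}). On $[4,\infty)$ I will use the exponential tail bound of Lemma~\ref{lem_1-U2}.

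Positivity gives $\calH_1\ge1$, so there is no spectrum below $1$. For the threshold, suppose $f$ solves $\calH_1f=f$ and is subordinate at both ends. Then $f\sim r^{5/2}$ at $0$ and $f$ is bounded at $\infty$, with $f'\to0$ since the potential decays like $r^{-2}$. Multiply by $f$ and integrate by parts. The boundary terms vanish, and $\int|f'|^2+(V_1-1)|f|^2$, suitably weighted, equals $0$. Strict positivity of $V_1-1$ then forces $f\equiv0$, so there is no threshold resonance.

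\emph{The operator $\calH_2$: no threshold resonance.} I will take the regular solution $\Phi^{(0)}_{2,0}$ at energy $1$, which behaves like $r^{1/2}$ at the origin under the Friedrichs condition of Lemma~\ref{lem:domains}. I will start it at $r=0.1$ from the certified Frobenius bounds of Lemma~\ref{lem:Acheck}.

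At infinity I will select the subordinate solution $\Phi^{(0)}_{2,\infty}$. Since $1-U^2$ decays exponentially, this solution behaves like the bounded branch $r^{1/2}J_0$-type profile at $k=0$, namely $\sim r^{1/2}$; the dominant branch is $\sim r^{1/2}\log r$. I will construct it as the fixed point of a Volterra equation on $[16,\infty)$. Contraction comes from the explicit tail integrals of $1-U^2$ in Corollary~\ref{cor:vortex_tail_bridge}, and the fixed point gives interval Cauchy data at $r=16$.

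I will then propagate both solutions with CAPD to $r_{\rm m}=10$ and check that the Wronskian enclosure excludes $0$.

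\emph{The operator $\calH_2$: exactly one eigenvalue.} For the upper bound, I will apply Set\^o's two-dimensional eigenvalue bound to $-\Delta-(1-U^2)$. The mass $\int(1-U^2)\,r\,dr$ is known exactly from the Bogomolny flux identity, $\int B=2\pi$ with $B=\frac12(1-U^2)$. The logarithmic double integral will be bounded by splitting it into an origin piece (Frobenius bounds), a compact piece (certified cellwise bounds), and a tail piece. If the resulting bound is strictly below the threshold, there is at most one eigenvalue below $1$.

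For existence, the simplest route is a trial function. A quick analytic option in two dimensions is that any attractive potential with positive integral produces a bound state: take $f_\epsilon=\min(1,\ r^{-\epsilon})$-type test functions. Then $\int|\nabla f_\epsilon|^2\to0$ while $\int(1-U^2)f_\epsilon^2\to\int(1-U^2)>0$, so $\langle(\calL_2-1)f,f\rangle<0$. Alternatively, the certified Wronskian sign change on $\Lambda_{\rm FGR}$ both proves existence and gives the enclosure. Positivity of $\calL_2\ge0$ (since $U^2\ge0$ and there is no zero mode) places the eigenvalue in $(0,1)$.

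The main obstacle is the Set\^o estimate. The log-kernel integral must be bounded sharply enough to fall under the integer threshold, which requires careful handling of the diagonal singularity and the tail.
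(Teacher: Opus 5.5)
Your overall plan matches the paper's. You prove pointwise positivity of $V_1-1$ on $(0,\sqrt6)$, $[\sqrt6,4]$ and $(4,\infty)$, using the certified bound only on the compact piece. You exclude the $\mathcal H_1$ threshold by integrating by parts in $L^2(r\,dr)$. For the $\mathcal H_2$ threshold you match, at $r_{\rm m}=10$, the Frobenius solution started at $0.1$ with the Volterra solution at $16$. For the upper eigenvalue count you use Set\^o's bound together with the flux identity $\int_0^\infty(1-U^2)\,r\,dr=2$. Two steps are not closed as written.

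\emph{The interval $[\sqrt2,\sqrt6)$ for $\mathcal H_1$.} Replacing $(2-a)^2$ by $1$ discards a factor $4$ at $r=0$, which is why your analytic argument stops at $\sqrt2$. ``Monotonicity of $U$'' gives no quantitative lower bound on $U$ there. Even keeping the full $(2-a)^2$ term, the crude bounds $a\le r^2/4$ and $1-U^2\le1$ only reach $r\approx1.75$. The idea the paper uses is the comparison $U>a$ on $(0,\infty)$. Since $a<r^2/4$, at any common value $x=U(r)=a(r)$ one has
$(U-a)'(r)=\frac{1-x}{r}\bigl(x-\frac{r^2}{2}(1+x)\bigr)<0$. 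So $U-a$, which is positive near $0$ and near $\infty$, has no zeros. This gives $(2-a)^2>(2-U)^2$. The identity $\frac{2(2-x)^2}{1-x^2}-6=\frac{2(2x-1)^2}{1-x^2}$ then shows that $\frac{(2-U)^2}{r^2}\le\frac12(1-U^2)$ forces $r^2\ge6$. Your fallback of extending the interval computation down to $\sqrt2$ would work, since the margin there is large. It is, however, a computation that is not in the certificate.

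\emph{Existence of the eigenvalue of $\mathcal H_2$.} Your variational argument is a genuinely different, numerics-free route, and it suffices for the proposition as stated. The paper instead gets existence from the certified sign change of $D(\mu)$ in Lemma~\ref{lem:fgr_eigenvalue_box}. That route costs more, but it also yields the enclosure $\lambda^2\in\Lambda_{\rm FGR}$. This enclosure is needed for Theorem~\ref{maintheo1}(ii) and for every FGR computation, so the variational argument cannot replace the Wronskian computation in the paper overall.

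As written, your test function is not admissible. $\min(1,r^{-\epsilon})$ is not in $L^2(\mathbb R^2)$ for $\epsilon\le1$, so it is not in the form domain, and $\langle(\mathcal L_2-1)f,f\rangle$ is only formal. Two easy fixes:
\begin{itemize}
\item Multiply by a cutoff $\chi(r/R)$. The added Dirichlet energy tends to $0$ as $R\to\infty$ for fixed $\epsilon>0$.
\item Use the logarithmic cutoff equal to $1$ on $r\le R$ and to $\log(R^2/r)/\log R$ on $[R,R^2]$. Its Dirichlet energy is $2\pi/\log R$.
\end{itemize}
Your argument that the eigenvalue lies in $(0,1)$ rather than at $0$ is fine.
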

\begin{proof}
The absence of discrete spectrum and a threshold resonance for $\mathcal{H}_1$ are proved in Lemma~\ref{lem_appV1g1}. Lemma~\ref{Lemspec0} shows that $\mathcal{H}_2$ has no threshold resonance, and Proposition~\ref{theoLT2} proves that it has at most one eigenvalue (with multiplicity), while Lemma~\ref{lem:fgr_eigenvalue_box} locates an eigenvalue in the interval~$\Lambda_{\rm FGR}$. 
\end{proof}

We first state the domains and the super-symmetric identities
\[
 \bfL=\calB^*\calB,\qquad \calL=\calB\calB^*.
\]

For $\bmf\in\calD_0\times\calD_0$, integration by parts gives
\[
 \langle\bfL\bmf,\bmf\rangle=\|\calB\bmf\|^2.
\]
It follows that the form domain of $\bfL$ equals the domain $\calD_1^2$ of
the closure of $\calB$, which is initially defined on
$\calD_0\times\calD_0$; see Section~2.3 of Teschl's book~\cite{teschl}, in
particular Theorem~2.13 on Friedrichs extensions. Indeed, the graph norm of
$\calB$ is the norm defined by the quadratic form associated with $\bfL+1$.
Since $\bfL$ is essentially selfadjoint on $\calD_0\times\calD_0$, its
closure is its Friedrichs extension. From now on, $\calB$ denotes the
closure of the initially defined operator. The first representation theorem
for closed quadratic forms (see, e.g.,~\cite[Theorem~2.1, p.~322]{kato})
then gives
\[
 \bfL=\calB^*\calB
\]
as an identity of selfadjoint operators. In particular,
\[
 \langle\bfL\bmf,\bmg\rangle
 =\langle\calB\bmf,\calB\bmg\rangle
\]
for all $\bmf\in\calD(\bfL)$ and $\bmg\in\calD_1^2$.

Via integration by
parts and Hardy's inequality in $L^2((0,\infty),dr)$, we compute, with
$Y:=L^2((0,\infty),r\,dr)$,
\[
 \calD_1^2
 :=
 \Bigl\{(f_1,f_2)\in Y\times Y:
 r^{1/2}(f_1,f_2)\in
 H_0^1((0,\infty),dr)\times H_0^1((0,\infty),dr)\Bigr\}.
\]
On the other hand, the adjoint $\calB^*$ has domain
\[
 \calD_2^2
 :=
 \Bigl\{(g_1,g_2)\in Y\times Y:
 g_1'+2g_1/r\in Y,\quad g_2'\in Y\Bigr\}.
\]
Analogously, $\calL$ is selfadjoint on
\[
 \calD(\calL):=\calD(\calL_1)\times\calD(\calL_2),
\]
and
\[
 \langle\calL\bmf,\bmg\rangle
 =\langle\calB^*\bmf,\calB^*\bmg\rangle
\]
for all $\bmf\in\calD(\calL)$ and $\bmg\in\calD_2^2$. Hence
\[
 \calL=\calB\calB^*
\]
as an identity of selfadjoint operators. Here
$\calD(\calL_j)=r^{-1/2}\calD(\calH_j)$, where the latter domains are given
by Lemma~\ref{lem:domains}. In particular,
\[
 \calD(\bfL)\subset\calD_1^2,
 \qquad
 \calD(\calL)\subset\calD_2^2.
\]

We next show that $\ker(\calB)=\ker(\calB^*)=\{0\}$ for radial functions.
This also follows from the fact that $\calB$
is Fredholm of index zero, as argued in \cite[Proposition 5]{GusSigVort}
as a consequence of the results in \cite{Stu},
and the fact that $\calB^\ast (\phi_1,\phi_2) = 0$ implies $(-\Delta+U^2)\phi_2=0$, and hence  $\phi_2=0=\phi_1$. 
We give here an independent, more direct proof that $\ker(\calB)=\ker(\calB^*)=\{0\} $
for radial functions.

\begin{lem}\label{lemB}
    The following holds: \begin{enumerate}
        \item Let $(y_1,y_2)\in \calD_1^2$ with $(y_1,y_2)\in \ker(\calB)$. Then, $(y_1,y_2)\equiv(0,0)$.
        \item Let $(y_1,y_2)\in \calD_2^2$ with $(y_1,y_2)\in \ker(\calB^*)$. Then, $(y_1,y_2)\equiv(0,0)$.
    \end{enumerate}
\end{lem}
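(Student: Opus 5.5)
The plan is to solve the first-order systems $\calB y=0$ and $\calB^* y=0$ explicitly, or at least qualitatively, as ODE systems on $(0,\infty)$. Then we check that no nontrivial solution satisfies both the integrability at infinity and the regularity at the origin encoded in $\calD_1^2$ and $\calD_2^2$. The key structural input is the Bogomolny equations \eqref{equ:onevortex_ODEs_smooth}, $U'=\frac{1-a}{r}U$ and $a'=\frac r2(1-U^2)$, which allow us to eliminate one unknown.

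\emph{Part (2).} Here $\calB^* y=0$ reads $-y_1'-\frac{2-a}{r}y_1+Uy_2=0$ and $Uy_1-y_2'=0$. Since $y\in\calD_2^2$, the combinations $y_1'+2y_1/r$ and $y_2'$ lie in $Y$, so the system holds in the weak sense and $y$ is locally absolutely continuous on $(0,\infty)$. Solving the second equation for $y_1=y_2'/U$ and substituting gives
\[
-(y_2'/U)'-\frac{2-a}{r}\,\frac{y_2'}{U}+Uy_2=0.
\]
Using $U'/U=(1-a)/r$, this simplifies to $-y_2''-\frac1r y_2'+U^2y_2=0$, that is, $(-\Delta+U^2)y_2=0$ in the radial sense. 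This matches the remark preceding the lemma. Next multiply by $y_2\,r$ and integrate over $(\epsilon,R)$:
\[
\int_\epsilon^R\bigl(|y_2'|^2+U^2y_2^2\bigr)r\,dr=\bigl[r y_2' y_2\bigr]_\epsilon^R .
\]
The boundary terms have to be shown to vanish along suitable sequences. Because $y_2, y_2'\in Y$, the product $r y_2y_2'$ is integrable with respect to $dr/r$, so $\liminf$ of $|ry_2y_2'|$ is $0$ at both ends. Hence $\int(|y_2'|^2+U^2y_2^2)r\,dr=0$. Since $U>0$, this forces $y_2=0$, and then $y_1=y_2'/U=0$.

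\emph{Part (1).} Here $\calB y=0$ reads $y_1'-\frac{1-a}{r}y_1+Uy_2=0$ and $Uy_1+y_2'+\frac{y_2}{r}=0$. Set $y_1=Uw$. Using $U'=\frac{1-a}rU$, the first equation becomes $Uw'+Uy_2=0$, so $y_2=-w'$. The second equation then becomes $U^2w-w''-\frac1r w'=0$, i.e.\ $(-\Delta+U^2)w=0$ radially. This is the same equation as in part (2), but now for $w=y_1/U$, whose behavior at the origin is worse since $U\sim \uprz r$. The membership $r^{1/2}y_j\in H^1_0$ together with Hardy's inequality gives $y_j/r\in Y$, hence $y_2=-w'\in Y$ with $y_2/r\in Y$. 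Also $U^2w=Uy_1\in Y$ near infinity. The two-dimensional solution space of $(-\Delta+U^2)w=0$ is spanned near $0$ by a regular solution $w_0\sim 1$ and a singular one $\sim\log r$. At infinity each solution grows like $I_0(r)$ or decays like $K_0(r)$ since $U^2\to1$ exponentially.

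The conditions at the two ends exclude each branch as follows. At infinity, $y_2=-w'\in Y$ excludes the growing branch, so $w$ must be the decaying one. At the origin, $w'=-y_2$ with $y_2/r\in Y$ excludes $\log r$, since then $w'\sim1/r$ and $y_2/r\sim r^{-2}$ is not in $L^2(r\,dr)$ near $0$. So $w$ would be an $H^1$ solution of $(-\Delta+U^2)w=0$, and the same energy identity as in part (2) gives
\[
\int_0^\infty\bigl(|w'|^2+U^2w^2\bigr)r\,dr=0 .
\]
Thus $w=0$, and so $y=0$.

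The main obstacle is justifying the vanishing of the boundary terms $[r w w']$ in part (1). Membership of $w$ itself in $L^2(r\,dr)$ near $0$ is not directly given, since we only know $Uw\in Y$. I would handle this through the ODE asymptotics rather than abstract integrability: classify the two solution behaviors at each endpoint, and show that the domain conditions select the regular branch at $0$ and the $K_0$-type branch at $\infty$, where the boundary terms visibly vanish.
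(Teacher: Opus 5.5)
Your proof is correct, but it takes a different route from the paper's.

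\textbf{The paper's route.} The paper never leaves the first-order systems. It multiplies the two equations of $\mathcal{B}y=0$ (resp.\ $\mathcal{B}^*y=0$) by $r^2\overline{y_1}$ and $r^2\overline{y_2}$, subtracts, and takes real parts, so that the $U$-coupling cancels. Integration by parts then leaves $\int|y_1|^2r\,dr+\int b|y_1|^2r^2\,dr=0$ (resp.\ $\int|y_2|^2r\,dr+\int b|y_1|^2r^2\,dr=0$), which closes because $b=(1-a)/r\ge0$. The only analytic inputs are exponential decay at infinity and $ry_j(r)\to0$ at the origin; in part (2) the latter comes from the $F=r^2y_1$ argument.

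\textbf{Your route.} You use the Bogomolny relation $U'=bU$ to factor both kernel problems through the scalar operator $-\Delta+U^2$. In part (1) your substitution says precisely $y=\mathcal{B}^*(0,w)$, so that $\mathcal{B}y=\mathcal{B}\mathcal{B}^*(0,w)=(0,(-\Delta+U^2)w)$. In part (2) you eliminate $y_1=y_2'/U$. Positivity then comes from $U>0$ rather than from $b\ge0$.

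\textbf{Comparison.} Your version makes transparent why both kernels are trivial: each reduces to the absence of a finite-energy zero-energy solution of $\mathcal{L}_2$, in line with the remark preceding the lemma. The cost is dividing by $U$ and classifying the regular and $\log r$ branches at $0$ and the $I_0$ and $K_0$ branches at $\infty$ of a second-order equation. The paper's identity avoids second-order asymptotics altogether and never divides by $U$.

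\textbf{Two small corrections.} First, in part (2), $y_2,y_2'\in Y$ only gives $\int|y_2y_2'|\,r\,dr<\infty$, i.e.\ $ry_2y_2'\in L^1(dr)$, not $L^1(dr/r)$. This yields $\liminf_{r\to\infty}|ry_2y_2'|=0$ but says nothing at the origin; indeed $L^1(dr/r)$ can fail near $0$ for functions with $y_2,y_2'\in Y$. At the origin, argue as in your part (1). There $y_2'\in Y$ rules out the $\log r$ branch of $(-\Delta+U^2)y_2=0$, so $y_2$ is bounded with $y_2'=O(r)$, and the boundary term vanishes.

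Second, in part (1) the obstacle you flag is smaller than you think. Since $U(r)\ge cr$ near $0$, Hardy's bound $y_1/r\in Y$ already gives $w=y_1/U\in Y$ near the origin. Together with $w'=-y_2\in Y$, this places $w$ in $H^1_{\rm rad}$ directly.
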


\begin{proof}
Let us prove $(1)$ first. The equation $\calB(y_1,y_2)=0$ reads
\begin{equation}\begin{aligned}\label{eq:kerB_zero_proof}
    & y_1'(r) - b(r) y_1(r) + U(r)y_2(r)=0,\\
    & y_2'(r)+\dfrac{1}{r}y_2(r) + U(r)y_1(r)=0.
\end{aligned}\end{equation}
Since $U(r)\to 1$ and $b(r)\to 0$ exponentially as $r\to+\infty$, the associated limiting system at infinity is
\begin{align*}
    y_1'(r)+y_2(r)=0, \qquad y_2'(r)+\dfrac{1}{r}y_2(r)+y_1(r)=0.
\end{align*}
A fundamental system of solutions for the above system is explicitly given by
\begin{align*}
    (y_1,y_2)=\big(I_0(r),-I_1(r)\big), \qquad (y_1,y_2)=\big(K_0(r),K_1(r)\big).
\end{align*}
Moreover, as $r\to+\infty$ one has
\begin{align*}
    & I_0(r),\, I_1(r)=O(e^{r}),\\
    & K_0(r),\, K_1(r)=O(e^{-r}).
\end{align*}
Because $(y_1,y_2)\in \calD_1^2$, the growing mode is excluded. A standard perturbation argument then yields the exponential decay
\begin{align*}
    (y_1(r),y_2(r))=\big( O(e^{-r}),\, O(e^{-r})\big) \quad \text{as } r\to+\infty.
\end{align*}
We now multiply the first equation in \eqref{eq:kerB_zero_proof} by
$r^2\overline{y_1}$ and the second by $r^2\overline{y_2}$, subtract the
resulting identities, take real parts, and integrate over $(0,\infty)$.
After integration by parts we obtain
\[
 0=-\int_0^\infty |y_1(r)|^2r\,dr
   -\int_0^\infty b(r)|y_1(r)|^2r^2\,dr.
\]
When integrating by parts, the boundary term at $+\infty$ vanishes thanks to the exponential decay, while the boundary term at $0$ is zero since $r^{1/2}y_j\in H^1_0$ implies $r\,y_j(r)\to 0$ as $r\to 0^+$. Finally, since $b(r)\ge 0$ for all $r>0$, both terms on the right are nonpositive, hence they must vanish. In particular $y_1\equiv 0$, and \eqref{eq:kerB_zero_proof} gives $y_2\equiv 0$.

We now prove $(2)$. The only additional point needed to justify the
integration by parts is the behavior at the origin.
Let $(y_1,y_2)\in  \calD_2^2$, and define 
\[
h(r):= y_1'(r)+\dfrac{2}{r}y_1(r).
\]
By assumption, we have that $y_1,h\in Y$. With $F(r)=r^2y_1(r)$, it follows that \[
F'(r)=r^2\Big(y_1'(r)+\dfrac{2}{r}y_1(r)\Big)=r^2h(r).
\]
In particular, $F\in H^1\big((0,1),dr\big)$ and 
\[
F(r)=F(0)+\int_0^r s^2h(s)\, ds.
\]
If $F(0)\neq0$, by the continuity of $F$ at $r=0$, there exists $\delta\in(0,1)$ small enough such that (recall that $y_1(r)=r^{-2}F(r)$) \[
\int_0^\delta \vert y_1(r)\vert ^2 \, r dr \geq \dfrac{1}{2} |F(0)|^2\int_0^\delta \dfrac{1}{r^4} \, rdr  = +\infty,
\]
which contradicts the fact that $y_1\in Y$.
Therefore, $F(0)=0$ and \[
\vert y_1(r)\vert \leq \dfrac{1}{r^2}\bigg\vert \int_0^r s^2h(s)\, ds\bigg\vert \leq \dfrac{1}{r^2}\bigg( \int_0^r |h(s)|^2\, sds \bigg)^{1/2} \bigg( \int_0^r s^3\, ds\bigg)^{1/2} \lesssim \Vert h\Vert_{L^2((0,\infty),rdr)}.
\]
Thus, $\sup_{r\in(0,1)}\vert y_1(r)\vert < +\infty$, and hence $ry_1(r)\to0$ as $r\to 0+$. The second equation in $\calB^\ast(y_1,y_2)=0$ is
$y_2'=Uy_1$. Since $U(r)=O(r)$ and $y_1$ is bounded near the origin,
it follows that $y_2$ is also bounded there, whence $ry_2(r)\to0$.
The same asymptotic argument as in $(1)$, applied now to the system
$\calB^\ast(y_1,y_2)=0$, gives exponential decay of both components at
infinity.  Multiplying the two equations in
$\calB^\ast(y_1,y_2)=0$ by $r^2\overline{y_1}$ and
$r^2\overline{y_2}$, respectively, subtracting, taking real parts, and
integrating by parts gives
\[
 0=
 \left[\frac{r^2}{2}\bigl(|y_2(r)|^2-|y_1(r)|^2\bigr)\right]_{0}^{\infty}
 -\int_0^\infty b(r)|y_1(r)|^2r^2\,dr
 -\int_0^\infty |y_2(r)|^2r\,dr .
\]
The boundary term vanishes by the behavior just established at zero and
infinity. Since $b\ge0$, we obtain $y_2\equiv0$, and then
$y_2'=Uy_1$ gives $y_1\equiv0$.
\end{proof}

We can now prove Theorem~\ref{maintheo1}.

\begin{proof}[Proof of Theorem~\ref{maintheo1}]
In view of the definition \eqref{eq:ML}, it suffices to work with $\bfL$.
Lemma~\ref{lem:2.1} determines the domain and selfadjointness of~$\bfL$.

Suppose that $\mu$ is an eigenvalue of $\bfL$ with eigenfunction
$\bm\phi\in\calD(\bfL)$. Since $\bfL=\calB^*\calB$ and
$\ker(\calB)=\{0\}$ by Lemma~\ref{lemB}, we have $\mu>0$. Moreover,
\[
 \calB^*\calB\bm\phi=\mu\bm\phi.
\]
Since $\bm\phi\in\calD(\calB)$, the right-hand side belongs to
$\calD(\calB)$. It follows that
$\calB\bm\phi\in\calD(\calB\calB^*)=\calD(\calL)$ and
\[
 \calL(\calB\bm\phi)
 =\calB\calB^*(\calB\bm\phi)
 =\mu\calB\bm\phi.
\]
Thus $\calB\bm\phi$ is an eigenfunction of $\calL$ with eigenvalue $\mu$.

Conversely, suppose that $\mu$ is an eigenvalue of $\calL$ with
 eigenfunction $\bm\psi\in\calD(\calL)$. Since
$\calL=\calB\calB^*$ and $\ker(\calB^*)=\{0\}$ by Lemma~\ref{lemB}, again
$\mu>0$. Since
\[
 \calB\calB^*\bm\psi=\mu\bm\psi
\]
and $\bm\psi\in\calD(\calB^*)$, it follows that
$\calB^*\bm\psi\in\calD(\calB^*\calB)=\calD(\bfL)$ and
\[
 \bfL(\calB^*\bm\psi)
 =\calB^*\calB(\calB^*\bm\psi)
 =\mu\calB^*\bm\psi.
\]
The maps $\calB$ and $\calB^*$ are inverse to one another up to the factor
$\mu$ on the corresponding eigenspaces, and therefore preserve
multiplicity.

Proposition~\ref{prop:lem_p2_t2} shows that $\calL$ has a unique eigenvalue
$\lambda^2\in(0,1)$, with corresponding eigenfunction $(0,\psi)$, and that
its threshold is regular. The preceding correspondence shows that
$\lambda^2$ is the unique eigenvalue of $\bfL$ and has the same
multiplicity. Moreover,
\[
 \calB^*(0,\psi)=(U\psi,-\psi'),
\]
so that, in view of $\bfM=\operatorname{diag}(\bfL,\bfL)$, the vectors
$\bmY_1$ and $\bmY_2$ are eigenfunctions of $\bfM$ as stated, and the
 eigenvalue has multiplicity two. The enclosure
\[
 \lambda^2\in[0.777471875,\,0.77747375]
\]
is proved in Lemma~\ref{lem:fgr_eigenvalue_box}. Finally, the purely
absolutely continuous nature of the spectrum of $\bfM$ on $[1,\infty)$
follows from the distorted Fourier basis in Proposition~\ref{propdFT},
combined with~\cite[Theorem XIII.19]{RS4}.
\end{proof}


\subsection{Distorted Fourier transform}
We denote by 
\begin{align*}
\Phi_j(r,\freq) \quad \hbox{ and } \quad \Theta_j(r,\freq)
\end{align*}
a real-valued fundamental system of generalized eigenfunctions for the problem 
\begin{align}\label{def:Phij}
\mathcal{H}_j f = (1+\freq^2) f, 
\end{align}
with $\Phi_j\in L^2((0,1))$, and let $\Psi_j(r,\freq)$ be the Weyl
solution at infinity, normalized for $\freq>0$ by
\[
 W\bigl(\Psi_j(\cdot,\freq),\overline{\Psi_j(\cdot,\freq)}\bigr)=-2i.
\]
It satisfies $\mathcal H_j\Psi_j=(1+z^2)\Psi_j$, with
$\Psi_j(r,z)\in L^2$ near infinity for $\Im z>0$. In what follows, we
always assume that $\Phi_j$ and $\Theta_j$ are chosen so that
$W(\Theta_j,\Phi_j)=1$, where
\[
 W(f,g)=fg'-f'g.
\]
We define
\begin{align}\label{def:a}
 a_j(\freq)
 =\frac{W(\Phi_j,\overline{\Psi_j})}
        {W(\Psi_j,\overline{\Psi_j})}
 =\frac{i}{2}W(\Phi_j,\overline{\Psi_j}).
\end{align}
With these definitions at hand, we can construct the distorted Fourier transform for the entire matrix operator $\bfM$. We will perform a detailed and rigorous analysis of the asymptotic behavior and structure of these solutions in our second paper \cite{LPPSS2}. We follow the general procedure in \cite{GZ,KST,KMS}. The outcome of this analysis is summarized in the following two propositions. The first of them explains the asymptotic behavior of $\Phi_j$, $\Theta_j$, $\Psi_j$, and $a_j$. The statements are proved in \cite[Sections~2.2 and~2.3]{LPPSS2}. 

\begin{prop}\label{prop:p1_t1_and_t2}
There exists a fundamental system of real-valued solutions of $\mathcal{H}_jf=f$ at the threshold ($\freq=0$) with the following asymptotic behavior 
\begin{equation}\label{Phi_Theta_zero_Op_1and2}
\begin{aligned}
(j=1) \qquad  &    \begin{aligned}
\Phi_{1}^{(0)} (r) & = \begin{cases}
r^{5/2}+O(r^{9/2}), & r\lesssim 1,
\\ c_*r^{3/2}+O(re^{-r}),  &  r \gtrsim 1, 
\end{cases}
\\ \Theta_{1}^{(0)} (r) & = \begin{cases}
 \tfrac{1}{4}r^{-3/2} + O(r^{1/2}), & r\lesssim 1,
\\ \tfrac1{2c_*}r^{-1/2}+O(r^{-1}e^{-r}), &   r \gtrsim 1, 
\end{cases} \end{aligned}
\\ (j=2) \qquad  &  \begin{aligned} \Phi_{2}^{(0)} (r) & = \begin{cases}
r^{1/2}+O(r^{5/2}), & r\lesssim 1,
\\ \widetilde{c}_{1,*}r^{1/2}\log(r)+\widetilde{c}_{2,*}r^{1/2}+O(e^{-r}\log(r)),  &  r \gtrsim 1,
\end{cases}
\\ \Theta_{2}^{(0)} (r) & = \begin{cases}
 -r^{1/2}\log(r) + O(r^{5/2}\log(r)), & r\lesssim 1,
\\ \widetilde{c}_{3,*}r^{1/2}\log(r)+\widetilde{c}_{4,*}r^{1/2}+O(e^{-r}\log(r)), &   r \gtrsim 1.
\end{cases} 
\end{aligned}
\end{aligned}
\end{equation}
for some $c_*\neq0$, where $\widetilde{c}_{1,*}\widetilde{c}_{4,*}-\widetilde{c}_{2,*}\widetilde{c}_{3,*}=1$ and $\widetilde{c}_{1,*}\neq0$. Also, they are normalized so that \[
W[\Theta_{j}^{(0)},\Phi_{j}^{(0)}]=1.
\]
In the case of positive energy $\freq>0$ for all $r>0$ with  $\freq r  \lesssim 1$, we have the expansion
\begin{equation}\label{Phi_xi_Op_1and2}
\begin{aligned}
\Phi_1(r,\freq) & = \Phi_{1}^{(0)}(r) + r^{3/2} \sum_{j\geq 1} (\freq r)^{2j} \Phi_{1,j}(r), 
\\\Phi_2(r,\freq) & = \Phi_2^{(0)}(r) + \sqrt{r} \sum_{j\geq 1} (\freq r)^{2j} \Phi_{2,j}(r),
\end{aligned}
\end{equation}
which converges absolutely. 

For all $r>0$, $\freq \geq 0$ with $\freq  r \gtrsim 1$, a Weyl-Titchmarsh function of $\mathcal{H}_j$ at infinity is given by
\begin{align}
\Psi_1(r,\freq) & = \freq^{-1/2}e^{i\freq r}\sigma_{1,\infty}\left(\freq r,r\right), & & \freq r\gtrsim 1, \label{Psi1Op1_3}
\\ \Psi_{2} (r,\freq) & = \freq^{-1/2}e^{i\freq r}\sigma_{2,\infty}\left(\freq r,r\right), & &  \freq r\gtrsim 1, \label{Fou25}
\end{align}
where $\sigma_{j,\infty}$ satisfy suitable symbol type bounds, see~\cite[Section 2]{LPPSS2} for details. Moreover, the connection coefficients satisfy
\begin{align}
|a_1(\freq)|  &  \simeq \dfrac{1}{\freq \langle \freq \rangle}, \label{SMOp1_3}
\qquad\quad |a_2(\freq)|\simeq
\begin{cases}
 1+|\log\freq|,&0<\freq\le1,\\
 1,&\freq\ge1.
\end{cases}
\end{align}
Furthermore, we have symbol type bounds for all $\Phi_{1,j}$, $\Phi_{2,j}$, $\sigma_{1,\infty}$, $\sigma_{2,\infty}$, $a_1$ and $a_2$.
\end{prop}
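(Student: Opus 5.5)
The plan is to treat the two operators separately, in each case building the threshold solutions from the Frobenius data at $r=0$ and the explicit limiting equations at infinity, and then the positive-energy objects by perturbation off these.

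\textbf{Threshold, $j=1$.} For $\freq=0$ the equation $\mathcal H_1 f=f$ reads
\[
-f''+\Bigl(\frac{(2-a_\theta)^2-\tfrac14}{r^2}+\tfrac12(U^2-1)\Bigr)f=0 .
\]
Near $0$ the effective potential is $\frac{15}{4r^2}+O(1)$, since $a_\theta=O(r^2)$ and $U=O(r)$. Frobenius theory with indices $5/2,-3/2$ then gives $\Phi_1^{(0)}=r^{5/2}(1+O(r^2))$ and a second solution $\tfrac14 r^{-3/2}(1+O(r^2))$. The leading term of this second solution is fixed by the condition $W=1$; logarithmic corrections would only appear at order $r^{4}$ relative, so they are absorbed into $O(r^{1/2})$.

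Near infinity, $a_\theta\to1$ and $U\to1$ exponentially, so the limiting equation is $-f''+\tfrac{3}{4r^2}f=0$, with solutions $r^{3/2}$ and $r^{-1/2}$. I would write Volterra integral equations on $[R,\infty)$ with kernel $r^{3/2}s^{-1/2}-r^{-1/2}s^{3/2}$ and exponentially small potential perturbation. This produces solutions $r^{3/2}+O(re^{-r})$ and $r^{-1/2}+O(r^{-1}e^{-r})$, and expanding $\Phi_1^{(0)}$ in this basis gives the stated form. The key point is $c_*\neq0$. This is exactly the statement that the regular solution is not subordinate at infinity, i.e.\ the absence of a threshold eigenfunction or resonance established in Lemma~\ref{lem_appV1g1} via positivity of $V_1-1$. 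The normalization $\tfrac{1}{2c_*}$ of $\Theta_1^{(0)}$ at infinity then follows from $W=1$ together with $W(r^{-1/2},r^{3/2})=2$, choosing $\Theta_1^{(0)}$ without a growing component, which is possible by adding a multiple of $\Phi_1^{(0)}$.

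\textbf{Threshold, $j=2$.} The analysis is identical with $-\tfrac1{4r^2}+U^2-1$. The double index $1/2$ gives $r^{1/2}$ and $r^{1/2}\log r$ near $0$, and the same pair at infinity, since $U^2-1$ decays exponentially. The determinant identity $\widetilde c_{1,*}\widetilde c_{4,*}-\widetilde c_{2,*}\widetilde c_{3,*}=1$ is Wronskian conservation. The condition $\widetilde c_{1,*}\ne0$ is precisely Lemma~\ref{Lemspec0}: absence of a resonance means $\Phi_2^{(0)}$ is not a multiple of the subordinate solution $r^{1/2}(1+O(e^{-r}))$.

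\textbf{Positive energy.} For $\freq r\lesssim1$ I would write $\Phi_j(r,\freq)$ as the Born-type series obtained by iterating $\Phi=\Phi^{(0)}+\freq^2\int_0^r G_0(r,s)\Phi(s)\,ds$, where $G_0$ is the threshold Green kernel built from $\Phi^{(0)}_j,\Theta^{(0)}_j$. Inductive bounds using the small-$r$ and large-$r$ asymptotics give the terms $r^{3/2}(\freq r)^{2j}\Phi_{1,j}$, resp.\ $\sqrt r(\freq r)^{2j}\Phi_{2,j}$, with factorial-type decay, which yields absolute convergence.

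For $\freq r\gtrsim1$, I would construct $\Psi_j$ from the outgoing Hankel-type solution of the limiting equation (e.g.\ $\sqrt{r}H^{(1)}_{\nu}(\freq r)$ with $\nu=1$, resp.\ $0$). The exponentially decaying perturbation is absorbed by a Volterra iteration from infinity, and the symbol bounds on $\sigma_{j,\infty}$ follow from the Hankel asymptotics.

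Finally, the sizes of $a_j$ come from evaluating $W(\Phi_j,\overline{\Psi_j})$ at $r\simeq\freq^{-1}$ for small $\freq$, and at $r\simeq1$ for large $\freq$:
\begin{itemize}
\item $j=1$: $\Phi_1\sim c_* r^{3/2}$ matched against $\freq^{-1/2}$ gives $|a_1|\simeq\freq^{-1}$, using $c_*\ne0$.
\item $j=2$: the $\widetilde c_{1,*}r^{1/2}\log r$ term produces $|\log\freq|$.
\end{itemize}
The \textbf{main obstacle} is this matching uniformly as $\freq\to0$. It must exclude cancellation, which is where the nonresonance inputs are used, and it must propagate symbol bounds through the intermediate regime $\freq r\simeq1$. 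These details are deferred to \cite[Sections 2.2--2.3]{LPPSS2}.
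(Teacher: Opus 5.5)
Your overall architecture is the standard route that the paper defers to \cite{LPPSS2}: Frobenius data at $0$, Volterra equations at $\infty$, a Born series for $\freq r\lesssim1$, Hankel-based Volterra iteration for $\freq r\gtrsim1$, and matching for $a_j$. You also locate the non-resonance inputs correctly: Lemma~\ref{lem_appV1g1} for $c_*\neq0$ and Lemma~\ref{Lemspec0} for $\widetilde c_{1,*}\neq0$. However, the $j=1$ threshold step has a genuine gap. The statement asserts $\Phi_1^{(0)}(r)=c_*r^{3/2}+O(re^{-r})$ for $r\gtrsim1$, with no $r^{-1/2}$ term. Expanding the regular solution in your Volterra basis $r^{3/2}+O(re^{-r})$, $r^{-1/2}+O(r^{-1}e^{-r})$ only gives $\Phi_1^{(0)}=c_*r^{3/2}+d_*r^{-1/2}+O(re^{-r})$, and $r^{-1/2}$ is not $O(re^{-r})$. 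Non-resonance controls $c_*$ but says nothing about $d_*$. So your sentence ``expanding $\Phi_1^{(0)}$ in this basis gives the stated form'' is unjustified; for a general potential of this type, $d_*$ would not vanish.

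The missing idea is that the threshold problem for $\mathcal L_1$ is explicitly solvable because of the Bogomolny equations. With $g:=rU$ one has $g'=(2-a)U$, and hence
\[
 -g''-\frac1r g'+\Bigl(\frac{(2-a)^2}{r^2}-\frac12\bigl(1-U^2\bigr)\Bigr)g=0 .
\]
Equivalently, $\mathcal L_1-1=A^*A$ on $L^2(r\,dr)$ with $A=\partial_r-\frac{2-a}{r}$ and $A(rU)=0$. Thus $\Phi_1^{(0)}(r)=r^{3/2}U(r)/U'(0)$ exactly. This equals $r^{5/2}+O(r^{9/2})$ at $0$, and at infinity it equals $c_*r^{3/2}-r^{3/2}(1-U)/U'(0)=c_*r^{3/2}+O(re^{-r})$ by Lemma~\ref{lem_1-U2}, with $c_*=1/U'(0)\neq0$. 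So $d_*=0$, and $c_*\neq0$ does not even need Lemma~\ref{lem_appV1g1}. Your choice of $\Theta_1^{(0)}$ then works as written.

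There are two smaller inaccuracies in the deferred part:
\begin{itemize}
\item For $\freq\gtrsim1$ the Wronskian defining $a_j$ must be evaluated at $r\simeq\freq^{-1}$, not at $r\simeq1$, because your representation of $\Phi_j$ is only controlled for $\freq r\lesssim1$. There $\Phi_1\approx r^{5/2}$ gives $|a_1|\simeq\freq^{-2}$, which your bullets omit.
\item In the region $\freq r\gtrsim1$, $r\lesssim1$, the deviation of $\mathcal H_1$ from the $\nu=1$ Bessel model is $\frac{(2-a)^2-1}{r^2}\approx\frac{3}{r^2}$, which is not exponentially small. The Volterra iteration for $\Psi_1$ still closes there, but because of the factor $(\freq r)^{-1}$, not because of exponential decay.
\end{itemize}
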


\begin{rem}\label{rem:c1*notzero}
The fact that $\widetilde{c}_{1,*}\neq0$, is precisely the statement in Proposition~\ref{prop:lem_p2_t2} that $\mathcal{H}_2$ has no threshold resonance. It is certified by an interval enclosure of a Wronskian, see Subsection~\ref{sec:num_verif_no_resonance}.
\end{rem}

We now state, without proof, the distorted Fourier transform associated with $\bfM$, whose construction is given in \cite[Theorem~1.2]{LPPSS2}.

\begin{prop} \label{propdFT}
The distorted Fourier transform for the selfadjoint matrix operator $\bfM$, applied to a radial function $\bmg \in \calD(\mathbf{M})\subset L^2_{r \ud r}(\bbR_+;\bbC^4)$, is given by 
\begin{equation}
    \wtilcalF[ \bmg ](\freq) \equiv \widetilde{\bmg}(\freq) := \int_0^\infty \overline{\bfE(r,\freq)}^{t}  \bmg(r) r\, \ud r, 
\end{equation}
where $\bfE(r,\freq)$ is  
\begin{equation}
    \begin{aligned}
        \bfE(r,\freq) :=  
        \begin{pmatrix} 
            E(r,\freq) & 0_{2\times 2 } \\ 0_{2 \times 2} & E(r,\freq)
        \end{pmatrix}  ,
    \end{aligned}
\end{equation}
{\setlength{\arraycolsep}{2pt}\small
\begin{align*}
E(r,\freq):= \frac{i}{\sqrt{2\pi r}\,\jk}\begin{pmatrix}
 a_1^{-1}(\freq)\,\begin{aligned}[t]
  &-\partial_r\Phi_1(r,\freq)-\tfrac{1}{2r}\Phi_1(r,\freq)\\
  &-b(r)\Phi_1(r,\freq)
 \end{aligned}
 & a_2^{-1}(\freq)\,U(r)\,\Phi_2(r,\freq) \\
 a_1^{-1}(\freq)\,U(r)\,\Phi_1(r,\freq)
 & a_2^{-1}(\freq)\,\bigl(-\partial_r\Phi_2(r,\freq)+\tfrac{1}{2r}\Phi_2(r,\freq)\bigr)
\end{pmatrix}.
\end{align*}}
Here $a_j(\freq)$, $\Phi_j(r,\freq)$ and $b(r)$ are as in \eqref{def:a}, \eqref{def:Phij} and \eqref{eq:bL1L2}, respectively.
The associated inverse Fourier transform is
\begin{equation}
    \wtilcalF^{-1}[\bmh](r) := \int_0^\infty \bfE(r,\freq)  \bmh(\freq) \freq\, \ud \freq  .
\end{equation}
 Then we have 
\begin{equation}
    \wtilcalF[ \bfM \bmg ](\freq) = (1+\freq^2) \widetilde{\bmg}(\freq),
\end{equation}
the Fourier inversion identity is given by
\begin{equation}
    \bfP_c = \wtilcalF^{-1} \circ \wtilcalF,
\end{equation}
and the Plancherel identity 
\begin{equation}
    \bigl\| \widetilde{\bmg} \bigr\|_{L^2_{\freq \ud \freq}(\bbR_+;\bbC^4)} = \bigl\| \bfP_c\,\bmg \bigr\|_{L^2_{r \ud r}(\bbR_+;\bbC^4)}
\end{equation}
holds for radial functions in $ \calD(\mathbf{M})$ and then extends to $L^2$. In particular, the restriction of $\wtilcalF$ to $\operatorname{Ran}\bfP_c$ is unitary onto $L^2(\freq\,d\freq;\bbC^4)$, equivalently
\[
\wtilcalF\circ\wtilcalF^{-1}=I\quad\text{on }L^2(\freq\,d\freq;\bbC^4).
\] 
\end{prop}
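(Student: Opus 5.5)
The plan is to reduce the matrix statement to two scalar Weyl--Kodaira expansions, one for each of $\calH_1$ and $\calH_2$, and then transport them to $\bfL$, and hence to $\bfM=\operatorname{diag}(\bfL,\bfL)$, with the super-symmetric intertwiner $\calB^*$. The block-diagonal form of $\bfE$ follows from the block structure of $\bfM$, so it suffices to prove the corresponding statements for $\bfL$ on $L^2_{r\,dr}(\bbR_+;\bbC^2)$ with kernel $E(r,\freq)$.

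\emph{Step 1: scalar expansions.} Fix $j$ and $\freq>0$. Since $\Phi_j$ is real and $W(\Psi_j,\overline{\Psi_j})=-2i$, the definition \eqref{def:a} gives
\[
\Phi_j=a_j\Psi_j+\overline{a_j}\,\overline{\Psi_j}.
\]
To see this, write $\Phi_j=c\Psi_j+d\overline{\Psi_j}$ and take Wronskians with $\overline{\Psi_j}$; reality of $\Phi_j$ forces $d=\bar c$. The resolvent kernel of $\calH_j$ at energy $1+z^2$, $\Im z>0$, is
\[
\frac{\Phi_j(r_<)\Psi_j(r_>)}{W(\Psi_j,\Phi_j)}, \qquad W(\Psi_j,\Phi_j)=-2i\,\overline{a_j}.
\]
Taking the jump across $[1,\infty)$ and applying Stone's formula, the spectral density in $E=1+\freq^2$ is
\[
\frac{\Phi_j(r,\freq)\Phi_j(s,\freq)}{4\pi|a_j(\freq)|^2}\,dE=\frac{\Phi_j(r,\freq)\Phi_j(s,\freq)}{2\pi|a_j(\freq)|^2}\,\freq\,d\freq .
\]
This step needs three inputs:
\begin{itemize}
\item[(i)] the limiting absorption principle on $(1,\infty)$, which holds because $a_j(\freq)\neq0$ there by the lower bounds in Proposition~\ref{prop:p1_t1_and_t2}, so there are no embedded eigenvalues and no singular continuous spectrum;
\item[(ii)] control at the threshold $\freq\to0$, which uses the absence of threshold resonances from Proposition~\ref{prop:lem_p2_t2} together with the small-$\freq$ bounds $|a_1|\simeq\freq^{-1}$ and $|a_2|\simeq1+|\log\freq|$, so that no spectral mass concentrates at $E=1$;
\item[(iii)] the eigenvalue information from Proposition~\ref{prop:lem_p2_t2}, namely none for $\calH_1$ and exactly one for $\calH_2$, which is removed by $P_c$.
\end{itemize}
This gives a scalar Plancherel identity and inversion formula on $\operatorname{Ran}P_c(\calH_j)$, with kernel $(2\pi)^{-1/2}|a_j|^{-1}\Phi_j$ and measure $\freq\,d\freq$. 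Undoing the conjugation $r^{1/2}$ from \eqref{eq:Hj} gives the analogous statement for $\calL_j$ in $L^2_{r\,dr}$, with eigenfunctions $r^{-1/2}\Phi_j$.

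\emph{Step 2: super-symmetric transport.} Use $\bfL=\calB^*\calB$ and $\calL=\calB\calB^*$ as selfadjoint identities, with trivial kernels by Lemma~\ref{lemB}. Polar decomposition then shows that
\[
\calB^*\calL^{-1/2}\colon \operatorname{Ran}P_c(\calL)\to\operatorname{Ran}P_c(\bfL)
\]
is unitary and intertwines $\calL$ with $\bfL$. The point spectra correspond exactly, as in the proof of Theorem~\ref{maintheo1}, so the continuous subspaces match. Consequently the generalized eigenfunctions of $\bfL$ are
\[
(1+\freq^2)^{-1/2}\,\calB^*\bigl(r^{-1/2}\Phi_j\,e_j\bigr).
\]
A direct computation checks that these are the columns of $E$:
\[
\calB^*(r^{-1/2}\Phi_1,0)=r^{-1/2}\Bigl(-\Phi_1'-\tfrac{1}{2r}\Phi_1-b\Phi_1,\;U\Phi_1\Bigr),
\]
using $(2-a_\theta)/r=1/r+b$, and similarly
\[
\calB^*(0,r^{-1/2}\Phi_2)=r^{-1/2}\Bigl(U\Phi_2,\;-\Phi_2'+\tfrac{1}{2r}\Phi_2\Bigr).
\]
The prefactor $i/(\sqrt{2\pi}\,\jk\,a_j)$ differs from $(2\pi)^{-1/2}|a_j|^{-1}\jk^{-1}$ only by the unimodular phase $i\,\overline{a_j}/|a_j|$, which is harmless in the Plancherel identity and the inversion formula.

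\emph{Step 3: the remaining identities.} The relation $\wtilcalF[\bfM\bmg]=(1+\freq^2)\widetilde{\bmg}$ follows from integrating by parts on $\calD(\bfM)$; the boundary terms vanish thanks to the Frobenius behaviour at $0$ and the $L^2$ decay at infinity. Plancherel, and the identity $\bfP_c=\wtilcalF^{-1}\circ\wtilcalF$, follow from Steps 1 and 2, first on a dense set and then on $L^2$ by continuity. Surjectivity, $\wtilcalF\circ\wtilcalF^{-1}=I$, is the scalar completeness statement (the spectral measure is pure ac with full support on $\freq>0$) transported by the unitary map of Step 2.

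The main obstacle is the threshold analysis in Step 1(ii). There one has to combine the zero-energy asymptotics of Proposition~\ref{prop:p1_t1_and_t2} with the perturbative expansion \eqref{Phi_xi_Op_1and2}, uniformly as $\freq\to0$, in order to exclude singular spectral mass at $E=1$ and to justify convergence of the $\freq$-integrals. This is especially delicate for $\calH_2$, where the logarithmic behaviour of $a_2$ enters, and it is exactly where the non-resonance condition $\widetilde c_{1,*}\neq0$ is used. I would handle it by writing the resolvent near $z=0$ through the explicit threshold fundamental system and dominated convergence, in the spirit of \cite{KST,GZ}.
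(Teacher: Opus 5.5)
Your proposal is correct and follows the route the paper indicates and defers to Part~II for. That route consists of scalar Weyl--Titchmarsh--Kodaira expansions for $\mathcal H_1,\mathcal H_2$ in the spirit of \cite{GZ,KST}. These are transported to $\bfL$ by the unitary $\mathcal B^*\mathcal L^{-1/2}$, which comes from $\bfL=\mathcal B^*\mathcal B$, $\mathcal L=\mathcal B\mathcal B^*$ and the trivial kernels of Lemma~\ref{lemB}, with the phase $i\overline{a_j}/|a_j|$ absorbed exactly as you describe.

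One correction to your assessment of where the difficulty lies: the non-resonance condition is not needed for the Plancherel and inversion identities. The single point $E=1$ can carry only an atom, and $1$ is never an eigenvalue of either $\mathcal H_j$, since the threshold solutions behave at infinity like $r^{3/2},r^{-1/2}$ for $\mathcal H_1$ and like $r^{1/2},r^{1/2}\log r$ for $\mathcal H_2$, none of which is square integrable. The threshold analysis you flag as the main obstacle therefore matters only for the quantitative small-frequency behaviour of $a_j$ used later in the dispersive estimates.
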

The distorted Fourier transform is crucial for the linear and nonlinear parts of our analysis (see \cites{LPPSS2}{LPPSS3}). On the linear side, it is convenient for various dispersive estimates, in particular weighted estimates. On the nonlinear side, it is crucial for understanding the radiation mechanism of the discrete component and for formulating and verifying the Fermi Golden Rule.



\section{On the Fermi Golden Rule}\label{sec:FGR}

In this section, we focus on the ODE for $Z_j$ and recall from \cite{LPPSS3} the main condition for the FGR. We recall the system of evolution equations for the radiation term and for the internal mode components
\begin{equation}
    \left\{ \begin{aligned}
        (\pt^2 + \bfM) \bmu &= \bfP_c \bmN, \\ 
        (\pt^2 + \lambda^2) z_1 &= \langle \bmY_1, \bmN \rangle, \\ 
        (\pt^2 + \lambda^2) z_2 &= \langle \bmY_2, \bmN \rangle.
    \end{aligned} \right.
\end{equation}
Here $\bmY_j$, $j=1,2$, are two independent orthonormal eigenvectors associated with the unique eigenvalue of $\bfM$, viz. 
\begin{align}\label{eq:internal_eigenvalue_convention}
\bfM \bmY_j = \lambda^2 \bmY_j, \qquad \lambda^2 [0.777471875,0.77747375],
\end{align}
see Theorem~\ref{maintheo1}. 
The interval displayed above is denoted by $\Lambda_{\rm FGR}$ in Lemma~\ref{lem:fgr_eigenvalue_box}. Thus,  $\Lambda_{\rm FGR}$ is an interval for the eigenvalue $\lambda^2$, while $\lambda>0$ denotes the corresponding internal frequency.

For technical reasons, the C++ code uses a different normalization of the internal mode. In fact, let $\psi_0$ denote the positive eigenfunction of $\mathcal L_2$
at the eigenvalue $\lambda^2$, normalized at the origin by
\[
  \psi_0(0)=1,\qquad \psi_0'(0)=0.
\]
Let
\begin{equation}\label{eq:bmYjbd1}
  \bmY^{(0)}_1=(U\psi_0,-\psi_0',0,0)^T,\qquad
  \bmY^{(0)}_2=(0,0,U\psi_0,-\psi_0')^T,
\end{equation}
and define
\[
  \nu_\psi^2
  :=\|\bmY_j^{(0)}\|_{L^2(r\,dr)^4}^2
  =\int_0^\infty
    \bigl(U^2\psi_0^2+(\psi_0')^2\bigr)r\,dr
  =\lambda^2\int_0^\infty\psi_0^2r\,dr.
\]
The orthonormal eigenvectors are then
\[
  \bmY_j:=\nu_\psi^{-1}\bmY_j^{(0)},\qquad j=1,2.
\]
We denote the profiles for the continuous component and the internal modes, respectively, by
\begin{equation}
    \boldsymbol{\mathfrak{f}} := e^{-it\sqrt{\bfM}} (2i\sqrt{\bfM})^{-1}(\partial_t + i\sqrt{\bfM}) \bmu \qquad \hbox{and} \qquad Z_j(t) := e^{-i\lambda t} (2i\lambda)^{-1} (\pt + i \lambda) z_j, \quad j = 1, 2.
\end{equation}

\subsection{The ODE and Fermi coefficient}

In this subsection, we record the leading-order ODE for $Z_j$. Recall that $z_j$ satisfies the equation
\begin{align}\label{FGRODE}
\dot{Z}_j(t) = \dfrac{1}{2i\lambda} e^{-i\lambda t} \big\langle \bmN, \bmY_j \big\rangle,
\end{align}
for a suitable nonlinearity $\bmN$. The Hamiltonian structure of the original equations plays an important role in the radiative damping mechanism that leads to the decay of $Z_j$. Here $\partial^\ast=-\partial_r-r^{-1}$ denotes the adjoint of $\partial_r$ in $L^2((0,\infty),r\,dr)$. Without going into details, we note that the most relevant part of the cubic Hamiltonian is the following:
\begin{align}\label{eq:FGR_cubic_hamiltonian}
    \mathcal{H}_2 & := - \int \biggl( -\frac12 U \alpha^3 - \frac12 U \beta^2 \alpha  
  + \partial^\ast(\mu\beta) \alpha - \mu \beta' \alpha
  + b \zeta \alpha^2 + b \zeta \beta^2
  - U \zeta^2 \alpha - U \mu^2 \alpha \biggr) \, rdr.
\end{align}
The corresponding variational derivative is denoted by 
\begin{align}\label{eq:bmQbddef1}
    \bmQ(\bmr) = -\frac{ \delta \mathcal{H}_2}{\delta \bmr}.
\end{align}
We introduce the symbol $\calH_2$ only to match the notation in \cite{LPPSS3}; from this point on we work with $\bmQ$ instead. This $\calH_2$ is unrelated to the scalar operator $\calH_2$ from~\eqref{eq:calHjdef1}. We denote the associated bilinear form by $\bmQ_s$:
\begin{equation}\label{eq:bmQsdef}
    \bmQ_s(\underline{\bmr},\underline{\bmr}')  = \frac{1}{2}\big[\bmQ(\underline{\bmr}+\underline{\bmr}') - \bmQ(\underline{\bmr})-\bmQ(\underline{\bmr}')\big].
\end{equation}
This is the bilinear expression that appears in the Fermi Golden Rule coefficients below; see \eqref{FGRODE14}. In Lemma~\ref{lem:capd_fgr_tail_upper_bound}
we will use the explicit form of \eqref{eq:bmQsdef} after substituting the
eigenvectors \eqref{eq:bmYjbd1}.
A direct (but technical) computation shows that, to leading order, $\boldsymbol{\mathfrak{f}}$ decomposes as
\begin{align}\label{deffF}
\boldsymbol{\mathfrak{f}}:= \boldsymbol{\mathfrak{f}}_{11}^F+2\boldsymbol{\mathfrak{f}}_{12}^F+\boldsymbol{\mathfrak{f}}_{22}^F+\mathrm{Remainders}, \quad \hbox{ where } \quad   \boldsymbol{\mathfrak{f}}_{ij}^F(t) & := Z_i(t)Z_j(t) \big( \Lambda_{ij} -i \Gamma_{ij} \big),
\end{align}
with
\begin{align}\label{defLamGam}
  \Lambda_{ij} & := e^{-it(\sqrt{\bfP_c\bfM}-2\lambda)}\dfrac{1}{2\sqrt{\bfM}}  \mathrm{p.v.}\frac{1}{\sqrt{\bfP_c\bfM}-2\lambda} \mathbf{P}_c\bmQ_s(\bmY_i,\bmY_j),
  \\ \Gamma_{ij} & :=  \dfrac{\pi}{4\lambda} \delta(\sqrt{\bfP_c\bfM}-2\lambda) \mathbf{P}_c\bmQ_s(\bmY_i,\bmY_j).
\end{align}
\begin{remark}
    The precise sense in which the terms labeled ``Remainders" in the decomposition of $\boldsymbol{\mathfrak{f}}$ in \eqref{deffF} can be handled is described in Sections~9 and~10 of \cite{LPPSS3}. The choice of notation, which seems unmotivated here, is also explained in \cite{LPPSS3}. The terms labeled $\boldsymbol{\mathfrak{f}}_{ij}^F$ here are the terms $\boldsymbol{\mathfrak{f}}_{ij}^F$ in Lemma~9.3 of \cite{LPPSS3}. In comparison with $\bmQ$ in equation~(9.60) of \cite{LPPSS3}, the expression $\bmQ$ in \eqref{eq:bmQbddef1} is missing the contribution of $\calH_{G,2}$ from \cite{LPPSS3}. This is justified because these contributions involve $\Theta_0^{(2)}$ in the notation of \cite{LPPSS3}, and in view of the form of $\bmY_j$  and equation (9.15) in \cite{LPPSS3}, $\Theta_0^{(2)}(\bmY_j,\bmY_k)=0$ for $j,k=1,2$.
\end{remark}
By definition, we can also write \begin{align}
\begin{split}
\bmu  = e^{it\sqrt{\bfP_c\bfM}} \boldsymbol{\mathfrak{f}} + e^{-it\sqrt{\bfP_c\bfM}} \overline{\boldsymbol{\mathfrak{f}}} .
\end{split}
\end{align}
In \cite{LPPSS3}, following similar ideas to \cite{SW} (see also \cite{LP22}), we prove the following result, which is the leading-order ODE for the internal mode. We refer to \cite[Sections 9, 10]{LPPSS3} for a detailed proof and estimates of the ``remainders" in \eqref{FGRODE13}, as well as a more precise statement of the following important proposition. Only the dissipative terms contribute to the evolution of
$|Z_1|^2+|Z_2|^2$: the conservative cubic terms
$i\mathcal P_j(Z,\overline Z)$ cancel in the corresponding energy
identity, while the remainders are of higher order.

\begin{prop}\label{lem4}
In the notation of \cite[Sections~9 and~10]{LPPSS3}, the projection of the
normal-form equation onto the internal modes has the following schematic form:
\begin{align}\label{FGRODE13}
\begin{split}
\dot{Z}_1
&=D_1|Z_1|^2Z_1+2D_{12}|Z_2|^2Z_1
  +C_{12}\overline{Z_1}Z_2^2+i\mathcal P_1(Z,\overline Z)+\mathcal R_1,\\
\dot{Z}_2
&=D_2|Z_2|^2Z_2+2D_{12}|Z_1|^2Z_2
  +C_{12}\overline{Z_2}Z_1^2+i\mathcal P_2(Z,\overline Z)+\mathcal R_2.
\end{split}
\end{align}
The conservative cubic terms satisfy
\[
 \operatorname{Re}\sum_{j=1}^2
 \overline Z_j\,i\mathcal P_j(Z,\overline Z)=0.
\]
Here $\mathcal P_j$ and $\mathcal R_j$ denote, respectively, the conservative
cubic polynomials and the higher-order remainder terms defined and estimated
in \cite[Sections~9 and~10]{LPPSS3}. 
The dissipative coefficients are
\begin{align}\label{FGRODE14}
\begin{split}
D_1 & := -\dfrac{\pi}{4\lambda^2} \big\langle \delta(\sqrt{\bfP_c\bfM}-2\lambda) \bmQ_s\big(\bmY_1, \bmY_1), \bmQ_s\big(\bmY_1, \bmY_1) \big\rangle,
\\
D_{12} & := -\dfrac{\pi}{4\lambda^2} \big\langle \delta(\sqrt{\bfP_c\bfM}-2\lambda) \bmQ_s\big(\bmY_2, \bmY_1), \bmQ_s\big(\bmY_2, \bmY_1) \big\rangle,
\\
C_{12} & := -\dfrac{\pi}{4\lambda^2} \big\langle \delta(\sqrt{\bfP_c\bfM}-2\lambda) \bmQ_s\big(\bmY_2, \bmY_2), \bmQ_s\big(\bmY_1, \bmY_1) \big\rangle,
\\
D_2 &:= -\dfrac{\pi}{4\lambda^2} \big\langle \delta(\sqrt{\bfP_c\bfM}-2\lambda) \bmQ_s\big(\bmY_2, \bmY_2), \bmQ_s\big(\bmY_2, \bmY_2) \big\rangle.
\end{split}
\end{align}
Equivalently, in the scalar distorted Fourier variable $k$, this is the shell
$1+k^2=4\lambda^2$, i.e., $k=k_\lambda:=\sqrt{4\lambda^2-1}$.
\end{prop}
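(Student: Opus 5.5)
The plan is to derive \eqref{FGRODE13} by a normal form argument of Soffer--Weinstein type, run on the Duhamel formulation of the system for $(\bmu,z_1,z_2)$. First I write $\bmN=\bmQ(\bmr)+O(|\bmr|^3)$, with $\bmQ$ the variational derivative of the cubic Hamiltonian \eqref{eq:FGR_cubic_hamiltonian}. The nonlinear terms that involve $\eta_0$ are elliptically recovered through \eqref{sys2} and \eqref{equ:pt_eta0_equation}, so they enter either as quadratic terms of the same type or at higher order. Substituting $z_j=\lambda^{-1}\Im$-type combinations of $e^{\pm i\lambda t}Z_j$ into \eqref{FGRODE}, the quadratic part of $\langle\bmN,\bmY_j\rangle$ splits into three kinds of terms: discrete--discrete, discrete--radiation, and radiation--radiation.

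The discrete--discrete terms $\langle\bmQ_s(\bmY_i,\bmY_k),\bmY_j\rangle Z_iZ_k e^{\pm i\lambda t}$ are nonresonant, since they oscillate at frequencies $\pm\lambda,\pm3\lambda$ against $e^{-i\lambda t}$. I remove them by a near-identity change of variables $Z_j\mapsto Z_j+\text{quadratic}$; this produces cubic terms in $Z$ that are conservative. The discrete--radiation terms are the source of damping. Write $\bmu=e^{it\sqrt{\bfP_c\bfM}}\boldsymbol{\mathfrak f}+\text{c.c.}$ and solve the Duhamel equation for $\boldsymbol{\mathfrak f}$. Its leading part is driven by $\bfP_c\bmQ_s(\bmY_i,\bmY_k)Z_iZ_ke^{2i\lambda t}$. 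Integrating in time with $Z$ frozen (the error is controlled by $\dot Z=O(|Z|^3)$) and using the Sokhotski--Plemelj limit $\int_0^t e^{-is(\omega-2\lambda)}ds\to \mathrm{p.v.}\,\frac{i}{\omega-2\lambda}... +\pi\delta(\omega-2\lambda)$ on the spectral side gives exactly the decomposition \eqref{deffF}--\eqref{defLamGam}. The Plemelj limit is applied in the distorted Fourier variable of Proposition~\ref{propdFT}, evaluated on the shell $k=k_\lambda$, which lies in the continuous spectrum because $4\lambda^2>1$.

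I then insert this $\bmu$ back into $\langle\bmQ_s(\bmu,\bmY_i),\bmY_j\rangle$, which by the symmetry of the Hamiltonian equals $\langle\bmu,\bmQ_s(\bmY_i,\bmY_j)\rangle$. Keeping the terms with total phase $e^{0}$ gives cubic terms in $Z,\overline Z$:
\begin{itemize}
\item the $\Gamma$ pieces yield $-\frac{\pi}{4\lambda^2}\langle\delta(\cdot)\bmQ_s(\bmY_i,\bmY_k),\bmQ_s(\bmY_l,\bmY_j)\rangle$, i.e.\ $D_1,D_2,D_{12},C_{12}$;
\item the p.v.\ pieces are real self-adjoint pairings, so they contribute $i\mathcal P_j$.
\end{itemize}
The block structure $\bfM=\mathrm{diag}(\bfL,\bfL)$ with $\bmY_1,\bmY_2$ in separate blocks kills the mixed coefficients other than those listed. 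It also gives the factor $2$ in front of $D_{12}$ and the structure of the $C_{12}$ terms. The identity $\operatorname{Re}\sum\overline Z_j\,i\mathcal P_j=0$ follows from the Hermitian symmetry of the real p.v.\ operator and of the normal form corrections.

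The main obstacle is not the algebra but controlling $\mathcal R_j$. This means the radiation--radiation terms, the cubic terms, the dependence on $\eta_0$, and the time-integration errors all have to be shown to be higher order, integrably in time. That requires the dispersive and weighted decay estimates for $e^{it\sqrt{\bfM}}\bfP_c$, which rely on the absence of threshold resonances (Proposition~\ref{prop:lem_p2_t2}) and on the symbol bounds in Proposition~\ref{prop:p1_t1_and_t2}. I would defer those estimates to the framework of \cite{LPPSS3}, Sections~9--10, and here only verify the leading-order coefficients.
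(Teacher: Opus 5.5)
Your overall route is the one the paper relies on through \cite{LPPSS3}: Soffer--Weinstein normal form, the splitting $\boldsymbol{\mathfrak f}^F_{ij}=Z_iZ_j(\Lambda_{ij}-i\Gamma_{ij})$, trilinear symmetry to move $\bmu$ onto $\bmQ_s(\bmY_l,\bmY_j)$, and the block structure. Your resonant bookkeeping is also right: only $\overline{Z_l}Z_iZ_k$ is phase-free, the prefactor is $-\pi/(4\lambda^2)$, and the blocks give exactly $D_1$, $C_{12}$ and $2D_{12}$.

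The gap is your first step, $\bmN=\bmQ(\bmr)+O(|\bmr|^3)$, where you dismiss the $\eta_0$ terms as ``of the same type or higher order''. For the system \eqref{sysN1} this identity is false, and it fails exactly where $\bmY_2$ enters.

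\emph{The $\eta_0$ terms are quadratic.} By \eqref{equ:pt_eta0_equation}, $\partial_t\eta_0=(-\Delta+U^2)^{-1}S(\bmr,\bmr)+O(|\bmr|^3)$ with $S:=\beta(L_1\alpha-2U'\zeta)-\alpha(L_1\beta-2U'\mu)$. Hence $U\partial_t\eta_0$ in $\calN_\beta$ and $-\partial_r\partial_t\eta_0$ in $\calN_\mu$ contribute the quadratic term $\calB^*(0,\partial_t\eta_0)$, which is not in $\mathcal H_2$.

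\emph{The remaining quadratic terms differ from $\bmQ$ off the gauge slice.} Compare them with the explicit $\bmQ$ in the proof of Lemma~\ref{lem:capd_fgr_tail_upper_bound}. They equal $\bmQ(\bmr)+(\beta\gamma,0,-\alpha\gamma,0)^t$ with $\gamma:=U\beta+\mu'+\mu/r$, and $\gamma$ vanishes exactly in Stuart's gauge \eqref{eq:Stuart}. But $\bmY_2$ is not in that gauge: $\gamma(\bmY_2)=\lambda^{-1}\calL_2\psi=\lambda\psi$. Reading the quadratic nonlinearity off \eqref{sysN1} therefore deletes exactly the $\lambda^2$-terms of $\mathcal Q^{(1)}_{22}$ and $\mathcal Q^{(1)}_{12}$ in that lemma's table. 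So the sources entering $D_2$, $D_{12}$, $C_{12}$ are not those of \eqref{FGRODE14}; only $D_1$ is insensitive to this.

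What is needed is what the paper imports from \cite{LPPSS3}. There the quadratic nonlinearity is the variational derivative of $\mathcal H_2$ plus a gauge part $\calH_{G,2}$, and the gauge part enters only through a quantity that vanishes on pairs of eigenvectors (the remark after \eqref{defLamGam}). Concretely, if the two blocks of $\bmr$ are $c_1\bm y$ and $c_2\bm y$ with $\bfL\bm y=\lambda^2\bm y$, then $S=c_1c_2\lambda^2(y_1^2-y_1^2)=0$.

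Your symmetry identity cannot give the corresponding statement for the back-reaction from the equations as written. For a test vector $\bm v=(v_1,\dots,v_4)$, $\langle\calB^*(0,\partial_t\eta_0),\bm v\rangle=\langle(-\Delta+U^2)^{-1}S(\bmr,\bmr),\,Uv_3+v_4'+v_4/r\rangle$, which is not symmetric in its three slots. So $S(\bmY_i,\bmY_k)=0$ removes this term from the source of $\boldsymbol{\mathfrak f}$, but not from $\langle\bmN,\bmY_2\rangle$. There it equals $\lambda^{-1}\int_0^\infty S(\bmr,\bmr)\psi\,r\,dr$, which does not vanish identically when one argument is radiation. You must therefore either run the normal form in the Hamiltonian variables of \cite{LPPSS3} from the outset, or track these terms explicitly.

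A minor point: $\int_0^te^{-is(\omega-2\lambda)}ds\to\pi\delta(\omega-2\lambda)-i\,\mathrm{p.v.}\frac{1}{\omega-2\lambda}$, so your sign on the p.v. part is off; this only affects $\mathcal P_j$.
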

Note that \eqref{FGRODE14} explicitly shows that
$D_1,D_2,D_{12}\leq0$. Moreover, $C_{12}$ is controlled
analytically by Cauchy--Schwarz for the positive spectral measure:
\[
  |C_{12}|^2\leq D_1D_2.
\]
Thus no separate numerical hypothesis on $C_{12}$ is required beyond
the certified strict negativity of $D_1$ and $D_2$.

In the third part of this series \cite[Lemma~9.9, equation (9.67)]{LPPSS3}, we start from~\eqref{FGRODE13} and derive the following differential inequality for $\mathfrak{z}(t):=|Z_1|^2+|Z_2|^2$:
\begin{align}\label{FGRODErho}
- \Gamma_0 \, \mathfrak{z}^2(t)\leq \dot{\mathfrak{z}}(t) +\mathrm{remainders} \leq - \Gamma_1 \, \mathfrak{z}^2(t), \qquad 
\end{align}
for two constants $\Gamma_0,\Gamma_1\geq0$, such that
\[
 D_1,D_2,D_{12}<0
 \quad\Longrightarrow\quad
 \min(\Gamma_0,\Gamma_1)>0.
\]
Thus, to obtain dissipative dynamics from~\eqref{FGRODErho}, it suffices to certify via interval enclosures that
\begin{align}\label{FGRnummain}
D_1,D_2,D_{12} < 0. 
\end{align}
This is one of the main results of this work and will be proved in Section~\ref{sec_NumVer_FGR}. For later comparison with the computer-assisted calculation, let
$D_1^{(0)},D_2^{(0)},D_{12}^{(0)}$, and $C_{12}^{(0)}$ denote the
four expressions in \eqref{FGRODE14} with every occurrence of
$\bmY_\ell$ replaced by the origin-normalized vector
$\bmY_\ell^{(0)}$.  Since
\[
  \bmY_\ell^{(0)}=\nu_\psi\bmY_\ell
\]
and $\bmQ_s$ is bilinear, we have
\[
  D_j^{(0)}=\nu_\psi^4D_j,
  \qquad j=1,2,12,
  \qquad
  C_{12}^{(0)}=\nu_\psi^4C_{12}.
\]
Equivalently,
\[
  D_j=\nu_\psi^{-4}D_j^{(0)},
  \qquad
  C_{12}=\nu_\psi^{-4}C_{12}^{(0)}.
\]

\begin{proposition}\label{prop:FGR1}
Let $D_1,D_2,D_{12}$ be the Fermi Golden Rule coefficients defined
in \eqref{FGRODE14}, formed using the orthonormal eigenvectors
$\bmY_1,\bmY_2$.  Then
\[
  D_1<0,\qquad D_2<0,\qquad D_{12}<0.
\]
More precisely, let 
\[
  k_\lambda:=\sqrt{4\lambda^2-1},
  \qquad
  \widehat D_j^{(0)}
  :=-\frac{k_\lambda}{4\lambda}D_j^{(0)},
  \qquad j\in\{1,2,12\}.
\]
The C++ code proves
\begin{equation}\label{eq:Dhat_lower_bounds}
 \widehat D_1^{(0)}>0.017297889,\qquad
 \widehat D_2^{(0)}>0.047248801,\qquad
 \widehat D_{12}^{(0)}>0.022940050.
\end{equation}
\end{proposition}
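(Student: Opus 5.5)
The plan is to reduce each coefficient to an explicit radial integral at the resonant frequency $k_\lambda$, and then certify that this integral is bounded away from zero. First, the distorted Fourier transform of Proposition~\ref{propdFT} diagonalizes $\bfM$ on $\operatorname{Ran}\bfP_c$ with spectral variable $1+\freq^2$ and measure $\freq\,d\freq$. Hence for any $\bm g$
\[
 \big\langle \delta(\sqrt{\bfP_c\bfM}-2\lambda)\bm g,\bm g\big\rangle
 =\frac{2\lambda}{k_\lambda}\,k_\lambda\,\bigl|\widetilde{\bm g}(k_\lambda)\bigr|^2\cdot\frac{1}{k_\lambda}\cdots,
\]
that is, an explicit positive multiple of $|\widetilde{\bm g}(k_\lambda)|^2$. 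The Jacobian comes from $\sqrt{1+\freq^2}=2\lambda$, giving $d\sqrt{1+\freq^2}=\freq\,d\freq/2\lambda$. With $\bm g=\bmQ_s(\bmY_i^{(0)},\bmY_j^{(0)})$ this gives $\widehat D^{(0)}_{\alpha}=c\,|\widetilde{\bm g}_\alpha(k_\lambda)|^2$ with an explicit $c>0$; the normalization $-k_\lambda/(4\lambda)$ is chosen to absorb these constants. The scaling relation $D_j=\nu_\psi^{-4}D_j^{(0)}$ then transfers the sign to $D_j$. So it suffices to bound the $\bbC^4$-vector
\[
 \int_0^\infty \overline{\bfE(r,k_\lambda)}^t\,\bmQ_s(\bmY^{(0)}_i,\bmY^{(0)}_j)(r)\,r\,dr
\]
away from zero. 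The block structure reduces it to four scalar integrals involving $\Phi_1,\Phi_2$, $a_1,a_2$, $U$, $b$, $\psi_0$ and $\psi_0'$.

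Second, I obtain certified enclosures of all ingredients, uniformly for $\mu\in\Lambda_{\rm FGR}$ from Lemma~\ref{lem:fgr_eigenvalue_box}. The inputs are:
\begin{itemize}
 \item the vortex profile, via the enclosure of $\uprz$, the Frobenius start at $0.1$, and the tail bounds;
 \item $\psi_0$, with the $K_0$-majorants on $[16,\infty)$;
 \item $\Phi_j(\cdot,k_\lambda)$, from certified Frobenius data at $0.1$ propagated by CAPD;
 \item $\Psi_j$, from the outgoing Volterra start at $R=16$ propagated backward.
\end{itemize}
The connection coefficients $a_j=\tfrac i2W(\Phi_j,\overline{\Psi_j})$ then come from the Wronskian at a matching radius. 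Substituting \eqref{eq:bmYjbd1} into \eqref{eq:bmQsdef} gives the explicit quadratic expressions in $(U\psi_0,-\psi_0')$ entering the integrand.

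Third, I split the integral into $(0,0.1)\cup[0.1,16]\cup(16,\infty)$.
\begin{itemize}
 \item On $[0.1,16]$, interval quadrature on the CAPD tubes gives an enclosure $I_{\rm main}$ of the vector integral.
 \item On $(0,0.1)$, I use the small-$r$ Frobenius bounds. The integrand is integrable there because of the powers of $r$ carried by $U$, $\Phi_j$ and $r\,dr$. This yields a bound $B_{\rm origin}$.
 \item On $(16,\infty)$, I rewrite $\Phi_j=a_j\Psi_j+\overline{a_j}\,\overline{\Psi_j}$, so that $\bfE$ is controlled through the bounded columns $\upsi_j$. Combined with $|\psi_0|,|\psi_0'|\le1.4K_0(\kappa_\lambda r)$, this gives a pointwise bound, and an analytic Bessel-integral estimate then gives $B_{\rm tail}$.
\end{itemize}
Finally, the triangle inequality gives
\[
 \|I\|\ge\|I_{\rm main}\|-B_{\rm origin}-B_{\rm tail},
\]
with $\|I_{\rm main}\|$ taken from its lower interval endpoint. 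Squaring and multiplying by the certified constant yields \eqref{eq:Dhat_lower_bounds}, and hence $D_1,D_2,D_{12}<0$.

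I expect the main obstacle to be the error budget for the outgoing solutions $\Psi_j$ and the coefficients $a_j$. Interval widths grow under backward propagation from $16$, and they are amplified by the uncertainty in $\mu$, which enters through $k_\lambda$, and in $\uprz$. If the widths prove too large, my first remedy is to subdivide $\Lambda_{\rm FGR}$ and the $\uprz$ interval and run the certificate cellwise. The second remedy is to raise the Poincar\'e order $N$ so that the start at $R=16$ is sharper.
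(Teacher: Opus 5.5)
Your proposal is correct and follows essentially the same route as the paper. The paper also reduces each coefficient via the distorted Fourier transform to $\frac{k_\lambda}{64\lambda^4}\|\mathcal A_{ij}\|_{\bbC^2}^2$ (Remark~\ref{rem:fgr_code_normalization}), evaluates $[0.1,16]$ by interval quadrature, bounds the origin piece and the tail piece (the latter via $\Phi_j=a_j\Psi_j+\overline{a_j}\,\overline{\Psi_j}$ and the $1.4K_0(\kappa_\lambda r)$ majorants), and combines them by the triangle inequality as in \eqref{eq:Dhat_triangle_ineq}. One correction: your displayed constant in the first step is garbled, and the identity you need there is $\int_0^\infty\delta(\sqrt{1+k^2}-2\lambda)F(k)\,k\,dk=2\lambda F(k_\lambda)$.
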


\begin{rem}\label{rem:fgr_code_normalization}
The quantities certified by the C++ code are positive scalar
multiples of the negatives of the corresponding coefficients in
\eqref{FGRODE14}.
For
$\alpha\in\{1,2,12\}$, let $(i,j)=(1,1),(2,2),(1,2)$, respectively,
and let $\mathcal A_{ij}\in\bbC^2$ denote the corresponding vector-valued
radial integral after the common scalar factor in the distorted Fourier
kernel has been removed, as in Lemma~\ref{lem:capd_fgr_origin} below.

We have
\[
 \int_0^\infty
 \delta\bigl(\sqrt{1+k^2}-2\lambda\bigr)F(k)\,k\,dk
 =2\lambda F(k_\lambda),
 \qquad
 k_\lambda=\sqrt{4\lambda^2-1}.
\]
At $k=k_\lambda$, the prefactor in Proposition~\ref{propdFT} is
\[
 \frac{i}{2\lambda\sqrt{2\pi r}}.
\]
After integration against $r\,dr$, the common scalar factor outside
$\mathcal A_{ij}$ is therefore
\[
 \frac{1}{2\lambda\sqrt{2\pi}},
\]
whose squared modulus is $1/(8\pi\lambda^2)$.  Consequently,
\[
\begin{aligned}
 D_\alpha^{(0)}
 &=-
 \frac{\pi}{4\lambda^2}\,
 2\lambda\,
 \frac{1}{8\pi\lambda^2}
 \|\mathcal A_{ij}\|_{\bbC^2}^2\\
 &=-
 \frac{1}{16\lambda^3}
 \|\mathcal A_{ij}\|_{\bbC^2}^2.
\end{aligned}
\]
The quantity computed by the C++ code is
\[
 \widehat D_\alpha^{(0)}
 =
 \frac{k_\lambda}{64\lambda^4}
 \|\mathcal A_{ij}\|_{\bbC^2}^2.
\]
It follows that
\[
 \widehat D_\alpha^{(0)}
 =-\frac{k_\lambda}{4\lambda}D_\alpha^{(0)},
 \qquad
 D_\alpha
 =-\frac{4\lambda}{k_\lambda\nu_\psi^4}
  \widehat D_\alpha^{(0)}.
\]
Hence the certified positivity of $\widehat D_\alpha^{(0)}$ implies the
negativity of the coefficients in~\eqref{FGRODE14}.
\end{rem}


\section[Approximating the vortex]{Approximating the vortex profiles \texorpdfstring{$(U,a)$}{(U,a)} } \label{sec_app1}

This section is devoted to studying   the degree-one magnetic vortex in the self-dual case. To start, we establish asymptotics and estimate the value of $U'(0)$ using a combination of classical analysis together with rigorous numerical computations.  By the Bogomolny equations  $U$ and $a$ satisfy the first order system
\begin{align}\label{numerics2}
U'(r) &= \dfrac{1-a(r)}{r} U(r), \qquad a'(r) = \frac{r}{2}(1-U^2(r)).
\end{align}
At $r=0$ one has 
\begin{equation}\label{Uaexp0}
    U(r)=\uprz r(1+o(1)), \qquad a(r)=\frac{r^2}{4}(1+o(1)).
\end{equation}  
Here $\uprz>0$ is known to exist and to be unique with the property that $U(r)\to1$ and $a(r)\to1$ as $r\to\infty$ \cite{bookJT}. We give a self-contained proof of this fact based on the ODE~\eqref{numerics2}.

\begin{prop}
    \label{prop:exuniqUa}
    There exists a unique value of $\uprz>0$ so that \eqref{numerics2} has a
    solution for which \eqref{Uaexp0} holds as $r\to0+$ and such that $(U(r),a(r))\to (1,1)$ as $r\to\infty$. 
\end{prop}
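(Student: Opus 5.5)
We argue by a shooting method in the initial slope $\upr=U'(0)>0$. First we establish local existence near the singular point: writing $U=r\,u$ and $a=r^2 c$, the system \eqref{numerics2} becomes a regular-singular system,
\[
 u'=-r\,c\,u,\qquad (r^2c)'=\tfrac r2\bigl(1-r^2u^2\bigr).
\]
A contraction argument for the equivalent integral equations on a small interval $(0,\delta]$ gives, for each $\upr>0$, a unique solution with $U=\upr r(1+o(1))$ and $a=\tfrac{r^2}{4}(1+o(1))$. This solution is smooth in $(r,\upr)$, and we continue it as far as it exists.

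Next we prove monotonicity in the parameter. Set $V=\partial_\upr U$ and $B=\partial_\upr a$. These satisfy the linearized system
\[
 V'=\tfrac{1-a}{r}V-\tfrac{U}{r}B,\qquad B'=-rUV,
\]
with $V\sim r$ and $B=o(r^2)$ at $0$. We show that $V>0$ and $B<0$ persist as long as $U>0$. At a first time where $V=0$ we would have $V'=-UB/r>0$, which is impossible. At a first time where $B=0$ we would have $B'=-rUV<0$, also impossible. Hence increasing $\upr$ increases $U$ and decreases $a$.

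We then split the parameter range according to the global behavior of the solution:
\begin{itemize}
 \item $S_+$ is the set of $\upr$ for which $U$ crosses $1$ while $a<1$. After that point $U'>0$ and $a'<0$, so $U$ blows up.
 \item $S_-$ is the set of $\upr$ for which $a$ crosses $1$ while $U<1$. After that point $U'<0$ and $a'>0$, so $U$ decreases and $a$ grows without bound, roughly like $r^2/4$.
\end{itemize}
Both sets are open by continuous dependence, since each crossing is transversal. Also, $U'>0$ and $a'>0$ hold while $a<1$ and $U<1$.

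Both sets are nonempty. For $\upr$ small we have $U\approx0$ on a long interval, so $a\approx r^2/4$ reaches $1$ near $r=2$ while $U$ is still small; hence $\upr\in S_-$. For $\upr$ large, $U\approx\upr r$ exceeds $1$ at $r\approx 1/\upr$, before $a\approx r^2/4$ has grown appreciably; hence $\upr\in S_+$.

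Since $(0,\infty)$ is connected, there is some $\upr$ in neither set. For such $\upr$ we have $U<1$ and $a<1$ for all $r$; both functions are increasing, so they have limits. The equations force $a'\sim \tfrac r2(1-U^2)$ to be integrable and $U'\sim(1-a)U/r$ to be integrable against $\int dr/r$. Together these force the limits to be $(1,1)$. Conversely, any solution tending to $(1,1)$ lies in neither set.

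Uniqueness is the main obstacle, because disjointness of two open sets only yields existence. We plan to show that the complement of $S_+\cup S_-$ is a single point by using the monotonicity above. Suppose $\upr_1<\upr_2$ both give solutions converging to $(1,1)$. Then $U_2>U_1$ and $a_2<a_1$ for all $r>0$, while the differences tend to $0$ at infinity.

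To rule this out, set $w=\log U_2-\log U_1>0$ and $d=a_1-a_2>0$. Then
\[
 w'=\frac dr>0,\qquad d'=\tfrac r2\bigl(U_2^2-U_1^2\bigr)>0.
\]
Thus $w$ is increasing, which contradicts $w\to0$. This argument closes uniqueness, and it should also show that $S_\pm$ are intervals $(0,\upr_0)$ and $(\upr_0,\infty)$. We expect the only real care to be needed in the limit argument and in justifying the signs all the way up to the first crossing.
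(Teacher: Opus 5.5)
Your proposal is correct and follows essentially the paper's argument: shooting in $\upr$, the sign-definite variational system, the two disjoint open crossing sets, existence from the complement (you use connectedness of $(0,\infty)$ where the paper first shows $\calC_a=[0,\uprzCa)$ and $\calC_U=(\uprzCU,\infty)$), and uniqueness by comparing two convergent solutions, which your choice $w=\log U_2-\log U_1$, $w'=d/r$ writes slightly more cleanly than the paper's $\Delta U,\Delta a$. One point to tighten is where $w>0$ and $d>0$ for all $r$ come from: derive them by a first-exit argument on that same system, as the paper does for $(\Delta U,\Delta a)$ (since $w(0^+)=\log(\upr_2/\upr_1)>0$, $w>0$ forces $d'>0$, hence $d>0$ and $w'>0$, so $w$ never returns to zero), rather than by integrating $\partial_\upr U>0$ over $[\upr_1,\upr_2]$, because that would require every intermediate solution to exist up to radius $r$, which you do not yet know since parameters in $S_+$ may blow up at finite radius.
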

\begin{proof}
It is standard to solve the initial value problem at the singular point $r=0$.
For $\upr\geq0$, write
\[
    U(r,\upr)=\upr rW(r,\upr),
    \qquad
    a(r,\upr)=\frac{r^2}{4}b(r,\upr).
\]
Then \eqref{numerics2} is the same as
\begin{equation}\label{eq:Wbsys}
      W'=-\frac r4 bW,
    \qquad
    b'=\frac2r(1-b)-2\upr^2rW^2,
    \qquad W(0,\upr)=b(0,\upr)=1.
\end{equation}
Equivalently,
\begin{equation}\label{eq:Ua_local_integral}
\begin{aligned}
 W(r,\upr)
 &=\exp\left(-\frac14\int_0^r s b(s,\upr)\,ds\right),\\
 b(r,\upr)
 &=1-\frac{2\upr^2}{r^2}\int_0^r s^3W(s,\upr)^2\,ds .
\end{aligned}
\end{equation}
For completeness, fix $L>0$ and consider $0\leq\upr\leq L$.  On
$C([0,\rho])\times C([0,\rho])$, the right-hand side of
\eqref{eq:Ua_local_integral} maps  
\[
 \|W-1\|_\infty+\|b-1\|_\infty\leq1
\]
into itself if $\rho$ is small enough.  The difference of the images of two pairs is bounded by $C_L\rho^2$ times the difference of the pairs.  Taking $C_L\rho^2<1$ yields a contraction.  Since $L$ is arbitrary, this proves local existence and uniqueness for every $\upr\geq0$, uniformly on compact sets of
parameters.  The same integral equations, differentiated with respect to
$r$ and $\upr$, give smooth dependence on $(r,\upr)$.  Expanding
\eqref{eq:Ua_local_integral} at zero gives
\begin{equation}\label{eq:Ua_local_expansion_proof}
\begin{aligned}
 U(r,\upr)&=\upr r-\frac{\upr}{8}r^3+O(r^5),\\
 a(r,\upr)&=\frac{r^2}{4}-\frac{\upr^2}{8}r^4+O(r^6),
\end{aligned}
\end{equation}
locally uniformly in $\upr$.  Starting at any positive radius, Picard iteration therefore gives a unique maximal continuation on
$0<r<R(\upr)$, where $R(\upr)\leq\infty$.  Continuous dependence implies the
lower semicontinuity
\[
 R(\upr_0)\leq\liminf_{\upr\to\upr_0}R(\upr).
\]
Notice also that $U(r,\upr)>0$ for $\upr>0$ and every
$0<r<R(\upr)$, cf.~\eqref{eq:Ua_local_integral}. 

As usual, we analyze the dependence on the shooting parameter via  the variational equations (or equations of variations).  Set
\[
 V(r,\upr):=\partial_{\upr}U(r,\upr),
 \qquad B(r,\upr):=-\partial_{\upr}a(r,\upr).
\]
The variational equations are
\begin{equation}\label{eq:v_ode}
\begin{aligned}
 V'&=\frac{1-a}{r}V+\frac Ur B,\\
 B'&=rUV,
\end{aligned}
\end{equation}
and by \eqref{eq:Ua_local_expansion_proof}
\[
 V(r,\upr)=r+O(r^3),
 \qquad
 B(r,\upr)=\frac{\upr}{4}r^4+O(r^6).
\]
We now claim that for $\upr>0$, 
\begin{equation}\label{eq:shooting_monotonicity}
 \partial_{\upr}U(r,\upr)>0,
 \qquad
 \partial_{\upr}a(r,\upr)<0
\end{equation}
for as long as the solution exists.  In particular, increasing the shooting
parameter raises $U$ and lowers $a$ on every common interval of existence.
Indeed, by the expansions above we have $V>0$ and $B>0$ for small $r>0$. If $V$ had a first zero at some $\rVzero >0$, then $V>0$ on $(0,\rVzero )$ would imply
$B'=rUV>0$ there and hence $B(\rVzero )>0$.  Evaluating the first line of~\eqref{eq:v_ode} at $\rVzero $ gives $V'(\rVzero )=\frac{U(\rVzero )}{\rVzero }B(\rVzero )>0$, contradicting the
choice of $\rVzero $ as the first zero.  Hence $V>0$, and therefore $B>0$, on every
interval of existence.

We now analyze the possible  crossings of the level $1$ by $a$ and~$U$.  Thus, suppose 
that $U(\rcrossingz ,\upr_0)=1$, where $\rcrossingz >0$ is minimal.  Then $a$ cannot
have reached $1$ at an earlier radius.  Indeed, if $a(\rcrossingo ,\upr_0)=1$ first,
with $\rcrossingo <\rcrossingz $, then $U(\rcrossingo ,\upr_0)<1$ and
\[
 U'(\rcrossingo ,\upr_0)=0,
 \qquad
 U''(\rcrossingo ,\upr_0)
 =-\frac12\bigl(1-U(\rcrossingo ,\upr_0)^2\bigr)U(\rcrossingo ,\upr_0)<0.
\]
Immediately after $\rcrossingo $ one therefore has $a>1$ and $U'<0$.  As long as
$U<1$, one also has $a'>0$, so these inequalities persist and $U$ can never
reach $1$, a contradiction.  Simultaneous equality $U=a=1$ is also
impossible, since uniqueness at the positive radius would make the solution
identically equal to $(1,1)$.  Consequently, for $0<r<\rcrossingz $
\[
 0<U(r,\upr_0)<1,\qquad 0<a(r,\upr_0)<1,\qquad a(\rcrossingz ,\upr_0)<1,
\]
and the first $U$--crossing is transverse:
\begin{equation}\label{eq:U_cross_transverse}
 U'(\rcrossingz ,\upr_0)=\frac{1-a(\rcrossingz ,\upr_0)}{\rcrossingz }>0.
\end{equation}
The same argument with the roles reversed shows that, at a first
$a$--crossing,
\begin{equation}\label{eq:a_cross_transverse}
 U(\rcrossingz ,\upr_0)<1,\qquad
 a'(\rcrossingz ,\upr_0)=\frac{\rcrossingz }{2}\bigl(1-U(\rcrossingz ,\upr_0)^2\bigr)>0.
\end{equation}
It also shows that a solution cannot have both a $U$--crossing and an
$a$--crossing: after a first $U$--crossing, $a$ has a strict maximum below
$1$ and then decreases, whereas after a first $a$--crossing, $U$ has a strict
maximum below $1$ and then decreases.

The monotonicity \eqref{eq:shooting_monotonicity} now gives the crossing dichotomy: {\em  If the solution for $\upr_0>0$ first reaches $U=1$ at $\rcrossingz $, then
every $\upr_1>\upr_0$ reaches $U=1$ at  $\rcrossingo <\rcrossingz $. Similarly, if the solution for $\upr_0$ first reaches
$a=1$ at $\rcrossingz $, then every $0<\upr_1<\upr_0$ reaches $a=1$ at a radius
strictly smaller than $\rcrossingz $.}

To see this, suppose that the
$\upr_1$--solution has not crossed $U=1$ before $\rcrossingz $.  On its common
interval with the $\upr_0$--solution, \eqref{eq:shooting_monotonicity} gives
$a(r,\upr_1)<a(r,\upr_0)<1$.  Before a $U$--crossing we therefore have
$0<U(r,\upr_1)<1$ and $0<a(r,\upr_1)<1$, so the solution remains bounded and
continues up to $\rcrossingz $.  But then
$U(\rcrossingz ,\upr_1)>U(\rcrossingz ,\upr_0)=1$, and the intermediate value theorem gives a
crossing before $\rcrossingz $.  The $a$--crossing assertion is proved similarly.

Define the {\em crossing sets}
\begin{align*}
 \calC_U&:=\{\upr\geq0:\ U(r,\upr)=1
       \text{ for some }0<r<R(\upr)\},\\
 \calC_a&:=\{\upr\geq0:\ a(r,\upr)=1
       \text{ for some }0<r<R(\upr)\}.
\end{align*}
By the preceding, these sets are disjoint, and we claim that they are relatively open in $[0,\infty)$.  For a positive
parameter in either crossing set, choose $r>0$ immediately before and after
the first crossing.  The strict inequalities at these two radii given by
\eqref{eq:U_cross_transverse} or \eqref{eq:a_cross_transverse} persist for
nearby parameters by lower semicontinuity of $R$ and continuous dependence. The intermediate value theorem then gives a crossing for every nearby
parameter.  At the endpoint $\upr=0$ one has
\[
 U(r,0)=0,\qquad a(r,0)=\frac{r^2}{4},
\]
so $a$ crosses $1$ transversely at $r=2$.  The same argument gives a relative
neighborhood $[0,\epsilon)$ contained in $\calC_a$.  This proves the claim.

The set $\calC_a$ is nonempty by the explicit solution at $\upr=0$.  The set
$\calC_U$ is nonempty as well.  Indeed, for as long as $0\leq U\leq1$,
\[
 a(r)\leq\frac{r^2}{4}
\]
and hence, for $0<r_\varepsilon<r$ before the first $U$--crossing,
\[
 \log\frac{U(r)}{U(r_\varepsilon)}
 \geq\int_{r_\varepsilon}^r\frac{1-s^2/4}{s}\,ds.
\]
Letting $r_\varepsilon\to0+$ gives
\[
 U(r)\geq\upr r e^{-r^2/8}.
\]
At $r=2$ the right-hand side equals $2\upr e^{-1/2}$, and consequently
\[
 [\sqrt e/2,\infty)\subset\calC_U.
\]
Define
\[
 \uprzCa:=\sup\calC_a,\qquad \uprzCU:=\inf\calC_U.
\]
By the crossing dichotomy, relative openness, and the disjointness proved
above, one has $0<\uprzCa\leq\uprzCU<\infty$ and
\begin{equation}\label{eq:crossing_intervals}
 \calC_a=[0,\uprzCa),
 \qquad
 \calC_U=(\uprzCU,\infty).
\end{equation}

If $\upr\in[\uprzCa,\uprzCU]$, then neither component crosses $1$.  It follows
that
\[
 0<U(r,\upr)<1,\qquad 0<a(r,\upr)<1
\]
throughout the maximal interval.  Both components are then strictly
increasing and bounded, and the solution extends to every $r>0$.  Let their
limits be $U_\infty$ and $a_\infty$.  If $U_\infty<1$, then
$a'(r)\geq c r$ for all large $r$ and some $c>0$, contradicting $a<1$.
Thus $U_\infty=1$.  If $a_\infty<1$, then for some $c>0$ and all large $r$,
\[
 \frac{U'(r)}{U(r)}=\frac{1-a(r)}r\geq\frac cr,
\]
which forces $U$ to grow past $1$.  Hence $a_\infty=1$.  Every parameter in
$[\uprzCa,\uprzCU]$ therefore produces a solution converging to $(1,1)$.

It remains to prove uniqueness.  Suppose that two parameters
$\upr_2>\upr_1>0$ both produce such a solution, and set
\[
 \Delta U:=U(\,\cdot\,,\upr_2)-U(\,\cdot\,,\upr_1),
 \qquad
 \Delta a:=a(\,\cdot\,,\upr_2)-a(\,\cdot\,,\upr_1).
\]
The expansions at zero show that $\Delta U>0$ and $\Delta a<0$ for small
positive $r$.  Moreover,
\begin{equation}\label{eq:Delta_ode_complete}
\begin{aligned}
 \Delta U'
 &=\frac{1-a(r,\upr_1)}r\Delta U
   -\frac{\Delta a}{r}U(r,\upr_2),\\
 \Delta a'
 &=-\frac r2\bigl(U(r,\upr_2)+U(r,\upr_1)\bigr)\Delta U.
\end{aligned}
\end{equation}
Since $a(r,\upr_1)<1$, an argument involving a first-zero (see above) applied to~\eqref{eq:Delta_ode_complete} shows that
\[
 \Delta U(r)>0,\qquad \Delta a(r)<0
 \quad\text{for every }r>0.
\]
In fact the first equation then gives $\Delta U'(r)>0$ for every $r>0$.
This is impossible because both profiles converge to $1$, and hence
$\Delta U(r)\to0$ as $r\to\infty$.  Thus the parameter is unique.  It follows
that $\uprzCa=\uprzCU=: \uprz$, which proves existence and uniqueness.  
\end{proof}

\begin{rem}\label{rem:J0}
    The proof implies the following practical enclosure criterion, which we will rely on for computations: if $0<\upr_-<\upr_+$ satisfy $a(r_-,\upr_-)>2$ for some $r_->0$ and $U(r_+,\upr_+)>2$ for some $r_+>0$, then the unique stable value $\uprz$ satisfies $\uprz\in [\upr_-,\upr_+]$. The reason we use $>2$ rather than $>1$ is the greater stability of the former.
\end{rem}

We now turn to the task of approximating the critical value of~$\upr=\uprz$ in Proposition~\ref{prop:exuniqUa}. We begin by writing the solutions of~\eqref{numerics2} as Frobenius series near~$r=0$. 
Changing variables $U(r)=\upr rW(r)$ with $W(0)=1$ and $a(r)=\frac{r^2}{4}b(r)$ with $b(0)=1$ leads to the system~\eqref{eq:Wbsys}, viz.
\begin{align*}
    W'(r) &= -\frac{r}{4}b(r) W(r),\\
    b'(r) &= \frac{2}{r}(1-b(r))- 2\upr^2 r W(r)^2.
\end{align*}
Although we can solve for $W$ in terms of $b$, we choose to keep the system in this form. 
We seek solutions in the form of power series 
\begin{equation}
    \label{eq:Wb}
    W(r) = \sum_{j=0}^\infty (-1)^j c_j r^{2j}, \quad b(r)= \sum_{\ell=0}^\infty (-1)^\ell  d_\ell r^{2\ell},
\end{equation}
with $c_0=1$ and $d_0=1$. The equations  yield the recursion
\begin{equation}\label{eq:recur}
    \begin{aligned}
      c_n & =\frac{1}{8n} \sum_{j+\ell=n-1}  c_j d_\ell, \\
      d_n &= \frac{\upr^2}{n+1} \sum_{j+\ell=n-1}  c_j c_\ell,
    \end{aligned}
    \end{equation}
    for all $n\ge1$.  
The range of $\upr$ in the following lemma contains the previously reported estimate $\uprz=U'(0)=0.6032\pm 10^{-4}$ (see \cite{VeSh}). More recent numerical studies (e.g.\ \cite{AIBPMCNOQu}) use the critical-coupling vortex profiles as input but do not, to our knowledge, report a refined standalone value for $U'(0)$.

\begin{lem}\label{lem:Wberror}
   For each $0\le \upr \le \frac{1}{\sqrt{2}}$ the recursion \eqref{eq:recur} has unique nonnegative solutions that satisfy the bounds $c_n\le 2^{-n}$ and $d_n\le 2^{-n}$ for each $n\ge0$. In conclusion, the power series~\eqref{eq:Wb} converge absolutely for $0\le r<\sqrt2$. Moreover, $U(r) = \upr rW(r)$ and $a(r)=\frac{r^2}{4}b(r)$ are analytic solutions of~\eqref{numerics2} in that interval. Finally, for every integer $N\ge0$ and $0\le r\le \frac14$,
   \[
    \Big |W(r) -  \sum_{j=0}^N (-1)^j c_j r^{2j}\Big| \le \frac{ 2^{-5N}}{31},\qquad \Big |b(r) -  \sum_{j=0}^N (-1)^j d_j r^{2j}\Big| \le\frac{ 2^{-5N}}{31}.
   \]
\end{lem}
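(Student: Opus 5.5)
The plan is a strong induction on $n$ for the coefficient bounds, followed by routine consequences. Existence and uniqueness of the coefficients is immediate: \eqref{eq:recur} expresses $c_n,d_n$ through $c_j,d_\ell$ with $j,\ell\le n-1$, so the sequences are determined by $c_0=d_0=1$. Nonnegativity follows by induction, since every term on the right-hand side is a product of nonnegative quantities with positive prefactors $\frac1{8n}$ and $\frac{\upr^2}{n+1}$.

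For the bound $c_n,d_n\le 2^{-n}$ I would assume it for all indices below $n$. The convolution $\sum_{j+\ell=n-1}c_jd_\ell$ has $n$ terms, each at most $2^{-(n-1)}$, so it is at most $n\,2^{1-n}$. Hence
\[
c_n\le \frac{n\,2^{1-n}}{8n}=\frac{2^{-n}}{4}\le 2^{-n},
\qquad
d_n\le \frac{\upr^2\,n\,2^{1-n}}{n+1}\le \frac12\cdot 2\cdot 2^{-n}\cdot\frac{n}{n+1}\le 2^{-n},
\]
where the second estimate uses $\upr^2\le\frac12$. This is the only place the hypothesis $\upr\le 1/\sqrt2$ enters.

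It follows that $\sum_j c_j r^{2j}$ and $\sum_\ell d_\ell r^{2\ell}$ are dominated by $\sum (r^2/2)^j$, so they converge absolutely for $r^2<2$ and define analytic functions there. Multiplication of power series then turns the recursion into $W'=-\frac r4 bW$ and $r b'=2(1-b)-2\upr^2r^2W^2$, the latter being equivalent to $(n+1)d_n=\upr^2\sum c_jc_\ell$ after matching powers, with the alternating signs $(-1)^j$ bookkept correctly. Since $W(0)=b(0)=1$, the pair $U=\upr rW$, $a=\frac{r^2}4b$ solves \eqref{numerics2} with the behavior \eqref{Uaexp0}, and by the uniqueness part of Proposition~\ref{prop:exuniqUa} it is \emph{the} solution on $[0,\sqrt2)$.

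For the tail estimate, take $r\le\frac14$, so $r^2\le 2^{-4}$ and $c_jr^{2j}\le 2^{-j}2^{-4j}=2^{-5j}$. The remainder is then bounded by
\[
\sum_{j>N}2^{-5j}=\frac{2^{-5(N+1)}}{1-2^{-5}}=\frac{2^{-5N}}{31},
\]
and the same computation applies to $b$.

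None of these steps is a serious obstacle. The only points needing care are that the sign convention in \eqref{eq:Wb} matches the recursion exactly, and that the $d_n$ bound closes precisely because $\upr^2\le\frac12$ together with $\frac{n}{n+1}<1$.
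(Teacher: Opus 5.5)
Your proof is correct and matches the paper's argument essentially line by line. It uses the same strong induction, giving $c_n\le 2^{-n-2}$ and $d_n\le \frac{n}{n+1}2^{-n}$ with $\upr^2\le\frac12$ entering only in the latter, the same absolute convergence by comparison with $\sum (r^2/2)^j$, and the same geometric tail $\sum_{j>N}2^{-5j}=2^{-5N}/31$ for $r\le\frac14$. Your extra identification of the series with the shooting solution via local uniqueness is not needed for the lemma as stated, but it is harmless.
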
 
\begin{proof}
The coefficients are uniquely determined by \eqref{eq:recur} and are
nonnegative. Suppose inductively that $c_j,d_j\le2^{-j}$ for $j<n$.
Since each sum in \eqref{eq:recur} has $n$ terms,
\[
 c_n\le \frac1{8n}\,n2^{-(n-1)}=2^{-n-2}\le2^{-n},
\]
and, since $\upr^2\le1/2$,
\[
 d_n\le \frac1{2(n+1)}\,n2^{-(n-1)}
      =\frac{n}{n+1}2^{-n}\le2^{-n}.
\]
This closes the induction. The coefficient bounds give absolute convergence
for $r<\sqrt2$, so termwise substitution in \eqref{eq:Wbsys} is justified.
Finally, for $0\le r\le1/4$, put $q=r^2/2\le1/32$. Then
\[
 \sum_{n>N}2^{-n}r^{2n}
 =\frac{q^{N+1}}{1-q}
 \le\frac{2^{-5N}}{31},
\]
which proves both remainder estimates.
\end{proof}

We denote the unique solution of \eqref{numerics2} by
$(U(r,\upr),a(r,\upr))$.  We use the truncations
\[
 U_N(r,\upr):=\upr r\sum_{j=0}^N(-1)^jc_j(\upr)r^{2j},
 \qquad
 a_N(r,\upr):=\frac{r^2}{4}\sum_{j=0}^N(-1)^jd_j(\upr)r^{2j},
\]
for
\begin{align}\label{app_kinterval}
 \upr\in J_0:=[0.6032878545810,\,0.6032878545819].
\end{align}
Proposition~\ref{prop:J0slope} will establish that $\uprz\in J_0$. First, we use the previous Frobenius series to determine initial data for the ODE~\eqref{numerics2} at $\rorigin=0.1$ and all $\upr\in J_0$. 

\begin{lem}\label{lem:I0}
At the radius $\rorigin=0.1$  
\begin{equation}\label{eq:vortex_start_box}
 \begin{aligned}
 U(\rorigin,\upr)&\in[0.0602534900425221,\,0.0602534900426123],\\
 a(\rorigin,\upr)&\in[0.00249545811721475,\,0.00249545811721477],
 \end{aligned}
\end{equation}
uniformly for $\upr\in J_0$.
\end{lem}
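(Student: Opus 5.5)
We will obtain the box \eqref{eq:vortex_start_box} by evaluating the truncated Frobenius series at $r=\rorigin=0.1$ in outward-rounded interval arithmetic, with $\upr$ carried as the interval $J_0$, and then adding the analytic remainder from Lemma~\ref{lem:Wberror}.

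First, since $J_0\subset[0,1/\sqrt2]$ and $\rorigin=0.1\le 1/4$, Lemma~\ref{lem:Wberror} applies uniformly in $\upr\in J_0$. It gives
\[
U(\rorigin,\upr)=\upr\rorigin\,W(\rorigin,\upr),\qquad a(\rorigin,\upr)=\tfrac{\rorigin^2}{4}\,b(\rorigin,\upr),
\]
together with the bounds $|W-W_N|,\ |b-b_N|\le 2^{-5N}/31$, where $W_N,b_N$ denote the partial sums up to degree $N$.

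The remainder bound $2^{-5N}/31$ is wasteful, since it uses $r\le 1/4$ and not $r=0.1$. We therefore redo the geometric-tail sum with $q=\rorigin^2/2=0.005$, which gives a tail of order $q^{N+1}/(1-q)$. Alternatively, we can simply take $N$ large, say $N=20$, which already makes the error below $10^{-30}$.

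Second, the coefficients $c_j(\upr),d_j(\upr)$ are computed by the recursion \eqref{eq:recur} in interval arithmetic, with $\upr^2$ enclosed from $J_0$. All terms are nonnegative polynomials in $\upr^2$, so each coefficient is monotone in $\upr$. The enclosure could thus equally be computed at the two endpoints of $J_0$, but plain interval evaluation suffices. We then form the sums $W_N(\rorigin)$ and $b_N(\rorigin)$, widen each by the remainder bound, and multiply by $\upr\rorigin$ and $\rorigin^2/4$ respectively.

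Finally, we check that the resulting intervals lie inside the stated ones. This check is delegated to the CAPD code at 220-bit precision.

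The only real point of care is the width of the $U$-box. It is dominated by the width of $J_0$ times $\rorigin$, roughly $10^{-13}$, and this matches the stated interval of width about $9\cdot10^{-14}$. We must make sure that the dependence on $\upr$ is not overestimated through interval wrapping in the recursion. The safest option is to exploit the monotonicity in $\upr$ and evaluate at the endpoints: $U$ is increasing in $\upr$ by \eqref{eq:shooting_monotonicity}, and $a$ is decreasing in $\upr$. For $a$, the $\upr$-dependence enters only at order $\upr^2 r^4/8\sim 10^{-5}$, times a tiny variation in $\upr$, which explains the very narrow $a$-box.
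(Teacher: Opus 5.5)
Your proposal is correct and follows the paper's argument: the recursion \eqref{eq:recur} is evaluated in outward-rounded interval arithmetic over all of $J_0$, and the refined geometric tail $q_*^{N+1}/(1-q_*)$ with $q_*=(0.1)^2/2=1/200$ is added (the paper truncates at $c_{80}$ and $d_{79}$, which puts both tails below $3\cdot10^{-187}$). Your endpoint evaluation via the monotonicity \eqref{eq:shooting_monotonicity} is a valid safeguard but unnecessary, since the widening from interval wrapping is negligible compared with the box widths of about $9\cdot10^{-14}$ and $2\cdot10^{-17}$; the paper uses plain interval evaluation over $J_0$.
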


\begin{proof}
For \eqref{eq:vortex_start_box}, the code evaluates the Frobenius recursion
through $c_{80}$ for $U$ and through $d_{79}$ for $a$.  With $\upr_+ := 0.6032878545819$, and 
$q_*=\rorigin^2/2=1/200$, Lemma~\ref{lem:Wberror} bounds the two omitted tails by
\begin{align*}
 \bigl|U(\rorigin,\upr)-U_{80}(\rorigin,\upr)\bigr|&\le \upr_+\rorigin\frac{q_*^{81}}{1-q_*}, \qquad \  \bigl|a(\rorigin,\upr)-a_{79}(\rorigin,\upr)\bigr| \le \frac{\rorigin^2}{4}\frac{q_*^{80}}{1-q_*},
\end{align*}
both of which are smaller than $3\cdot10^{-187}$.  Outward-rounded interval
evaluation of the finite recursion over $J_0$ then gives
\eqref{eq:vortex_start_box}.
\end{proof}

The following is a key enclosure of the unique value of $\upr$ that gives us the vortex.

\begin{prop}\label{prop:J0slope}
If $U$ is the true vortex profile, then $\uprz=U'(0)\in J_0$.
\end{prop}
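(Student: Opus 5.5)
The plan is to apply the bracketing criterion of Remark~\ref{rem:J0} to the two endpoints of $J_0$, namely $\upr_-:=0.6032878545810$ and $\upr_+:=0.6032878545819$. We must exhibit radii $r_\pm>0$ with
\[
 a(r_-,\upr_-)>2,\qquad U(r_+,\upr_+)>2 .
\]
By the crossing dichotomy in the proof of Proposition~\ref{prop:exuniqUa}, the first inequality forces $\upr_-\in\calC_a=[0,\uprz)$. The second forces $\upr_+\in\calC_U=(\uprz,\infty)$. Together these give $\uprz\in J_0$.

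To verify the two inequalities rigorously, we avoid the singularity at $r=0$. For each endpoint slope we take as initial data at $\rorigin=0.1$ the certified box~\eqref{eq:vortex_start_box} of Lemma~\ref{lem:I0}, which is valid uniformly on $J_0$ and in particular at $\upr_\pm$. One could instead re-evaluate the Frobenius truncations at the single points $\upr_\pm$ with the remainder bound of Lemma~\ref{lem:Wberror}, which gives tighter boxes. The system~\eqref{numerics2} is regular on $[0.1,\infty)$. We propagate these interval initial data forward with the CAPD Taylor integrator in outward-rounded multiprecision interval arithmetic. This produces tubes that provably contain the exact solutions $(U(r,\upr_\pm),a(r,\upr_\pm))$ for as long as the enclosure remains bounded.

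We continue the integration until we reach a radius $r_-$ where the lower endpoint of the enclosure of $a(\cdot,\upr_-)$ exceeds $2$, and a radius $r_+$ where the lower endpoint of the enclosure of $U(\cdot,\upr_+)$ exceeds $2$. Two facts guarantee these radii exist.
\begin{itemize}
\item[(a)] Past a crossing the solutions leave $[0,1]^2$ and diverge, since the crossings are transverse. This is why the threshold $2$ rather than $1$ is used: it gives a robust, strict interval inequality.
\item[(b)] Some care is needed, because a crossing for the $\upr_+$ branch might not occur before blow-up. The argument only requires $U(r_+,\upr_+)>2$ at some radius in the existence interval, and the enclosure itself certifies that the solution exists up to $r_+$.
\end{itemize}

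The main obstacle is the exponential instability of the linearization about the vortex. The endpoints differ from $\uprz$ by only about $10^{-12}$, so the two trajectories shadow the vortex, which is the separatrix, for a long distance before peeling off. The separation grows roughly like $e^{r}$, so the crossing occurs only at $r\approx 25$–$30$. Over that range the interval width must not swamp the signal: the initial width $\sim 10^{-13}$ in $U$ is amplified by the same factor $e^{r}$.

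To control this, I would proceed in three steps.
\begin{itemize}
\item[(1)] Start from point-slope boxes at $\upr_\pm$ rather than the $J_0$-uniform box, with widths $\sim 10^{-60}$ coming from the Frobenius tail.
\item[(2)] Use $220$-bit precision and high-order Taylor steps with a Lohner-type (QR) wrapping-effect control, so that enclosure width grows only like the true exponential growth rate.
\item[(3)] Check that the final enclosures are tight enough to decide both inequalities.
\end{itemize}
If this fails, the fallback is to enlarge $J_0$ slightly or to stop earlier at a lower threshold. The latter remains valid because Remark~\ref{rem:J0} only needs some point past the crossing.
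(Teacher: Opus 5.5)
Your proposal is essentially the paper's proof: apply Remark~\ref{rem:J0} at the endpoints $\upr_\pm$ of $J_0$ and propagate Frobenius initial data from $\rorigin=0.1$ with CAPD, which the paper does to certify $a(27.75,\upr_-)-2>0.0104$ and $U(31.6,\upr_+)-2>0.0021$. The one point to sharpen is that your step~(1) is necessary, not an optional refinement: the $J_0$-uniform box \eqref{eq:vortex_start_box} contains the initial data of both $\upr_-$ and $\upr_+$, whose trajectories end on opposite crossing sides, so no enclosure propagated from it could certify either inequality, and one must use boxes computed with the shooting parameter frozen at each endpoint.
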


\begin{proof}
Set $\upr_-=0.6032878545810$ and $\upr_+=0.6032878545819$.  Starting
from the certified initial enclosures \eqref{eq:vortex_start_box}, the
CAPD solver propagates the vortex system \eqref{numerics2} with the
shooting parameter frozen at each endpoint. The decimal inputs are first
enclosed by directed interval parsing, and all subsequent arithmetic is
rounded outward. The multiprecision transcript
\cite{AYMHPartICertificateRepo} records
\[
 \begin{aligned}
 a(27.75,\upr_-)-2
 &\in[0.01048373219531,\,0.01048373219532],\\
 U(31.6,\upr_+)-2
 &\in[0.00213149536392,\,0.00213149536394].
 \end{aligned}
\]
The endpoints therefore lie on the two opposite crossing sides described in
Proposition~\ref{prop:exuniqUa}.  Remark~\ref{rem:J0} yields
$\uprz\in[\upr_-,\upr_+]=J_0$.  This use of $J_0$ is not circular:
Lemma~\ref{lem:I0} constructs the initial enclosures uniformly for every
parameter in the prescribed interval $J_0$, without assuming that
$\uprz\in J_0$.
\end{proof}

Next, we solve for $(U,a)$ from $r=\infty$ assuming that $(U(r),a(r))\to (1,1)$
as $r\to\infty$.  The linearized equations about~$(1,1)$ have   the modified Bessel functions $K_\nu(r), I_\nu(r)$ as a fundamental system (see the following proof for details). 
Recall the asymptotic expansions 
\begin{equation}\begin{aligned}
    I_0(r) &= \frac{e^r}{\sqrt{2\pi r}}\Big(1 + \frac{1}{8r}+O(r^{-2})\Big), \quad I_1(r) = \frac{e^r}{\sqrt{2\pi r}}\Big(1 - \frac{3}{8r}+O(r^{-2})\Big), \\
    K_0(r) &= \frac{\sqrt{\pi} e^{-r}}{\sqrt{2 r}}\Big(1 - \frac{1}{8r}+O(r^{-2})\Big), \quad K_1(r) = \frac{\sqrt{\pi}e^{-r}}{\sqrt{2 r}}\Big(1 + \frac{3}{8r}+O(r^{-2})\Big),
\end{aligned}
\label{eq:Bessel_KI}
\end{equation}
as  $r\to\infty$, see \url{https://dlmf.nist.gov/10.40}.

\begin{lem}
    \label{lem:Ua}
Let $(U,a)$ be the unique vortex profile given by
Proposition~\ref{prop:exuniqUa}.  Then 
\begin{equation}\label{eq:mB_definition}
 m_{\rm B}:=
 \lim_{r\to\infty}\frac{1-U(r)}{K_0(r)}
 =
 \lim_{r\to\infty}\frac{1-a(r)}{rK_1(r)}>0
\end{equation}
exists.  
\end{lem}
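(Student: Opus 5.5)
We set $u:=1-U$ and $v:=1-a$ and rewrite \eqref{numerics2} as a perturbation of the linear system obtained by linearizing at $(1,1)$. Since $U'=\frac{v}{r}U$ and $a'=\frac r2(1-U^2)$, we get
\[
 u'=-\frac{v}{r}+\frac{uv}{r},\qquad v'=-r u+\frac r2 u^2 .
\]
The linear part $u'=-v/r$, $v'=-ru$ gives $-(rv/r\,\cdot)$... more concretely, $w:=v/r$ satisfies $u'=-w$ and $(rw)'=-ru$, i.e.\ $w'+w/r=-u$. This is exactly the system in the proof of Lemma~\ref{lemB}, whose fundamental system is $(K_0,K_1)$ and $(I_0,-I_1)$. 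Thus $(u,w)\approx m(K_0,K_1)$, i.e.\ $1-U\approx mK_0$ and $1-a\approx m\,rK_1$, which explains the claimed identity of the two limits.

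First, I establish a priori exponential decay. By Proposition~\ref{prop:exuniqUa} and its proof, $0<U,a<1$, both are increasing, and both tend to $1$. Hence $u,v>0$ and $u,v\to0$. From $v'=-\frac r2(1-U)(1+U)\le -\frac r2 u$ and $u'=-\frac{v}{r}(1-u)\ge -(1+o(1))v/r$ I obtain the second-order inequality
\[
 u''+\frac1r u'-(1+o(1))u \approx 0.
\]
A comparison/maximum-principle argument on $[R,\infty)$, together with $u\to0$, then gives $u=O(e^{-(1-\epsilon)r})$ for every $\epsilon>0$, and similarly for $v/r$.

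Second, I use variation of parameters. Write $(u,w)=\alpha(r)(K_0,K_1)+\beta(r)(I_0,-I_1)$. The Wronskian-type identity $K_0I_1+K_1I_0=1/r$ gives explicit formulas for $\alpha',\beta'$ in terms of the nonlinear forcing $N=(uw,\ \tfrac12 ru^2/r)$, which is $O(e^{-2(1-\epsilon)r})$. Then $\beta'=O(r\,K\cdot N)$ is integrable, and decay forces $\beta(r)=-\int_r^\infty\beta'$, which is $O(e^{-(3-2\epsilon)r})$, so the $I$-component is negligible. Next, $\alpha'=O(r\,I\cdot N)=O(e^{-(1-2\epsilon)r})$ is integrable, so $\alpha(r)\to m_{\rm B}$. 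This yields both limits in \eqref{eq:mB_definition} simultaneously, with the same constant, because $w=v/r$ carries the coefficient of $K_1$. One could bootstrap first to sharpen the decay to $O(r^{-1/2}e^{-r})$, but the $\epsilon$-losses above suffice.

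Third, I prove positivity, which I expect to be the main obstacle. Since $u>0$, we get $m_{\rm B}\ge0$. To exclude $m_{\rm B}=0$, note that if $\alpha\to0$ then $u=-\int_r^\infty\alpha' K_0+\dots$, which is $O(e^{-(2-\epsilon)r})$. Iterating the representation, each step doubles the decay rate. I would then argue that $u$ decays faster than any exponential, and pair this with a unique continuation/Gronwall argument on the system $\frac{d}{dr}(u,w)=A(r)(u,w)$ with bounded $A$, integrated backward from infinity. That forces $u\equiv w\equiv0$, contradicting $U<1$.

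An alternative route to positivity uses a sign argument directly. From $v'<0$ and $u'<0$ I get $u\ge$ const$\cdot K_0$ via a comparison function. Specifically, $\tilde u=cK_0$ is a subsolution of the linearized equation because the nonlinear corrections $uv/r$ and $-\tfrac r2u^2$ have favorable signs. Checking those signs is where the care lies.
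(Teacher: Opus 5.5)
Your proposal is correct, and its core is the paper's argument: your $\alpha=rI_1u+I_0v$ and $\beta=rK_1u-K_0v$ are exactly the paper's $A$ and $D$. In both, the two limits exist and coincide because $\alpha'$ is integrable once $u$ decays at some rate $\gamma>\tfrac12$. The routes differ in the two auxiliary steps.

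\textbf{A priori decay.} The paper sets $q=-2\log U$, which satisfies the exact equation $q''+\frac1r q'=1-e^{-q}$, and uses that $q'^2-\gamma^2q^2$ is nonincreasing. Your comparison route also works, but not as written.
\begin{itemize}
\item Keeping the $\frac1r u'$ term gives $u''+\frac1r u'=\frac12U(1+U)u-Uv^2/r^2$, and the last term has the wrong sign.
\item Two first-order inequalities cannot be combined into a second-order one. You must differentiate the identity $ru'=-Uv$ and only then use the sign of $v'$.
\end{itemize}
Doing so gives
\[
u''=\tfrac12U(1+U)\,u+\frac{Uv}{r^2}(1-v)\ \ge\ \tfrac12U(1+U)\,u .
\]
Comparison with $Ce^{-\gamma r}$ then gives $u=O(e^{-\gamma r})$ for every $\gamma<1$.

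\textbf{Positivity.} Your first route is valid: assuming $m_{\rm B}=0$, the decay rate doubles at each iteration, and a backward Gronwall estimate for $X'=A(r)X$ with bounded $A$ forces $X\equiv0$. But you treat as the main obstacle something that is immediate from formulas you already wrote. Since $u,v>0$,
\[
\alpha=rI_1u+I_0v>0,\qquad \alpha'=I_1uv+\tfrac r2 I_0u^2>0,
\]
so $\alpha$ is positive and increasing, and $m_{\rm B}=\lim\alpha\ge\alpha(r_0)>0$ for any $r_0$. This is the paper's one-line argument.

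\textbf{Comparison.} Your unique-continuation route does not rely on the sign structure of the nonlinearity, so it would survive where that structure fails. The paper's route is much shorter and also gives an explicit lower bound on $m_{\rm B}$ from data at any finite radius. Your subsolution alternative is left unchecked and is not needed.
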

\begin{proof}
Set
\[
 \widetilde U:=1-U,\qquad \widetilde a:=1-a.
\]
Then \eqref{numerics2} becomes
\begin{equation}\label{eq:sysinfty}
\begin{aligned}
 \widetilde U'
 &=-r^{-1}\widetilde a+r^{-1}\widetilde a\,\widetilde U,\\
 \widetilde a'
 &=-r\widetilde U+\frac r2\widetilde U^2.
\end{aligned}
\end{equation}
We first establish the limits in \eqref{eq:mB_definition}.  Introduce 
\[
 q(r):=-2\log U(r).
\]
By Proposition~\ref{prop:exuniqUa}, one has $q>0$, $q'<0$, and
$q,q'\to0$ as $r\to\infty$.  Moreover,
\[
 q''+\frac1r q'=1-e^{-q}.
\]
Fix $0<\gamma<1$.  For all sufficiently large $r$,
$1-e^{-q}\geq\gamma^2q$, and hence $q''\geq\gamma^2q$.  Therefore
\[
 \frac{d}{dr}\bigl(q'^2-\gamma^2q^2\bigr)
 =2q'\bigl(q''-\gamma^2q\bigr)\leq0.
\]
Since $q'^2-\gamma^2q^2\to0$ at infinity, this nonincreasing quantity is
nonnegative.  Thus $-q'\geq\gamma q$, and
\begin{equation}\label{eq:preliminary_U_decay}
 \widetilde U(r)=O(e^{-\gamma r}).
\end{equation}
The second equation in \eqref{eq:sysinfty} gives
\[
 \widetilde a(r)
 =\int_r^\infty
 s\widetilde U(s)\left(1-\frac{\widetilde U(s)}2\right)\,ds,
\]
so
\begin{equation}\label{eq:preliminary_a_decay}
 \widetilde a(r)=O(re^{-\gamma r}).
\end{equation}
The linear part has the fundamental solutions
\[
 (I_0(r),-rI_1(r)),\qquad (K_0(r),rK_1(r)).
\]
Write
\[
 (\widetilde U,\widetilde a)
 =A(r)(K_0,rK_1)+D(r)(I_0,-rI_1).
\]
Using the Wronskian relation 
\begin{equation}\label{eq:WrKI}
 r\bigl(K_0(r)I_1(r)+K_1(r)I_0(r)\bigr)=1,
\end{equation}
one obtains
\[
 A=rI_1\widetilde U+I_0\widetilde a,\qquad
 D=rK_1\widetilde U-K_0\widetilde a,
\]
and, from \eqref{eq:sysinfty},
\begin{align*}
 A'
 &=rI_1\frac{\widetilde a\,\widetilde U}{r}
   +I_0\frac r2\widetilde U^2,\\
 D'
 &=rK_1\frac{\widetilde a\,\widetilde U}{r}
   -K_0\frac r2\widetilde U^2.
\end{align*}
Choose $\gamma>1/2$ in
\eqref{eq:preliminary_U_decay}, \eqref{eq:preliminary_a_decay}.  The estimates in \eqref{eq:Bessel_KI} then show that $A'$ and $D'$ are integrable.  Hence
$A(r)$ has a finite limit, denoted by $m_{\rm B}$, while the displayed
formula for $D$ gives $D(r)\to0$.  More precisely,
\[
 A(r)-m_{\rm B}
 =O\bigl(r^{1/2}e^{-(2\gamma-1)r}\bigr),
 \qquad
 D(r)=O\bigl(r^{1/2}e^{-(2\gamma+1)r}\bigr).
\]
It follows that
\begin{equation}\label{eq:Ua_Bessel_asymptotic}
\begin{aligned}
 \widetilde U(r)
 &=m_{\rm B}K_0(r)+O(e^{-2\gamma r}),\\
 \widetilde a(r)
 &=m_{\rm B}rK_1(r)+O(re^{-2\gamma r}).
\end{aligned}
\end{equation}
Furthermore, $A(r)>0$ and $A'(r)>0$, so $m_{\rm B}>0$.  This proves
\eqref{eq:mB_definition}.  
\end{proof}

\noindent Using \eqref{eq:Bessel_KI} and choosing $2\gamma>1$ in \eqref{eq:Ua_Bessel_asymptotic} yields
\begin{align}
U(r)&= 1-mr^{-\frac12}e^{-r}+O(r^{-\frac32}e^{-r}), & &    a(r)= 1-mr^{1/2}e^{-r}+O(r^{-\frac12}e^{-r}) , & &   r\to\infty,  \qquad 
\label{Uaexpinfty}
\end{align}
with $m=\sqrt{\pi/2}\,m_{\rm B}$.
\def\rzeroUbound{4}\def\rzeroUparameter{3}
Although the previous lemma determines the asymptotic behavior of $U(r)$
and $a(r)$ to leading order for large~$r$, it does not provide a
certified numerical enclosure for the coefficient $m$. We therefore
prove rigorous exponential upper and lower bounds for
$r>\rzeroUbound$ with explicit constants, which can be seen as an
effective version of Lemma~\ref{lem:Ua}. The only computational
ingredient is the verification, carried out by our C++ code using
certified interval arithmetic, of the inequalities in
\eqref{app_num_condition_a6} at the single, relatively small radius
$\rfour=\rzeroUbound$. Their extension to every $r>\rfour$ is entirely
analytic: once \eqref{app_num_condition_a6} has been certified at $\rfour$,
the comparison argument below yields \eqref{lem_1-U2_statement} for all
$r>\rfour$. 

\begin{lem}\label{lem_1-U2}
For all $r>\rzeroUbound$, one has
\begin{equation}\label{lem_1-U2_statement}
\begin{aligned}
1-\bigl(4r^{-1/2}-2r^{-3/2}+3r^{-5/2}\bigr)e^{-r}
&<U(r)<1-\tfrac75K_0(r)+K_0(r)^2,\\
1-4\sqrt r\,e^{-r}
&<a(r)<1-\tfrac75rK_1(r).
\end{aligned}
\end{equation}
\end{lem}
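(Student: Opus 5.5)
We prove the bounds by a comparison (barrier) argument for the first-order system \eqref{eq:sysinfty}, started from the certified inequalities \eqref{app_num_condition_a6} at $\rfour=4$. Write $\widetilde U=1-U$ and $\widetilde a=1-a$. The four claims then read
\[
\underline{u}<\widetilde U<\overline{u},\qquad \underline{\alpha}<\widetilde a<\overline{\alpha},
\]
with $\underline u=\tfrac75K_0-K_0^2$, $\overline u=(4r^{-1/2}-2r^{-3/2}+3r^{-5/2})e^{-r}$, $\underline\alpha=\tfrac75rK_1$ and $\overline\alpha=4\sqrt r\,e^{-r}$. The structural fact to exploit is the monotone sign pattern of \eqref{eq:sysinfty}:
\[
\widetilde U'=-r^{-1}\widetilde a\,(1-\widetilde U),\qquad \widetilde a'=-r\widetilde U\,\bigl(1-\tfrac12\widetilde U\bigr).
\]
Here $0<\widetilde U<1$ by Proposition~\ref{prop:exuniqUa}, and $1-\tfrac12\widetilde U$ is increasing in $\widetilde U$ on $[0,1]$. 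So a larger $\widetilde a$ makes $\widetilde U$ decrease faster, and a larger $\widetilde U$ makes $\widetilde a$ decrease faster. Since both components tend to $0$ at infinity, it is cleaner to argue \emph{backwards from infinity} in integrated form. Namely, for $r>4$,
\[
\widetilde a(r)=\int_r^\infty s\widetilde U(1-\tfrac12\widetilde U)\,ds,\qquad \widetilde U(r)=\int_r^\infty s^{-1}\widetilde a(1-\widetilde U)\,ds.
\]

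\textbf{Upper bounds.} Using the exact identities $(K_0)'=-K_1$ and $(rK_1)'=-rK_0$, I compute the defects of the barrier pairs. For the upper pair, I check that $\overline u\ge\int_r^\infty s^{-1}\overline\alpha\,ds$ and $\overline\alpha\ge\int_r^\infty s\,\overline u\,ds$ for $r\ge4$. These are elementary inequalities between explicit functions times $e^{-r}$. After differentiating, they reduce to polynomial inequalities in $r^{-1/2}$, valid for $r\ge4$. With these in hand, I run a continuity (first-crossing) argument. Let $r_*\in(4,\infty]$ be maximal such that both strict upper bounds hold on $(4,r_*)$; \eqref{app_num_condition_a6} supplies them at $r=4$. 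At a first crossing, say $\widetilde U(r_*)=\overline u(r_*)$, I need $\widetilde U'(r_*)\le \overline u'(r_*)$ to be contradicted. This follows from $\widetilde U'=-\widetilde a(1-\widetilde U)/r\ge-\overline\alpha/r>\overline u'$, which is the differential form of the barrier inequality, with strictness coming from a strict barrier defect. The crossing of $\widetilde a$ at $\overline\alpha$ is handled the same way. Because the curves are exponentially small, the factors $(1-\widetilde U)$ and $(1-\tfrac12\widetilde U)$ only help in this direction.

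\textbf{Lower bounds.} These need more care, because the factors $(1-\widetilde U)$ and $(1-\tfrac12\widetilde U)$ work against us. Here I use the already-proved upper bound $\widetilde U<\overline u$ inside those factors. The lower barriers $\tfrac75K_0-K_0^2$ and $\tfrac75rK_1$ are exactly the linear Bessel solution with a quadratic correction $-K_0^2$ built in to absorb these losses. I plan to verify
\[
-\underline\alpha(1-\overline u)/r<\underline u'\quad\text{and}\quad -r\underline u(1-\tfrac12\underline u)<\underline\alpha'=-rK_0\cdot\tfrac75.
\]
The second is immediate, since $\underline u(1-\underline u/2)<\tfrac75K_0$. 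The first becomes
\[
\tfrac75K_1(1-\overline u)>\tfrac75K_1-2K_0K_1,
\]
that is, $\tfrac75\overline u<2K_0$ for $r\ge4$. This is checkable from explicit Bessel lower bounds such as $K_0(r)\ge\sqrt{\pi/(2r)}e^{-r}(1-\tfrac1{8r})$.

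\textbf{Main obstacle.} The main difficulty is that barrier comparison for a \emph{coupled} system started at a finite point is not automatically well-posed. The solution is only pinned down by its decay at infinity, and the system has a growing mode $(I_0,-rI_1)$, so forward comparison from $r=4$ alone cannot exclude drift. I will handle this by combining the first-crossing argument with the known asymptotics of Lemma~\ref{lem:Ua}. A crossing cannot occur at $r_*=\infty$ in a degenerate way, because the barrier constants $4$ and $\tfrac75$ bracket $\sqrt{2/\pi}\,m\cdot$ appropriately. If this is not clear a priori, I instead argue directly with the integral identities on $(r,\infty)$, taking a supremum of the ratios $\widetilde U/\overline u$ and $\widetilde a/\overline\alpha$ over $[r,\infty)$. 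I expect this step to be the main difficulty, together with verifying the barrier defect inequalities uniformly for all $r\ge4$.
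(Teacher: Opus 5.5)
There is a genuine gap. Your first barrier inequality $\overline u\ge\int_r^\infty s^{-1}\overline\alpha\,ds$ is true, but the second is false. Since $(4s^{1/2}-2s^{-1/2})e^{-s}=-\frac{d}{ds}\bigl(4s^{1/2}e^{-s}\bigr)$, one has $\int_r^\infty s\,\overline u\,ds=\overline\alpha(r)+3\int_r^\infty s^{-3/2}e^{-s}\,ds>\overline\alpha(r)$. The factor $1-\tfrac12\overline u$ changes this only by $O(e^{-2r})$, so $\overline\alpha$ is a strict \emph{sub}solution of the $\widetilde a$-equation; this is the paper's residual $\frac r2(1-\mathcal U_s^2)-\mathcal A'>0$. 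Your lower-barrier inequalities fail as well. First, $\tfrac75\overline u<2K_0$ is false for every $r\ge4$, because $\tfrac75\overline u\ge4.9\,r^{-1/2}e^{-r}$ while $2K_0(r)\le\sqrt{2\pi}\,r^{-1/2}e^{-r}$. Second, your justification $\underline u(1-\tfrac12\underline u)<\tfrac75K_0$ proves $-r\underline u(1-\tfrac12\underline u)>\underline\alpha'$, the reverse of what you need, so $\underline\alpha$ is a strict \emph{super}solution.

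Independently of these signs, your crossing step runs backwards. At a first point $r_*$ where $\widetilde U$ rises to $\overline u$, one automatically has $\widetilde U'(r_*)\ge\overline u'(r_*)$. Your chain $\widetilde U'(r_*)\ge-\overline\alpha(r_*)/r_*>\overline u'(r_*)$ therefore gives no contradiction: an integrated-from-infinity supersolution inequality cannot block a forward crossing from $r=4$. Your fallbacks do not close this either. A bracket for $m_{\rm B}$ is an output of the lemma (it yields $\tfrac75\le m_{\rm B}\le4\sqrt{2/\pi}$), not an available input. The identities on $(r,\infty)$ are homogeneous to leading order and carry no information from $r=4$, so a sup-of-ratios argument can only compare the two ratios with each other, not bound them.

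The missing idea is to exploit the saddle structure instead of fighting it. Each barrier pair must be \emph{mixed}: super in one component and sub in the other. That is in fact what your pairs are, once the nonlinear factors are evaluated on the barriers themselves. In the paper one sets $x=U-\mathcal U$ and $y=a-\mathcal A$, and the differences satisfy $x'=\frac{1-a}{r}x-\frac{\mathcal U}{r}y+(\text{residual})$ and $y'=-\frac r2(U+\mathcal U)x+(\text{residual})$. For $(\mathcal U_s,\mathcal A)=(1-\overline u,1-\overline\alpha)$, the residual signs $\mathcal U_s'>\frac{1-\mathcal A}{r}\mathcal U_s$ and $\mathcal A'<\frac r2(1-\mathcal U_s^2)$ make the quadrant $\{x_s<0,\ y>0\}$ forward invariant. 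Inside it $x_s$ decreases and $y$ increases, contradicting $x_s,y\to0$. This is how decay at infinity enters, with no knowledge of $m_{\rm B}$.

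One mixed pair controls only one of the two quadrants aligned with the growing mode $(I_0,-rI_1)$. The paper therefore adds the slightly stronger barrier $\mathcal U_g=1-(4r^{-1/2}-2r^{-3/2})e^{-r}$, with reversed residual signs and the same $\mathcal A$, which makes $\{x_g>0,\ y<0\}$ invariant. This is why \eqref{app_num_condition_a6} is certified for the stronger barriers $\mathcal U_g$ and $\mathsf U_s$. The leftover case, where $y$ vanishes first while $x_s>0>x_g$, needs the separate last-zero integral estimate $-\mathcal G(r)<2\mathcal Q(r,\infty)$.

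The bounds $U<\mathsf U_g$ and $a<\mathsf A$ follow by the same scheme with $(\mathsf U_g,\mathsf U_s,\mathsf A)$. There the nonlinear factor is evaluated exactly on the barrier at the crossing point, which is why the thin margin $K_0K_1(\frac1{25}+\frac75K_0)$ suffices. Replacing $1-\widetilde U$ by $1-\overline u$, as you propose, loses far more than the $-K_0^2$ correction can absorb.
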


\begin{proof}
Set $\rfour=\rzeroUbound$ and define, as before,
\begin{equation}\label{1-U2feq}
\begin{aligned}
\mathcal A(r)&:=1-4\sqrt r\,e^{-r},
&\mathsf A(r)&:=1-\tfrac75rK_1(r),\\
\mathcal U_g(r)&:=1-\bigl(4r^{-1/2}-2r^{-3/2}\bigr)e^{-r},
&\mathsf U_g(r)&:=1-\tfrac75K_0(r)+K_0(r)^2,\\
\mathcal U_s(r)&:=1-\bigl(4r^{-1/2}-2r^{-3/2}
  +3r^{-5/2}\bigr)e^{-r},
&\mathsf U_s(r)&:=1-\tfrac75K_0(r)-K_0(r)^2.
\end{aligned}
\end{equation}
Starting from the initial boxes in \eqref{eq:vortex_start_box}, the C++
verifier propagates \eqref{numerics2} from $\rorigin=0.1$ to $\rfour=4$ and
certifies
\begin{equation}\label{app_num_condition_a6}
\mathcal A(\rfour)<a(\rfour,\upr)<\mathsf A(\rfour),
\qquad
\mathcal U_g(\rfour)<U(\rfour,\upr)<\mathsf U_s(\rfour)
\end{equation}
uniformly for $\upr\in J_0$. Proposition~\ref{prop:J0slope} implies that
$\uprz=U'(0)\in J_0$, hence we may specialize to $\upr=\uprz$. 
Proposition~\ref{prop:exuniqUa}
gives $0<U,a<1$ and $(U,a)\to(1,1)$.  All six comparison profiles are
positive on $[\rfour,\infty)$ and converge to $1$.

We first prove the lower bounds.  Direct differentiation gives
\begin{equation}\label{lem_1-U2_ineq}
\begin{aligned}
\mathcal U_g'
&<\frac{1-\mathcal A}{r}\mathcal U_g,
&\qquad
\mathcal A'
&>\frac r2(1-\mathcal U_g^2),\\
\mathcal U_s'
&>\frac{1-\mathcal A}{r}\mathcal U_s,
&
\mathcal A'
&<\frac r2(1-\mathcal U_s^2)
\end{aligned}
\end{equation}
for $r>\rfour$.  For clarity, the four residuals are
\begin{align*}
\frac{1-\mathcal A}{r}\mathcal U_g-\mathcal U_g'
&=3r^{-5/2}e^{-r}
  -\left(16r^{-1}-8r^{-2}\right)e^{-2r}>0,\\
\mathcal A'-\frac r2(1-\mathcal U_g^2)
&=2\left(4-\frac4r+\frac1{r^2}\right)e^{-2r}>0,\\
\mathcal U_s'-\frac{1-\mathcal A}{r}\mathcal U_s
&=\frac{15}{2}r^{-7/2}e^{-r}
  {}+\frac{1-\mathcal A}{r}(1-\mathcal U_s)>0,\\
\frac r2(1-\mathcal U_s^2)-\mathcal A'
&=\frac r2\left[
  6r^{-5/2}e^{-r}
  \right.\notag\\
&\qquad\left.
  -\left(16r^{-1}-16r^{-2}+28r^{-3}
  -12r^{-4}+9r^{-5}\right)e^{-2r}\right]>0.
\end{align*}
For the first inequality, use
$3e^r>16r^{3/2}-8r^{1/2}$ for $r\ge4$.  For the last, after multiplying by
$re^{2r}$, the left-hand side is $6e^rr^{-3/2}>40$, whereas the expression
on the right is less than
$16+28r^{-2}+9r^{-4}<18$.

For the lower comparison, define
\[
x_s:=U-\mathcal U_s,\qquad
x_g:=U-\mathcal U_g,\qquad
y:=a-\mathcal A.
\]
Then $x_s(\rfour)>x_g(\rfour)>0$ and $y(\rfour)>0$, while all three differences tend
to zero at infinity.  From \eqref{lem_1-U2_ineq},
\begin{equation}\label{app_num_condition_a8}
\begin{aligned}
x_g'&>\frac{1-a}{r}x_g-\frac{\mathcal U_g}{r}y,\\
x_s'&<\frac{1-a}{r}x_s-\frac{\mathcal U_s}{r}y,\\
y'&<-\frac r2(U+\mathcal U_g)x_g,\\
y'&>-\frac r2(U+\mathcal U_s)x_s.
\end{aligned}
\end{equation}

Suppose that one of the two lower bounds fails, and let
\[
r_*:=\inf\{r>\rfour:x_s(r)=0\ \hbox{or}\ y(r)=0\}.
\]
If $x_s(r_*)=0<y(r_*)$, then $x_s'(r_*)<0$ and $y'(r_*)>0$.
Moreover, the quadrant $x_s<0$, $y>0$ is forward invariant by the second
and fourth inequalities in \eqref{app_num_condition_a8}.  Thus $x_s$
decreases and $y$ increases for all $r>r_*$, contradicting
$x_s,y\to0$.

Suppose next that $y(r_*)=0<x_s(r_*)$.  If $x_g(r_*)\ge0$, the quadrant
$x_g>0$, $y<0$ is forward invariant by the first and third inequalities,
again contradicting convergence to zero.  It remains to exclude
$x_g(r_*)<0$.  Let
\[
r_{**}:=\sup\{r\in(\rfour,r_*):x_g(r)=0\}.
\]
Then $x_g<0$ on $(r_{**},r_*)$.  The exact difference equations are
\begin{align}
x_g'
&=\frac{1-a}{r}x_g-\frac{\mathcal U_g}{r}y
{}+3r^{-5/2}e^{-r}
-\frac{1-\mathcal A}{r}(1-\mathcal U_g),\label{eq:xg_exact}\\
y'
&=-\frac r2(U+\mathcal U_g)x_g
-2\left(4-\frac4r+\frac1{r^2}\right)e^{-2r}.\label{eq:y_exact}
\end{align}
Since $x_g'(r_{**})\le0$, \eqref{eq:xg_exact} gives
\[
0<\mathcal G(r_{**})\le y(r_{**}),
\]
where
\[
\mathcal G(r):=\frac r{\mathcal U_g(r)}
\left[
3r^{-5/2}e^{-r}
-\frac{1-\mathcal A(r)}r(1-\mathcal U_g(r))
\right].
\]
Also set
\[
\mathcal Q(a,b):=
-\int_a^b2\left(4-\frac4s+\frac1{s^2}\right)e^{-2s}\,ds.
\]
We claim that
\begin{equation}\label{eq:GQ_lower}
-\mathcal G(r)<2\mathcal Q(r,\infty),\qquad r\ge\rfour.
\end{equation}
To verify this, put $p(r)=(2-r^{-1})^2$.  Since $p$ is concave on
$[\rfour,\infty)$,
\[
-2\mathcal Q(r,\infty)
=4\int_r^\infty p(s)e^{-2s}\,ds
\le\bigl(2p(r)+p'(r)\bigr)e^{-2r}.
\]
Since $\mathcal U_g<1$,
\[
\mathcal G(r)>
3r^{-3/2}e^{-r}
-\left(16-\frac8r\right)e^{-2r}.
\]
It therefore suffices to check
\[
3e^r>
h(r):=24r^{3/2}-16r^{1/2}+6r^{-1/2}-2r^{-3/2}.
\]
At $r=\rfour$, the two sides are $163.794\ldots$ and $162.75$.  Moreover,
\[
h(r)-h'(r)
=r^{-5/2}\left(24r^4-52r^3+14r^2+r-3\right)>0
\]
for $r\ge\rfour$, so $3e^r-h(r)$ is increasing once positive.  This proves
\eqref{eq:GQ_lower}.

Integrating \eqref{eq:y_exact} from $r_{**}$ to $r_*$ and using $x_g<0$
gives
\[
\begin{aligned}
0=y(r_*)
&>y(r_{**})+\mathcal Q(r_{**},r_*)\\
&\ge\mathcal G(r_{**})+\mathcal Q(r_{**},r_*)\\
&>\mathcal G(r_{**})+\mathcal Q(r_{**},\infty)>0,
\end{aligned}
\]
a contradiction.  Here the last inequality follows from
\eqref{eq:GQ_lower} and $\mathcal Q<0$.  If $x_s(r_*)=y(r_*)=0$, the strict
inequalities give $x_s'(r_*)<0$ and $y'(r_*)>0$, reducing to the first case.
This proves
\[
\mathcal U_s(r)<U(r),\qquad \mathcal A(r)<a(r),\qquad r>\rfour.
\]

We now prove the upper bounds by the same comparison, recording the required
analytic checks.  Since
\[
\frac{1-\mathsf A}{r}=\frac75K_1(r),\qquad
\mathsf A'=\frac75rK_0(r),\qquad
\mathsf U_g'=\left(\frac75-2K_0(r)\right)K_1(r),\qquad
\mathsf U_s'=\left(\frac75+2K_0(r)\right)K_1(r),
\]
one has
\begin{align*}
\frac{1-\mathsf A}{r}\mathsf U_g-\mathsf U_g'
&=K_0(r)K_1(r)
  \left(\frac1{25}+\frac75K_0(r)\right)>0,\\
\mathsf A'-\frac r2(1-\mathsf U_g^2)
&=rK_0(r)^2
  \left[1+\frac12\left(\frac75-K_0(r)\right)^2\right]>0,
\end{align*}
\begin{align*}
\mathsf U_s'-\frac{1-\mathsf A}{r}\mathsf U_s
&=K_0(r)K_1(r)
  \left(\frac{99}{25}+\frac75K_0(r)\right)>0,\\
\frac r2(1-\mathsf U_s^2)-\mathsf A'
&=rK_0(r)^2
  \left[1-\frac12\left(\frac75+K_0(r)\right)^2\right]>0.
\end{align*}
For the last sign, the integral representation of $K_0$ gives
\[
0<K_0(r)\le\sqrt{\frac{\pi}{2r}}e^{-r}
\le\sqrt{\frac\pi8}e^{-4}<\frac1{75}<\sqrt2-\frac75.
\]
Thus the exact analogues of \eqref{lem_1-U2_ineq} hold for
$(\mathsf U_g,\mathsf U_s,\mathsf A)$.
\begin{figure}
\centering
\includegraphics[scale=0.69]{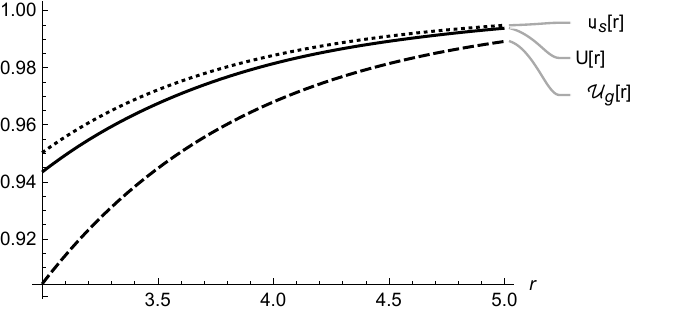}
\qquad 
\includegraphics[scale=0.65]{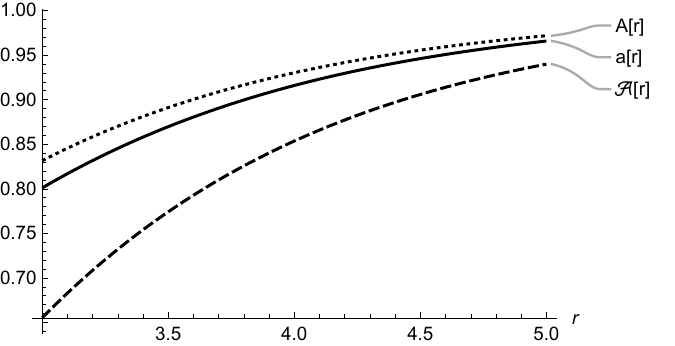}
\caption{Illustration of Lemma~\ref{lem_1-U2}. The solid curves show the vortex profile $(U,a)$, and the dashed and dotted curves show the comparison functions defined in \eqref{1-U2feq}. The boundary inequalities \eqref{app_num_condition_a6} are verified by interval arithmetic. These enclosures become highly accurate for large~$r$.}
\label{fig3LT}
\end{figure}
Set
\[
X_g:=\mathsf U_g-U,\qquad
X_s:=\mathsf U_s-U,\qquad
Y:=\mathsf A-a.
\]
By \eqref{app_num_condition_a6},
$X_g(\rfour)>X_s(\rfour)>0$ and $Y(\rfour)>0$.  The preceding first-exit proof applies
with $\mathsf U_g$ as the target barrier and $\mathsf U_s$ as the auxiliary
barrier.  In its only technical subcase, if $r_{**}$ is the last zero of
$X_s$ before $Y$ first reaches zero, the exact difference equations give
$Y(r_{**})\geq\widetilde{\mathcal G}(r_{**})$; integration then gives the
same contradiction as above provided
\begin{equation}\label{eq:GQ_upper}
-\widetilde{\mathcal G}(r)
<2\widetilde{\mathcal Q}(r,\infty),
\end{equation}
where
\begin{align*}
\widetilde{\mathcal G}(r)
&:=\frac r{\mathsf U_s(r)}
\left[
\mathsf U_s'(r)
-\frac{1-\mathsf A(r)}r\mathsf U_s(r)
\right],\\
\widetilde{\mathcal Q}(a,b)
&:=\int_a^b
\left[
\mathsf A'(s)-\frac s2(1-\mathsf U_s(s)^2)
\right]ds.
\end{align*}
Indeed,
\[
\widetilde{\mathcal G}(r)
=\frac{rK_0(r)K_1(r)}{\mathsf U_s}
\left(\frac{99}{25}+\frac75K_0(r)\right)
>\frac{99}{25}rK_0(r)^2,
\]
while
\[
\mathsf A'-\frac r2(1-\mathsf U_s^2)
=-rK_0(r)^2
 \left[1-\frac12\left(\frac75+K_0(r)\right)^2\right]
>-\frac1{50}rK_0(r)^2.
\]
Since $K_1>K_0$, one has
$K_0(s)\le K_0(r)e^{-(s-r)}$ for $s\ge r$, and hence
\[
\int_r^\infty sK_0(s)^2\,ds
\le K_0(r)^2\left(\frac r2+\frac14\right).
\]
Consequently,
\[
-2\widetilde{\mathcal Q}(r,\infty)
<\frac1{25}\int_r^\infty sK_0(s)^2\,ds
<\widetilde{\mathcal G}(r),
\]
which is \eqref{eq:GQ_upper}.  The same last-zero integration used above now
excludes the technical subcase and proves
\[
U(r)<\mathsf U_g(r),\qquad a(r)<\mathsf A(r),\qquad r>\rfour.
\]
Together with the lower bounds, this is \eqref{lem_1-U2_statement}.
\end{proof}

We found empirically that our
spectral and FGR computations require a narrower interval than~$J_0$ from Proposition~\ref{prop:J0slope}.  Lemma~\ref{lem:Newton}
achieves higher accuracy by  solving  
$U(20,\upr)=1$ via a Newton scheme combined with the exponential tail estimate
from Lemma~\ref{lem_1-U2}. The  interval $J_0$ has width $9.0\cdot10^{-13}$, whereas that lemma 
reduces this to $1.263\cdot10^{-16}$, achieving an improvement  by a factor greater than $7.1\cdot10^3$. 

The Newton iteration lemma uses the equation of variations~\eqref{eq:v_ode}
with respect to the shooting parameter. In analogy with the vortex profiles themselves, we depend on Frobenius series to compute the solutions of this differentiated ODE for small~$r$. Differentiating the
explicit  recurrences \eqref{eq:recur} gives the following direct
analogue of Lemma~\ref{lem:Wberror}.

\begin{lemma}\label{lem:Wbderror}
Let $c_n(\upr),d_n(\upr)$ be the coefficients in \eqref{eq:Wb}, determined by
\eqref{eq:recur}. Set
\[
    \dot c_n(\upr):=\partial_\upr c_n(\upr),\qquad
    \dot d_n(\upr):=\partial_\upr d_n(\upr).
\]
Since $c_0(\upr)=d_0(\upr)=1$, one has
$\dot c_0(\upr)=\dot d_0(\upr)=0$.
For every $0\le \upr\le 1/\sqrt2$ and every $n\ge1$,
\[
    0\le \dot c_n(\upr)\le 4n\,2^{-n},
    \qquad
    0\le \dot d_n(\upr)\le 4n\,2^{-n}.
\]
Consequently, the differentiated Frobenius series for
$\partial_\upr W$ and $\partial_\upr b$ converge absolutely for
$0\le r<\sqrt2$. Moreover, for $0\le r\le1/4$, $q=r^2/2$, and
\[
    E_N(r):=
    4q^{N+1}\frac{(N+1)-Nq}{(1-q)^2},
\]
one has
\[
\left|
  \partial_\upr W(r,\upr)
  -\sum_{j=0}^N(-1)^j\dot c_j(\upr)r^{2j}
\right|
\le E_N(r),
\]
and
\[
\left|
  \partial_\upr b(r,\upr)
  -\sum_{j=0}^N(-1)^j\dot d_j(\upr)r^{2j}
\right|
\le E_N(r).
\]
In particular, with
\[
    U_N(r,\upr)=\upr r\sum_{j=0}^N(-1)^jc_j(\upr)r^{2j},
    \qquad
    a_N(r,\upr)=\frac{r^2}{4}\sum_{j=0}^N(-1)^jd_j(\upr)r^{2j},
\]
the shooting derivatives satisfy
\[
\left|
  \partial_\upr U(r,\upr)-\partial_\upr U_N(r,\upr)
\right|
\le
 r\,\frac{2^{-5N}}{31}+\frac{r}{\sqrt2}E_N(r),
\]
and
\[
\left|
  \partial_\upr a(r,\upr)-\partial_\upr a_N(r,\upr)
\right|
\le
\frac{r^2}{4}E_N(r).
\]
\end{lemma}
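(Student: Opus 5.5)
The plan is to mirror the proof of Lemma~\ref{lem:Wberror}: differentiate the recursion \eqref{eq:recur} in $\upr$ and close an induction with the weaker majorant $4n\,2^{-n}$. Since $c_0=d_0=1$, we have $\dot c_0=\dot d_0=0$. Differentiating \eqref{eq:recur} gives, for $n\ge1$,
\[
 \dot c_n=\frac1{8n}\sum_{j+\ell=n-1}\bigl(\dot c_jd_\ell+c_j\dot d_\ell\bigr),
 \qquad
 \dot d_n=\frac{2\upr}{n+1}\sum_{j+\ell=n-1}c_jc_\ell+\frac{\upr^2}{n+1}\sum_{j+\ell=n-1}\bigl(\dot c_jc_\ell+c_j\dot c_\ell\bigr).
\]
These are again triangular recursions: the right-hand sides involve only indices $<n$. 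Nonnegativity of $\dot c_n,\dot d_n$ then follows by induction, using $\upr\ge0$ and $c_j,d_j\ge0$ from Lemma~\ref{lem:Wberror}.

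For the upper bound, assume $\dot c_j,\dot d_j\le 4j\,2^{-j}$ for all $j<n$, and use $c_\ell,d_\ell\le2^{-\ell}$. Then each mixed sum satisfies
\[
 \sum_{j+\ell=n-1}\dot c_jd_\ell\le 4\cdot2^{-(n-1)}\sum_{j=0}^{n-1}j=2n(n-1)2^{-(n-1)},
\]
and the same bound holds for the other mixed sums. Hence
\[
 \dot c_n\le\frac{2\cdot 2n(n-1)2^{-(n-1)}}{8n}=(n-1)2^{-n}\le4n\,2^{-n}.
\]
For $\dot d_n$ we use $2\upr\le\sqrt2$ and $\upr^2\le\tfrac12$. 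The first term is at most $\sqrt2\,n2^{-(n-1)}/(n+1)\le2\sqrt2\,2^{-n}$. The second is at most $\tfrac1{2(n+1)}\cdot2\cdot2n(n-1)2^{-(n-1)}=\tfrac{4n(n-1)}{n+1}2^{-n}$. Since $4n-\tfrac{4n(n-1)}{n+1}=\tfrac{8n}{n+1}\ge4>2\sqrt2$ for $n\ge1$, we get $\dot d_n\le4n\,2^{-n}$, which closes the induction. This arithmetic is the only step requiring any care, and it has slack.

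With these bounds, the differentiated series $\sum(-1)^j\dot c_jr^{2j}$ and $\sum(-1)^j\dot d_jr^{2j}$ converge absolutely and locally uniformly for $r<\sqrt2$. The coefficients $c_j,d_j$ are polynomials in $\upr$, and the majorants are uniform in $\upr\in[0,1/\sqrt2]$. Termwise differentiation in $\upr$ is therefore justified, so these series equal $\partial_\upr W$ and $\partial_\upr b$, where $W,b$ are the analytic solutions from Lemma~\ref{lem:Wberror}. The remainder is then bounded by
\[
 \sum_{n>N}4n\,2^{-n}r^{2n}=4\sum_{n>N}nq^n=4q^{N+1}\frac{(N+1)-Nq}{(1-q)^2}=E_N(r),
 \qquad q=r^2/2,
\]
by the standard closed form of the tail of $\sum nq^n$.

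Finally, since $U=\upr rW$, we have $\partial_\upr U=rW+\upr r\,\partial_\upr W$, and similarly $\partial_\upr U_N=rW_N+\upr r\,\partial_\upr W_N$ for the truncations. The first difference $r(W-W_N)$ is bounded by $r\,2^{-5N}/31$ via Lemma~\ref{lem:Wberror}. The second is bounded by $\upr rE_N\le rE_N/\sqrt2$. Likewise $\partial_\upr a=\tfrac{r^2}{4}\partial_\upr b$, which gives the bound $\tfrac{r^2}{4}E_N(r)$ directly.
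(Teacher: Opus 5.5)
Your proposal is correct and follows essentially the same route as the paper. You differentiate the recursion \eqref{eq:recur}, close the induction with the majorant $4n\,2^{-n}$ using $c_n,d_n\le 2^{-n}$ from Lemma~\ref{lem:Wberror}, sum the tail $4\sum_{n>N}nq^n$ in closed form, and split $\partial_\upr U=rW+\upr r\,\partial_\upr W$. Your extra justifications (nonnegativity by induction, and uniform convergence in $\upr$ for termwise differentiation) are details the paper leaves implicit, and your estimate for $\dot d_n$ agrees with the paper's up to dropping the harmless factor $n/(n+1)$.
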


\begin{proof}
Differentiating \eqref{eq:recur} gives, for $n\ge1$,
\[
  \dot c_n
  =
  \frac1{8n}
  \sum_{j+\ell=n-1}
  \big(\dot c_jd_\ell+c_j\dot d_\ell\big),
\]
and
\[
  \dot d_n
  =
  \frac{2\upr}{n+1}
  \sum_{j+\ell=n-1}c_jc_\ell
  +
  \frac{\upr^2}{n+1}
  \sum_{j+\ell=n-1}
  \big(\dot c_jc_\ell+c_j\dot c_\ell\big).
\]
All coefficients are nonnegative polynomials in $\upr$. We use
Lemma~\ref{lem:Wberror}, namely $0\le c_n,d_n\le2^{-n}$, and prove the
displayed derivative bounds by induction. The case $n=1$ is immediate. If
the bounds hold up to $n-1$, then
\[
\begin{aligned}
  \dot c_n
  &\le
  \frac1{8n}
  \sum_{j+\ell=n-1}4(j+\ell)2^{-(n-1)}
  =
  (n-1)2^{-n}
  \le4n2^{-n}.
\end{aligned}
\]
Similarly, since $0\le\upr\le1/\sqrt2$,
\[
\begin{aligned}
  \dot d_n
  &\le
  \frac{\sqrt2}{n+1}n\,2^{-(n-1)}
  +
  \frac{1}{2(n+1)}
  \sum_{j+\ell=n-1}4(j+\ell)2^{-(n-1)}  \\
  &=
  \left(
    \frac{2\sqrt2\,n}{n+1}
    +
    \frac{4n(n-1)}{n+1}
  \right)2^{-n}
  \le4n2^{-n}.
\end{aligned}
\]
This proves the coefficient bounds. The tail estimate is 
\[
  \sum_{j>N}4j\left(\frac{r^2}{2}\right)^j
  =
  4q^{N+1}\frac{(N+1)-Nq}{(1-q)^2}.
\]
The estimates for $\partial_\upr U$ and $\partial_\upr a$ follow from
\[
  \partial_\upr U=rW+\upr r\,\partial_\upr W,\qquad
  \partial_\upr a=\frac{r^2}{4}\partial_\upr b,
\]
together with the $W$-tail estimate in Lemma~\ref{lem:Wberror}. This proves
the lemma.
\end{proof}

We can now turn to refining the interval $J_0$ from Proposition~\ref{prop:J0slope}. 

\begin{lemma}\label{lem:Newton}
The shooting parameter satisfies
\[
  \bigl|\uprz-0.6032878545816779\bigr|
  <7\cdot10^{-17}.
\]
In particular,
\begin{equation}\label{eq:Icert}
  \uprz\in I_{\rm cert}
  :=
  [0.6032878545816699,\,0.6032878545816856].
\end{equation}
The interval $I_{\rm cert}$ has width $1.57\cdot10^{-14}$ and is the
fixed working interval used in the subsequent interval\footnote{Not all digits in the endpoints of this interval are significant.} computations.
\end{lemma}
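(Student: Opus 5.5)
The plan is an interval Newton argument for the proxy function $F_{20}(\upr):=U(20,\upr)-1$ on $J_0$. The true slope $\uprz$ is defined by a condition at infinity, so it is not a root of $F_{20}$. It nearly is one, though. Since $\uprz\in J_0$ by Proposition~\ref{prop:J0slope}, Lemma~\ref{lem_1-U2} at $r=20$ gives
\[
 0<1-U(20,\uprz)<\bigl(4\cdot20^{-1/2}-2\cdot20^{-3/2}+3\cdot20^{-5/2}\bigr)e^{-20}=:\varepsilon_{20},
\]
so $|F_{20}(\uprz)|<\varepsilon_{20}\approx 1.8\cdot10^{-9}$. This is the only place the condition at infinity enters.

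Next we need a certified lower bound for the derivative $F_{20}'=\partial_\upr U(20,\cdot)=V(20,\cdot)$ uniformly on $J_0$. We start from the Frobenius enclosures of Lemma~\ref{lem:Wberror} and Lemma~\ref{lem:Wbderror} at $\rorigin=0.1$. With $N$ around $80$, the remainders $r2^{-5N}/31+rE_N(r)/\sqrt2$ are negligible. CAPD then propagates the augmented system, consisting of \eqref{numerics2} together with the variational equations \eqref{eq:v_ode}, from $0.1$ to $20$ with $\upr\in J_0$ as an interval parameter. This yields enclosures $V(20,J_0)\subset[d_-,d_+]$ with $d_->0$; positivity is also consistent with \eqref{eq:shooting_monotonicity}. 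In the same run we compute a tight enclosure of $F_{20}(\upr^*)$ at the point value $\upr^*=0.6032878545816779$, which is parsed by directed rounding into a thin interval.

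For the mean value step, pick $\xi$ between $\upr^*$ and $\uprz$, both in $J_0$. Then
\[
 \uprz-\upr^*=\frac{F_{20}(\uprz)-F_{20}(\upr^*)}{F_{20}'(\xi)},\qquad
 |\uprz-\upr^*|\le\frac{\varepsilon_{20}+|F_{20}(\upr^*)|}{d_-}.
\]
It remains to check $d_-$ and the numerator numerically. Since $V$ grows exponentially in $r$ (the linearization is unstable), $d_-$ should be of order $10^{8}$ or larger at $r=20$. That makes the quotient about $10^{-17}$, and we certify in outward rounding that it is below $7\cdot10^{-17}$. The inclusion $\uprz\in I_{\rm cert}$ then follows, since $I_{\rm cert}$ contains $[\upr^*-7\cdot10^{-17},\upr^*+7\cdot10^{-17}]$ after directed parsing of its endpoints.

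The main obstacle is the numerical propagation to $r=20$. The same exponential instability that makes $V$ large also amplifies the width of the enclosures of $U$ over the parameter interval $J_0$, which has width $9\cdot10^{-13}$. If wrapping makes $F_{20}(\upr^*)$ or $V(20,J_0)$ too wide, I would first subdivide $J_0$ into subintervals for the derivative bound. I would also use a high Taylor order at $220$-bit precision and propagate the thin point $\upr^*$ separately for $F_{20}(\upr^*)$. If that still fails, I would lower the evaluation radius somewhat (e.g.\ $r=18$) and trade a larger tail bound $\varepsilon_r$ against better-conditioned enclosures.
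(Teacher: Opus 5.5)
Your argument is correct and uses the same certified ingredients as the paper: the proxy $F_{20}(\upr)=U(20,\upr)-1$, the tail bound $0<1-U(20,\uprz)<B_{20}<1.80092\cdot10^{-9}$ from Lemma~\ref{lem_1-U2}, and the CAPD lower bound $\partial_\upr U(20,\cdot)>2.853194\cdot10^{7}$ on $J_0$. The only difference is that you apply the mean value theorem once, directly between $0.6032878545816779$ and $\uprz$, whereas the paper first locates the zero $\upr_{20}$ of $F_{20}$ by an interval Newton step on $I_{\rm cert}$ and then adds $|\uprz-\upr_{20}|<6.312\cdot10^{-17}$ and $|\upr_{20}-0.6032878545816779|<6.739\cdot10^{-18}$. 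Both routes give a bound of about $6.986\cdot10^{-17}$; your expectation $d_-\gtrsim10^{8}$ is too optimistic, since the actual value is about $2.853\cdot10^{7}$, so the margin below $7\cdot10^{-17}$ is thin but sufficient.
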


\begin{proof}
Write $(U(r,\upr),a(r,\upr))$ for the solution of the vortex equations
with initial slope $U'(0,\upr)=\upr$. Proposition~\ref{prop:exuniqUa}
gives a unique value $\uprz$ for which
\[
  \bigl(U(r,\uprz),a(r,\uprz)\bigr)\longrightarrow(1,1)
  \qquad\text{as }r\to\infty,
\]
and Proposition~\ref{prop:J0slope} gives the preliminary bound
\begin{equation}\label{eq:Newton_J0}
  \uprz\in J_0
  =[0.6032878545810,\,0.6032878545819].
\end{equation}
The condition defining $\uprz$ is imposed at infinity. We therefore first
locate the zero of the auxiliary function
\[
  F_{20}(\upr):=U(20,\upr)-1
\]
and then compare that zero with $\uprz$ using the vortex estimate at infinity provided by Lemma~\ref{lem_1-U2}.

Let $m$ be the midpoint\footnote{To be more precise, let \(m\in I_{\rm cert}\) be the 220-bit representable point used by the
C++ code. It is chosen as a floating-point approximation
to the midpoint of \(I_{\rm cert}\).
} of $I_{\rm cert}$. The C++ calculation starts at
$\rorigin=0.1$. The initial intervals for $U$ and $a$ are obtained from the
Frobenius estimates in Lemma~\ref{lem:Wberror}, while those for their
derivatives with respect to $\upr$ are obtained from
Lemma~\ref{lem:Wbderror}. The vortex equations and the variational
system~\eqref{eq:v_ode} are then integrated from $\rorigin=0.1$ to $r=20$ by
interval arithmetic.
The full outward-rounded intervals can be found in the 
transcript\footnote{\label{fn:transcript}We remind the reader that the
``transcript'' is a file in our repository containing the principal
numerical results produced by the C++ code.} contained in the Github repository~\cite{AYMHPartICertificateRepo}. They imply the following
slightly enlarged bounds:
\[
  F_{20}(m)
  \in[-4.087672,\,-4.087671]\cdot10^{-9}
\]
and
\[
  F_{20}'(\upr)
  =\partial_\upr U(20,\upr)
  \in[2.853286,\,2.853290]\cdot10^7,
  \qquad \upr\in I_{\rm cert}.
\]
In particular, $F_{20}'$ is positive throughout $I_{\rm cert}$.

Let $[F_{20}(m)]$ and $[F_{20}'(I_{\rm cert})]$ denote the intervals
computed by the code,  and form the interval 
\[
  \mathcal N_{20}
  :=
  m-\frac{[F_{20}(m)]}{[F_{20}'(I_{\rm cert})]},
\]
which is needed by the Newton method. 
Using the full intervals recorded in the transcript, the program proves
that $\mathcal N_{20}$ has width less than $1.089\cdot10^{-22}$ and that
\[
  \mathcal N_{20}
  \subset
  0.6032878545816779
  +[-6.739,\,-6.738]\cdot10^{-18}
  \Subset I_{\rm cert}.
\]
We recall the scalar interval Newton criterion: if $F\in C^1(I)$,
$0\notin[F'(I)]$, and
\[
  m-\frac{[F(m)]}{[F'(I)]}\subset\operatorname{int}I,
\]
then $F$ has a zero in $I$. Applying this criterion gives a zero
$\upr_{20}\in I_{\rm cert}$ of $F_{20}$. Since $F_{20}'>0$ on
$I_{\rm cert}$, this zero is unique. Moreover, the mean value theorem
shows that every zero in $I_{\rm cert}$ belongs to the interval Newton
image. Consequently,
\begin{equation}\label{eq:Newton_c20}
  F_{20}(\upr_{20})=0,
  \qquad
  \bigl|\upr_{20}-0.6032878545816779\bigr|
  <6.739\cdot10^{-18}.
\end{equation}
It remains to compare the finite-radius zero $\upr_{20}$ with the true
shooting parameter $\uprz$. Integrating the same variational system
uniformly for $\upr\in J_0$, the C++ calculation gives the outward-rounded
lower bound
\begin{equation}\label{eq:Newton_broad_derivative}
  \partial_\upr U(20,\upr)>2.853194\cdot10^7,
  \qquad \upr\in J_0.
\end{equation}
More explicitly, to obtain \eqref{eq:Newton_broad_derivative} the C++ code
integrates the vortex equations together with their derivatives with respect
to the shooting parameter. Writing
\[
 U_\upr:=\partial_\upr U,\qquad a_\upr:=\partial_\upr a,
\]
the system passed to CAPD is
\[
\begin{aligned}
 r'&=1,
&
 U'&=\frac{1-a}{r}U,
&
 a'&=\frac r2(1-U^2),\\
 U_\upr'&=\frac{1-a}{r}U_\upr-\frac Ur a_\upr,
&
 a_\upr'&=-rUU_\upr,
&
 \upr'&=0.
\end{aligned}
\]
Notice that
$V=U_\upr$ and $B=-a_\upr$ reduce the fourth and fifth equations to
\eqref{eq:v_ode}.
The initial intervals at $\rorigin=0.1$, uniform for $\upr\in J_0$, are obtained
from the Frobenius estimates in
Lemmas~\ref{lem:Wberror} and~\ref{lem:Wbderror}. CAPD propagates the
six-component interval vector
\[
 (r,U,a,U_\upr,a_\upr,\upr)
\]
to $r=20$ and obtains
\[
 U_\upr(20,J_0)\subset
 [2.85319410965073633711311481846,\,
  2.85331836120358078129566571744]\cdot10^7.
\]
This implies \eqref{eq:Newton_broad_derivative}. 

The hypotheses at $r=\rfour=\rzeroUbound$ needed for Lemma~\ref{lem_1-U2} were already
verified uniformly on $J_0$. Hence that lemma applies to the true vortex,
in view of \eqref{eq:Newton_J0}, and gives
\[
  0<1-U(20,\uprz)\le B_{20},
\]
where
\[
  B_{20}
  :=
  \bigl(4\cdot20^{-1/2}-2\cdot20^{-3/2}
        +3\cdot20^{-5/2}\bigr)e^{-20}
  <1.80092\cdot10^{-9}.
\]
Since $I_{\rm cert}\subset J_0$, both $\upr_{20}$ and $\uprz$ belong to
$J_0$. Using $F_{20}(\upr_{20})=0$, the mean value theorem and~\eqref{eq:Newton_broad_derivative} yield
\[
\begin{aligned}
  |\uprz-\upr_{20}|
  &\le
  \frac{|F_{20}(\uprz)-F_{20}(\upr_{20})|}
       {\displaystyle\inf_{\upr\in J_0}F_{20}'(\upr)}<
  \frac{1.80092\cdot10^{-9}}
       {2.853194\cdot10^7}
  <6.312\cdot10^{-17}.
\end{aligned}
\]
Combining this estimate with \eqref{eq:Newton_c20}, we obtain
\[
\begin{aligned}
  \bigl|\uprz-0.6032878545816779\bigr|
  &\le
  |\uprz-\upr_{20}|
  +\bigl|\upr_{20}-0.6032878545816779\bigr|\\
  &<
  6.312\cdot10^{-17}
  +6.739\cdot10^{-18}\\
  &<7\cdot10^{-17}.
\end{aligned}
\]
Finally,
\[
  0.6032878545816779
  +[-7\cdot10^{-17},\,7\cdot10^{-17}]
  \Subset I_{\rm cert},
\]
which also proves the inclusion in~\eqref{eq:Icert}.
\end{proof}

\section{Spectral properties of the linearized operators \texorpdfstring{$\calL_1,\calL_2$}{L1, L2}}

The following consequence of Lemma~\ref{lem_1-U2} supplies the explicit
tail norms used in the threshold and FGR computations below.
\begin{corollary}
\label{cor:vortex_tail_bridge}
For every $r\ge \rfour$, 
\begin{equation}\label{eq:capd_vortex_tail_bridge}
    |1-U(r)^2|\le 8\,\frac{e^{-r}}{\sqrt r}.
\end{equation}
Consequently, for $ R\ge\rfour$,
\begin{equation}\label{eq:capd_vortex_tail_norms}
    \int_R^\infty s\,|1-U(s)^2|\log\frac{s}{R}\,ds
    \le 8 e^{-R}\frac{R+2}{R},
    \qquad
    \int_R^\infty s\,|1-U(s)^2|\,ds
    \le 8e^{-R}(R+1).
\end{equation}
\end{corollary}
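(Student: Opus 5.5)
The pointwise bound will come from the lower bound for $U$ in Lemma~\ref{lem_1-U2}, and the two integral bounds from integrating it. Since $0<U<1$ by Proposition~\ref{prop:exuniqUa}, we have $|1-U^2|=(1-U)(1+U)\le 2(1-U)$. Lemma~\ref{lem_1-U2} gives, for $r>\rfour$,
\[
 1-U(r)<\bigl(4r^{-1/2}-2r^{-3/2}+3r^{-5/2}\bigr)e^{-r}.
\]
For $r\ge 4$ we have $-2r^{-3/2}+3r^{-5/2}=r^{-3/2}(-2+3/r)<0$, so $1-U(r)<4r^{-1/2}e^{-r}$. Doubling this gives $|1-U^2|\le 8e^{-r}/\sqrt r$. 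The endpoint $r=4$ follows by continuity, or directly from the certified inequality \eqref{app_num_condition_a6}.

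For the unweighted integral we insert the pointwise bound and use $s\cdot s^{-1/2}=\sqrt s\le s$ for $s\ge1$:
\[
 \int_R^\infty s|1-U^2|\,ds\le 8\int_R^\infty \sqrt s\,e^{-s}\,ds\le 8\int_R^\infty s e^{-s}\,ds=8e^{-R}(R+1).
\]
The crude replacement $\sqrt s\le s$ already yields the stated constant, so no sharper estimate is needed.

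For the logarithmic integral we similarly bound the integrand by $8\sqrt s\,e^{-s}\log(s/R)$. We then use $\log(s/R)\le (s-R)/R$ and $\sqrt s\le s/\sqrt R$ for $s\ge R$. Substituting $s=R+t$ gives
\[
 \int_R^\infty \sqrt s\,e^{-s}\frac{s-R}{R}\,ds\le\frac{e^{-R}}{R^{3/2}}\int_0^\infty (R+t)t\,e^{-t}\,dt=\frac{e^{-R}(R+2)}{R^{3/2}}\le e^{-R}\frac{R+2}{R}.
\]
Multiplying by $8$ gives the claim.

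The main obstacle is only bookkeeping: checking the sign of the correction terms in the lower comparison profile $\mathcal U_s$ for $r\ge4$, and choosing elementary inequalities that reproduce the stated constants. All of these are straightforward.
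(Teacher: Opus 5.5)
Your proof is correct and follows essentially the same route as the paper: you bound $1-U^2\le 2(1-U)$ via the lower comparison profile of Lemma~\ref{lem_1-U2}, drop the negative correction terms for $r\ge 4$, and integrate using $\sqrt s\le s$ and $\log(s/R)\le (s-R)/R$. Your only departures are minor: in the logarithmic integral, $\sqrt s\le s/\sqrt R$ gives the slightly sharper intermediate bound $e^{-R}(R+2)/R^{3/2}$ before you relax it, and you handle the endpoint $r=4$ explicitly, which the paper passes over silently.
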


\begin{proof}
Lemma~\ref{lem_1-U2} gives, for $r>\rfour$,
\[
    U(r)>1-\big(4r^{-1/2}-2r^{-3/2}+3r^{-5/2}\big)e^{-r}.
\]
Together with the already established upper bound $U(r)<1$, this implies
\[
    0<1-U(r)^2
    \le 2(1-U(r)).
\]
Hence
\[
    1-U(r)^2
    \le
    \big(8r^{-1/2}-4r^{-3/2}+6r^{-5/2}\big)e^{-r}
    =
    8\frac{e^{-r}}{\sqrt r}\Big(1-\frac{1}{2r}+\frac{3}{4r^2}\Big).
\]
The factor in parentheses is at most $1$ for $r\ge 3/2$, and therefore for all $r\ge R\ge\rfour$. This proves \eqref{eq:capd_vortex_tail_bridge}. For the first bound in \eqref{eq:capd_vortex_tail_norms} we use, for $s\ge R\ge\rfour$, the elementary inequalities $\sqrt{s}\le s$ and $\log(s/R)\le s/R-1$. Thus
\[
    \int_R^\infty s\,|1-U(s)^2|\log\frac{s}{R}\,ds
    \le 8\int_R^\infty \sqrt{s}e^{-s}\log\frac{s}{R}\,ds
    \le 8\int_R^\infty s e^{-s}\Big(\frac{s}{R}-1\Big)\,ds
    =8e^{-R}\frac{R+2}{R}.
\]
Similarly,
\[
    \int_R^\infty s\,|1-U(s)^2|\,ds
    \le 8\int_R^\infty s e^{-s}\,ds
    =8e^{-R}(R+1),
\]
as claimed.
\end{proof}

\subsection{Absence of discrete spectrum and of a threshold resonance for \texorpdfstring{$\mathcal{L}_1$}{L1}}

Next, we give a rigorous computer-assisted proof of the fact that $V_1>1$. The argument is analytic
on $(0,\sqrt6)$ and $(\rfour,\infty)$; interval arithmetic is needed only on
the compact interval $[\sqrt6,\rfour]$.

\begin{figure}
\centering
\includegraphics[scale=0.7
]{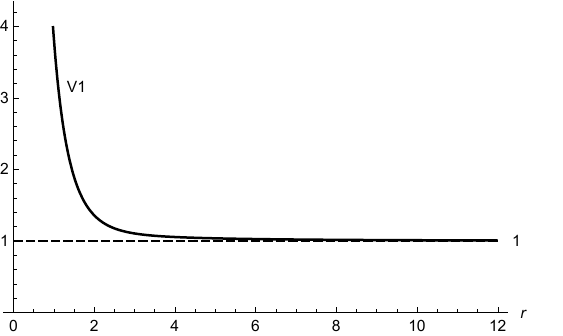}
\quad \includegraphics[scale=0.7
]{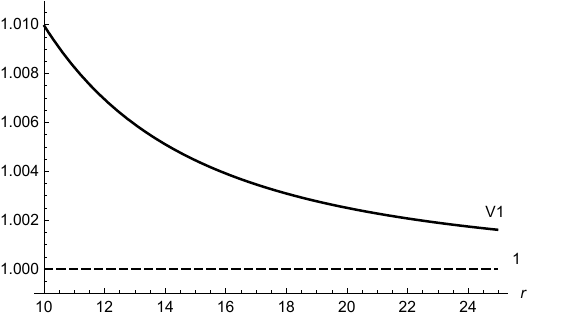}
\caption{Graph of $V_1(r)$ on $(0,12)$ and $(10,25)$ respectively.}
\label{figlemV1}
\end{figure}

\begin{lem}\label{lem_appV1g1}
The potential $V_1$ satisfies
\begin{align}\label{lem_app_V1}
V_1(r)-1
=\frac{(2-a(r))^2}{r^2}-\frac12\bigl(1-U(r)^2\bigr)>0,
\qquad r\in(0,\infty).
\end{align}
Hence, $\mathcal L_1$ has no discrete spectrum or threshold resonance.
\end{lem}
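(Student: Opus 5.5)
The plan is to split $(0,\infty)$ into $(0,\sqrt6)$, $[\sqrt6,4]$ and $(4,\infty)$, as announced before the statement. The identity in \eqref{lem_app_V1} is immediate from \eqref{eq:lemSd2}, since $V_1-1=(2-a)^2/r^2+\tfrac12(1+U^2)-1$. Proposition~\ref{prop:exuniqUa} gives $0<U<1$ and $0<a<1$ for all $r>0$. Hence $(2-a)^2>1$ and $\tfrac12(1-U^2)<\tfrac12$, so $V_1-1>1/r^2-1/2\ge0$ for $r\le\sqrt2$.

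To push this out to $\sqrt6$ analytically, I will use the monotonicity bound from the proof of Proposition~\ref{prop:exuniqUa}: as long as $U\le1$ we have $a(r)\le r^2/4$. Then $(2-a)^2/r^2\ge (2-r^2/4)^2/r^2$, and it suffices to check $(2-r^2/4)^2/r^2>\tfrac12(1-U^2)$. Crudely bounding $1-U^2\le 1$, we need $(8-r^2)^2>8r^2$, i.e.\ $r^2<24-16\sqrt2\approx 1.37$ (since $8-r^2>0$ here). This alone does not reach $\sqrt6$. I will therefore also use the lower bound $U(r)\ge \upr r e^{-r^2/8}$ from the same proof, with $\upr\ge0.6032$, to sharpen $1-U^2$. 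It is plausible that this closes the gap on $(0,\sqrt6)$ with an elementary one-variable inequality. If it does not, I will move the breakpoint and let the interval computation cover more.

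On $(4,\infty)$ I will use Lemma~\ref{lem_1-U2} together with Corollary~\ref{cor:vortex_tail_bridge}. We have $a<1$, so $(2-a)^2/r^2>1/r^2$, while $\tfrac12(1-U^2)\le 4e^{-r}/\sqrt r$. Thus it suffices that $e^{r}>4r^{3/2}$ for $r\ge4$. At $r=4$ this reads $54.6>32$, and the ratio $e^r r^{-3/2}$ is increasing for $r>3/2$.

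On the compact interval $[\sqrt6,4]$, the plan is to propagate \eqref{numerics2} with CAPD from the box \eqref{eq:vortex_start_box}, uniformly for $\upr\in J_0$, which is justified by Proposition~\ref{prop:J0slope}. On each subinterval of a fine partition we enclose $V_1-1$ in interval arithmetic and check that its lower endpoint is positive. Since $V_1-1$ is not close to zero there (near $r=4$ it is about $1/16$ minus something of size $0.1$? this must be checked numerically), I expect a modest partition to suffice. This numerical step, together with making the analytic near-origin bound reach $\sqrt6$, is the main obstacle. The minimum of $V_1-1$ sits in the mid-range, see Figure~\ref{figlemV1}.

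Finally, positivity gives the spectral consequences. For $f$ in the form domain, $\langle(\calH_1-1)f,f\rangle=\|f'\|^2+\int(V_1-1-\tfrac1{4r^2})|f|^2\,dr$, and the Hardy inequality $\|f'\|^2\ge\tfrac14\int|f|^2/r^2$ gives $\calH_1-1\ge0$ with a strictly positive weight. Hence there is no spectrum below $1$, and in particular no discrete spectrum. For a threshold solution $\calL_1\phi=\phi$ that is regular at $0$ (behaving like $r^2$) and subordinate at infinity (bounded, like $r^{-2}$ or the limiting Bessel behavior), I will multiply by $\phi\,r$ and integrate by parts. Both boundary terms vanish, which leaves $\int(|\phi'|^2+(V_1-1)|\phi|^2)r\,dr=0$, and therefore $\phi\equiv0$.
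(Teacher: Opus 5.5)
Your proof goes through, but only through your fallback. The analytic sharpening you propose on $(0,\sqrt6)$ does not reach $\sqrt6$. With $a\le r^2/4$ (or $a<1$) and $U\ge U'(0)\,re^{-r^2/8}$, the inequality already fails for $r$ slightly above $2$. At $r=\sqrt6$ these bounds give only $(2-a)^2/r^2>1/6$, against $\tfrac12(1-U^2)\le\tfrac12\bigl(1-6U'(0)^2e^{-3/2}\bigr)\approx0.256$. Even feeding the lower bound for $U$ back into $a=\int_0^r\tfrac s2(1-U^2)\,ds$ only gets you to about $r\approx2.38$.

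Since the CAPD run starts at $r=0.1$ anyway, the fix costs essentially nothing: certify $V_1-1>0$ on $[\sqrt2,4]$, where your trivial bound already covers $(0,\sqrt2]$. That completes a valid proof with a longer numerical interval.

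The paper reaches $\sqrt6$ analytically through an idea you are missing: $U>a$ on all of $(0,\infty)$. This is proved by a crossing argument. At a zero of $U-a$, with $x=U=a$, one has $(U-a)'=\frac{1-x}{r}\bigl(x-\frac{r^2}{2}(1+x)\bigr)<0$, because $a<r^2/4$ forces $r^2>4x$. Meanwhile $U-a>0$ near $0$ and near $\infty$, so no zero can occur. This yields $V_1-1>\frac{(2-U)^2}{r^2}-\frac12(1-U^2)$, which is positive for $r<\sqrt6$ by the identity $\frac{2(2-x)^2}{1-x^2}-6=\frac{2(2x-1)^2}{1-x^2}\ge0$. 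That confines the numerics to $[\sqrt6,4]$ and records a structural fact about the vortex.

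On $(4,\infty)$ your argument is correct: $a<1$ plus Corollary~\ref{cor:vortex_tail_bridge}, reducing to $e^r>4r^{3/2}$. It is simpler than the paper's, which expands using the finer lower barriers for both $a$ and $U$ from Lemma~\ref{lem_1-U2}. Your spectral conclusions coincide with the paper's, up to the $r^{1/2}$ conjugation between $\mathcal H_1$ and $\mathcal L_1$.

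Two small corrections:
\begin{itemize}
\item Near $r=4$ one has $V_1-1\approx0.055$, not something negative. The paper certifies $V_1-1>0.0548$ on $[\sqrt6,4]$, with the minimum near $r=4$.
\item The subordinate threshold solution at infinity behaves like $r^{-1}$, not $r^{-2}$, since $V_1-1\sim r^{-2}$ there; Bessel behaviour plays no role at $k=0$. The boundary term $r\phi\phi'$ still vanishes, so your resonance argument is unaffected.
\end{itemize}
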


\begin{proof}
  We first record the inequality
\begin{equation}\label{eq:U_greater_a}
U(r)>a(r),\qquad r>0,
\end{equation}
which will be used below.  Indeed, since $0<U<1$, the second equation in
\eqref{numerics2} gives
\[
a(r)=\int_0^r\frac{s}{2}\bigl(1-U(s)^2\bigr)\,ds<\frac{r^2}{4}.
\]
Set $F=U-a$.  The expansions \eqref{Uaexp0} give $F(r)>0$ for all
sufficiently small $r>0$, while \eqref{Uaexpinfty}, with $m>0$, gives
\[
F(r)=m\bigl(r^{1/2}-r^{-1/2}\bigr)e^{-r}
  +O\bigl(r^{-1/2}e^{-r}\bigr)>0
\]
for all sufficiently large $r$.  If $F(r)=0$ at some $r>0$, and if
$x:=U(r)=a(r)$, then $0<x<1$, $r^2>4x$, and \eqref{numerics2} yields
\[
F'(r)
=\frac{1-x}{r}\left(x-\frac{r^2}{2}(1+x)\right)<0.
\]
Thus every zero of $F$ would be a strict crossing from positive to negative.
Since $F$ is positive both near zero and for all sufficiently large $r$, no
such zero can occur.  This proves \eqref{eq:U_greater_a}.

We now prove \eqref{lem_app_V1}.  By \eqref{eq:U_greater_a},
\begin{align}\label{lem_app_V12}
\frac{(2-a)^2}{r^2}-\frac12(1-U^2)
>
\frac{(2-U)^2}{r^2}-\frac12(1-U^2)
=:g(r).
\end{align}
For $0<x<1$,
\[
\frac{2(2-x)^2}{1-x^2}-6
=\frac{2(2x-1)^2}{1-x^2}\ge0.
\]
Consequently, if $g(r)\le0$, then
\[
r^2\ge \frac{2(2-U(r))^2}{1-U(r)^2}\ge6.
\]
It follows that $g(r)>0$, and hence $V_1(r)-1>0$, for
$0<r<\sqrt6$.

On the compact interval $[\sqrt6,\rfour]$, the interval arithmetic computation
described in Section~\ref{sec:capd_certificate} gives the certified bound
\[
V_1(r)-1>0.0548.
\]
It remains to consider $r>\rfour=\rzeroUbound$.  Put
\[
p(r):=4r^{-1/2}-2r^{-3/2}+3r^{-5/2}.
\]
The lower comparison functions in Lemma~\ref{lem_1-U2} are positive for
$r\ge\rfour$.  Indeed, direct differentiation shows that
$4\sqrt r\,e^{-r}$ and $p(r)e^{-r}$ are decreasing there, and their values
at $r=\rfour$ are $8e^{-4}<1$ and $\frac{59}{32}e^{-4}<1$, respectively.
Hence that lemma, together with $1-a>0$, gives
\begin{align*}
V_1(r)-1
&=\frac{4}{r^2}(1-a)+\frac{a^2}{r^2}
  +\frac12\bigl(U^2-1\bigr)\notag\\
&>\frac1{r^2}\bigl(1-4\sqrt r\,e^{-r}\bigr)^2
 +\frac12\left(\bigl(1-p(r)e^{-r}\bigr)^2-1\right)\notag\\
&=\frac{1}{r^2}
 -\bigl(4r^{-1/2}+6r^{-3/2}+3r^{-5/2}\bigr)e^{-r}\notag\\
&\quad
 +\bigl(24r^{-1}-8r^{-2}+14r^{-3}-6r^{-4}
   +\tfrac92r^{-5}\bigr)e^{-2r}.
\end{align*}
The coefficient of $e^{-2r}$ in the last line is
\[
16r^{-1}+\frac12p(r)^2>0.
\]
Moreover, if
\[
P(r):=4r^{3/2}+6r^{1/2}+3r^{-1/2},
\]
then
\[
\frac{d}{dr}\bigl(P(r)e^{-r}\bigr)
=-e^{-r}\left(4r^{3/2}+\frac32r^{-3/2}\right)<0,
\]
and
\[
P(4)e^{-4}=\frac{91}{2}e^{-4}<1.
\]
Therefore $P(r)e^{-r}<1$ for every $r\ge\rfour$, and already the first two
terms in the preceding lower bound have positive sum.  This proves
\eqref{lem_app_V1} for $r>\rfour$, and hence on all of $(0,\infty)$.

Finally, since $V_1(r)\to1$, one has
$\sigma_{\rm ess}(\mathcal L_1)=[1,\infty)$.  For every $f$ in the form
domain of $\mathcal L_1$,
\[
\big\langle(\mathcal L_1-1)f,f\big\rangle
=\int_0^\infty
\left(|f'(r)|^2+\bigl(V_1(r)-1\bigr)|f(r)|^2\right)r\,dr\ge0.
\]
Thus $\mathcal L_1$ has no eigenvalue below $1$, and hence no discrete
spectrum.

To exclude a threshold resonance, suppose that $\varphi=\overline{\varphi}$ is a radial
threshold solution of
\[
-\varphi''-\frac1r\varphi'+(V_1-1)\varphi=0,
\]
which is globally subordinate. Recall that this refers to the smaller branch relative to the asymptotic behavior as $r\to0+$, respectively $r\to\infty$. This branch is unique up to a scalar multiple. 
Here
\[
V_1(r)-1=\frac4{r^2}+O(1)\quad(r\to0+),
\qquad
V_1(r)-1=\frac1{r^2}+O\bigl(r^{-1/2}e^{-r}\bigr)
\quad(r\to\infty).
\]
Thus the subordinate asymptotics are
\[
\varphi(r)=O(r^2),\quad \varphi'(r)=O(r)
\qquad(r\to0+),
\]
and
\[
\varphi(r)=O(r^{-1}),\quad \varphi'(r)=O(r^{-2})
\qquad(r\to\infty).
\]
Multiplying the equation by $r{\varphi}$, and integrating over
$(\varepsilon,R)$ gives
\[
\int_\varepsilon^R
\left(|\varphi'|^2+(V_1-1)|\varphi|^2\right)r\,dr
=
\bigl[r\varphi'(r)\varphi(r)\bigr]_{\varepsilon}^{R}.
\]
The boundary terms vanish as $\varepsilon\to0+$ and $R\to\infty$.
Thus the nonnegative integral on the left is zero.  Since $V_1-1>0$, this
forces $\varphi\equiv0$, contradicting the definition of a resonance.
Therefore the threshold is not resonant.
\end{proof}

\subsection{Absence of a threshold resonance for \texorpdfstring{$\mathcal{L}_2$}{L2}}\label{sec:num_verif_no_resonance}
In this subsection, we give the computer-assisted interval proof of the absence of threshold resonances for $\mathcal{L}_2$, already stated in Proposition~\ref{prop:lem_p2_t2}.

\begin{lem}\label{Lemspec0}
The threshold of $\mathcal L_2$, equivalently of its conjugate
$\mathcal H_2$, is not a resonance.
\end{lem}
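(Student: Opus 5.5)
The plan is to reduce the absence of a threshold resonance to a single Wronskian that does not vanish, and to certify that non-vanishing with interval arithmetic. We work with $\mathcal H_2=-\partial_r^2-\frac{1}{4r^2}+U^2$ at energy $1$, i.e.\ with $-f''-\frac{1}{4r^2}f-(1-U^2)f=0$.

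\textbf{The two distinguished solutions.}
\begin{itemize}
\item Near $r=0$ the indicial exponents are $\tfrac12$ (double). The regular branch $\Phi_{2,0}^{(0)}=r^{1/2}(1+O(r^2))$ is the one selected by the Friedrichs domain in Lemma~\ref{lem:domains}; the other branch is $r^{1/2}\log r$.
\item Near $r=\infty$ the potential $1-U^2$ decays like $r^{-1/2}e^{-r}$ by Corollary~\ref{cor:vortex_tail_bridge}. The free solutions are therefore $r^{1/2}$ and $r^{1/2}\log r$. The subordinate branch $\Phi_{2,\infty}^{(0)}$ is the one asymptotic to $r^{1/2}$.
\end{itemize}
A resonance means these two branches are proportional. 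Equivalently, $\widetilde c_{1,*}=0$ in Proposition~\ref{prop:p1_t1_and_t2} (see Remark~\ref{rem:c1*notzero}), which is the same as $W(\Phi_{2,0}^{(0)},\Phi_{2,\infty}^{(0)})=0$. It therefore suffices to show this Wronskian is nonzero. Since the Wronskian is constant in $r$, we may evaluate it at a single matching point $r_{\rm m}=10$.

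\textbf{Step 1 (data at the origin).} Write $\Phi_{2,0}^{(0)}=r^{1/2}\sum_j e_j r^{2j}$. Inserting the vortex Frobenius series from Lemma~\ref{lem:Wberror} gives a triangular recurrence with convolution in the coefficients of $U^2$. We enclose the coefficients up to a truncation order and bound the tail by the geometric-majorant bootstrap (Lemmas~\ref{lem:capd_origin_frobenius} and~\ref{lem:Acheck}). This yields intervals for $\Phi_{2,0}^{(0)}(0.1)$ and $\partial_r\Phi_{2,0}^{(0)}(0.1)$ that are uniform in $\upr\in I_{\rm cert}$.

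\textbf{Step 2 (data at infinity).} Write $\Phi_{2,\infty}^{(0)}(r)=r^{1/2}+\int_r^\infty G(r,s)(1-U(s)^2)\Phi_{2,\infty}^{(0)}(s)\,ds$. Here $G(r,s)=r^{1/2}s^{1/2}\log(s/r)$ is the Volterra kernel built from $r^{1/2}$ and $r^{1/2}\log r$. Set $w=r^{-1/2}\Phi_{2,\infty}^{(0)}$. The operator norm on $L^\infty[R,\infty)$ is then bounded by $\int_R^\infty s(1-U^2)\log(s/R)\,ds\le 8e^{-R}(R+2)/R$, using \eqref{eq:capd_vortex_tail_norms}. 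At $R=16$ this is tiny, so the map is a contraction. This gives $|w-1|\le \varepsilon/(1-\varepsilon)$ on $[16,\infty)$. An analogous bound on the differentiated formula, using the second estimate in \eqref{eq:capd_vortex_tail_norms}, gives intervals for $\Phi_{2,\infty}^{(0)}(16)$ and its derivative.

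\textbf{Step 3 (propagation and matching).} We integrate the ODE with CAPD, with $\upr\in I_{\rm cert}$ as an interval parameter and the vortex equations \eqref{numerics2} propagated alongside. The solution $\Phi_{2,0}^{(0)}$ is carried forward from $0.1$ to $10$, and $\Phi_{2,\infty}^{(0)}$ is carried backward from $16$ to $10$. We then evaluate the Wronskian at $r_{\rm m}=10$ in interval arithmetic and check that the resulting interval excludes $0$. The matching point $10$ is chosen to balance the error growth of the two integrations.

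\textbf{Expected obstacle.} The step I expect to be hardest is the control at infinity. Both threshold solutions grow polynomially and differ only by a logarithm, so the Volterra kernel is not decaying. The contraction has to come entirely from the exponential decay of $1-U^2$ in Lemma~\ref{lem_1-U2}. The derivative enclosure also needs some care, because the kernel $\partial_rG$ contains $\log(s/r)$ and $1$ terms. A further concern is that the backward propagation of the subordinate solution mixes in the dominant $\log$ branch, so the interval width must be kept small enough for the Wronskian to stay away from zero. If the enclosure turns out too wide, the first remedy is to raise $R$ or lower $r_{\rm m}$.
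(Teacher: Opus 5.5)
Your proposal matches the paper's proof essentially step for step. The paper takes regular data at $0.1$ from the Frobenius bootstrap of Lemma~\ref{lem:Acheck} with $E=1$. It runs a Volterra contraction for $w=r^{-1/2}\Phi_{2,\infty}^{(0)}$ on $[16,\infty)$ with the constants $8e^{-R}(R+2)/R$ and $8e^{-R}(R+1)$ from Corollary~\ref{cor:vortex_tail_bridge}, the second entering through $w'(r)=\pm\frac1r\int_r^\infty s(1-U^2)w\,ds$. It then uses CAPD propagation to $r_{\rm m}=10$ and certifies that the Wronskian there is nonzero, obtaining $[-0.8003267,-0.8001801]$.

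One small slip: with $G(r,s)=r^{1/2}s^{1/2}\log(s/r)$, variation of constants gives $\Phi_{2,\infty}^{(0)}(r)=r^{1/2}-\int_r^\infty G(r,s)(1-U(s)^2)\Phi_{2,\infty}^{(0)}(s)\,ds$, with a minus rather than a plus. This is harmless, since only the operator norm enters the contraction estimate.
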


\begin{proof}
By \eqref{eq:LtoH}, it suffices to work with $\mathcal H_2$. Since
$U^2-1$ decays exponentially, the standard zero-energy construction gives
at infinity the fundamental system described in
\eqref{Phi_Theta_zero_Op_1and2}, with asymptotics
\[
 r^{1/2}\log r+O(e^{-r}\log r),
 \qquad
 r^{1/2}+O(e^{-r}).
\]
Let $\Phi_{2,0}^{(0)}$ denote the nontrivial solution of
\begin{equation}
     (\mathcal H_2-1)\Phi=0 
\label{eq:Phi200}
\end{equation}
which is regular at the origin, with its scaling fixed by the normalization~\eqref{eq:Phi200 rsmall} as $r\to0+$. Here the superscript $(0)$ indicates that we are at the
threshold spectral value, that is, $k=0$. Let
$\Phi_{2,\infty}^{(0)}$ denote the threshold solution normalized at infinity.
We write
\[
 W[f,g]:=fg'-f'g.
\]
Since the equation contains no first-order derivative, this Wronskian is
independent of $r$. 
The two solutions used in the computation are determined by the asymptotic
normalizations
\begin{equation}\label{eq:Phi200 rsmall}
    \Phi_{2,0}^{(0)}(r)\sim r^{1/2},
 \qquad
 \frac{d}{dr}\Phi_{2,0}^{(0)}(r)\sim\frac{1}{2r^{1/2}},
 \qquad r\to0+,
\end{equation}
and
\[
 \Phi_{2,\infty}^{(0)}(r)\sim-r^{1/2},
 \qquad
 \frac{d}{dr}\Phi_{2,\infty}^{(0)}(r)\sim-\frac{1}{2r^{1/2}},
 \qquad r\to\infty.
\]
By Lemma~\ref{lem:domains}, the normalization at the origin selects
the solution satisfying the Friedrichs boundary condition and excludes the
$r^{1/2}\log r$ branch.
The regular solution is approximated numerically with initial conditions in the value-and-derivative interval supplied by Lemma~\ref{lem:Acheck} at $\rorigin=0.1$ (obtained by a truncated Frobenius series), and propagated forward by the C++ code. See Lemma~\ref{lem:Acheck} below, applied
with threshold energy $E=1$, for the Frobenius approximation at~$\rorigin=0.1$. 

The solution $\Phi_{2,\infty}^{(0)}$, normalized at infinity, will be
propagated backward from $R=16$. To obtain rigorous initial data for
this propagation, namely intervals enclosing its value and derivative
at $R=16$, we use a Volterra equation. For this purpose, define
\[
 q(r):=1-U(r)^2,
 \qquad
 \Phi_{2,\infty}^{(0)}(r)=r^{1/2}w(r).
\]
The threshold equation and normalization at infinity give
\[
 (rw')'=-rq(r)w,
 \qquad
 w(r)\to-1,
 \qquad
 rw'(r)\to0
 \quad(r\to\infty).
\]
Integrating twice from $r$ to infinity and applying Fubini's theorem in
the second integration gives
\begin{equation}\label{eq:threshold_wprime_volterra}
 w'(r) =\frac1r\int_r^\infty s q(s)w(s)\,ds, \qquad \ 
 w(r)
 =-1-\int_r^\infty
 s q(s)\log\frac{s}{r}\,w(s)\,ds.
\end{equation}
For $R\ge\rfour$, define
\[
 (\mathcal T_Rv)(r)
 :=\int_r^\infty
 s q(s)\log\frac{s}{r}\,v(s)\,ds.
\]
By Corollary~\ref{cor:vortex_tail_bridge},
\[
 \|\mathcal T_R\|_{L^\infty\to L^\infty}
 \le L_R,
 \qquad
 L_R:=8e^{-R}\frac{R+2}{R},
\]
and
\[
 \int_R^\infty s q(s)\,ds
 \le M_R,
 \qquad
 M_R:=8e^{-R}(R+1).
\]
For $R\ge16$, one has $L_R\le L_{16}<1$, so the affine map
$v\mapsto-1-\mathcal T_Rv$ is a contraction. Thus
\begin{equation}\label{eq:threshold_w_error_bounds}
 \|w+1\|_{L^\infty([R,\infty))}
 \le\frac{L_R}{1-L_R},
 \qquad
 |w'(R)|
 \le\frac{M_R}{R(1-L_R)},
\end{equation}
where the derivative estimate follows from
\eqref{eq:threshold_wprime_volterra} and
$\|w\|_{L^\infty([R,\infty))}\le(1-L_R)^{-1}$.
At $R=16$, the multiprecision transcript records the outward-rounded
bounds
\[
 L_R<1.012817\cdot10^{-6},
 \qquad
 \frac{L_R}{1-L_R}<1.012818\cdot10^{-6},
 \qquad
 \frac{M_R}{R(1-L_R)}<9.56550\cdot10^{-7}.
\]
Consequently, the exact Cauchy data satisfy
\begin{equation}\label{eq:threshold_infinity_Cauchy_box}
\begin{aligned}
 \Phi_{2,\infty}^{(0)}(R)
 &\in R^{1/2}
 \left(-1+[-1.012818,1.012818]\cdot10^{-6}\right),\\
 (\Phi_{2,\infty}^{(0)})'(R)
 &\in
 \frac{-1+[-1.012818,1.012818]\cdot10^{-6}}{2R^{1/2}}
 +R^{1/2}[-9.56550,9.56550]\cdot10^{-7}.
\end{aligned}
\end{equation}
The C++ verifier, using CAPD's validated integrator, then propagates
\eqref{eq:threshold_infinity_Cauchy_box} backward from $R=16$ to the
common Wronskian matching radius $r_{\rm m}=10$. 
If a threshold resonance existed, the solution regular at the origin would
also have size $O(r^{1/2})$ at infinity. It would therefore be a multiple of
$\Phi_{2,\infty}^{(0)}$, and their Wronskian would vanish. The interval
computation instead gives
\begin{align}\label{noresonance_wronskian}
 W\bigl[\Phi_{2,0}^{(0)},\Phi_{2,\infty}^{(0)}\bigr](10)
 \in[-0.8003267,\,-0.8001801].
\end{align}
Since this interval excludes zero, $\Phi_{2,0}^{(0)}$ is not a multiple of
$\Phi_{2,\infty}^{(0)}$. Equivalently, in its expansion at infinity, the
coefficient of the logarithmic solution
\[
 \Theta_{2,\infty}^{(0)}(r)\sim r^{1/2}\log r,
 \qquad r\to\infty,
\]
is nonzero. Thus $\widetilde c_{1,*}\neq0$, and the threshold of
$\mathcal L_2$ is not a resonance.
\end{proof}


\subsection{The discrete spectrum of \texorpdfstring{$\mathcal{L}_2$}{L2}}\label{secLT}
We now prove that $\mathcal{L}_2=-\Delta_r+U^2$ has at most one
eigenvalue in $(0,1)$. A rigorous enclosure of this eigenvalue is obtained
in Lemma~\ref{lem:fgr_eigenvalue_box} below. As a consequence of some
Set\^o-type estimates~\cite{seto}, summarized in
\eqref{eq:seto_eigenvalue_bound}, it suffices to estimate the quantity
$\Lambda$ defined in \eqref{eq:seto_Lambda_definition}. The C++ code certifies upper enclosures of the compact double integral and of the two
compact one-variable moments; the regions near zero and
infinity are controlled analytically using the rigorous vortex tail
bounds.

\begin{prop}\label{theoLT2}
The operator $\mathcal{L}_2$ has at most one eigenvalue in $(0,1)$.
\end{prop}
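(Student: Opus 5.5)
The plan is to reduce the eigenvalue count to a Birman--Schwinger-type bound in two dimensions and then certify that bound numerically. Writing $\mathcal L_2-1=-\Delta_r-q$ with $q:=1-U^2\ge0$ (recall $0<U<1$ by Proposition~\ref{prop:exuniqUa}), eigenvalues of $\mathcal L_2$ in $(0,1)$ are exactly the negative eigenvalues of $-\Delta-q$ on radial functions. The essential spectrum is $[1,\infty)$, and $\mathcal L_2>0$ since $U^2>0$. Hence it suffices to show that the radial operator $-\Delta-q$ has at most one negative eigenvalue.

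For this I will invoke Set\^o's two-dimensional estimate~\cite{seto}. In the radial sector it bounds the number $N$ of negative eigenvalues by a quantity of the form
\[
N\le 1+\Lambda,\qquad \Lambda:=\frac{1}{\int_0^\infty q(s)s\,ds}\int_0^\infty\!\!\int_0^\infty q(r)q(s)\,\bigl|\log\tfrac{r}{s}\bigr|\,rs\,dr\,ds
\]
(up to the precise normalization and constants in~\cite{seto}). The leading $1$ accounts for the unavoidable bound state in two dimensions. Since $N$ is an integer, it then suffices to prove $\Lambda<1$ in the normalization of~\eqref{eq:seto_eigenvalue_bound}.

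The mass is known exactly. From $a'=\frac r2 q$ and $a(\infty)=1$ we get $\int_0^\infty q\,s\,ds=2$, which is~\eqref{eq:vortex_mass}. The remaining work is an upper bound on the logarithmic double integral, which I split as follows.

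\emph{Cut-offs.} Use $\varepsilon=0.1$ and $R=\rfour$, or larger, e.g.\ $16$.

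\emph{Compact square $[\varepsilon,R]^2$.} The code encloses $q$ cellwise using the certified vortex enclosures, which are uniform in $\upr\in I_{\rm cert}$. It then multiplies by exact integrals of $rs|\log(r/s)|$ over each rectangle. On diagonal cells the $r=s$ singularity is integrable and its integral is computed in closed form.

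\emph{Pieces touching $(0,\varepsilon)$.} Here $q\le1$. I bound $|\log(r/s)|\le|\log r|+|\log s|$ and use the compact moments $\int q\,s\,ds$ and $\int q\,s|\log s|\,ds$, which the code also certifies.

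\emph{Pieces touching $(R,\infty)$.} I use Corollary~\ref{cor:vortex_tail_bridge}, whose second bound controls the mass of the tail and whose first bound controls $\int s q\log(s/R)$. The cross terms are handled with $\log(s/r)=\log(s/R)+\log(R/r)$.

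Summing these pieces with outward rounding should give $\Lambda$ strictly below the threshold.

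The main obstacle I expect is the margin. The integrand is not small, since $q$ is of order one on $[0,3]$, so the compact double sum must be fine enough that the discretization overestimate does not push $\Lambda$ over the threshold. The first thing I would try is a nonuniform partition that is dense where $q$ varies, namely $r\in[0.5,5]$, with exact kernel integrals on each cell. If the margin is still too thin, I would refine the diagonal treatment and move $\varepsilon$ closer to $0$, using the Frobenius bounds of Lemma~\ref{lem:Wberror}.
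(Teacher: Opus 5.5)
Your route is the paper's route: Set\^o's radial bound, the Bogomolny mass identity $\int_0^\infty(1-U^2)r\,dr=2$ from \eqref{eq:vortex_mass}, a certified cellwise enclosure of the logarithmic double integral with a closed formula on diagonal cells, and analytic control of the strip near $0$ and of the tail. The one point that needs fixing is the constant in Set\^o's bound, and it is not a harmless normalization.

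In the form the paper uses, \eqref{eq:seto_eigenvalue_bound}, the denominator is $2\int_0^\infty V r\,dr=4$. So one needs the double integral $\iint f(r)f(s)\,|\log(r/s)|\,dr\,ds$, with $f=r(1-U^2)$, to be below $4$; the paper certifies that it is below $3.182$. Your displayed inequality $N\le1+\Lambda$, with $\Lambda$ equal to the double integral divided by the mass only, is weaker by exactly a factor of $2$. In that normalization the target $\Lambda<1$ is false. The double integral is about $3.1$, so your $\Lambda$ is about $1.5$, and no refinement of the cells or of $\varepsilon$ can bring it below $1$. If by ``the normalization of \eqref{eq:seto_eigenvalue_bound}'' you meant the ratio with $2\int Vr\,dr$ in the denominator, your criterion is correct. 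In that case the margin is comfortable (roughly $3.1$ against $4$), not thin. This is why the paper can afford crude majorants: $f(t)\le t$ near the origin, and $|\log(r/s)|\le s+|\log r|$ with $f\le 8re^{-r}$ beyond $r=15$.

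A related quantitative caveat concerns your cut-off $R$. The choice $R=4$ does not work with the integrated bounds of Corollary~\ref{cor:vortex_tail_bridge}. At $R=4$ they only give $\int_4^\infty s(1-U^2)\,ds\le 40e^{-4}\approx0.73$, roughly four times the true value. The cross term $2\int_4^\infty f\,ds\cdot\int_0^4 f(r)\log(4/r)\,dr$ alone then exceeds $2.5$, and together with the compact part (about $2.3$ on $[0,4]^2$) the total overshoots $4$. Taking $R$ around $15$ or $16$, as the paper does, makes all tail contributions of order $10^{-3}$. Your treatment of $(0,0.1)$ via $1-U^2\le1$ and $|\log(r/s)|\le|\log r|+|\log s|$ costs only about $0.07$, which fits within the budget.
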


\begin{proof}
Set
\[
  V(r):=1-U^2(r)\geq0,
  \qquad
  \mathcal L_2-1=-\Delta_r-V.
\]
Since $V(r)\leq1$ near the origin, while
Lemma~\ref{lem_1-U2} gives
\[
  V(r)\leq 8r^{-1/2}e^{-r},\qquad r>\rfour,
\]
we have
\[
  \int_0^\infty r\bigl(1+|\ln r|\bigr)V(r)\,dr<\infty.
\]
Thus the hypotheses of Set\^o's two-dimensional zero-angular-momentum
bound~\cite[Theorem~5.1, equation~(5.6)]{seto} are satisfied. If
$\mathcal N$ denotes the number of negative eigenvalues of the radial
operator $\mathcal L_2-1$, counted with multiplicity, then Set\^o's bound
gives
\begin{align}\label{eq:seto_eigenvalue_bound}
\mathcal{N}<1+\dfrac{\int_0^\infty
 \left(\int_0^\infty V(s)\big\vert\ln\tfrac rs\big\vert s\,ds\right)
 V(r)r\,dr}
 {2\int_0^\infty V(r)r\,dr}.
\end{align}
First, the Bogomolny equations
\eqref{equ:onevortex_ODEs_smooth} and the boundary values of $a$ give
\begin{equation}\label{eq:vortex_mass}
  \int_0^\infty V(r)r\,dr
  =\int_0^\infty 2a'(r)\,dr
  =2\bigl(a(\infty)-a(0)\bigr)=2.
\end{equation}
It therefore remains to prove that
\begin{align}\label{eq:seto_Lambda_definition}
\Lambda:=\int_0^\infty
 \left(\int_0^\infty V(s)\big\vert\ln\tfrac rs\big\vert s\,ds\right)
 V(r)r\,dr<3.2.
\end{align}

Write
\[
  f(r):=rV(r)=r\bigl(1-U^2(r)\bigr).
\]
Put $\varepsilon:=10^{-30}$ and $R:=15$. Since $0\leq U\leq1$, we have
\[
  0\leq f(t)=t\bigl(1-U^2(t)\bigr)\leq t.
\]
On each interval $[\alpha,\beta]\subset[\varepsilon,0.1]$, the
rectangle computation therefore uses the pointwise bound
$f(t)\leq\beta$. On $[0.1,R]$ it uses tubular neighborhoods
(``enclosures") of the vortex profile generated by the C++ code,
initialized at $\rorigin=0.1$ by the Frobenius approximations.

We estimate analytically the region in which at least one of the two
variables lies in $(0,\varepsilon)$. Set
\[
  m_\varepsilon:=\int_0^\varepsilon f(t)\,dt,
  \qquad
  \ell_\varepsilon:=\int_0^\varepsilon f(t)|\ln t|\,dt.
\]
The bound $f(t)\leq t$ gives
\[
  m_\varepsilon\leq\frac{\varepsilon^2}{2},
  \qquad
  \ell_\varepsilon
  \leq\frac{\varepsilon^2}{2}|\ln\varepsilon|
       +\frac{\varepsilon^2}{4}.
\]
Moreover, by the Bogomolny identity 
\[
  M_\infty:=\int_0^\infty f(t)\,dt=2,
\]
see \eqref{eq:vortex_mass}.
Using again $f(t)\leq t$ on $(0,R)$, and
Lemma~\ref{lem_1-U2} together with $|\ln t|\leq t$ for $t\geq R$, we
also have
\[
\begin{aligned}
  L_\infty
  :=\int_0^\infty f(t)|\ln t|\,dt
  &\leq \int_0^R t|\ln t|\,dt
       +8\int_R^\infty t^2e^{-t}\,dt  \\
  &<\frac{R^2}{2}\ln R+1.
\end{aligned}
\]
Consequently, symmetry and
$\bigl|\ln(r/s)\bigr|\leq|\ln r|+|\ln s|$ show that the union of the
two strips where $r<\varepsilon$ or $s<\varepsilon$ contributes at
most
\[
\begin{aligned}
  2\bigl(m_\varepsilon L_\infty
       +\ell_\varepsilon M_\infty\bigr)
  &\leq
  2\left[
    \frac{\varepsilon^2}{2}
       \left(\frac{R^2}{2}\ln R+1\right)
    +2\left(
       \frac{\varepsilon^2}{2}|\ln\varepsilon|
       +\frac{\varepsilon^2}{4}\right)
  \right] \\
  &=4.448107532\ldots\cdot10^{-58}
   <4.449\cdot10^{-58}.
\end{aligned}
\]
The factor $2$ harmlessly counts the intersection of the two strips
twice. The positive cutoff $\varepsilon$ is introduced only because
the rectangle formula for a diagonal cell contains the logarithm of
its left endpoint and therefore cannot be applied with that endpoint
equal to zero. The resulting
rectangle computation gives
\begin{align}\label{mainintegral_LT}
\int_0^{15}\left(\int_0^{15}
 \bigl(1-U^2(s)\bigr)\big\vert\ln\tfrac rs\big\vert s\,ds\right)
 \bigl(1-U^2(r)\bigr)r\,dr<3.17965.
\end{align}
The same computation gives the certified bounds over $[0,15]$
\[
  M_{15}:=\int_0^{15}f(r)\,dr<2.008204,
  \qquad
  L_{15}:=\int_0^{15}f(r)|\ln r|\,dr<1.476031.
\]

For $r\geq15$, Lemma~\ref{lem_1-U2} gives
\begin{align}\label{1-U2}
  f(r)
  &<2\sqrt r\bigl(4-2r^{-1}+3r^{-2}\bigr)e^{-r}
   \leq8\sqrt r\,e^{-r}
   \leq8r e^{-r}.
\end{align}
Set
\[
\begin{aligned}
  m_0&:=8\int_{15}^\infty r e^{-r}\,dr
       =8(15+1)e^{-15}<3.91555\cdot10^{-5},\\
  m_1&:=8\int_{15}^\infty r^2e^{-r}\,dr
       =8(15^2+2\cdot15+2)e^{-15}<6.28936\cdot10^{-4}.
\end{aligned}
\]
Thus
\[
  \int_{15}^\infty f(r)\,dr\leq m_0,
  \qquad
  \int_{15}^\infty r f(r)\,dr\leq m_1.
\]
Moreover, if $0<r<15\leq s$, then
\[
  \big\vert\ln\tfrac rs\big\vert
  =\ln\tfrac sr\leq s+|\ln r|,
\]
whereas, for $r,s\geq15$,
\[
  \big\vert\ln\tfrac rs\big\vert
  \leq\ln r+\ln s\leq r+s.
\]
Consequently, the two mixed regions and the tail--tail region satisfy
\begin{align*}
&\iint_{(0,\infty)^2\setminus(0,15)^2}
 f(r)f(s)\big\vert\ln\tfrac rs\big\vert\,dr\,ds\\
&\qquad\leq
 2\bigl(M_{15}m_1+L_{15}m_0\bigr)+2m_0m_1
 <0.002642.
\end{align*}
This is the combined tail estimate recorded by the
certificate; in particular, it already includes the tail--tail region.

Combining this estimate with \eqref{mainintegral_LT}, we obtain
\[
  \Lambda<3.17965+0.002642=3.182292<3.2.
\]
Therefore
\[
  \mathcal N<1+\frac14\Lambda<1.796<2.
\]
Since $\mathcal N$ is an integer, it follows that $\mathcal N\leq1$.
In particular, $\mathcal L_2$ has at most one eigenvalue in $(0,1)$.
\end{proof}

\section{Approximating the internal mode   and the resonant dFT basis elements   }\label{sec:internal_mode_dft_data}

In this section we locate the internal eigenvalue $\lambda^2$ and its eigenfunction with certified high accuracy. In addition, since the resonant terms live at square-root spectral frequency $2\lambda$, i.e., at spectral energy $4\lambda^2$, we also compute the corresponding distorted Fourier basis elements at that frequency with rigorous error bounds. This is needed in the FGR verification. 

The eigenfunction $\psi$ satisfies the scalar eigenvalue equation with the exact vortex profile as a coefficient:
\begin{align}\label{eq:internal_mode}
(\calL_2\psi)(r)=-\psi''(r)-\dfrac{1}{r}\psi'(r)+U(r)^2\psi(r)=\lambda^2 \psi(r). 
\end{align}
We reserve $\psi$ for the $L^2$-normalized positive eigenfunction, while the
  C++ code rescales $\psi$ to $\psi_0$ which satisfies the normalization $\psi_0(0)=1, \psi_0'(0)=0$. At infinity, \eqref{eq:internal_mode} is an exponentially decaying perturbation of the modified Bessel equation with parameter $\sqrt{1-\lambda^2}$, and square integrability excludes the $I_0$ branch, leaving the $K_0\big(\sqrt{1-\lambda^2}\,r\big)$ behavior. In particular,
\[
  \psi(r)=O\!\left(r^{-1/2}e^{-\sqrt{1-\lambda^2}\,r}\right),
  \qquad r\to\infty.
\]
 For the Fermi Golden Rule, we need to solve the following equations for the
regular distorted Fourier basis at spectral energy $4\lambda^2$ (see
\eqref{eq:lemSd2}):
\begin{equation}\label{eq:Phi12}
\begin{aligned}
\calH_1\Phi_1= -\Phi_1''(r)-\dfrac{1}{4r^2}\Phi_1(r)
  +\dfrac{1+U(r)^2}{2}\Phi_1(r)
  +\dfrac{(2-a(r))^2}{r^2}\Phi_1(r)
  &=4\lambda^2\Phi_1(r),\\
\calH_2\Phi_2 = -\Phi_2''(r)-\dfrac{1}{4r^2}\Phi_2(r)+U(r)^2\Phi_2(r)
  &=4\lambda^2\Phi_2(r)
\end{aligned}
\end{equation}
with small $r$ asymptotics
\begin{align*}
\Phi_1(r) &=r^{1/2}\Big(r^{2}+a_4r^4+O(r^{6})\Big),
& \Phi_1'(r)&=r^{1/2}\Big(\dfrac{5}{2}r+\dfrac{9}{2}a_4r^3+O(r^{5})\Big),
\\
\Phi_2(r) &=r^{1/2}\Big(1+b_2r^2+O(r^{4})\Big),
& \Phi_2'(r)&=r^{1/2}\Big(\dfrac{1}{2}r^{-1}+\dfrac{5}{2} b_2r+O(r^{3})\Big).
\end{align*}
These expansions are the only admissible ones by Lemma~\ref{lem:domains}.

As in the case of the vortex, we use Frobenius series to approximate these
solutions on \([0,\rorigin]\).  Here, the coefficients of the ODEs themselves contain the Frobenius series of \(U\) and \(a\), so the
 recurrences governing the coefficients contain convolutions.  Lemmas~\ref{lem:capd_origin_frobenius} and~\ref{lem:Acheck} control the
resulting truncation errors by combining an
analytic majorant valid for all orders with finite outward-rounded interval calculations 
performed by the C++ code.

In the following lemma, a Frobenius factor means the analytic
power-series part left after removing the prescribed singular or
vanishing power at a regular singular point. The relevance of this
factorization to the lemma is that, after the resulting expressions are
substituted into the differential equations and equal powers are
compared, the coefficients obey triangular recurrences of convolution
type. In our setting near $r=0$, the regular solutions have the form
\[
    Y(r)=r^\nu h(r^2),
\]
where $r^\nu$ is the leading Frobenius power dictated by the indicial
equation, and
\[
    h(x)=\sum_{n\ge0}c_nx^n,\qquad x=r^2,
\]
is analytic at $x=0$. Thus, for example,
\[
    U(r)=r f(r^2),
\]
so the Frobenius factor is $f(x)$, while
\[
    \psi_0(r,\mu)=h_\mu(r^2)
\]
has leading power $\nu=0$ and Frobenius factor $h(x)$. Similarly,
\[
    \Phi_1(r)=r^{5/2}h_1(r^2),\qquad
    \Phi_2(r)=r^{1/2}h_2(r^2),
\]
so $h_1$ and $h_2$ are the corresponding Frobenius factors.

We use the following lemma to turn bounds for finitely many coefficients,
together with a bootstrap estimate, into geometric bounds for the entire
coefficient tail and hence for the truncation error. Unlike
Lemmas~\ref{lem:Wberror} and~\ref{lem:Wbderror}, it requires finite
interval-arithmetic checks of its hypotheses, especially of the
bootstrap in part~(2). The concrete recurrences and finite checks are
given in Lemma~\ref{lem:Acheck}.

More precisely, suppose that the Frobenius series is truncated after degree
$M$, and write
\[
  A_n:=|c_n|x_*^n,\qquad T:=A_M.
\]
Our goal is to prove the geometric bound
\[
  A_{M+m}\leq T\rho^m,\qquad m\geq1,
\]
for some fixed $\rho<1$. The code first computes the next $K$
coefficients and verifies this inequality separately for
$m=1,\ldots,K$. These $K$ coefficients are not included in the
polynomial used at $x_*$. Rather, they provide the initial segment of
the induction and move the starting index of the more delicate argument
from $M+1$ to $M+K+1$.

The remaining coefficients are controlled by their triangular recurrences, which contain coefficient convolutions in the variable $x=r^2$. The $n$th coefficient depends only on coefficients with smaller indices. The C++ code inserts the geometric bounds for all preceding coefficients into a nonnegative majorant of the $n$th coefficient, obtained by placing absolute values inside the recurrence. We show that the resulting expression is at most $\beta T\rho^{n-M}$ with $\beta\leq1$. Hence the same geometric bound holds for the $n$th coefficient, and the induction continues.

In the applications below, there are several reasons why this induction closes. Multiplication of the coefficient recurrences by $x_*^n$ produces explicit factors $x_*$ or $x_*^2$. Moreover, the coefficients of the vortex profile decay geometrically with
\[
  \sigma=\frac{x_*}{2}=0.005,
\]
and the divisors in the recurrences grow quadratically with $n$. Taking $K=20$ separates the coefficients retained in the polynomial from the first index of the infinite induction. The convolution terms containing those retained coefficients consequently carry powers at least $\sigma^{K-1}$ or $\sigma^{K+1}$. Lemma~\ref{lem:Acheck} verifies these facts for the four families used below: $\psi_0(\cdot,\mu)$, the regular solutions $\Phi_1$ and $\Phi_2$
at the FGR energy $E=4\mu$, and the regular threshold solution for
$\Phi_2$ at $E=1$.

\begin{lemma}\label{lem:capd_origin_frobenius}
Let 
\[
    h(x)=\sum_{n\ge0}c_nx^n
\]
and define
\[
    A_n:=|c_n|x_*^n, \qquad x_*:=\rorigin^2=10^{-2}.
\]
Assume that 
\begin{equation}\label{eq:frob_majorant_recurrence}
    A_n\le P_n(A_0,\ldots,A_{n-1}),\qquad n\ge1,
\end{equation}
where each $P_n\ge0 $ is monotone increasing in each argument.
Let $M,K$ be positive integers,  $T>0$, and  $0<\rho<1$.  Define
\[
  \widehat A_n:=
  \begin{cases}
    A_n, & 0\le n\le M+K,\\
    T\rho^{\,n-M}, & n>M+K.
  \end{cases}
\]
Suppose the following two inequalities hold:
\begin{enumerate}[leftmargin=*]
\item The finite coefficients satisfy
\begin{equation}\label{eq:certified_frob_finite_slices}
     A_{M+m}\le T\rho^m,\qquad 1\le m\le K.
\end{equation}
\item For every $n\ge M+K+1$,
\begin{equation}\label{eq:frob_tail_bootstrap}
  P_n(\widehat A_0,\ldots,\widehat A_{n-1})
  \le \beta\,T\rho^{\,n-M}
\end{equation}
with a constant $\beta\le 1$.
\end{enumerate}
Then the exact Frobenius coefficients satisfy
\begin{equation}\label{eq:frob_geometric_tail_bound}
 A_{M+m}\le T\rho^m,\qquad m\ge1.
\end{equation}
Consequently, with
\[
    h_M(x):=\sum_{n=0}^{M}c_nx^n,
    \qquad
    h_M'(x):=\sum_{n=1}^{M}n c_nx^{n-1},
\]
one has
\begin{equation}
    \label{eq:value}
    \left|h(x_*)-h_M(x_*)\right|
  \le
  E_0:=\frac{T\rho}{1-\rho}
\end{equation}
and
\begin{equation}\label{eq:deriv}
  \left|h'(x_*)-h_M'(x_*)\right|
  \le
  E_1:=
  \frac{T}{x_*}
  \left(
    \frac{M\rho}{1-\rho}
    +
    \frac{\rho}{(1-\rho)^2}
  \right).
\end{equation}
Thus, if $Y(r)=r^\nu h(r^2)$, $\nu\ge0$, and $\rorigin:=\sqrt{x_*}=0.1$, then
\[
Y(\rorigin)\in \rorigin^\nu\big(h_M(x_*)+[-E_0,E_0]\big),
\]
and
\[
Y'(\rorigin)\in
\nu \rorigin^{\nu-1}\big(h_M(x_*)+[-E_0,E_0]\big)
+2\rorigin^{\nu+1}\big(h_M'(x_*)+[-E_1,E_1]\big),
\]
with the first term omitted if $\nu=0$.
\end{lemma}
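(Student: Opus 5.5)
We argue by strong induction on $n$, comparing the true sequence $(A_n)$ with the hybrid sequence $(\widehat A_n)$. The goal is the statement
\[
  A_n\le\widehat A_n\quad\text{for all }n\ge0,
\]
from which \eqref{eq:frob_geometric_tail_bound} follows immediately.

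For $0\le n\le M+K$ the claim is trivial, since there $\widehat A_n=A_n$. On the range $M+1\le n\le M+K$, hypothesis \eqref{eq:certified_frob_finite_slices} already gives $A_n\le T\rho^{n-M}$.

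Now fix $n\ge M+K+1$ and assume $A_j\le\widehat A_j$ for all $j<n$. The majorant recurrence \eqref{eq:frob_majorant_recurrence} holds, and $P_n$ is monotone increasing in each argument. Together with the bootstrap hypothesis \eqref{eq:frob_tail_bootstrap} and $\beta\le1$, this gives
\[
  A_n\le P_n(A_0,\dots,A_{n-1})\le P_n(\widehat A_0,\dots,\widehat A_{n-1})\le\beta T\rho^{n-M}\le\widehat A_n .
\]
This closes the induction. The only subtle point is that monotonicity must be applied with the hybrid sequence, not the geometric one. This is exactly why $\widehat A_n$ keeps the true values up to index $M+K$, so that the finitely many checked coefficients feed the bootstrap.

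For the remainder estimates, write $h(x_*)-h_M(x_*)=\sum_{n>M}c_nx_*^n$. Its modulus is at most $\sum_{m\ge1}A_{M+m}\le T\sum_{m\ge1}\rho^m=T\rho/(1-\rho)$, which is \eqref{eq:value}; absolute convergence follows from the same bound. For the derivative, use the identity $|nc_nx_*^{n-1}|=nA_n/x_*$ and set $n=M+m$. Then
\[
  \sum_{m\ge1}(M+m)T\rho^m/x_*=\frac{T}{x_*}\Bigl(\frac{M\rho}{1-\rho}+\frac{\rho}{(1-\rho)^2}\Bigr),
\]
using $\sum_{m\ge1} m\rho^m=\rho/(1-\rho)^2$. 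This is \eqref{eq:deriv}.

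Finally, $Y(r)=r^\nu h(r^2)$. At $r=\rorigin$ we have $\rorigin^2=x_*$, so $Y(\rorigin)=\rorigin^\nu h(x_*)$. Differentiating gives
\[
  Y'(r)=\nu r^{\nu-1}h(r^2)+2r^{\nu+1}h'(r^2).
\]
Inserting the enclosures $h(x_*)\in h_M(x_*)+[-E_0,E_0]$ and $h'(x_*)\in h_M'(x_*)+[-E_1,E_1]$ yields the stated intervals, and the first term is absent when $\nu=0$. The proof is purely elementary. The real work is hidden in verifying hypothesis (2) for concrete recurrences, which belongs to Lemma~\ref{lem:Acheck} and not here.
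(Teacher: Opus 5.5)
Your proposal is correct and is essentially the paper's own proof: the same strong induction showing $A_n\le\widehat A_n$ via the monotonicity of $P_n$ and the bootstrap hypothesis, followed by the same geometric-series bounds for the value and derivative remainders and the same formula $Y'(r)=\nu r^{\nu-1}h(r^2)+2r^{\nu+1}h'(r^2)$. One point that both you and the paper leave implicit is that $|c_n|\le T\rho^{n-M}x_*^{-n}$ gives radius of convergence at least $x_*/\rho>x_*$, which justifies termwise differentiation at $x_*$.
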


\begin{proof}
The finitely many bounds in item~(1) start the induction. Suppose that
$A_{M+j}\le T\rho^j$ is already known for $1\le j<m$, with $m>K$, and
set $n=M+m$. Every coefficient entering
\eqref{eq:frob_majorant_recurrence} is then bounded by the corresponding
$\widehat A_j$. Hence, by the monotonicity of $P_n$ and
\eqref{eq:frob_tail_bootstrap},
\[
\begin{aligned}
A_{M+m} \le P_{M+m}(A_0,\ldots,A_{M+m-1}) \le P_{M+m}(\widehat A_0,\ldots,\widehat A_{M+m-1}) \le \beta T\rho^m
\le T\rho^m.
\end{aligned}
\]
Induction gives the claimed estimate for every $m\ge1$, proving
\eqref{eq:frob_geometric_tail_bound}.

Hence, the remainder at $x=x_*$  satisfies
\[
\begin{aligned}
\left|h(x_*)-h_M(x_*)\right| \le \sum_{m\ge1}|c_{M+m}|x_*^{M+m} =\sum_{m\ge1}A_{M+m} \le T\sum_{m\ge1}\rho^m
=\frac{T\rho}{1-\rho},
\end{aligned}
\]
which proves \eqref{eq:value}. Similarly,
\[
\begin{aligned}
\left|h'(x_*)-h_M'(x_*)\right| & \le
\sum_{m\ge1}(M+m)|c_{M+m}|x_*^{M+m-1} =\frac{1}{x_*}
\sum_{m\ge1}(M+m)A_{M+m}\\
&\le
\frac{T}{x_*}\sum_{m\ge1}(M+m)\rho^m =
\frac{T}{x_*}
\left(
\frac{M\rho}{1-\rho}
+
\frac{\rho}{(1-\rho)^2}
\right),
\end{aligned}
\]
which proves \eqref{eq:deriv}.

Finally, since $x_*=\rorigin^2$, the bounds
\eqref{eq:value} and \eqref{eq:deriv} give the stated interval for
$Y(\rorigin)=\rorigin^\nu h(x_*)$. Differentiating
$Y(r)=r^\nu h(r^2)$ gives
$Y'(r)=\nu r^{\nu-1}h(r^2)+2r^{\nu+1}h'(r^2)$, and applying the same
bounds at $r=\rorigin$ gives the stated interval for
$Y'(\rorigin)$. When $\nu=0$, the first term vanishes.
\end{proof}

The next lemma applies
Lemma~\ref{lem:capd_origin_frobenius} to the Frobenius series for
$\psi_0(\cdot,\mu)$, $\Phi_1$, $\Phi_2$, and the regular threshold solution
$\Phi_{2,0}^{(0)}$ defined in~\eqref{eq:Phi200}. The subsequent validated ODE integrations require interval enclosures containing the exact values and derivatives of these solutions at $\rorigin=0.1$. As in the preceding interval computations, the C++ code produces rigorous enclosures for finitely many coefficients and the resulting truncated polynomials. To obtain enclosures for the exact solutions, however, the omitted infinite tails must also be bounded analytically. Since the coefficient recurrences contain convolutions with the vortex coefficients, obtaining uniform tail bounds is less direct than for the vortex series. The purpose of the argument below is therefore to verify the hypotheses of Lemma~\ref{lem:capd_origin_frobenius} and thereby obtain explicit uniform bounds $\mathcal E_0,\mathcal E_1$ for the omitted value and derivative tails. Adding these bounds to the finite polynomial enclosures gives rigorous intervals for the exact solutions and their derivatives at $\rorigin=0.1$.

To obtain these bounds, we first substitute the Frobenius series into
the differential equations, which leads to triangular recurrences for
their coefficients. The
previously established coefficient bounds for $U$ and $a$ then give
nonnegative majorants for every term in these recurrences. Second,
 interval arithmetic is used to compute the coefficients
through degree $M+K$ and to verify directly
\[
  A_{M+m}\leq A_M\rho^m,\qquad 1\leq m\leq K,
\]
with $K=20$ and $\rho=0.95$. The adjacent coefficient ratios printed
in the transcript are not used to estimate the infinite tail.

Finally, the proof estimates the recurrence for every
$j\geq j_0:=M+K+1$. The quantities $H_\alpha$ introduced below measure
the contribution of the coefficients with
$\ell\leq M$, and $N_\alpha$ bounds the contribution of the
coefficients with $\ell>M$. The
proof shows that the corresponding normalized bounds do not increase
for $j\geq j_0$. The interval computation verifies
\[
  \beta_\alpha:=H_\alpha+N_\alpha<1
\]
at $j=j_0$, which proves the all-order estimate required in part~(2)
of Lemma~\ref{lem:capd_origin_frobenius}. The remainder bounds
$\mathcal E_0,\mathcal E_1$ then follow from the two geometric sums in
that lemma.

The wider interval $J$ is used in
Lemma~\ref{lem:fgr_eigenvalue_box} to locate the internal eigenvalue
and obtain the sharper enclosure
$\lambda^2\in\Lambda_{\rm FGR}\subset J$. This is why both parameter
intervals occur in the following estimates. The resulting intervals
for the solutions and their derivatives at $\rorigin=0.1$ provide the initial
data for the subsequent validated ODE integrations. In the following lemma, $\psi_0(r,\mu)$ denotes the solution of~\eqref{eq:internal_mode} with eigenvalue~$\lambda^2$ replaced by~$\mu$. In other words, we solve
\begin{equation}\label{eq:evalmu}
(\calL_2\psi_0(\cdot,\mu))(r)=-\psi_0''(r,\mu)-\dfrac{1}{r}\psi_0'(r,\mu)+U(r)^2\psi_0(r,\mu)=\mu \psi_0(r,\mu)
\end{equation}
with normalizations $\psi_0(0,\mu)=1$, $\psi_0'(0,\mu)=0$. Note that there is a unique value of~$\mu\in (0,1)$, namely the eigenvalue~$\lambda^2$, for which $\psi_0(\cdot,\mu)$ decays (and then $\psi_0(r):=\psi_0(r,\lambda^2)$). In fact, in Lemma~\ref{lem:fgr_eigenvalue_box} below, $\psi_0(\cdot,\mu)$ plays an essential role in proving that $\lambda^2$ is contained in $J$ as well as in the much smaller interval~$\Lambda_{\rm FGR}$. This is why the following lemma controls the Frobenius series of~$\psi_0(r,\mu)$ near $r=0$ uniformly for $\mu\in J$.

\begin{lemma}
\label{lem:Acheck}
Define
$
 x=r^2$, $x_*:=\rorigin^2$, $\rorigin=0.1,
$
and let $
 I_{\rm cert}$ be as in~\eqref{eq:Icert}. Further, define $
 J:=[0.77747,\,0.77753]$, and  $
 \Lambda_{\rm FGR}:=[0.777471875,\,0.77747375]\subsetneq J$.
Set 
\[
 U(r)=r f(x),\qquad a(r)=g(x),\qquad
 f(x)=\sum_{n\ge0}p_nx^n,\qquad
 g(x)=\sum_{n\ge1}q_nx^n.
\]
All the estimates below are uniform for the shooting parameter
$\upr\in I_{\rm cert}$, and hence apply to the vortex shooting parameter
$\uprz$.
We write, cf.~\eqref{eq:evalmu}
\[
 \psi_0(r,\mu)=h_\mu(x),\qquad h_\mu(0)=1,
\]
and
\[
 \Phi_{1,E}(r)=r^{5/2}h_{1,E}(x),\qquad
 \Phi_{2,E}(r)=r^{1/2}h_{2,E}(x),\qquad
 h_{1,E}(0)=h_{2,E}(0)=1.
\]
Here $\mu\in J$ for $h_\mu$, $E\in4\Lambda_{\rm FGR}$ for $h_{1,E}$ and
$h_{2,E}$, and $E=1$   for 
$\Phi_{2,0}^{(0)}$. 
For a Frobenius factor $h(x)=\sum_{n\ge0}\gamma_nx^n$, let
\[
 h^{(M)}(x):=\sum_{n=0}^{M}\gamma_nx^n,
\]
where $M=100$ for $h_\mu$ and $M=60$ for $h_{1,E}$ and $h_{2,E}$.
Uniformly in the parameter ranges just specified, the exact remainders at
$x_*$ satisfy
\[
 |h(x_*)-h^{(M)}(x_*)|<\mathcal E_0,
 \qquad
 |h'(x_*)-\partial_x h^{(M)}(x_*)|<\mathcal E_1,
\]
where
\[
{\renewcommand{\arraystretch}{1.25}%
\begin{array}{c|c|c|c}
\text{Frobenius factor and parameter range}
 &M&\mathcal E_0&\mathcal E_1\\ \hline
h_\mu,\ \mu\in J
 &100&2.744\cdot10^{-279}&3.293\cdot10^{-275}\\
h_{1,E},\ E\in4\Lambda_{\rm FGR}
 &60&2.948\cdot10^{-167}&2.358\cdot10^{-163}\\
h_{2,E},\ E\in4\Lambda_{\rm FGR}
 &60&1.096\cdot10^{-166}&8.762\cdot10^{-163}\\
h_{2,E},\ E=1
 &60&3.122\cdot10^{-167}&2.497\cdot10^{-163}
\end{array}}
\]
Consequently, if $Y(r)=r^\nu h(r^2)$ is any of these regular solutions,
then
\[
 Y(\rorigin)\in
 \rorigin^\nu\bigl(h^{(M)}(x_*)+[-\mathcal E_0,\mathcal E_0]\bigr)
\]
and
\[
 Y'(\rorigin)\in
 \nu \rorigin^{\nu-1}
 \bigl(h^{(M)}(x_*)+[-\mathcal E_0,\mathcal E_0]\bigr)
 +2\rorigin^{\nu+1}
 \bigl(\partial_x h^{(M)}(x_*)+[-\mathcal E_1,\mathcal E_1]\bigr),
\]
with the first term omitted when $\nu=0$.
\end{lemma}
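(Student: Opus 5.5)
The plan is to verify, for each of the four families, the two hypotheses of Lemma~\ref{lem:capd_origin_frobenius}. The remainder bounds $\mathcal E_0,\mathcal E_1$ then come from \eqref{eq:value}--\eqref{eq:deriv}, with $T=A_M$ and $\rho=0.95$.

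\textbf{Step 1: derive the coefficient recurrences.} Write $U=rf(x)$ and $a=g(x)$ with $x=r^2$. Then $U^2=xf^2$, and the coefficients $p_n,q_n$ are the Frobenius coefficients from \eqref{eq:Wb}, namely $p_n=\upr(-1)^nc_n$ and $q_{n+1}=(-1)^nd_n/4$. By Lemma~\ref{lem:Wberror}, with $\upr\le1/\sqrt2$, these satisfy $|p_n|\le 2^{-n}$ and $|q_{n+1}|\le 2^{-n}/4$.

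For $\psi_0=h_\mu(x)$, the radial Laplacian becomes $-4(xh''+h')$. Equation \eqref{eq:evalmu} therefore gives
\[
4n^2\gamma_n=\sum_{j+k=n-2}(f^2)_j\gamma_k-\mu\gamma_{n-1}.
\]
For $\Phi_2=r^{1/2}h_2$, the conjugation removes the $-\tfrac14r^{-2}$ term, so the same recurrence holds with $\mu$ replaced by $E$.

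For $\Phi_1=r^{5/2}h_1$, I will expand $(2-a)^2/r^2=4/x-4g/x+g^2/x$. The $4/x$ term cancels against the indicial part and leaves the divisor $4n(n+2)$. The remaining pieces are convolutions with $g/x$ and $g^2/x$, whose coefficients are bounded geometrically with ratio $1/2$.

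\textbf{Step 2: build the majorants and run the finite checks.} Multiply each recurrence by $x_*^n$ and take absolute values inside. This gives monotone majorants $P_n$, in which the vortex sequences enter with ratio $\sigma=x_*/2=0.005$ and carry at least one explicit factor $x_*$. Interval arithmetic, uniform over $\upr\in I_{\rm cert}$ and over the parameter boxes ($\mu\in J$, $E\in4\Lambda_{\rm FGR}$, or $E=1$), then computes $A_0,\dots,A_{M+K}$ with $K=20$ and checks $A_{M+m}\le A_M\rho^m$ for $1\le m\le K$.

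\textbf{Step 3: close the bootstrap (the main obstacle).} For $n\ge j_0=M+K+1$, insert $\widehat A$ into $P_n$ and split the convolution into two parts.
\begin{itemize}
\item For the indices $\ell\le M$, the vortex factor has index at least $n-\ell-2\ge K-1$. This part is therefore at most $H_\alpha T\rho^{n-M}$, where $H_\alpha$ is a finite sum of retained $A_\ell$ weighted by $(\sigma/\rho)^{n-\ell}$ and divided by $4n^2$.
\item For the indices $\ell>M$, use $\widehat A_\ell\le T\rho^{\ell-M}$. This gives $N_\alpha T\rho^{n-M}$ with $N_\alpha\lesssim x_*^{2}\rho^{-2}\sum_j(\sigma/\rho)^j/(4n^2)$, plus the $\mu x_*\rho^{-1}/(4n^2)$ term.
\end{itemize}
Both normalized quantities are nonincreasing in $n$, because the divisors grow and $\sigma/\rho<1$. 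It therefore suffices to certify $\beta_\alpha=H_\alpha+N_\alpha<1$ at $n=j_0$ alone.

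The delicate point is making the monotonicity in $n$ rigorous for the $\Phi_1$ recurrence, where $g/x$ and $g^2/x$ contribute several convolution layers. If a direct bound there is too crude, I would first majorize those layers by a single geometric sequence with ratio $1/2$ before running the argument.

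Finally, the tabulated values of $\mathcal E_0,\mathcal E_1$ follow by outward-rounded evaluation of $T\rho/(1-\rho)$ and of \eqref{eq:deriv}, and the enclosures for $Y(\rorigin)$ and $Y'(\rorigin)$ are the last display of Lemma~\ref{lem:capd_origin_frobenius}.
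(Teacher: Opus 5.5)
Your proposal is correct and follows the paper's proof essentially step for step. It uses the same three recurrences (divisors $4j^2$ and $4j^2+8j$), the same absolute-value majorant maps, the finite check with $K=20$ and $\rho=0.95$, and the split into retained indices $\ell\le M$ versus $\ell>M$, which reduces everything to certifying $\beta_\alpha=H_\alpha+N_\alpha<1$ at $j_0=M+K+1$ before invoking Lemma~\ref{lem:capd_origin_frobenius}. The one small correction concerns the kernel majorants coming from the convolutions in $f^2$ and $g^2$: they carry a linear prefactor and so are not purely geometric (the paper uses $\overline B_k=C^2(k+1)\sigma^k$ and $\overline S_i=(2+\frac{i-1}{4})\sigma^i$). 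Consequently the monotonicity in $j$ rests on successive kernel ratios being at most $2\sigma<\rho$, not on $\sigma<\rho$ alone, and this still holds trivially.
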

\begin{proof}
Put
\[
  \sigma:=\frac{x_*}{2}=0.005,
  \qquad
  C:=\sup_{\upr\in I_{\rm cert}}|\upr|.
\]
We first relate the vortex coefficients above to those estimated in
Lemma~\ref{lem:Wberror}.  From
\[
  U(r)=\upr r\sum_{n\ge0}(-1)^nc_nr^{2n},
  \qquad
  a(r)=\frac{r^2}{4}\sum_{n\ge0}(-1)^nd_nr^{2n},
\]
one has
\begin{equation}\label{eq:pnqn}
  p_n=\upr(-1)^nc_n,
  \qquad
  q_n=\frac14(-1)^{n-1}d_{n-1}\quad(n\ge1).
\end{equation}
Since $|c_n|,|d_n|\le2^{-n}$,
\begin{equation}\label{eq:certified_vortex_coefficients}
  |p_n|x_*^n\le C\sigma^n,
  \qquad
  |q_n|x_*^n\le\frac12\sigma^n.
\end{equation}
The combinations of vortex coefficients that enter the Frobenius
recurrences are $f^2$ and $(2-g)^2-4$. We therefore define
\begin{equation}\label{eq:certified_vortex_product_coefficients}
\begin{aligned}
 B(x):=f(x)^2=\sum_{k\ge0}B_kx^k,
  \qquad \
  S(x):=(2-g(x))^2-4=\sum_{i\ge1}S_ix^i.
\end{aligned}
\end{equation}
Writing out the coefficient convolutions in $B=f^2$ and $S=-4g+g^2$ and applying \eqref{eq:certified_vortex_coefficients} gives
\begin{equation}\label{eq:certified_vortex_kernels}
\begin{aligned}
  |B_k|x_*^k
  &\le C^2(k+1)\sigma^k=:\overline B_k,
  && k\ge0,\\
  |S_i|x_*^i
  &\le\left(2+\frac{i-1}{4}\right)\sigma^i
   =:\overline S_i,
  && i\ge1.
\end{aligned}
\end{equation}
Indeed, the first estimate follows from
\[
 \sum_{m=0}^k(C\sigma^m)(C\sigma^{k-m})
 =C^2(k+1)\sigma^k.
\]
For the second, use $(2-g)^2-4=-4g+g^2$: the two contributions are
bounded by $2\sigma^i$ and $(i-1)\sigma^i/4$, respectively.

We next derive the three recurrences used by the code.  Write
\[
 h_\mu(x)=\sum_{j\ge0}\gamma_jx^j,
  \qquad
  h_1(x)=\sum_{j\ge0}\gamma_{1,j}x^j,
  \qquad
  h_2(x)=\sum_{j\ge0}\gamma_{2,j}x^j.
\]
   We replace $4\lambda^2$ by~$E=4\mu$ for the
solutions in \eqref{eq:Phi12}, whereas we set $E=1$ for the threshold
solution in \eqref{eq:Phi200}.  We substitute
\[
 U(r)=rf(r^2),\qquad a(r)=g(r^2),\qquad
 \psi_0(r,\mu)=h_\mu(r^2)
\]
into \eqref{eq:evalmu}, and
\[
 \Phi_1(r)=r^{5/2}h_1(r^2),\qquad
 \Phi_2(r)=r^{1/2}h_2(r^2)
\]
into the first and second equations of \eqref{eq:Phi12}, respectively.
For the threshold solution we substitute
$\Phi_{2,0}^{(0)}(r)=r^{1/2}h_2(r^2)$ into \eqref{eq:Phi200}.
With $x=r^2$, and with primes in the following display denoting
$x$--derivatives, the resulting equations are
\begin{equation}\label{eq:certified_frob_factor_equations}
\begin{aligned}
 4\bigl(h_\mu'+xh_\mu''\bigr)
   &=(xB-\mu)h_\mu,\\
 4x^2h_1''+12xh_1'
   &=\left(S+\left(\frac12-E\right)x+\frac12x^2B\right)h_1,\\
 4\bigl(h_2'+xh_2''\bigr)
   &=(xB-E)h_2.
\end{aligned}
\end{equation}
The last equation in \eqref{eq:certified_frob_factor_equations} also
holds for the threshold factor, with $E=1$. Substituting the power
series for $B,S,h_\mu,h_1,h_2$ into
\eqref{eq:certified_frob_factor_equations}, and comparing the
coefficient of $x^{j-1}$ in the first and third equations and that of
$x^j$ in the second, gives, for $j\ge1$,
\begin{equation}\label{eq:certified_frob_recurrences}
\begin{aligned}
  4j^2\gamma_j
  &=-\mu\gamma_{j-1}
    +\sum_{\substack{k+\ell=j-2\\k,\ell\ge0}}B_k\gamma_\ell,\\
  (4j^2+8j)\gamma_{1,j}
  &=\sum_{i=1}^{j}S_i\gamma_{1,j-i}
    +\left(\frac12-E\right)\gamma_{1,j-1}
    +\frac12\sum_{\substack{k+\ell=j-2\\k,\ell\ge0}}
       B_k\gamma_{1,\ell},\\
  4j^2\gamma_{2,j}
  &=-E\gamma_{2,j-1}
    +\sum_{\substack{k+\ell=j-2\\k,\ell\ge0}}B_k\gamma_{2,\ell}.
\end{aligned}
\end{equation}
As usual, a sum with $j-2<0$ is empty.  The two distinct factors by
which one divides to determine the $j$th coefficient are $4j^2$ for
$h_\mu$ and $h_2$, and $4j^2+8j$ for $h_1$.  Indeed,
\[
 \frac{d^2}{dr^2}\bigl(r^\nu h(r^2)\bigr)
 =r^{\nu-2}\left(
   \nu(\nu-1)h+2(2\nu+1)xh'+4x^2h''
 \right).
\]
For $\Phi_1$, $\nu=5/2$ and the constant $4$ in $(2-g)^2$
cancels $\nu(\nu-1)+1/4=4$, leaving
$4j(j-1)+12j=4j^2+8j$.  For $\Phi_2$, $\nu=1/2$ and
$\nu(\nu-1)+1/4=0$, leaving
$4j(j-1)+4j=4j^2$.  The first equation above likewise gives
$4j^2$ for $h_\mu$.  This proves
\eqref{eq:certified_frob_recurrences}.  Notice in particular that
$\psi_0$ and $\Phi_2$ use only the series for $U$, whereas $\Phi_1$ uses
the series for both $U$ and $a$.

We now specify the monotone maps to which
Lemma~\ref{lem:capd_origin_frobenius} is applied.  For a fixed parameter
in the relevant interval, let
\begin{equation}\label{eq:certified_frob_weighted_coefficients}
 a_j^\psi:=|\gamma_j|x_*^j,\qquad
 a_j^1:=|\gamma_{1,j}|x_*^j,\qquad
 a_j^2:=|\gamma_{2,j}|x_*^j.
\end{equation}
The absolute values of $\mu$, $E$, and $\frac12-E$ below denote uniform
upper bounds on the corresponding parameter interval.  Multiplying \eqref{eq:certified_frob_recurrences} by $x_*^j$ and using
\eqref{eq:certified_vortex_kernels} gives the following concrete
majorant maps. For $z=(z_0,\ldots,z_{j-1})\in[0,\infty)^j$, set
\begin{equation}\label{eq:certified_frob_majorant_maps}
\begin{aligned}
 \mathcal P_{\psi,j}(z)
 &:=
 \frac{1}{4j^2}\left(
 x_*|\mu|z_{j-1}
 +x_*^2\sum_{\ell=0}^{j-2}
       \overline B_{j-2-\ell}z_\ell\right),\\
 \mathcal P_{1,j}(z)
 &:=
 \frac{1}{4j^2+8j}\left(
 \sum_{\ell=0}^{j-1}\overline S_{j-\ell}z_\ell
 +x_*\left|\frac12-E\right|z_{j-1}
 +\frac{x_*^2}{2}\sum_{\ell=0}^{j-2}
       \overline B_{j-2-\ell}z_\ell\right),\\
 \mathcal P_{2,j}(z)
 &:=
 \frac{1}{4j^2}\left(
 x_*|E|z_{j-1}
 +x_*^2\sum_{\ell=0}^{j-2}
       \overline B_{j-2-\ell}z_\ell\right).
\end{aligned}
\end{equation}
In other words, $\mathcal P_{\alpha,j}$ is obtained from the
corresponding recurrence in
\eqref{eq:certified_frob_recurrences} by taking absolute values
term by term and applying the coefficient bounds
\eqref{eq:certified_vortex_kernels}. Consequently,
\begin{equation}\label{eq:certified_frob_majorant_inequality}
 a_j^\alpha
 \le
 \mathcal P_{\alpha,j}
 \bigl(a_0^\alpha,\ldots,a_{j-1}^\alpha\bigr),
 \qquad \alpha\in\{\psi,1,2\}.
\end{equation}
Each $\mathcal P_{\alpha,j}$ is increasing in every argument, since it
is a linear combination with nonnegative coefficients. The maps in
\eqref{eq:certified_frob_majorant_maps} are the concrete maps denoted by
$P_n$ in Lemma~\ref{lem:capd_origin_frobenius}.

We next verify the two hypotheses of
Lemma~\ref{lem:capd_origin_frobenius} that imply the geometric tail
bound \eqref{eq:frob_geometric_tail_bound}. We begin with the finite
family of coefficient bounds
\eqref{eq:certified_frob_finite_slices}. The
argument is applied separately to the Frobenius factor $h_\mu$ for
$\mu\in J$, to $h_1$ and $h_2$ for
$E\in4\Lambda_{\rm FGR}$, and to the threshold factor $h_2$ for $E=1$.
In every case, take
\[
 K:=20,\qquad \rho:=0.95,
\]
with $M=100$ for $h_\mu$ and $M=60$ for $h_1,h_2$.

The finite computational step uses
\eqref{eq:certified_frob_recurrences} to construct successively interval
enclosures for the coefficients through index $M+K$. Starting from the
normalized zeroth coefficient, each enclosure is computed from the
preceding ones, with $\upr$ and the spectral parameter represented by
their full input intervals and all arithmetic rounded outward. For
$\alpha=\psi,1,2$, respectively, let
\[
 [\gamma_{\alpha,j}^-,\gamma_{\alpha,j}^+]
\]
denote the resulting enclosure of $\gamma_j,\gamma_{1,j}$, or
$\gamma_{2,j}$. By
\eqref{eq:certified_frob_weighted_coefficients}, multiplication by
$x_*^j$ gives an enclosure of $a_j^\alpha$. Thus, for
$0\le j\le M+K$, let $A_j$ be the outward-rounded upper bound produced
by the code, so that, uniformly over the corresponding parameter range,
\begin{equation}\label{eq:certified_frob_interval_coefficient_bounds}
 a_j^\alpha
 \le x_*^j\max\bigl\{|\gamma_{\alpha,j}^-|,
                         |\gamma_{\alpha,j}^+|\bigr\}
 \le A_j
\end{equation}

For each computation, set
\[
 T:=A_M,\qquad
 j_0:=M+K+1,\qquad
 \vartheta:=\frac{\sigma}{\rho}=\frac1{190}.
\]
Thus $T$ bounds the contribution
$a_M^\alpha=x_*^M|\gamma_{\alpha,M}|$ of the last coefficient retained in the polynomial, while
$j_0$ is the first index treated by the infinite-tail induction. The
code verifies
\begin{equation}\label{eq:certified_frob_finite_interval_check}
 A_{M+m}\le T\rho^m,\qquad 1\le m\le K.
\end{equation}
Only the coefficients through $M$ are retained in the polynomial
evaluated at $x_*$. The next $K$ coefficients are used solely to verify
\eqref{eq:certified_frob_finite_interval_check}. Together with
\eqref{eq:certified_frob_interval_coefficient_bounds}, this verifies
the finite-slice hypothesis \eqref{eq:certified_frob_finite_slices} of
Lemma~\ref{lem:capd_origin_frobenius}; all indices $j>M+K$ are then
controlled by the analytic induction below.

It remains to verify the all-order bootstrap estimate
\eqref{eq:frob_tail_bootstrap}. To obtain a convenient majorant for the
sequence $\widehat A$ appearing there, combine the certified bounds
$A_\ell$ from
\eqref{eq:certified_frob_interval_coefficient_bounds} for the retained
coefficients $0\le\ell\le M$ with the target geometric profile
$T\rho^{\ell-M}$ in \eqref{eq:frob_geometric_tail_bound}, and define
\begin{equation}\label{eq:certified_frob_comparison_sequence}
 \widetilde A_\ell:=
 \begin{cases}
   A_\ell,&0\le\ell\le M,\\
   T\rho^{\ell-M},&\ell>M.
 \end{cases}
\end{equation}
By \eqref{eq:certified_frob_finite_interval_check}, this sequence also
majorizes the interval bounds $A_\ell$ from
\eqref{eq:certified_frob_interval_coefficient_bounds} for
$M<\ell\le M+K$. For $j\ge j_0$, split each convolution in
\eqref{eq:certified_frob_recurrences} according to whether
$\ell\le M$ or $\ell>M$.  At $j=j_0$, we first isolate in each majorant map the terms involving
$A_0,\ldots,A_M$, namely the coefficients retained in the polynomial
$h_M$. These terms are, respectively,
\[
\begin{aligned}
 \mathcal P_{\psi,j_0}^{\rm fin}
 &=\frac{x_*^2}{4j_0^2}
   \sum_{\ell=0}^{M}\overline B_{j_0-2-\ell}A_\ell,\\
 \mathcal P_{1,j_0}^{\rm fin}
 &=\frac{1}{4j_0^2+8j_0}\left(
   \sum_{\ell=0}^{M}\overline S_{j_0-\ell}A_\ell
   +\frac{x_*^2}{2}\sum_{\ell=0}^{M}
      \overline B_{j_0-2-\ell}A_\ell\right),\\
 \mathcal P_{2,j_0}^{\rm fin}
 &=\frac{x_*^2}{4j_0^2}
   \sum_{\ell=0}^{M}\overline B_{j_0-2-\ell}A_\ell.
\end{aligned}
\]
Here the first and third formulas have the same form, but the
coefficients $A_\ell$ and the values of $M,T,j_0$ belong to different
Frobenius factors.  The terms involving the immediately preceding
coefficient have index $j_0-1=M+K>M$, so they belong to the part with
$\ell>M$ and will be included in $N_\psi,N_1,N_2$.  We define $H_\psi,H_1,H_2$ by dividing these three contributions by
the proposed bound $T\rho^{j_0-M}$. Thus
\begin{equation}\label{eq:certified_frob_heads}
\begin{aligned}
 H_\psi&:=
 \dfrac{x_*^2}
      {4j_0^2T\rho^{j_0-M}}\displaystyle\sum_{\ell=0}^{M}
       \overline B_{j_0-2-\ell}A_\ell,
\\
H_1&:= \frac{1}{(4j_0^2+8j_0)T\rho^{j_0-M}}\left(\displaystyle\sum_{\ell=0}^{M}  \overline S_{j_0-\ell}A_\ell +\frac{x_*^2}{2}\displaystyle\sum_{\ell=0}^{M} \overline B_{j_0-2-\ell}A_\ell\right),
\\
H_2&:= \dfrac{x_*^2}{4j_0^2T\rho^{j_0-M}} \displaystyle\sum_{\ell=0}^{M} \overline B_{j_0-2-\ell}A_\ell.
\end{aligned}
\end{equation}
For $j\ge j_0$, let $H_\alpha(j)$ denote the corresponding
right-hand side with $j_0$ replaced by $j$.  Thus
$H_\alpha=H_\alpha(j_0)$.
Indeed, since $j_0=M+K+1$ and $0\le\ell\le M$, we have
\begin{equation}\label{eq:certified_frob_head_index_gaps}
 j_0-2-\ell\ge K-1=19,
 \qquad
 j_0-\ell\ge K+1=21.
\end{equation}
Thus every occurrence of $\overline B_k$ has $k\ge0$, and every
occurrence of $\overline S_i$ has $i\ge1$. Note that if $\mathcal P_{\alpha,j}^{\rm fin}$ denotes the sum of the terms in
\[
 \mathcal P_{\alpha,j}
 \bigl(\widetilde A_0,\ldots,\widetilde A_{j-1}\bigr)
\]
that involve $A_0,\ldots,A_M$, then, by definition,
\[
 \mathcal P_{\alpha,j}^{\rm fin}
 =H_\alpha(j)T\rho^{j-M}.
\]
Before estimating these terms, we indicate the sources of smallness in
the argument and record precisely the effect of normalization by
$T\rho^{j-M}$. After multiplication of the recurrences by $x_*^j$, a
term involving the immediately preceding coefficient carries a factor
$x_*$, while a $\overline B$--term carries a factor $x_*^2$. A
$\overline S$--term carries no such exterior factor; however, since
$S_0=0$, every term in the $\overline S$--sum contains at least one
factor $\sigma$. More precisely, for $j\ge j_0$ and $\ell>M$,
\eqref{eq:certified_frob_comparison_sequence} and
\eqref{eq:certified_vortex_kernels} give
\begin{equation}\label{eq:certified_frob_normalized_tail_factors}
\begin{aligned}
 \frac{\widetilde A_{j-1}}{T\rho^{j-M}}
 =\frac1\rho, \qquad 
 \frac{\overline B_{j-2-\ell}\widetilde A_\ell}
      {T\rho^{j-M}}
 &=C^2(j-1-\ell)
   \frac{\vartheta^{j-2-\ell}}{\rho^2}, \qquad 
 \frac{\overline S_{j-\ell}\widetilde A_\ell}
      {T\rho^{j-M}}
=\left(2+\frac{j-\ell-1}{4}\right)
   \vartheta^{j-\ell}.
\end{aligned}
\end{equation}
In the majorant maps
\eqref{eq:certified_frob_majorant_maps}, these three ratios are
multiplied, respectively, by $x_*$, $x_*^2$, and $1$. Thus the factors
$\rho^{-1}$ and $\rho^{-2}$ are accompanied by explicit powers of
$x_*$ and by the geometric decay of the kernel coefficients away from
the diagonal. For the normalized finite contributions $H_\alpha$ defined in
\eqref{eq:certified_frob_heads}, one has $\ell\le M$. Hence, at the
first uncomputed index $j_0=M+K+1$, the stronger index separations
\eqref{eq:certified_frob_head_index_gaps} provide at least the factors
$\sigma^{K-1}$ in the $\overline B$--sums and $\sigma^{K+1}$ in the
$\overline S$--sum. Finally, all these terms are
divided by the increasing indicial factors $4j^2$ or $4j^2+8j$. The
estimates below make this bookkeeping precise.

It remains to estimate the terms containing coefficients with
$\ell>M$ when the majorant map is evaluated on the comparison sequence
$\widetilde A$. Define
\begin{equation}\label{eq:certified_frob_indicial_factors}
 d_\psi(j)=d_2(j):=4j^2,
 \qquad
 d_1(j):=4j^2+8j.
\end{equation}
For the $\overline B$--convolution, putting $k=j-2-\ell$ gives
\begin{equation}\label{eq:certified_frob_B_tail_estimate}
\begin{aligned}
 &\frac{x_*^2}
 {d_\alpha(j)T\rho^{j-M}}\displaystyle\sum_{\ell=M+1}^{j-2}
 \overline B_{j-2-\ell}T\rho^{\ell-M} 
 \le
 \frac{x_*^2C^2}{d_\alpha(j)\rho^2}
 \sum_{k\ge0}(k+1)\vartheta^k
 =
 \frac{x_*^2C^2}
 {d_\alpha(j)\rho^2(1-\vartheta)^2},
\end{aligned}
\end{equation}
where $\alpha=\psi,1,2$, as appropriate. In the $\Phi_1$ recurrence,
the $\overline B$--contribution in
\eqref{eq:certified_frob_B_tail_estimate} carries the additional factor
$1/2$ from \eqref{eq:certified_frob_majorant_maps}. Similarly, the
$\overline S$--convolution in the $\Phi_1$ recurrence satisfies
\begin{equation}\label{eq:certified_frob_S_tail_estimate}
\begin{aligned}
 \frac{1}
 {(4j^2+8j)T\rho^{j-M}}\displaystyle\sum_{\ell=M+1}^{j-1}
 \overline S_{j-\ell}T\rho^{\ell-M}  &  \le   
 \frac{1}{4j^2+8j}
 \sum_{i\ge1}\left(2+\frac{i-1}{4}\right)\vartheta^i
 \\
 &=
 \frac{1}{4j^2+8j}
 \left(
 \frac{2\vartheta}{1-\vartheta}
 +\frac{\vartheta^2}{4(1-\vartheta)^2}
 \right).
\end{aligned}
\end{equation}
Finally, the term involving the immediately preceding coefficient in
the $h_\mu$ recurrence gives
\begin{equation}\label{eq:certified_frob_preceding_tail_estimate_psi}
 \frac{x_*|\mu|\widetilde A_{j-1}}
      {4j^2T\rho^{j-M}}
 =\frac{x_*|\mu|}{4j^2\rho},
\end{equation}
and the corresponding terms in the $h_1$ and $h_2$ recurrences give,
respectively,
\begin{equation}\label{eq:certified_frob_preceding_tail_estimates_12}
\begin{aligned}
 \frac{x_*\left|\frac12-E\right|\widetilde A_{j-1}}
      {(4j^2+8j)T\rho^{j-M}}
 &=
 \frac{x_*\left|\frac12-E\right|}
      {(4j^2+8j)\rho}, \qquad \ \ 
 \frac{x_*|E|\widetilde A_{j-1}}
      {4j^2T\rho^{j-M}}
 &=
 \frac{x_*|E|}{4j^2\rho}.
\end{aligned}
\end{equation}
Accordingly, we collect the right-hand sides of \eqref{eq:certified_frob_B_tail_estimate}--\eqref{eq:certified_frob_preceding_tail_estimates_12}, evaluated at $j=j_0$, by defining
\begin{equation}\label{eq:certified_frob_tail_norms}
\begin{aligned}
 N_\psi&:=
  \frac{x_*|\mu|}{4j_0^2\rho}
  +\frac{x_*^2C^2}{4j_0^2\rho^2(1-\vartheta)^2},\\
 N_1&:=\frac{1}{4j_0^2+8j_0}\left(
   \frac{2\vartheta}{1-\vartheta}
   +\frac{\vartheta^2}{4(1-\vartheta)^2}
   +\frac{x_*\left|\frac12-E\right|}{\rho}
   +\frac{x_*^2C^2}{2\rho^2(1-\vartheta)^2}
 \right),\\
 N_2&:=\frac{1}{4j_0^2}\left(
   \frac{x_*|E|}{\rho}
   +\frac{x_*^2C^2}{\rho^2(1-\vartheta)^2}
 \right).
\end{aligned}
\end{equation}
For $j\ge j_0$, let $N_\alpha(j)$ denote the same expression with
$j_0$ replaced by $j$.  Thus $N_\alpha=N_\alpha(j_0)$.
These are precisely the quantities obtained from
\[
 \sum_{i\ge1}\vartheta^i=\frac{\vartheta}{1-\vartheta},
 \qquad
 \sum_{i\ge1}(i-1)\vartheta^i
 =\frac{\vartheta^2}{(1-\vartheta)^2},
 \qquad
 \sum_{k\ge0}(k+1)\vartheta^k
 =\frac1{(1-\vartheta)^2}.
\]
If $\mathcal P_{\alpha,j}^{\rm tail}$ denotes the remaining terms after
the majorant map is evaluated on $\widetilde A$, then the preceding
estimates state precisely that
\[
 \mathcal P_{\alpha,j}^{\rm tail}
 \le N_\alpha(j)T\rho^{j-M}.
\]
We now explain why it suffices to evaluate the preceding bounds at
$j=j_0$.  The kernel majorants satisfy
\[
 \frac{\overline B_{k+1}}{\overline B_k}
 =\sigma\frac{k+2}{k+1}\le2\sigma,
 \qquad
 \frac{\overline S_{i+1}}{\overline S_i}
 =\sigma\frac{i+8}{i+7}
 \le\frac98\sigma<2\sigma.
\]
For the normalized finite parts, passing from $j$ to $j+1$ therefore
costs a factor $\rho^{-1}$ from the normalization, while the kernel
majorant contributes a factor at most $2\sigma$. Since the
denominators~$d_\alpha(j)$ increase,
\[
 H_\alpha(j+1)
 \le
 \frac{2\sigma}{\rho}
 \frac{d_\alpha(j)}{d_\alpha(j+1)}
 H_\alpha(j)
 \le
 \frac{2\sigma}{\rho}H_\alpha(j),
 \qquad \alpha\in\{\psi,1,2\}.
\]
Here
\[
 \frac{2\sigma}{\rho}=\frac{0.01}{0.95}=\frac1{95}<1.
\]
Consequently,
\[
 H_\alpha(j)\le H_\alpha(j_0)=H_\alpha,
 \qquad j\ge j_0.
\]
Likewise,
\[
 N_\alpha(j)\le N_\alpha(j_0)=N_\alpha,
 \qquad j\ge j_0,
\]
because the geometric sums have already been extended to infinity and
the denominators~$d_\alpha(j)$ increase with~$j$.

Set
\[
 \beta_\psi:=H_\psi+N_\psi,\qquad
 \beta_1:=H_1+N_1,\qquad
 \beta_2:=H_2+N_2.
\]
It follows that, for every $j\ge j_0$, the corresponding monotone map
satisfies
\begin{equation}\label{eq:certified_frob_all_order}
 \mathcal P_{\alpha,j}
 \bigl(\widetilde A_0,\ldots,\widetilde A_{j-1}\bigr)
 \le
 \bigl(H_\alpha(j)+N_\alpha(j)\bigr)T\rho^{j-M}
 \le
 \beta_\alpha T\rho^{j-M}.
\end{equation}
It remains to transfer \eqref{eq:certified_frob_all_order}, which was
proved for the comparison sequence $\widetilde A$, to the hybrid
sequence required in \eqref{eq:frob_tail_bootstrap}. Since
Lemma~\ref{lem:capd_origin_frobenius} applies to one exact coefficient
sequence at a time, choose any parameter value in the range of the
current computation: $\mu\in J$ for $h_\mu$,
$E\in4\Lambda_{\rm FGR}$ for $h_1$ and $h_2$, or $E=1$ for the
threshold factor $h_2$. The bounds below are uniform over the
corresponding range.

For the exact weighted coefficients $a_\ell^\alpha$ defined in
\eqref{eq:certified_frob_weighted_coefficients}, define the sequence
denoted by $\widehat A$ in Lemma~\ref{lem:capd_origin_frobenius}, here
written as $\widehat a$ to distinguish it from the computed bounds
$A_\ell$, by
\begin{equation}\label{eq:certified_frob_hybrid_sequence}
 \widehat a_\ell:=
 \begin{cases}
   a_\ell^\alpha,&0\le\ell\le M+K,\\
   T\rho^{\ell-M},&\ell>M+K.
 \end{cases}
\end{equation}
For $0\le\ell\le M$,
\eqref{eq:certified_frob_interval_coefficient_bounds} and
\eqref{eq:certified_frob_comparison_sequence} give
\[
 a_\ell^\alpha\le A_\ell=\widetilde A_\ell.
\]
For $M<\ell\le M+K$,
\eqref{eq:certified_frob_interval_coefficient_bounds},
\eqref{eq:certified_frob_finite_interval_check}, and
\eqref{eq:certified_frob_comparison_sequence} give
\[
 a_\ell^\alpha\le A_\ell
 \le T\rho^{\ell-M}
 =\widetilde A_\ell.
\]
For $\ell>M+K$,
\eqref{eq:certified_frob_hybrid_sequence} and
\eqref{eq:certified_frob_comparison_sequence} agree. Hence
\begin{equation}\label{eq:certified_frob_hybrid_comparison}
 \widehat a_\ell\le\widetilde A_\ell,
 \qquad \ell\ge0.
\end{equation}
By the coordinatewise monotonicity of $\mathcal P_{\alpha,j}$,
\eqref{eq:certified_frob_hybrid_comparison}, and
\eqref{eq:certified_frob_all_order},
\[
 \mathcal P_{\alpha,j}
 \bigl(\widehat a_0,\ldots,\widehat a_{j-1}\bigr)
 \le
 \mathcal P_{\alpha,j}
 \bigl(\widetilde A_0,\ldots,\widetilde A_{j-1}\bigr)
 \le
 \beta_\alpha T\rho^{j-M}.
\]
This is precisely the inequality required in
\eqref{eq:frob_tail_bootstrap}; the interval computation below verifies
that $\beta_\alpha<1$.

In every case the finite verification uses $K=20$ and $\rho=0.95$.
The interval calculation first computes the coefficients through
$M+K$, sets $T=A_M$, and verifies
\eqref{eq:certified_frob_finite_slices}. It then evaluates the explicit
bounds $H_\alpha$ and $N_\alpha$ in
\eqref{eq:certified_frob_heads} and
\eqref{eq:certified_frob_tail_norms}. The transcript\footnotemark[\getrefnumber{fn:transcript}]
\cite{AYMHPartICertificateRepo} records
\begin{equation}\label{eq:certified_frob_bootstrap_constants}\begin{aligned}
 \beta_\psi:=H_\psi+N_\psi&<0.358546
 &&\text{for }\mu\in J,\\
 \beta_1:=H_1+N_1&<1.417\cdot10^{-6}
 &&\text{for }E\in4\Lambda_{\rm FGR},\\
 \beta_2:=H_2+N_2&<1.249\cdot10^{-6}
 &&\text{for }E\in4\Lambda_{\rm FGR},\\
 \beta_2:=H_2+N_2&<4.027\cdot10^{-7}
 &&\text{for }E=1.
\end{aligned}
\end{equation}
In particular, \eqref{eq:certified_frob_bootstrap_constants} shows that all four bootstrap constants are strictly smaller than one. The coefficient enclosures \eqref{eq:certified_frob_interval_coefficient_bounds}, the finite checks \eqref{eq:certified_frob_finite_interval_check}, and the bounds \eqref{eq:certified_frob_bootstrap_constants} are the only numerical inputs to this bootstrap argument. The passage from these finite inputs to the infinite coefficient tails is the analytic induction above.

Together with \eqref{eq:certified_frob_finite_slices}, these inequalities verify the hypotheses of Lemma~\ref{lem:capd_origin_frobenius}. Indeed, the first $K$ coefficient bounds start the induction. If the proposed estimate has been proved through index $j-1$, then coordinatewise monotonicity gives
\[
 a_j^\alpha  \le  \beta_\alpha T\rho^{j-M}  \le  T\rho^{j-M}.
\]
It follows by induction that
\[
 a_{M+m}^\alpha\le T\rho^m,  \qquad m\ge1.
\]
At this point no further approximation of the Frobenius series is involved. Lemma~\ref{lem:capd_origin_frobenius} gives the exact remainder bounds
\[
 \mathcal E_0
 =
 \frac{T\rho}{1-\rho},
 \qquad
 \mathcal E_1
 =
 \frac{T}{x_*}
 \left(
   \frac{M\rho}{1-\rho}
   +\frac{\rho}{(1-\rho)^2}
 \right).
\]
Our C++ code evaluates these two expressions with outward-rounded multiprecision interval arithmetic. Thus, the values of $\mathcal E_0,\mathcal E_1$ in the transcript and stated in the lemma are consequences of the geometric majorant. Their small size reflects the small value of $T=A_M=|\gamma_M|x_*^M$ at $x_*=10^{-2}$.

For $h_\mu$, the calculation is performed both on the initial interval $J$ used in the eigenvalue search and, subsequently, on the smaller interval $\Lambda_{\rm FGR}$ determined by Lemma~\ref{lem:fgr_eigenvalue_box}. On $J$, the bootstrap calculation gives
\[
 \beta_\psi<0.358538353958974,
\]
and substitution of the corresponding value of $T$ into the preceding
formulas gives
\[
 \mathcal E_0<2.74366499059244\cdot10^{-279},
 \qquad
 \mathcal E_1<3.29239798871093\cdot10^{-275}.
\]
On $\Lambda_{\rm FGR}$, the corresponding calculation gives
\[
 \beta_\psi<0.358545152034321,
\]
and the same formulas give
\[
 \mathcal E_0<2.74355048425486\cdot10^{-279},
 \qquad
 \mathcal E_1<3.29226058110583\cdot10^{-275}.
\]
The $h_\mu$ row in the statement uses the outward-rounded bounds obtained
on the wider interval $J$, and therefore also applies on
$\Lambda_{\rm FGR}\subset J$. The calculation is repeated on
$\Lambda_{\rm FGR}$ before the subsequent FGR computations, where the
slightly smaller remainder bounds are used. The common estimate
$\beta_\psi<0.358546$ applies to both parameter intervals.

The case $E=1$ concerns the regular threshold solution
$\Phi_{2,0}^{(0)}$ of
\[
  (\mathcal H_2-1)\Phi_{2,0}^{(0)}=0.
\]
Finally, the corresponding bounds for the radial derivative follow from
the exact identity
\[
 \frac{d}{dr}\bigl(r^\nu h(r^2)\bigr)
 =
 \nu r^{\nu-1}h(r^2)+2r^{\nu+1}h'(r^2).
\]
Consequently, the intervals constructed at $\rorigin=0.1$ contain the exact
regular solution and its radial derivative. Starting from these
intervals, the CAPD solver propagates $U,a$ together with $\psi_0$ or the
distorted Fourier basis elements and their derivatives.
\end{proof}

The following result locates the internal eigenvalue to high precision.  Near the
origin, the Frobenius estimates developed above give a rigorous enclosure of
the regular solution at $\rorigin=0.1$.  At infinity, a Volterra argument gives a
rigorous enclosure at $R=16$ of the solution normalized to decay like
$K_0(\sqrt{1-\mu}\,r)$. The C++ code evaluates these bounds using outward-rounded interval
arithmetic. It encloses the Bessel functions $K_0(\alpha_\mu R)$ and $K_1(\alpha_\mu R)$ by interval quadrature using their integral
representations. It then  bounds the Volterra remainders by elementary functions, all with
outward rounding. Finally, the code propagates the two solution enclosures to a common
matching radius and computes their Wronskian there as a function of the
spectral parameter. The vanishing of this Wronskian is equivalent to
linear dependence of the origin-regular and infinity-decaying solutions, yielding an eigenfunction.
By certifying opposite signs of the Wronskian at the endpoints, the code locates the eigenvalue within the chosen interval. 

\begin{lemma}\label{lem:fgr_eigenvalue_box}
Set
\[
  \Lambda_{\rm FGR}:=[0.777471875,\,0.77747375].
\]
Then $\mathcal L_2$ has an eigenvalue in $\Lambda_{\rm FGR}$. In fact, this
eigenvalue is the unique internal eigenvalue. With the notation of
Section~\ref{sec:FGR}, this means
$\lambda^2\in\Lambda_{\rm FGR}$.
\end{lemma}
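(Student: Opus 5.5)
We argue by a shooting/matching argument for the scalar radial problem \eqref{eq:evalmu}, treating the spectral parameter $\mu$ as an interval parameter. For $\mu\in J$ we let $\psi_0(\cdot,\mu)$ be the regular solution normalized by $\psi_0(0,\mu)=1$. We let $\psi_{\infty,\mu}$ be the solution decaying like $K_0(\alpha_\mu r)$, with $\alpha_\mu:=\sqrt{1-\mu}$. Set
\[
 \mathcal W(\mu):=W[\psi_0(\cdot,\mu),\psi_{\infty,\mu}](r_{\rm m}),
\]
weighted by $r$ so that it is independent of $r$. Then $\mu\in(0,1)$ is an eigenvalue of $\mathcal L_2$ iff $\mathcal W(\mu)=0$, because the two solutions are then proportional and the regular solution lies in $L^2_{r\,dr}$. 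Both solutions depend continuously on $\mu$, so $\mathcal W$ is continuous. It therefore suffices to certify that $\mathcal W$ has opposite signs at $\mu_-=0.777471875$ and $\mu_+=0.77747375$. The intermediate value theorem then gives an eigenvalue in $\Lambda_{\rm FGR}$. By Proposition~\ref{theoLT2} there is at most one eigenvalue in $(0,1)$, so this eigenvalue is the unique internal eigenvalue $\lambda^2$.

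The concrete steps are as follows.
\begin{enumerate}
\item At $\rorigin=0.1$, take the enclosure of $(\psi_0,\psi_0')(\rorigin,\mu)$ from Lemma~\ref{lem:Acheck}, uniformly for $\mu\in J$ and $\upr\in I_{\rm cert}$. Couple it with the vortex start box of Lemma~\ref{lem:I0}, restricted to $I_{\rm cert}$.
\item At $R=16$, write $\psi_{\infty,\mu}=K_0(\alpha_\mu r)(1+w)$, or equivalently use variation of parameters with the pair $K_0,I_0$ at rate $\alpha_\mu$. This gives a Volterra equation with kernel built from $I_0K_0$ and the potential $q=1-U^2$. Using Corollary~\ref{cor:vortex_tail_bridge}, the kernel is bounded by roughly $\int_R^\infty s\,q(s)\,ds\le 8e^{-R}(R+1)$, up to Bessel-product factors. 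These factors are controlled because $I_0(\alpha s)K_0(\alpha s)\lesssim 1/(\alpha s)$ and $q$ decays like $e^{-s}$, while $\alpha_\mu<1$. The map is therefore a contraction on $[16,\infty)$ with tiny constant. This yields intervals for $\psi_{\infty,\mu}(16)$ and $\psi_{\infty,\mu}'(16)$ in terms of $K_0(\alpha_\mu 16)$ and $K_1(\alpha_\mu 16)$.
\item Enclose these Bessel values rigorously through $K_\nu(x)=\int_0^\infty e^{-x\cosh t}\cosh(\nu t)\,dt$, using monotone upper and lower Riemann sums plus an explicit tail bound.
\item Use CAPD to propagate $(U,a,\psi_0)$ forward from $0.1$ to $r_{\rm m}=10$, and $\psi_{\infty,\mu}$ backward from $16$ to $10$. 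Here $U,a$ are also integrated forward to $16$ to supply the coefficient.
\item Evaluate the interval Wronskian at $\mu_\pm$, frozen as thin intervals, and check that $0$ is excluded with opposite signs.
\end{enumerate}
The bisection starting from $J$ is only a heuristic that produced $\Lambda_{\rm FGR}$. The proof needs just the two endpoint evaluations.

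The main obstacle is precision. The window has width about $2\cdot10^{-6}$, so the Wronskian at $\mu_\pm$ is small, of order $10^{-6}$ times $\partial_\mu\mathcal W$. All accumulated widths must stay below this. These include the width from $\upr\in I_{\rm cert}$, which is amplified in the unstable vortex integration (cf.\ $\partial_\upr U(20)\sim 3\cdot10^7$), the Frobenius remainders, the Volterra error, and the Bessel quadrature error. I would first check that backward integration of the decaying branch from $16$ is stable: it grows going inward, which is favorable. Forward integration of the regular branch only to $r_{\rm m}=10$ limits the growth of the $I_0$ component. If the enclosures are too wide, I would move $r_{\rm m}$ or increase the precision and Riemann-sum resolution, relying on the sharp interval $I_{\rm cert}$ from Lemma~\ref{lem:Newton}.
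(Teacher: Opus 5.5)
Your proposal is correct and follows the paper's proof essentially step for step. It uses the Frobenius box at $0.1$ from Lemma~\ref{lem:Acheck}, a Volterra-constructed $K_0(\alpha_\mu r)$-normalized decaying solution at $R=16$ with $K_0,K_1$ enclosed by Riemann sums of the integral representation, and CAPD propagation of $(U,a,\psi_0,\psi_0')$ to the matching radius $10$. It then certifies opposite-sign Wronskians at the endpoints of $\Lambda_{\rm FGR}$ and gets uniqueness from Proposition~\ref{theoLT2}.

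The only real difference is the Volterra kernel bound. The paper writes the kernel as $sK_0(\alpha_\mu s)^2\int_r^s \frac{dt}{tK_0(\alpha_\mu t)^2}$ and bounds it by $s\log(s/r)$ using only the monotonicity of $K_0$, which gives the explicit contraction constant $L_R=8e^{-R}(R+2)/R$. Your $I_0K_0\lesssim 1/(\alpha s)$ route also works, but it would need a certified explicit constant for that Bessel product.
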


\begin{proof}
We first specify the two solutions and their normalizations. Put
\[
  J:=[0.77747,\,0.77753],\qquad
  \rorigin:=0.1,\qquad r_{\rm m}:=10,\qquad R:=16,
\]
and, for $\mu\in J$, set
\[
  \alpha_\mu:=\sqrt{1-\mu}.
\]
Let $\psi_{0}(\cdot,\mu)$ be the solution of~\eqref{eq:evalmu}  normalized by
\[
  \psi_{0}(0,\mu)=1,\qquad \psi_{0}'(0,\mu)=0
\]
as before. 
Below, we construct the solution $\psi_{\infty}(\cdot,\mu)$ normalized by
\[
  \lim_{r\to\infty}
  \frac{\psi_{\infty}(r,\mu)}{K_0(\alpha_\mu r)}=1.
\]
With the convention
\[
  W[f,g]:=fg'-f'g,
\]
we define the Wronskian 
\[
  D(\mu):=
  W[\psi_{0}(\cdot,\mu),\psi_{\infty}(\cdot,\mu)](r_{\rm m}).
\]
For two solutions of $\mathcal L_2\psi=\mu\psi$, we have 
\[
  W'(r)=-\frac1rW(r),
  \qquad\text{and hence}\qquad
  \bigl(rW(r)\bigr)'=0.
\]
Thus the zero set of the Wronskian is independent of the matching radius,
although the unweighted Wronskian itself is not constant.

\smallskip
\noindent\emph{The regular solution at the origin.}
Write
\[
  U(r)=rf(r^2),\qquad
  \psi_{0}(r,\mu)=h_\mu(r^2),\qquad
  h_\mu(x)=\sum_{n\ge0}h_nx^n,\qquad h_0=1.
\]
Substitution of these series in $\mathcal L_2\psi=\mu\psi$ gives the first
triangular recurrence in \eqref{eq:certified_frob_recurrences}. More
explicitly, by \eqref{eq:certified_vortex_product_coefficients} and
\eqref{eq:pnqn},
\[
 B_m=\sum_{j=0}^{m}p_jp_{m-j},
 \qquad m\ge0,
\]
and hence
\[
 h_1=-\frac{\mu}{4},
 \qquad
 h_{n+1}
 =
 \frac{1}{4(n+1)^2}
 \left(
 \sum_{m=0}^{n-1}B_mh_{n-1-m}-\mu h_n
 \right),
 \qquad n\ge1.
\]
The vortex shooting parameter is carried by the code as the interval parameter
\[
  I_{\rm cert}
  =[0.6032878545816699,\,0.6032878545816856],
\]
which contains the true shooting parameter $\uprz$ by
Lemma~\ref{lem:Newton}.
Define the degree--$100$ truncation
\[
 \boldsymbol{\mathfrak h}_{100}(x)
 :=\sum_{n=0}^{100}h_nx^n.
\]
The code uses $\boldsymbol{\mathfrak h}_{100}$ for the initial
enclosure and computes the additional coefficients
$h_{101},\ldots,h_{120}$ needed for the $K=20$ finite bootstrap check
in Lemma~\ref{lem:Acheck}. That lemma verifies the hypotheses of
Lemma~\ref{lem:capd_origin_frobenius}, uniformly for $\mu\in J$. The
multiprecision transcript in the certificate repository
\cite{AYMHPartICertificateRepo} records
\[
  M=100,\qquad K=20,\qquad \rho=0.95,
\]
and
\[
  \beta
  \le 0.358538353958974<1.
\]
It also records the tail bounds at $x_*=10^{-2}$,
\[
  E_0
  \le 2.74366499059244\cdot10^{-279},
  \qquad
  E_1
  \le 3.29239798871093\cdot10^{-275},
\]
where $E_0$ bounds the value tail and $E_1$ the $x$-derivative tail. Hence the exact initial data satisfy 
\[
  \psi_0(\rorigin,\mu)
  \in \boldsymbol{\mathfrak{h}}_{100}(10^{-2})+[-E_0,E_0]
\]
and
\[
  \psi_0'(\rorigin,\mu)
  \in
  0.2\boldsymbol{\mathfrak{h}}_{100}'(10^{-2})+[-0.2E_1,0.2E_1].
\]
In particular, the truncation error in the initial value of
$\psi_{0}'(\cdot,\mu)$ is less than $6.585\cdot10^{-276}$. Thus the interval
started at $\rorigin$ contains the initial data of the exact regular solution, not merely
those of a truncated polynomial.

\smallskip
\noindent\emph{The decaying solution at infinity.}
Set
\[
  q(r):=1-U(r)^2.
\]
The eigenvalue equation can be written as
\[
  \left(-\partial_r^2-\frac1r\partial_r+\alpha_\mu^2\right)\psi
  =q\psi.
\]
Corollary~\ref{cor:vortex_tail_bridge} gives, for $r\ge R$,
\[
  |q(r)|\le 8\,\frac{e^{-r}}{\sqrt r}.
\]
Consequently, define
\[
\begin{aligned}
  \ell_R&:=
  \int_R^\infty s|q(s)|\log\frac{s}{R}\,ds,
  &
  m_R&:=
  \int_R^\infty s|q(s)|\,ds.
\end{aligned}
\]
Since $R\ge1$, we have $\sqrt{s}\le s$ for $s\ge R$, and
\[
  \log\frac{s}{R}\le\frac{s-R}{R}.
\]
Therefore,
\begin{align}
  m_R
  &\le8\int_R^\infty s e^{-s}\,ds
  =8e^{-R}(R+1)=:M_R,
  \label{eq:internal_tail_moment}\\
  \ell_R
  &\le\frac8R\int_R^\infty s(s-R)e^{-s}\,ds
  =8e^{-R}\frac{R+2}{R}=:L_R.
  \label{eq:internal_tail_log_moment}
\end{align}
At $R=16$ the outward-rounded values used by the code satisfy
\[
  L_R
  \le1.012816572473333\cdot10^{-6},
  \qquad
  M_R
  \le1.530478376181925\cdot10^{-5}.
\]
The transcript also gives the consistency check
\[
  |1-U(16)^2|
  \le1.288693152516394\cdot10^{-7}
  <2.250703494385183\cdot10^{-7}
  =8\frac{e^{-16}}{\sqrt{16}}.
\]
The global estimate for every $r\ge16$ is proved in 
Corollary~\ref{cor:vortex_tail_bridge}.

We now give the Volterra argument underlying the initial box at $R$.
The modified Bessel functions satisfy
\begin{equation}\label{eq:internal_bessel_wronskian}
\begin{aligned}
 &I_0(\alpha_\mu r)\,
   \partial_rK_0(\alpha_\mu r)
 -\partial_rI_0(\alpha_\mu r)\,
   K_0(\alpha_\mu r)
 =-\frac1r, \qquad \ \frac{d}{dr}
 \left(
 \frac{I_0(\alpha_\mu r)}{K_0(\alpha_\mu r)}
 \right)
 =
 \frac{1}{rK_0(\alpha_\mu r)^2}.
\end{aligned}
\end{equation}
Variation of constants for the solution with the prescribed
normalization at infinity yields
\begin{equation}\label{eq:internal_decaying_variation}
\begin{aligned}
 \psi_{\infty,\mu}(r)
 &=K_0(\alpha_\mu r) -\int_r^\infty s
 \Bigl(
 K_0(\alpha_\mu r)I_0(\alpha_\mu s)
 -I_0(\alpha_\mu r)K_0(\alpha_\mu s)
 \Bigr)
 q(s)\psi_{\infty,\mu}(s)\,ds.
\end{aligned}
\end{equation}
Thus, writing $\psi_{\infty,\mu}(r)=K_0(\alpha_\mu r)z(r)$, we obtain
\begin{equation}\label{eq:internal_decaying_volterra}
 z(r)=1-(\mathcal T_\mu z)(r),
\end{equation}
where
\begin{equation}\label{eq:internal_decaying_volterra_operator}
\begin{aligned}
 (\mathcal T_\mu z)(r)
 &:=
 \int_r^\infty sK_0(\alpha_\mu s)^2 \left( \frac{I_0(\alpha_\mu s)}{K_0(\alpha_\mu s)}
 -
 \frac{I_0(\alpha_\mu r)}{K_0(\alpha_\mu r)}
 \right)
 q(s)z(s)\,ds.
\end{aligned}
\end{equation}
For $s\ge r$, the function $K_0(\alpha_\mu s)$ is positive and
decreasing. Hence, by \eqref{eq:internal_bessel_wronskian},
\[
\begin{aligned}
 &K_0(\alpha_\mu s)^2
 \left(
 \frac{I_0(\alpha_\mu s)}{K_0(\alpha_\mu s)}
 -
 \frac{I_0(\alpha_\mu r)}{K_0(\alpha_\mu r)}
 \right) =
 K_0(\alpha_\mu s)^2
 \int_r^s\frac{dt}{tK_0(\alpha_\mu t)^2} \le
 \int_r^s\frac{dt}{t}
 =
 \log\frac{s}{r}.
\end{aligned}
\]
Consequently, on $C_b([R,\infty))$,
\[
 \|\mathcal T_\mu\| \le
 \sup_{r\ge R}
 \int_r^\infty s|q(s)|\log\frac{s}{r}\,ds
 \le L_R<1
\]
by \eqref{eq:internal_tail_log_moment}. Thus
$I+\mathcal T_\mu$ is invertible, and
\eqref{eq:internal_decaying_volterra} has a unique bounded solution.
Moreover,
\begin{equation}\label{eq:internal_decaying_z_bounds}
 \|z\|_\infty\le\frac1{1-L_R},
 \qquad
 \|z-1\|_\infty\le\frac{L_R}{1-L_R}.
\end{equation}
The tail integral tends to zero with $r$, so $z(r)\to1$ as
$r\to\infty$. This constructs the uniquely normalized decaying
solution
\[
 \psi_{\infty,\mu}(r)=K_0(\alpha_\mu r)z(r).
\]

Differentiating \eqref{eq:internal_decaying_volterra_operator} gives
\[
 z'(r)
 =
 \frac{1}{rK_0(\alpha_\mu r)^2}
 \int_r^\infty
 sK_0(\alpha_\mu s)^2q(s)z(s)\,ds.
\]
Since $K_0(\alpha_\mu s)\le K_0(\alpha_\mu r)$ for $s\ge r$,
\eqref{eq:internal_tail_moment} and
\eqref{eq:internal_decaying_z_bounds} give
\[
 |z'(R)|
 \le
 \frac{M_R}{R(1-L_R)}.
\]
Using
\[
 \partial_rK_0(\alpha_\mu r)
 =-\alpha_\mu K_1(\alpha_\mu r),
\]
we therefore obtain
\begin{equation}\label{eq:internal_decaying_raw_errors}
\begin{aligned}
 \left|\psi_{\infty,\mu}(R)-K_0(\alpha_\mu R)\right|
 &\le
 K_0(\alpha_\mu R)\frac{L_R}{1-L_R},\\
 \left|\psi_{\infty,\mu}'(R)
 +\alpha_\mu K_1(\alpha_\mu R)\right|
 &\le
 \alpha_\mu K_1(\alpha_\mu R)\frac{L_R}{1-L_R}  + 
 K_0(\alpha_\mu R)\frac{M_R}{R(1-L_R)}.
\end{aligned}
\end{equation}

We next pass to the slightly larger bounds used by the code. Set
\begin{equation}\label{eq:internal_eta_scale}
 \eta_\mu:=\frac{L_R}{\alpha_\mu},
 \qquad
 S_\mu:=
 \sqrt{\frac{\pi}{2\alpha_\mu R}}e^{-\alpha_\mu R}.
\end{equation}
Using the lower endpoint of $\alpha_\mu$ on $J$ and the bound on $L_R$,
outward-rounded evaluation gives
\[
 0<\eta_\mu\le2.148\cdot10^{-6}<1,
 \qquad \mu\in J.
\]
The integral representation of $K_0$, together with
$\cosh t\ge1+t^2/2$, gives
\[
 K_0(\alpha_\mu R)\le S_\mu.
\]
Moreover,
\[
 \frac{K_1(x)}{K_0(x)}
 <1+\frac1{2x},
\]
proved in \cite[Corollary~3.3]{YangChu2017}, yields, uniformly for
$\mu\in J$,
\[
 \alpha_\mu
 \frac{K_1(\alpha_\mu R)}{K_0(\alpha_\mu R)}
 <
 \alpha_\mu+\frac1{2R}
 <0.503<1.
\]
Thus, $\alpha_\mu K_1(\alpha_\mu R)
 \le K_0(\alpha_\mu R)
 \le S_\mu$. Define
\begin{equation}\label{eq:internal_decaying_error_radii}
 \varepsilon_{0,\mu}
 :=
 S_\mu\frac{\eta_\mu}{1-\eta_\mu},
 \qquad
 \varepsilon_{1,\mu}
 :=
 \alpha_\mu\varepsilon_{0,\mu}
 +S_\mu\frac{M_R}{1-\eta_\mu}.
\end{equation}
Since $L_R=\alpha_\mu\eta_\mu$, $0<L_R\le\eta_\mu<1$, and
$x\mapsto x/(1-x)$ is increasing on $(0,1)$, the first estimate in
\eqref{eq:internal_decaying_raw_errors} gives
\[
 \left|\psi_{\infty,\mu}(R)-K_0(\alpha_\mu R)\right|
 \le
 S_\mu\frac{\eta_\mu}{1-\eta_\mu}
 =
 \varepsilon_{0,\mu}.
\]
For the derivative estimate,
\[
\begin{aligned}
 \alpha_\mu K_1(\alpha_\mu R)\frac{L_R}{1-L_R}
 &\le
 S_\mu\frac{L_R}{1-L_R} \le
 S_\mu\frac{L_R}{1-\eta_\mu}
 =
 \alpha_\mu\varepsilon_{0,\mu},
\end{aligned}
\]
whereas, since $R\ge1$,
\[
 K_0(\alpha_\mu R)\frac{M_R}{R(1-L_R)}
 \le
 S_\mu\frac{M_R}{1-\eta_\mu}.
\]
The second estimate in \eqref{eq:internal_decaying_raw_errors}
therefore gives
\[
 \left|\psi_{\infty,\mu}'(R)
 +\alpha_\mu K_1(\alpha_\mu R)\right|
 \le
 \varepsilon_{1,\mu}.
\]
These are exactly the error radii used by the C++ routine:
\begin{equation}\label{eq:internal_decaying_Cauchy_box}
\begin{aligned}
 \psi_{\infty,\mu}(R)
 &\in
 K_0(\alpha_\mu R)
 +[-\varepsilon_{0,\mu},\varepsilon_{0,\mu}],\\
 \psi_{\infty,\mu}'(R)
 &\in
 -\alpha_\mu K_1(\alpha_\mu R)
 +[-\varepsilon_{1,\mu},\varepsilon_{1,\mu}].
\end{aligned}
\end{equation}

The central Bessel values $K_0(\alpha_\mu R)$ and $K_1(\alpha_\mu R)$ in
\eqref{eq:internal_decaying_Cauchy_box} are also enclosed by
outward-rounded interval arithmetic. Uniformly for all possible values
$x=\alpha_\mu R$ with $\mu\in J$, the code uses
\begin{equation}\label{eq:internal_Bessel_integral_representation}
 K_\nu(x)
 =
 \int_0^\infty e^{-x\cosh t}\cosh(\nu t)\,dt,
 \qquad \nu=0,1.
\end{equation}
For $\nu=0$ the integrand is nonincreasing in $t$. For $\nu=1$ its
derivative is
\[
 e^{-x\cosh t}\sinh t\,(1-x\cosh t)\le0,
\]
because $x>1$. The code therefore computes lower and upper Riemann
sums on the uniform partition of $[0,8]$ into $12000$ subintervals.
For the remaining tail it uses
\begin{equation}\label{eq:internal_Bessel_quadrature_tail}
 0\le
 \int_8^\infty e^{-x\cosh t}\cosh(\nu t)\,dt
 \le
 \frac2x\exp\left(-\frac{xe^8}{2}\right),
 \qquad \nu=0,1.
\end{equation}
Indeed, for $\nu=1$ this follows from
$\cosh t\le e^t$ and $\cosh t\ge e^t/2$, while the integrand for
$\nu=0$ is smaller.\footnote{In particular, the code relies only on
interval bounds for the Bessel functions. No non-interval
floating-point evaluation of the Bessel functions is used.}

\smallskip
\noindent\emph{Validated propagation and matching.}
Starting with the certified Frobenius box at $\rorigin=0.1$, the code propagates
the regular solution forward to $r_{\rm m}=10$. Starting from a rigorous interval enclosure at $R=16$ provided by the Volterra argument, it propagates the decaying solution backward to
$r_{\rm m}=10$. In both directions, the CAPD solver propagates the full vector
\[
  (r,U,a,\psi_0,\psi_0',\upr,\mu).
\]
This means that the vortex profile $(U,a)$ is  part of the same  ODE system
and is not just passed to the code as a non-rigorous numerical approximation. In the forward
direction the system used by the code is
\[
\begin{aligned}
  \dot r&=1,
  &\dot U&=\frac{1-a}{r}U,
  &\dot a&=\frac r2(1-U^2),\\
  \dot\psi_0&=\psi_0',
  &\dot\psi_0'&=-\frac1r\psi_0'+(U^2-\mu)\psi_0,
  &\dot\upr&=\dot\mu=0,
\end{aligned}
\]
and for backward propagation the sign of the vector field is reversed. The true shooting
parameter is enclosed by $I_{\rm cert}$, see Lemma~\ref{lem:Newton}. For each Wronskian evaluation,
the chosen value of $\mu$ is represented by an interval containing that
exact value. 

On the interval $J$ the code computes
\[
\begin{aligned}
  D(0.77747)
  &\in
  [-4.074288757607958\cdot10^{-7},
   -3.539855515662376\cdot10^{-7}],\\
  D(0.77753)
  &\in
  [7.716058532830248\cdot10^{-6},
   7.768215008615830\cdot10^{-6}].
\end{aligned}
\]
It then performs bisection five times. At the sixth midpoint, the interval enclosure for that value of~$D$ contains $0$, so its sign cannot be certified. The code therefore retains the last bracket whose endpoint
Wronskians have strictly opposite signs:
\[
  \Lambda_{\rm FGR}
  =[0.777471875,\,0.77747375].
\]
Its width is $1.875\cdot10^{-6}$.   At its endpoints
$\mu_-=0.777471875$ and $\mu_+=0.77747375$, the code finds that 
\[
  D(\mu_-)
  \in
  [-1.536081253326721\cdot10^{-7},
   -1.002050065350549\cdot10^{-7}]
\]
and
\[
  D(\mu_+)
  \in
  [1.002150911811321\cdot10^{-7},
   1.535780040260618\cdot10^{-7}].
\]
Hence, $D$ has a zero
$\mu_*\in\Lambda_{\rm FGR}$ (using standard continuous dependence on $\mu$). In view of Proposition~\ref{theoLT2}, $\mu_*$ is the unique eigenvalue of
$\mathcal L_2$. 
\end{proof}

The FGR tail estimate in Lemma~\ref{lem:capd_fgr_tail_upper_bound} requires an explicit numerical constant in the
exponential decay bound for the internal mode. The information needed for that is provided by Lemma~\ref{lem:psi_K0}.

\begin{figure}[!htbp]
\centering
\includegraphics[scale=0.7]{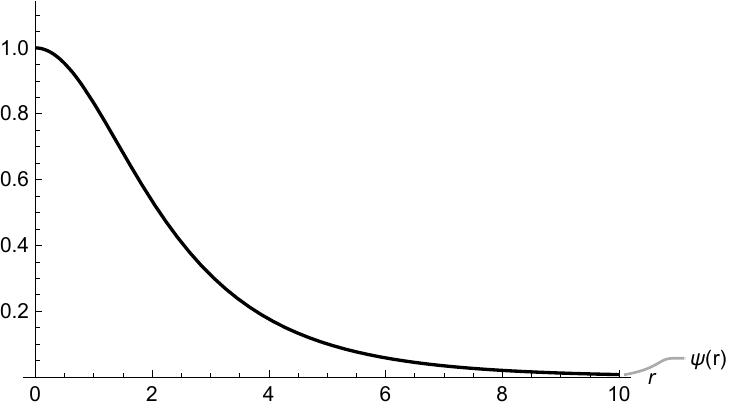}
\caption{Visualization of the internal mode associated with $\mathcal{L}_2$. }
\label{fig4LT}
\end{figure}

\begin{lem} \label{lem:psi_K0}
Let $\psi_0=\psi_0(r)$ be the unique radial eigenfunction\footnote{As ground state it is necessarily of definite sign, thus positive.} characterized by 
\begin{equation}\label{eq:radial_eig}
(\calL_2\psi_0)(r)=-\psi_0''(r)-\frac{1}r\psi_0'(r)+U^2(r)\psi_0(r)=\lambda^2\psi_0(r),\qquad r>0,
\end{equation}
with $\lambda^2\in(0,1)$, $\psi_0(0)=1$, $\psi_0'(0)=0$, and $\psi_0(r)\to0$ as $r\to\infty$.
Fix $\rabstract >0$. Assume that there exists a constant $\delta_0\in\bigl(0,1-\lambda^2\bigr)$ such that
\begin{equation}\label{eq:U_lower_bound}
U^2(r)\ge 1-\delta_0,\qquad \forall\,r\ge \rabstract .
\end{equation}
Define
\begin{equation}\label{eq:kappa_def}
\kappa_\lambda :=\sqrt{(1-\delta_0)-\lambda^2}>0,
\end{equation}
and let $K_0$ denote the modified Bessel function of the second kind.
Assume the one-point comparison at $r=\rabstract $ for some constant $C>0$: 
\begin{equation}\label{eq:one_point_comp}
\psi_0(\rabstract )\le CK_0(\kappa_\lambda  \rabstract ).
\end{equation}
Then
\begin{equation}\label{eq:psi_upper_K0}
0<\psi_0(r)\le C\,K_0(\kappa_\lambda  r),\qquad \forall\,r\ge \rabstract .
\end{equation}
Moreover, for all $r\ge \rabstract $,
\begin{equation}\label{eq:psi_upper_explicit}
\psi_0(r)\le C\,\sqrt{\frac{\pi}{2\kappa_\lambda  r}}\,e^{-\kappa_\lambda  r},
\end{equation}
and the tail admits the explicit bound
\begin{equation}\label{eq:psi_tail_bound_final}
\int_{\rabstract }^{\infty}\psi_0^2(r)\,r\,dr
\le
\frac{C^2\pi}{4\kappa_\lambda ^2}\,e^{-2\kappa_\lambda  \rabstract }.
\end{equation}
\end{lem}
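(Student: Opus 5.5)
The plan is a maximum-principle comparison on $[\rabstract,\infty)$ between $\psi_0$ and the supersolution-type barrier $w(r):=CK_0(\kappa_\lambda r)$.

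\emph{Positivity.} First, $\psi_0>0$ on $(0,\infty)$. It is the ground state of $\calL_2$ (Proposition~\ref{theoLT2} shows it is the unique eigenfunction below the threshold), so it has no zeros; alternatively, a zero at some $r_0>0$ would produce a Dirichlet eigenfunction on $(0,r_0)$ or $(r_0,\infty)$, contradicting the ground-state property.

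\emph{Comparison.} Next, note that $w$ solves
\[
-w''-\tfrac1r w'+\kappa_\lambda^2 w=0 .
\]
On $r\ge\rabstract$, hypothesis \eqref{eq:U_lower_bound} gives
\[
-\psi_0''-\tfrac1r\psi_0'+\kappa_\lambda^2\psi_0=(1-\delta_0-U^2)\psi_0\le0,
\]
using $\psi_0>0$. Hence $v:=\psi_0-w$ satisfies
\[
-v''-\tfrac1r v'+\kappa_\lambda^2 v\le 0 \quad\text{on }(\rabstract,\infty),
\]
with $v(\rabstract)\le0$ by \eqref{eq:one_point_comp} and $v(r)\to0$ as $r\to\infty$, since both $\psi_0$ and $K_0$ decay. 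Suppose $v$ had a positive value somewhere. Then it attains a positive interior maximum at some $r_1>\rabstract$, where $v'(r_1)=0$ and $v''(r_1)\le0$. This gives
\[
\kappa_\lambda^2v(r_1)\le v''(r_1)\le0,
\]
a contradiction. This yields \eqref{eq:psi_upper_K0}. The only mildly delicate point is justifying that $\psi_0\to0$, which holds by assumption, so the maximum principle step is routine.

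\emph{Explicit bounds.} For \eqref{eq:psi_upper_explicit}, I will use the standard bound
\[
K_0(x)\le\sqrt{\pi/(2x)}\,e^{-x},
\]
which follows from the integral representation \eqref{eq:internal_Bessel_integral_representation} together with $\cosh t\ge1+t^2/2$; this was already used above. For the tail bound \eqref{eq:psi_tail_bound_final}, square \eqref{eq:psi_upper_explicit}, multiply by $r$, and compute
\[
\int_{\rabstract}^\infty C^2\frac{\pi}{2\kappa_\lambda}e^{-2\kappa_\lambda r}\,dr=\frac{C^2\pi}{4\kappa_\lambda^2}e^{-2\kappa_\lambda\rabstract}.
\]
I expect no step to be a genuine obstacle. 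The main care point is the positivity of $\psi_0$, which is essential for the sign in the differential inequality.
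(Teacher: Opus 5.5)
Your proposal is correct and follows essentially the same route as the paper: a maximum-principle comparison on $[\rabstract,\infty)$ with the barrier $CK_0(\kappa_\lambda r)$, then the bound $K_0(x)\le\sqrt{\pi/(2x)}\,e^{-x}$ from the integral representation, then the explicit tail integral. The one difference is where the potential goes. The paper applies $-\partial_r^2-\frac1r\partial_r+U^2-\lambda^2$ to the barrier, which is a supersolution because $K_0>0$, and uses $U^2-\lambda^2\ge\kappa_\lambda^2>0$ at a negative minimum of $CK_0(\kappa_\lambda\cdot)-\psi_0$; so its comparison needs no sign information on $\psi_0$, and positivity enters only in the lower bound $0<\psi_0$. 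Your version makes $\psi_0$ a subsolution of the Bessel operator, so it needs $\psi_0>0$ as an input, which you correctly supply through the ground-state property.
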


\begin{proof}
The function $K_0(\kappa_\lambda  r)$ satisfies the following equation: 
\begin{equation}\label{eq:K0_identity}
-\frac{d^2}{dr^2}K_0(\kappa_\lambda  r)-\frac{1}r\frac{d}{dr}K_0(\kappa_\lambda  r)+\kappa_\lambda ^2 K_0(\kappa_\lambda  r)=0,
\qquad r>0.
\end{equation}
Hence, for $\wt{\psi_0}(r):=CK_0(\kappa_\lambda  r)$,
\begin{equation}\label{eq:L_on_psibar}
\Bigl(-\partial_r^2-\frac{1}r\partial_r+U^2-\lambda^2\Bigr)\wt{\psi_0}
=
\Bigl(U^2-\lambda^2-\kappa_\lambda ^2\Bigr)\wt{\psi_0}.
\end{equation}
Using \eqref{eq:U_lower_bound} and \eqref{eq:kappa_def}, we have for all $r\ge \rabstract $:
\begin{equation}\label{eq:supersolution}
U^2(r)-\lambda^2-\kappa_\lambda ^2 \ge (1-\delta_0)-\lambda^2-\kappa_\lambda ^2 = 0,
\end{equation}
so $\wt{\psi_0}$ is a \emph{supersolution} of \eqref{eq:radial_eig} on the interval $[\rabstract ,\infty)$.
Let $e(r):=\wt{\psi_0}(r)-\psi_0(r)$. Subtracting \eqref{eq:radial_eig} from~\eqref{eq:L_on_psibar} using
\eqref{eq:supersolution}  yields
\begin{equation}\label{eq:e_ineq}
-\;e''(r)-\frac{1}r e'(r)+\bigl(U^2(r)-\lambda^2\bigr)e(r)\ge 0,\qquad r\ge \rabstract .
\end{equation}
From \eqref{eq:one_point_comp} we have $e(\rabstract )\ge0$.
Also $\psi_0(r)\to0$ and $K_0(\kappa_\lambda  r)\to0$ as $r\to\infty$, hence $e(r)\to0$. 
If $e$ were negative somewhere on $(\rabstract ,\infty)$, it would attain a negative minimum
at some interior point $r_*>\rabstract $, where $e(r_*)<0$, $e'(r_*)=0$, $e''(r_*)\ge0$.
Evaluating \eqref{eq:e_ineq} at $r_*$ gives
\[
0\le -e''(r_*)-\frac{1}{r_*}e'(r_*)+\bigl(U^2(r_*)-\lambda^2\bigr)e(r_*)
\le \bigl(U^2(r_*)-\lambda^2\bigr)e(r_*).
\]
For $r\ge \rabstract $, \eqref{eq:U_lower_bound} implies $U^2(r)-\lambda^2\ge (1-\delta_0)-\lambda^2=\kappa_\lambda ^2>0$,
hence the last term is strictly negative because $e(r_*)<0$, a contradiction.
Therefore, $e(r)\ge0$ for all $r\ge \rabstract $, proving \eqref{eq:psi_upper_K0}.

The integral representation of $K_0$ and the elementary inequality
$\cosh t\ge1+t^2/2$ give, for $x>0$,
\begin{equation}\label{eq:K0_bound}
 K_0(x)=\int_0^\infty e^{-x\cosh t}\,dt
 \le e^{-x}\int_0^\infty e^{-xt^2/2}\,dt
 =\sqrt{\frac{\pi}{2x}}\,e^{-x},
\end{equation}
combined with \eqref{eq:psi_upper_K0} yields \eqref{eq:psi_upper_explicit}.
Then
\[
\psi_0^2(r)\,r \le C^2\,\frac{\pi}{2\kappa_\lambda }\,e^{-2\kappa_\lambda  r},
\]
and integrating from $\rabstract $ to $\infty$ gives \eqref{eq:psi_tail_bound_final}.
\end{proof}

We determine the constant $C$ in \eqref{eq:one_point_comp} by numerical computation, see the tables in Section~\ref{sec:capd_certificate}.For this computation, we take the comparison radius in
Lemma~\ref{lem:psi_K0} to be $\reight:=8.001$; thus
$\rabstract=\reight$ in this application. We also set
$\delta_0=0.0005$. The C++ code verifies the required inequalities at this
radius and records the margins
\begin{equation}\label{eq:psi_K0_endpoint_margins}
\begin{aligned}
 1.4K_0(\kappa_\lambda  \reight )-\psi_0(\reight )&>5.5262\cdot10^{-4},\\
 \psi_0'(\reight )-\bigl(1.4K_0(\kappa_\lambda  \reight )\bigr)'&>2.8955\cdot10^{-4},\\
 U(\reight )^2-(1-\delta_0)&>3.8992\cdot10^{-7}.
\end{aligned}
\end{equation}
The first and third inequalities, together with the monotonicity of $U$, verify the hypotheses of the lemma on $[\reight ,\infty)$.  The derivative margin is an additional endpoint check recorded by the code; it is not needed in the maximum-principle argument above.  Thus the admissible choice is (see Figure~\ref{fig6_cst_psi})
\begin{align}\label{eq:cst_psi_num}
C=1.4
\end{align} 
Note that the abstract parameters $\rabstract$ and $\delta_0$ can be adjusted as needed (provided that \eqref{eq:U_lower_bound} is maintained). For simplicity, we choose $C=1.4$.

\begin{figure}[!htbp]
\centering
\includegraphics[scale=0.9]{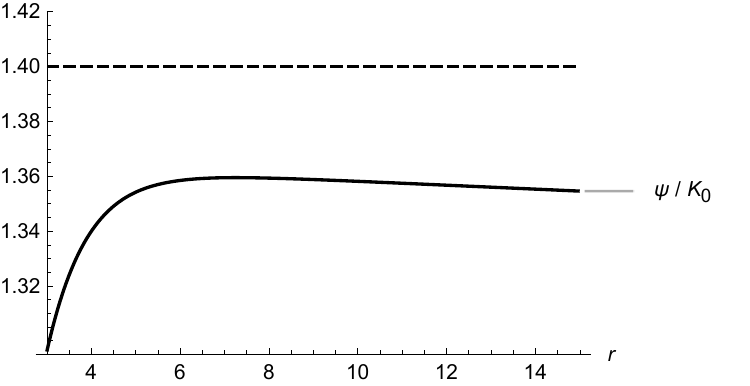}
\caption{Graph of $\psi_0(r)/K_0(\kappa_\lambda r)$ with $\delta_0=0.0005$.}\label{fig6_cst_psi}
\end{figure}

The comparison above controls the value of the internal mode on the entire
tail. We next derive the corresponding derivative estimate needed in the FGR
calculation. This is a separate consequence of the eigenvalue equation.

\begin{lemma}\label{lem:capd_psi_prime_tail}
Let $\psi_0$ and $\lambda^2$ be as in the previous lemma, and set
\[
  \kappa_\lambda=\sqrt{1-\delta_0-\lambda^2},
  \qquad \delta_0=0.0005.
\]
Then, 
\[
  0<\psi_0(r)\le 1.4K_0(\kappa_\lambda r),
  \qquad
  |\psi_0'(r)|\le 1.4K_0(\kappa_\lambda r)
\]
for all $r\ge16$.
\end{lemma}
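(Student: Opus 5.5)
The value bound on $[16,\infty)$ is immediate from Lemma~\ref{lem:psi_K0} applied with $\rabstract=\reight=8.001$, $\delta_0=0.0005$, $C=1.4$. The certified margins \eqref{eq:psi_K0_endpoint_margins} give both the one-point comparison and $U(\reight)^2>1-\delta_0$, and the monotonicity of $U$ (Proposition~\ref{prop:exuniqUa}) propagates the latter to all $r\ge\reight$. Hence $0<\psi_0(r)\le1.4K_0(\kappa_\lambda r)$ for all $r\ge\reight$, and in particular for $r\ge16$. Positivity holds because $\psi_0$ is the ground state, or alternatively because $\psi_0$ cannot vanish where $U^2-\lambda^2>0$ without violating the maximum principle used in that lemma.

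For the derivative, I would first show that $\psi_0'<0$ on $[\reight,\infty)$. Write the equation as $(r\psi_0')'=r(U^2-\lambda^2)\psi_0$. The right-hand side is positive for $r\ge\reight$, so $r\psi_0'$ is increasing there. Since $\psi_0\le CK_0(\kappa_\lambda r)$, we have $r\psi_0'\to0$: integrability of $r(U^2-\lambda^2)\psi_0$ gives a limit $\ell$, and $\ell\neq0$ would contradict the decay of $\psi_0$. An increasing function with limit $0$ is negative, so $\psi_0'<0$. Integrating from $r$ to $\infty$ then gives
\[
|\psi_0'(r)|=-\psi_0'(r)=\frac1r\int_r^\infty s\bigl(U(s)^2-\lambda^2\bigr)\psi_0(s)\,ds\le\frac{1.4(1-\lambda^2)}{r}\int_r^\infty sK_0(\kappa_\lambda s)\,ds .
\]
Here I use $U^2\le1$. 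The identity $(sK_1(\kappa s))'=-\kappa sK_0(\kappa s)$ evaluates the last integral as $rK_1(\kappa_\lambda r)/\kappa_\lambda$. This yields
\[
|\psi_0'(r)|\le 1.4\,\frac{1-\lambda^2}{\kappa_\lambda}\,K_1(\kappa_\lambda r).
\]

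The remaining and main step is to absorb the factor $\frac{1-\lambda^2}{\kappa_\lambda}\frac{K_1}{K_0}$ into $1$. By \cite[Corollary~3.3]{YangChu2017}, $K_1(x)/K_0(x)<1+1/(2x)$. With $\lambda^2\in\Lambda_{\rm FGR}$ we have $1-\lambda^2\approx0.2225$ and $\kappa_\lambda\approx0.4712$. Thus $(1-\lambda^2)/\kappa_\lambda\approx0.472$, and at $x=16\kappa_\lambda\approx7.5$ the factor is about $0.472\cdot1.067<1$, which is a comfortable margin. This is why the statement is restricted to $r\ge16$ rather than $\reight$. Since $1+1/(2x)$ decreases in $x$, the check at $r=16$ suffices for all $r\ge16$. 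I would have the code evaluate this single inequality with outward rounding, uniformly over $\lambda^2\in\Lambda_{\rm FGR}$, though it can equally be checked by hand. If the finiteness of $\ell$ needs care, the cruder bound $|\psi_0'(r)|\le r^{-1}\int_r^\infty\cdots$ already follows from monotonicity, and the same estimate goes through.
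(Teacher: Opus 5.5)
Your proposal is correct and follows the paper's proof essentially step for step: the value bound comes from Lemma~\ref{lem:psi_K0} at $r=8.001$ together with the monotonicity of $U$, then the identity $-r\psi_0'(r)=\int_r^\infty s\,(U^2-\lambda^2)\psi_0\,ds$ once $r\psi_0'\to0$, the evaluation via $(xK_1(x))'=-xK_0(x)$, and the Yang--Chu ratio bound checked at $r=16$ uniformly over $\Lambda_{\rm FGR}$ (the paper's certified constant is $0.50359$). The only inaccuracy is your aside about why the statement is restricted to $r\ge16$: the same factor is about $0.535<1$ already at $r=8.001$, so the restriction reflects where the bound is used in the FGR tail estimate rather than a limitation of the argument.
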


\begin{proof}
The pointwise comparison is Lemma~\ref{lem:psi_K0}, with the certified
endpoint check made at $\reight=8.001$. For $\delta_0=0.0005$, the interval
certificate proves
\[
 U(\reight )^2-(1-\delta_0)>3.8992\cdot10^{-7}.
\]
Since \(U\) is increasing, the third inequality in
\eqref{eq:psi_K0_endpoint_margins} implies
\[
 U(s)^2-\lambda^2\ge 1-\delta_0-\lambda^2>0,
 \qquad s\ge \reight.
\]
Thus \(0<U(s)^2-\lambda^2\le1-\lambda^2\) for \(s\ge16\).
Multiplying the eigenvalue equation by $r$ gives
\[
  (r\psi_0'(r))'=r\bigl(U(r)^2-\lambda^2\bigr)\psi_0(r).
\]
The right-hand side is integrable on $[16,\infty)$ by
Lemma~\ref{lem:psi_K0}, since $0\le U\le1$. Hence $r\psi_0'(r)$ has a finite
limit as $r\to\infty$. That limit must be zero: otherwise $\psi_0'(r)$ would
have a fixed sign and magnitude comparable to $r^{-1}$ for all sufficiently
large $r$, contradicting $\psi_0(r)\to0$. We may therefore integrate from
$r$ to infinity and obtain
\[
  -r\psi_0'(r)
  =
  \int_r^\infty s\bigl(U(s)^2-\lambda^2\bigr)\psi_0(s)\,ds,
  \qquad r\ge16.
\]
Using $0\le U^2-\lambda^2\le1-\lambda^2$ and
Lemma~\ref{lem:psi_K0}, we get
\[
  |\psi_0'(r)|
  \le
  1.4\,\frac{1-\lambda^2}{r}
  \int_r^\infty sK_0(\kappa_\lambda s)\,ds
  =
  1.4\,\frac{1-\lambda^2}{\kappa_\lambda}
  K_1(\kappa_\lambda r),
\]
where we used \((xK_1(x))'=-xK_0(x)\).
For $x>0$ we use the sharp ratio bound
\[
  \frac{K_1(x)}{K_0(x)}<1+\frac{1}{2x},
\]
proved by Yang and Chu~\cite[Corollary~3.3]{YangChu2017}. The C++ code
verifies uniformly for $\mu\in\Lambda_{\rm FGR}$, with
$\kappa_\mu:=\sqrt{1-\delta_0-\mu}$, that
\[
  \frac{1-\mu}{\kappa_\mu }
  \left(1+\frac1{2\kappa_\mu \cdot16}\right)
  \le 0.50359<1.
\]
In particular, this holds at $\mu=\lambda^2$ and
$\kappa_\mu =\kappa_\lambda$. Since $r\ge16$, it follows that
\[
  \frac{1-\lambda^2}{\kappa_\lambda}
  \frac{K_1(\kappa_\lambda r)}{K_0(\kappa_\lambda r)}<1,
\]
which proves the derivative comparison.
\end{proof}

\section{Certification of the Fermi Golden Rule}\label{sec_NumVer_FGR}

We now turn to the FGR computations. 
With the two functions from~\eqref{eq:Phi12} in hand, we can then pull them back and define the matrix Weyl solutions at zero $\upi_1$ and $\upi_2$, namely, \begin{equation}\label{FGR_num_app_upi}\begin{aligned}
\upi_1&:=(\upi_{1,1},\upi_{1,2})^T=r^{1/2}\mathcal{B}^*\big( r^{-1/2}\Phi_1(r,\freq),\, 0\big)^T=\big(-\partial_r\Phi_1-\tfrac{1}{2r}\Phi_1-b\Phi_1,\, U\Phi_1\big)^T,
\\ \upi_2&:=(\upi_{2,1},\upi_{2,2})^T=r^{1/2}\mathcal{B}^*\big(0, \, r^{-1/2}\Phi_2(r,\freq)\big)^T=\big(U\Phi_2,\, -\partial_r\Phi_2+\tfrac{1}{2r}\Phi_2\big)^T.
\end{aligned}\end{equation}
Here $\freq=k_\lambda:=\sqrt{4\lambda^2-1}$. 
The next step is to calculate the connection coefficients $a_j(\freq)=W(\Phi_j,\overline{\Psi_j})/W(\Psi_j,\overline{\Psi_j})$. For this we need the scalar Weyl
solutions at infinity at the same frequency. The C++ code uses certified outgoing\footnote{This refers to the sign in the complex exponential, which ensures Sommerfeld's radiation condition in the plane.}  initial conditions at $\rsixteen =16$.
These are obtained from interval enclosures of the outgoing asymptotic
expansions, including residual bounds and Volterra remainder estimates for the truncation errors:
\begin{equation}
    \label{eq:outgoingW}
    \begin{aligned}
\Psi_1(\rsixteen )&=\dfrac{1}{\sqrt{k_\lambda}}e^{ik_\lambda \rsixteen }\Big(1+\dfrac{3i}{8}\dfrac{1}{k_\lambda \rsixteen }+\dfrac{15}{128}\dfrac{1}{(k_\lambda \rsixteen )^2}+\ldots\Big), 
\\  \partial_r\Psi_1(\rsixteen )&=\dfrac{1}{\sqrt{k_\lambda}} e^{ik_\lambda \rsixteen }\Big(ik_\lambda-\dfrac{3}{8\rsixteen }-\dfrac{33i}{128}\dfrac{1}{k_\lambda \rsixteen ^2}+\ldots\Big),
\\ 
\Psi_2(\rsixteen )&=\dfrac{1}{\sqrt{k_\lambda}}e^{ik_\lambda \rsixteen }\Big(1-\dfrac{i}{8}\dfrac{1}{k_\lambda \rsixteen } -\dfrac{9}{128}\dfrac{1}{(k_\lambda \rsixteen )^2}+\ldots\Big), 
\\ \partial_r\Psi_2(\rsixteen )&=\dfrac{1}{\sqrt{k_\lambda}} e^{ik_\lambda \rsixteen }\Big(ik_\lambda+\dfrac{1}{8\rsixteen }+\dfrac{7i}{128}\dfrac{1}{k_\lambda \rsixteen ^2}+\ldots\Big),
\end{aligned}
\end{equation}
and then propagates backward on the compact interval. The expansion is differentiated term by term, and the truncation error at $\rsixteen $ is enclosed before it is used as initial data. Within the compact interval the values of $\Psi_j$ are dictated by the linear equation, not by these truncated expressions.

We then compute $a_j(\freq)$. The proof uses the interval enclosures from the C++ code.
For the calculations below, we also introduce the matrix Weyl solutions at infinity, defined analogously to~\eqref{FGR_num_app_upi} (see Proposition~\ref{prop:p1_t1_and_t2} and Proposition~\ref{propdFT}):
\begin{equation}\label{FGR_num_app_upsi_2}\begin{aligned}
    \upsi_1&:=(\upsi_{1,1},\upsi_{1,2})^T=r^{1/2}\mathcal{B}^*\big( r^{-1/2}\Psi_1(r,\freq),\, 0\big)^T=\big(-\Psi_1'-\tfrac{1}{2r}\Psi_1-b\Psi_1,\, U\Psi_1\big)^T,
    \\ \upsi_2&:=(\upsi_{2,1},\upsi_{2,2})^T=r^{1/2}\mathcal{B}^*\big(0, \, r^{-1/2}\Psi_2(r,\freq)\big)^T=\big(U\Psi_2,\, -\Psi_2'+\tfrac{1}{2r}\Psi_2\big)^T.
\end{aligned}\end{equation}
With all of these, we can calculate the distorted Fourier kernel
$\mathbf{E}$ from Proposition~\ref{propdFT}, and hence the Fermi Golden
Rule coefficients in \eqref{FGRODE14}.  The calculation uses the bilinear
form $\bmQ_s$ from \eqref{eq:bmQsdef}, the eigenvectors \eqref{eq:bmYjbd1},
the interval enclosure of the vortex profile, the internal mode, and the distorted
Fourier kernel at $\sqrt{\bfM}=2\lambda$, equivalently at
$k_\lambda=\sqrt{4\lambda^2-1}$ in the scalar Fourier variable.

We now turn to the second, independent tail input for the FGR calculation:
the outgoing scalar Weyl solutions. All interval estimates involving the
spectral parameter are verified uniformly for
$\mu\in\Lambda_{\rm FGR}$  and then applied at $\mu=\lambda^2$. For the
solutions in~\eqref{eq:outgoingW}, the following lemma gives certified initial data
at $R=16$ and a uniform tail bound. These intervals are then propagated backward over the finite interval $[10,16]$.
The C++ code certifies both the initial intervals and the Volterra remainder
bounds.

\begin{lemma}\label{lem:capd_outgoing_start}
Let $\mu\in\Lambda_{\rm FGR}$ and put $k=\sqrt{4\mu-1}$.  For $j=1,2$ let $y_j$ solve 
\[
  y_j''+Q_j(r,\mu)y_j=0,\qquad k^{1/2}e^{-ikr}y_j(r)\longrightarrow1
\qquad(r\to\infty), 
\]
which are equivalent to $\mathcal H_jy_j=4\mu y_j$, with
\[
\begin{aligned}
Q_1(r,\mu)&=4\mu-\frac{1+U(r)^2}{2}-\frac{(2-a(r))^2}{r^2}+\frac{1}{4r^2},\\
Q_2(r,\mu)&=4\mu-U(r)^2+\frac{1}{4r^2}.
\end{aligned}
\]
Define the order $N$ truncated approximation of $y_j$ by
\[
  y_{j,N}^0(r)=k^{-1/2}e^{ikr}\sum_{n=0}^{N}c_{j,n}(kr)^{-n},\qquad N=10,
\]
where $c_{j,0}=1$ and
\[
  c_{j,n}=-i\,\frac{n(n-1)+\alpha_j}{2n}\,c_{j,n-1},
  \qquad
  \alpha_1=-\frac34,\quad \alpha_2=\frac14 .
\]
Then, for  $\mu\in\Lambda_{\rm FGR}$, one has at $R=16$:
\[
\begin{aligned}
  y_1(R)&\in y_{1,N}^0(R)+[-\varepsilon_{1,0},\varepsilon_{1,0}]+i[-\varepsilon_{1,0},\varepsilon_{1,0}],\\
  y_1'(R)&\in (y_{1,N}^0)'(R)+[-\varepsilon_{1,1},\varepsilon_{1,1}]+i[-\varepsilon_{1,1},\varepsilon_{1,1}],\\
  y_2(R)&\in y_{2,N}^0(R)+[-\varepsilon_{2,0},\varepsilon_{2,0}]+i[-\varepsilon_{2,0},\varepsilon_{2,0}],\\
  y_2'(R)&\in (y_{2,N}^0)'(R)+[-\varepsilon_{2,1},\varepsilon_{2,1}]+i[-\varepsilon_{2,1},\varepsilon_{2,1}],
\end{aligned}
\]
with
\[
\varepsilon_{1,0}=1.350273\cdot10^{-7},\ \   \varepsilon_{1,1}=3.922666\cdot10^{-7},\ \ 
\varepsilon_{2,0}=1.306810\cdot10^{-7},\ \  \varepsilon_{2,1}=3.796404\cdot10^{-7}.
\]
Moreover,  the  Weyl vectors $\upsi_j$ in
\eqref{FGR_num_app_upsi_2} satisfy
\[
  \sup_{r\ge16}\max_j |\upsi_j(r)|\le1.253,
\]
in the maximum norm for vectors in $\bbC^2$. 
\end{lemma}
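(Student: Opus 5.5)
The plan is to treat the truncated Poincar\'e series $y_{j,N}^0$ as an approximate outgoing solution and to correct it by a Volterra equation posed on $[R,\infty)$. The first step is to compute the residual
\[
 e_j:=(y_{j,N}^0)''+Q_j(\cdot,\mu)\,y_{j,N}^0 .
\]
Write $Q_j=k^2+\alpha_j' r^{-2}+q_j$, where $q_j$ is the exponentially small vortex contribution. For $j=2$ we have $q_2=1-U^2$. For $j=1$ we have $q_1=\tfrac12(1-U^2)-\big((2-a)^2-1\big)/r^2+\ldots$ after collecting the $r^{-2}$ terms, and indeed $(2-a)^2-1-\tfrac14\to 3/4$ gives the constant $\alpha_1=-3/4$. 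The recursion for $c_{j,n}$ is exactly the one that cancels the Bessel-type part $k^2+(\tfrac14-\nu^2)r^{-2}$ through order $N$. Hence $e_j$ splits into two pieces. The first is an explicit algebraic remainder, a single term of the form $c_{j,N}\,(\ldots)\,k^{-1/2}e^{ikr}(kr)^{-N-2}$, which is bounded in closed form. The second is $q_j\,y^0_{j,N}$, which is bounded using Lemma~\ref{lem_1-U2} and Corollary~\ref{cor:vortex_tail_bridge}, i.e.\ $|1-U^2|\le 8r^{-1/2}e^{-r}$ and $0<1-a<4\sqrt r e^{-r}$.

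The next step is to set $y_j=y^0_{j,N}+w_j$. The correction $w_j$ solves $w_j''+(k^2+\alpha_j'r^{-2})w_j=-e_j-q_jw_j$. Rather than using Hankel kernels, I would simply use the free kernel $\sin(k(s-r))/k$. This gives
\[
 w_j(r)=\int_r^\infty \frac{\sin k(s-r)}{k}\Big(e_j(s)+\big(Q_j(s)-k^2\big)w_j(s)\Big)\,ds .
\]
In this equation the full potential perturbation $Q_j-k^2=O(r^{-2})$ is treated as the Volterra kernel. On $[16,\infty)$ the total kernel mass is $k^{-1}\int_{16}^\infty |Q_j-k^2|\,ds\lesssim k^{-1}\big(|\alpha_j'|/16+\text{tiny}\big)$, which is $<1$ since $k\approx1.5$. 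Therefore the map is a contraction in the weighted sup norm $\sup_{r\ge R}|w|$. The resulting estimates are $|w_j(R)|\le \|\,\!\int_R^\infty|e_j|/k\|/(1-\theta)$, and for $w_j'$ the analogous estimate with $\cos$ in place of $\sin$. It must also be checked that $w_j\to0$, so that the normalization $k^{1/2}e^{-ikr}y_j\to1$ is preserved. Since $\int_r^\infty|e_j|=O(r^{-N-1})$, this holds. The C++ code then only has to evaluate the closed-form residual integrals and $\theta$ outward-rounded, uniformly in $\mu\in\Lambda_{\rm FGR}$, and add them to $y^0_{j,N}(R)$, $(y^0_{j,N})'(R)$. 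I expect the main delicacy to be the constant bookkeeping. Making $\varepsilon_{j,0}\sim10^{-7}$ requires a sharp bound on the algebraic remainder at $kR\approx24$, where the terms of the asymptotic series are near their minimum around $n\sim2kR$, so $N=10$ is well inside. It also requires that $\theta$ be not too close to $1$.

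For the uniform bound on $\upsi_j$, I would use the same Volterra representation for every $r\ge16$, not only at $R$. This gives $|y_j(r)|\le k^{-1/2}\big(\sum_n|c_{j,n}|(kr)^{-n}\big)+\varepsilon$ and a similar bound for $|y_j'(r)|\le k^{1/2}(1+\ldots)$. These quantities are monotone in $r$, so their suprema occur at $r=16$. Then, using $0<U\le1$, $0\le b=(1-a)/r\le 4r^{-1/2}e^{-r}$, and $1/(2r)\le 1/32$, each component of $\upsi_j$ from \eqref{FGR_num_app_upsi_2} is bounded by $|y_j'|+(\tfrac1{32}+b)|y_j|$ or by $U|y_j|$. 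The numerical evaluation at $k=\sqrt{4\mu-1}$, $\mu\in\Lambda_{\rm FGR}$, gives roughly $k^{1/2}+k^{-1/2}/32\approx1.25$, which should be consistent with the stated bound $1.253$.
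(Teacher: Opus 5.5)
Your proposal is correct and is essentially the paper's argument. The paper conjugates by $k^{-1/2}e^{ikr}$ and uses the kernel $(1-e^{2ik(s-r)})/(2ik)$ for $\partial_r^2+2ik\partial_r$, which is your sine kernel after conjugation; it gets the same contraction constant $(|\alpha_j|/R+I_V)/k_-$ and the same sup-norm majorants leading to $1.253$. One correction to your closing remark: the $10^{-7}$ size of $\varepsilon_{j,0}$ comes from the vortex-tail term $I_V Z_{j,N}\approx 2.3\cdot 10^{-7}$, while the algebraic truncation residual at $N=10$ is only about $10^{-12}$, so no sharp bound on the latter is needed.
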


\begin{proof}
Let
\[
  Q_1^0(r,\mu)=k^2-\frac{3}{4r^2},\qquad
  Q_2^0(r,\mu)=k^2+\frac{1}{4r^2}.
\]
For $r\ge 16$, Corollary~\ref{cor:vortex_tail_bridge} gives the explicit   bound
\[
  0\le 1-U(r)^2\le 8e^{-r}r^{-1/2}.
\]
For $j=2$ this implies
\[
  |Q_2(r,\mu)-Q_2^0(r,\mu)|=|1-U(r)^2|\le 8e^{-r}r^{-1/2}.
\]
For $j=1$ we also need a bound on $1-a(r)$. Using the ODE $a'(r)=\frac r2(1-U(r)^2)$ and $a(\infty)=1$, we have
\[
  1-a(r)=\int_r^\infty a'(s)\,ds
  =\frac12\int_r^\infty s\bigl(1-U(s)^2\bigr)\,ds
  \le 4\int_r^\infty e^{-s}s^{1/2}\,ds.
\]
From  $\int_r^\infty e^{-s}s^{1/2}\,ds\le e^{-r}r^{1/2}\bigl(1+\frac{1}{2r}\bigr)$ we infer, for $r\ge16$, that
\[
  0\le 1-a(r)\le 4e^{-r}r^{1/2}\Bigl(1+\frac{1}{2r}\Bigr)
  \le \frac{33}{8}\,e^{-r}r^{1/2}.
\]
Consequently,
\[
\begin{aligned}
|Q_1(r,\mu)-Q_1^0(r,\mu)|
&=\left|\frac{1-U(r)^2}{2}+\frac{1-(2-a(r))^2}{r^2}\right|\\
&\le 4e^{-r}r^{-1/2}+\frac{3(1-a(r))}{r^2}
\le 4e^{-r}r^{-1/2}+\frac{99}{8}e^{-r}r^{-3/2}\\
&\le 4e^{-r}r^{-1/2}+\frac{99}{128}e^{-r}r^{-1/2}
\le 8e^{-r}r^{-1/2}.
\end{aligned}
\]
Write
\[
  y(r)=k^{-1/2}e^{ikr}z(r).
\]
Then $z$ solves
\[
  z''+2ikz'+\left(\frac{\alpha_j}{r^2}+V_j(r)\right)z=0,
  \qquad
  V_j=Q_j-Q_j^0.
\]
Note that $V_j$ here is different from \eqref{eq:lemSd2}. The polynomial
\[
  z_{j,N}(r)=\sum_{n=0}^{N}c_{j,n}(kr)^{-n}
\]
satisfies the equation with $V_j\equiv 0$ up to a remainder term:
\[
  z_{j,N}''+2ikz_{j,N}'+\frac{\alpha_j}{r^2}z_{j,N}
  =
  k^2\big(N(N+1)+\alpha_j\big)c_{j,N}(kr)^{-N-2},
\]
as can be seen from the recurrence for $c_{j,n}$. Therefore, with $R=16$,
\[
  I_{{\rm res},j}
  :=
  \int_R^\infty
  \left|
  k^2\big(N(N+1)+\alpha_j\big)c_{j,N}(ks)^{-N-2}
  \right|\,ds
  \le
  \frac{|N(N+1)+\alpha_j|\,|c_{j,N}|\,k}
       {(N+1)(kR)^{N+1}} .
\]
The C++ code evaluates this last rational expression with
outward rounding and obtains
\[
  I_{{\rm res},1}<1.642\cdot10^{-12},\qquad
  I_{{\rm res},2}<1.500\cdot10^{-12}.
\]
Lemma~\ref{lem_1-U2} gives
\[
  I_V:=\int_R^\infty |V_j(s)|\,ds
  \le 8e^{-R}R^{-1/2}<2.251\cdot10^{-7}.
\]
Let $e=z-z_{j,N}$. The  Green kernel of $d^2/dr^2+2ik\,d/dr$ is
\[
  K(r,s)=\frac{1-e^{2ik(s-r)}}{2ik},\qquad s\ge r,
\]
so $|K(r,s)|\le k^{-1}$ and $|\partial_rK(r,s)|\le1$. Hence $e$ satisfies the Volterra equation
\begin{equation}\label{eq:outgoing_error_volterra}
e(r)=\int_r^\infty K(r,s)
\left[
\rho_{j,N}(s)+V_j(s)z_{j,N}(s)
+\left(\frac{\alpha_j}{s^2}+V_j(s)\right)e(s)
\right]\,ds,
\end{equation}
where $\rho_{j,N}$ is the explicit residual above.
To make the resulting estimates uniform for
$\mu\in\Lambda_{\rm FGR}$, set
\begin{equation}\label{eq:outgoing_k_endpoints}
k_-:=\inf_{\mu\in\Lambda_{\rm FGR}}\sqrt{4\mu-1},
\qquad
k_+:=\sup_{\mu\in\Lambda_{\rm FGR}}\sqrt{4\mu-1}.
\end{equation}
The C++ verifier uses the full interval enclosure of
$k=\sqrt{4\mu-1}$ determined by $\mu\in\Lambda_{\rm FGR}$. Define
\begin{equation}\label{eq:outgoing_zN_majorants}
Z_{j,N}:=\sum_{n=0}^{N}
\frac{|c_{j,n}|}{(k_-R)^n},
\qquad
Z'_{j,N}:=\frac1R\sum_{n=1}^{N}
\frac{n|c_{j,n}|}{(k_-R)^n}.
\end{equation}
The definition of $z_{j,N}$ and
\eqref{eq:outgoing_zN_majorants} give
\[
\|z_{j,N}\|_{L^\infty([R,\infty))}
\le Z_{j,N},
\qquad
\|z_{j,N}'\|_{L^\infty([R,\infty))}
\le Z'_{j,N}.
\]
The C++ verifier evaluates \eqref{eq:outgoing_zN_majorants} with
outward-rounded interval arithmetic. The certificate transcript in the \texttt{capd\_cpp/transcripts} directory
of~\cite{AYMHPartICertificateRepo} records
\[
Z_{1,N}<1.016362,\qquad Z_{2,N}<1.005515,
\]
and
\[
Z'_{1,N}<1.038\cdot10^{-3},\qquad
Z'_{2,N}<3.537\cdot10^{-4}.
\]
Using $|K(r,s)|\le k^{-1}$ in
\eqref{eq:outgoing_error_volterra}, the $L^\infty$ norm of its
Volterra operator is bounded by
\begin{equation}\label{eq:outgoing_contraction_constant}
\Theta_j:=
\frac{|\alpha_j|/R+I_V}{k_-}.
\end{equation}
Outward-rounded interval evaluation over
$\mu\in\Lambda_{\rm FGR}$ gives
\[
\Theta_1<3.228\cdot10^{-2},
\qquad
\Theta_2<1.076\cdot10^{-2},
\]
so both Volterra operators are contractions. Define
\begin{align}
\mathcal E_{j,0}
&:=
\frac{k_-^{-1}\bigl(I_{{\rm res},j}+I_VZ_{j,N}\bigr)}
{1-\Theta_j},
\label{eq:outgoing_e0_majorant}\\
\mathcal E_{j,1}
&:=
I_{{\rm res},j}+I_VZ_{j,N}
+\left(\frac{|\alpha_j|}{R}+I_V\right)\mathcal E_{j,0}.
\label{eq:outgoing_e1_majorant}
\end{align}
The Volterra equation \eqref{eq:outgoing_error_volterra} and its
differentiated form, using $|\partial_rK(r,s)|\le1$, give
\begin{equation}\label{eq:outgoing_e_bounds}
\|e\|_{L^\infty([R,\infty))}
\le\mathcal E_{j,0},
\qquad
\|e'\|_{L^\infty([R,\infty))}
\le\mathcal E_{j,1}.
\end{equation}
Since
\[
y_j-y_{j,N}^0=k^{-1/2}e^{ikr}e,
\]
\eqref{eq:outgoing_k_endpoints} and
\eqref{eq:outgoing_e_bounds} imply
\begin{equation}\label{eq:outgoing_y_error_bounds}
\begin{aligned}
|y_j(R)-y_{j,N}^0(R)|
&\le k_-^{-1/2}\mathcal E_{j,0},\\
|y_j'(R)-(y_{j,N}^0)'(R)|
&\le
k_+^{1/2}\mathcal E_{j,0}
+k_-^{-1/2}\mathcal E_{j,1}.
\end{aligned}
\end{equation}
The same outward-rounded computation gives the constants
$\varepsilon_{j,0}$ and $\varepsilon_{j,1}$ in the statement; the
intermediate enclosures for $\|e\|_\infty$ and $\|e'\|_\infty$ are
recorded in the same transcript.

For the uniform vector bound, define
\begin{equation}\label{eq:outgoing_Y_majorants}
\begin{aligned}
Y_{j,0}
&:=
k_-^{-1/2}\bigl(Z_{j,N}+\mathcal E_{j,0}\bigr),\\
Y_{j,1}
&:=
k_+^{1/2}\bigl(Z_{j,N}+\mathcal E_{j,0}\bigr)
+k_-^{-1/2}\bigl(Z'_{j,N}+\mathcal E_{j,1}\bigr).
\end{aligned}
\end{equation}
Then
\[
|y_j(r)|\le Y_{j,0},
\qquad
|y_j'(r)|\le Y_{j,1},
\qquad r\ge R.
\]
Moreover, the $a$-tail estimate in
Lemma~\ref{lem_1-U2} gives
\begin{equation}\label{eq:outgoing_b_tail}
|b(r)|=\frac{1-a(r)}r
\le\frac{33}{8}e^{-r}r^{-1/2},
\qquad r\ge16.
\end{equation}
Using \eqref{FGR_num_app_upsi_2}, $|U|\le1$,
\eqref{eq:outgoing_Y_majorants}, and
\eqref{eq:outgoing_b_tail}, for $r\ge R=16$ and $j=1,2$ we obtain
\begin{equation}\label{eq:outgoing_vector_bound}
|\upsi_j(r)|
\le
\max\left\{
Y_{j,1}
+\left(\frac1{2R}+\frac{33e^{-R}}{8\sqrt R}\right)Y_{j,0},
Y_{j,0}
\right\}
<1.253.
\end{equation}
This is the bound used in the FGR estimates below.
\end{proof}

We now have the two tail inputs needed for the FGR estimate.
Lemma~\ref{lem:capd_psi_prime_tail} controls the internal mode and its
derivative, while Lemma~\ref{lem:capd_outgoing_start} controls the outgoing
Weyl vectors. We combine these bounds in the following lemma.

\begin{lemma}\label{lem:capd_fgr_tail_upper_bound}
Let $\lambda^2\in\Lambda_{\rm FGR}$ be the internal eigenvalue, let
$\psi_0$ and $\bmY_j^{(0)}$ be as in \eqref{eq:bmYjbd1}, and set
\[
 k_\lambda:=\sqrt{4\lambda^2-1},
 \qquad
 \kappa_\lambda:=\sqrt{1-\delta_0-\lambda^2},
 \qquad
 \delta_0=0.0005.
\]
Then, for $r\ge16$ and $i,j\in\{1,2\}$,
\begin{equation}\label{eq:fgr_tail_pointwise_bound}
\left\|
\overline{\bfE(r,k_\lambda)}^{\,t}
\bmQ_s\bigl(\bmY_i^{(0)},\bmY_j^{(0)}\bigr)(r)
\right\|_{\bbC^4}
\le
30\,r^{-1/2}
\bigl(1.4K_0(\kappa_\lambda r)\bigr)^2.
\end{equation}
cf.~\eqref{eq:bmQsdef}.
\end{lemma}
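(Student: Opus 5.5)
The plan is to compute the bilinear form explicitly and bound each of the finitely many terms. First I will write out $\bmQ=-\delta\mathcal H_2/\delta\bmr$ from \eqref{eq:FGR_cubic_hamiltonian} component by component. In $L^2(r\,dr)$, with $\partial^*$ the adjoint of $\partial_r$, this gives quadratic expressions in $(\alpha,\zeta,\beta,\mu)$ and their first derivatives, with coefficients $U$, $b$, and constants. I will then polarize as in \eqref{eq:bmQsdef} and insert $\bmY_i^{(0)},\bmY_j^{(0)}$ from \eqref{eq:bmYjbd1}. Because $\bmY_1^{(0)}$ lives in the $(\alpha,\zeta)$ block and $\bmY_2^{(0)}$ in the $(\beta,\mu)$ block, only a few monomials survive for each pair $(i,j)$.

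Each component of $\bmQ_s(\bmY_i^{(0)},\bmY_j^{(0)})$ is then a sum of terms of the form (coefficient) $\times$ (two factors from $\{U\psi_0,\psi_0',(U\psi_0)',\psi_0''\}$). Some derivatives appear because the terms $\partial^*(\mu\beta)\alpha$ and $\mu\beta'\alpha$ contain $\partial_r$. I will remove the second derivatives with the eigenvalue equation $\psi_0''=-r^{-1}\psi_0'+(U^2-\lambda^2)\psi_0$. I will also use $U'=bU$ and $|U|\le1$, together with the tail bound $|b(r)|\le\frac{33}{8}e^{-r}r^{-1/2}$ from \eqref{eq:outgoing_b_tail}. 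For $r\ge16$, every such product is bounded by an explicit constant times $\max(\psi_0,|\psi_0'|)^2$. Lemma~\ref{lem:capd_psi_prime_tail} bounds this by $(1.4K_0(\kappa_\lambda r))^2$. Summing the absolute coefficients gives a bound $|\bmQ_s(\bmY_i^{(0)},\bmY_j^{(0)})(r)|\le c_Q\,(1.4K_0(\kappa_\lambda r))^2$ componentwise, where $c_Q$ is a small explicit integer.

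Next I will bound the kernel. At $k=k_\lambda$ the prefactor of $E(r,k)$ in Proposition~\ref{propdFT} is $i/(2\lambda\sqrt{2\pi r})$, and the columns of $E$ are $a_j^{-1}\upi_j$. I will avoid the regular solutions $\Phi_j$ at large $r$ and use the real-valuedness of $\Phi_j$ instead. Writing $\Phi_j=\overline{a_j}\Psi_j+a_j\overline{\Psi_j}$ gives $a_j^{-1}\upi_j=(\overline{a_j}/a_j)\upsi_j+\overline{\upsi_j}$, so $|a_j^{-1}\upi_j|\le 2|\upsi_j|\le 2\cdot1.253$ by Lemma~\ref{lem:capd_outgoing_start}. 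This decomposition is consistent with \eqref{def:a} and the Wronskian normalization $W(\Psi_j,\overline{\Psi_j})=-2i$, up to the convention check. Since $\lambda^2\ge0.7774$, we have $1/(2\lambda\sqrt{2\pi})<0.23$. Each entry of $\overline{\bfE}^{t}$ is therefore at most about $0.58\,r^{-1/2}$, and the $\bbC^4$ norm of the matrix–vector product is at most roughly $0.58\cdot 2\cdot c_Q\,r^{-1/2}(1.4K_0)^2$ after applying Cauchy–Schwarz to the $2\times2$ blocks.

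The main obstacle is bookkeeping: deriving the exact components of $\bmQ_s$, including the $\partial^*$ terms that generate $(U\psi_0)'$ and $1/r$ factors, and checking that the resulting constant times $0.58\cdot2\sqrt2$ stays below $30$. I expect $c_Q$ to be around $6$–$10$, which leaves room. As a fallback I would crudely bound $1/r\le1/16$ and $U'\le|b|$ to keep the constant explicit. I would also verify the decomposition identity for $\Phi_j$ in terms of $\Psi_j$ against the paper's conventions.
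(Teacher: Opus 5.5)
Your proposal is correct and follows the paper's proof essentially step by step. The paper also writes out $\bmQ_s(\bmY_i^{(0)},\bmY_j^{(0)})$ explicitly, using the eigenvalue equation to eliminate $\psi_0''$ (which cancels all $1/r$ terms), and bounds every entry by $5\bigl(1.4K_0(\kappa_\lambda r)\bigr)^2$ using $0<U\le1$, $0\le U'\le1$ and Lemma~\ref{lem:capd_psi_prime_tail}; it then bounds $a_j^{-1}\upi_j$ by $2\cdot1.253$ via the connection formula and Lemma~\ref{lem:capd_outgoing_start}, reaching the constant $8.02<30$. The only slip is that \eqref{def:a} gives $\Phi_j=a_j\Psi_j+\overline{a_j}\,\overline{\Psi_j}$ rather than your version with the coefficients swapped, which does not affect the modulus bound.
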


\begin{proof}
We prove \eqref{eq:fgr_tail_pointwise_bound} by combining the
internal-mode bounds in Lemma~\ref{lem:capd_psi_prime_tail} with the
bound in Lemma~\ref{lem:capd_outgoing_start} for the vectors $\upsi_j$
constructed from the solutions normalized at infinity. The interval
computations underlying these two inputs are performed with the spectral
parameter $\mu$ ranging over $\Lambda_{\rm FGR}$. Since
$\lambda^2\in\Lambda_{\rm FGR}$, both estimates apply at
$\mu=\lambda^2$.

For brevity, write
\[
 A_\lambda(r):=1.4K_0(\kappa_\lambda r).
\]
We first compute the bilinear source exactly. To avoid confusing the fourth
component of the perturbation with a spectral parameter, write
\[
 \bmr=(\alpha,\zeta,\beta,m)^t,
\]
where $m$ denotes the variable denoted by $\mu$ in
\eqref{eq:FGR_cubic_hamiltonian}. Taking the variational derivative in
\eqref{eq:bmQbddef1} gives
\[
\bmQ(\alpha,\zeta,\beta,m)=
\begin{pmatrix}
-\frac32U\alpha^2-\frac12U\beta^2
+\partial^*(m\beta)-m\beta'
+2b\zeta\alpha-U\zeta^2-Um^2\\
b(\alpha^2+\beta^2)-2U\zeta\alpha\\
-U\alpha\beta-\partial^*(m\alpha)+m\alpha'
+2b\zeta\beta\\
\beta\alpha'-\alpha\beta'-2U\alpha m
\end{pmatrix},
\qquad b=\frac{U'}{U}.
\]
Substituting
\[
 \bmY_1^{(0)}=(U\psi_0,-\psi_0',0,0)^t,
 \qquad
 \bmY_2^{(0)}=(0,0,U\psi_0,-\psi_0')^t
\]
and polarizing according to \eqref{eq:bmQsdef} yields the following exact
table:
\[
{\renewcommand{\arraystretch}{1.3}%
\begin{array}{c|c|c}
 & \mathcal Q_{ij}^{(1)} & \mathcal Q_{ij}^{(2)} \\ \hline
(1,1) &
-\frac32U^3\psi_0^2-2U'\psi_0\psi_0'-U(\psi_0')^2 &
UU'\psi_0^2+2U^2\psi_0\psi_0'
\\
(2,2) &
\frac12U^3\psi_0^2+2U'\psi_0\psi_0'
+U(\psi_0')^2-\lambda^2U\psi_0^2 &
UU'\psi_0^2
\\
(1,2) &
-U^3\psi_0^2-2U'\psi_0\psi_0'
-U(\psi_0')^2+\frac12\lambda^2U\psi_0^2 &
U^2\psi_0\psi_0'
\end{array}}
\]
More precisely,
\begin{equation}\label{eq:Qijscalar}
\begin{aligned}
 \bmQ_s(\bmY_1^{(0)},\bmY_1^{(0)})
 &=(\mathcal Q_{11}^{(1)},\mathcal Q_{11}^{(2)},0,0)^t,\\
 \bmQ_s(\bmY_2^{(0)},\bmY_2^{(0)})
 &=(\mathcal Q_{22}^{(1)},\mathcal Q_{22}^{(2)},0,0)^t,\\
 \bmQ_s(\bmY_1^{(0)},\bmY_2^{(0)})
 &=(0,0,\mathcal Q_{12}^{(1)},\mathcal Q_{12}^{(2)})^t,
\end{aligned}
\end{equation}
and the case $(2,1)$ is the same by symmetry. The terms containing
$\lambda^2$ result from the exact internal-mode equation
\[
 \psi_0''+\frac1r\psi_0'=(U^2-\lambda^2)\psi_0.
\]
For $r\ge16$, Proposition~\ref{prop:exuniqUa} gives
\[
 0<U\le1,\qquad 0\le U'\le1,\qquad 0<\lambda^2<1,
\]
while Lemma~\ref{lem:capd_psi_prime_tail} gives
\[
 |\psi_0(r)|\le A_\lambda(r),
 \qquad
 |\psi_0'(r)|\le A_\lambda(r).
\]
It follows directly from the table that
\[
 |\mathcal Q_{ij}^{(\ell)}(r)|
 \le5A_\lambda(r)^2,
 \qquad \ell=1,2.
\]
 Next, we relate the columns $\upi_j$ determined by the origin normalizations to the outgoing
columns. By \eqref{def:a},
\[
 \Phi_j(r,k_\lambda)
 =
 a_j(k_\lambda)\Psi_j(r,k_\lambda)
 +\overline{a_j(k_\lambda)}\,
  \overline{\Psi_j(r,k_\lambda)}.
\]
Since the first-order operators defining $\upi_j$ and $\upsi_j$ have real
coefficients,
\[
 a_j(k_\lambda)^{-1}\upi_j
 =
 \upsi_j+
 \frac{\overline{a_j(k_\lambda)}}{a_j(k_\lambda)}
 \overline{\upsi_j}.
\]
Lemma~\ref{lem:capd_outgoing_start} therefore gives, componentwise,
\[
 \left|a_j(k_\lambda)^{-1}\upi_{j,\ell}(r,k_\lambda)\right|
 \le2\mathcal W_\infty,
 \qquad
 \mathcal W_\infty
 :=\sup_{r\ge16}\max_{j,\ell}|\upsi_{j,\ell}(r,k_\lambda)|
 \le1.253.
\]
Let $(q_1,q_2)$ denote the two nonzero components of one of the source vectors
in \eqref{eq:Qijscalar}. Since $\langle k_\lambda\rangle=2\lambda$, the corresponding
nonzero block of the contraction is exactly
\[
 -\frac{i}{2\lambda\sqrt{2\pi r}}
 \begin{pmatrix}
 \overline{a_1(k_\lambda)^{-1}\upi_{1,1}}\,q_1
 +\overline{a_1(k_\lambda)^{-1}\upi_{1,2}}\,q_2\\
 \overline{a_2(k_\lambda)^{-1}\upi_{2,1}}\,q_1
 +\overline{a_2(k_\lambda)^{-1}\upi_{2,2}}\,q_2
 \end{pmatrix}.
\]
Thus, each output component contains exactly two matrix products. After expanding this connection formula, each of these products splits
into an outgoing and an incoming term (the latter being the complex conjugate). 

Using the componentwise bounds above and then the Euclidean norm, we obtain
\[
\begin{aligned}
 \left\|
 \overline{\bfE(r,k_\lambda)}^{\,t}
 \bmQ_s\bigl(\bmY_i^{(0)},\bmY_j^{(0)}\bigr)(r)
 \right\|_{\bbC^4}
 &\le
 \frac{20\sqrt2\,\mathcal W_\infty}
      {2\lambda\sqrt{2\pi}}\,
 r^{-1/2}A_\lambda(r)^2
 \\ &\le
 \frac{10(1.253)}
      {\sqrt{0.777471875}\sqrt{\pi}}\,
 r^{-1/2}A_\lambda(r)^2\\
 &<8.02\,r^{-1/2}A_\lambda(r)^2
 <30\,r^{-1/2}A_\lambda(r)^2.
\end{aligned}
\]
Finally, the factor $r^{-1/2}$ is essential. After multiplication by the radial
measure $r\,dr$, it produces the factor $\sqrt r\,dr$ in
\eqref{FGR_rg15_numerics}.
\end{proof}

We now integrate the pointwise estimate
\eqref{eq:fgr_tail_pointwise_bound} to control the contribution of
$r\ge16$ to the FGR coefficients. For
$\mu\in\Lambda_{\rm FGR}$, set
\[
 k_\mu:=\sqrt{4\mu-1},
 \qquad
 \kappa_\mu:=\sqrt{1-\delta_0-\mu},
\]
as in Lemma~\ref{lem:capd_psi_prime_tail}, and define the rounded-up tail bound used below by
\begin{equation}\label{eq:Btail_definition}
 B_{\rm tail}:=3.174707028\cdot10^{-10}.
\end{equation}
The Bessel estimate \eqref{eq:K0_bound} and
$r^{-1/2}\le1/4$ for $r\ge16$ imply, for every $\kappa>0$,
\begin{align}\label{eq:Btail_Bessel_majorant}
 \int_{16}^\infty
 \bigl[1.4K_0(\kappa r)\bigr]^2\sqrt r\,dr
 &\le
 \frac{(1.4)^2\pi}{2\kappa}
 \int_{16}^\infty r^{-1/2}e^{-2\kappa r}\,dr  \le
 \frac{(1.4)^2\pi}{16\kappa^2}e^{-32\kappa}.
\end{align}
The C++ code evaluates the majorant obtained by substituting
\eqref{eq:Btail_Bessel_majorant}, with $\mu$ ranging over
$\Lambda_{\rm FGR}$. Outward-rounded interval arithmetic gives
\[
 \sup_{\mu\in\Lambda_{\rm FGR}}
 \frac{\pi k_\mu}{4\mu}
 \left(
 30\int_{16}^\infty
 \bigl[1.4K_0(\kappa_\mu r)\bigr]^2\sqrt r\,dr
 \right)^2
 <B_{\rm tail}.
\]
Consequently, by Proposition~\ref{propdFT}, 
Remark~\ref{rem:fgr_code_normalization}, and \eqref{eq:fgr_tail_pointwise_bound}, for $i,j\in\{1,2\}$,
\begin{align}\label{FGR_rg15_numerics}
&\frac{\pi k_\lambda}{8\lambda^2}
\left\|
\int_{16}^\infty
\overline{\bfE(r,k_\lambda)}^{\,t}
\bmQ_s(\bmY_i^{(0)},\bmY_j^{(0)})(r)\,r\,dr
\right\|_{\bbC^4}^2
\notag\\
&\quad\le
\frac{\pi k_\lambda}{4\lambda^2}
\left(
30\int_{16}^\infty
\bigl[1.4K_0(\kappa_\lambda r)\bigr]^2\sqrt r\,dr
\right)^2
<B_{\rm tail}<10^{-9}.
\end{align}
The radial integrals entering the FGR coefficients are evaluated by interval
quadrature on $[0.1,16]$, thereby avoiding the regular singular point at the
origin. The integral over $0<r<0.1$ is estimated directly from the Frobenius
series of the regular solutions. The three vector
contributions over $(0,0.1)$, $[0.1,16]$, and $(16,\infty)$ are then
combined by the  triangle inequality before taking the square.
The next lemma records the corresponding bound for the origin
contribution, uniformly for
$\lambda^2\in\Lambda_{\rm FGR}$.

\begin{lemma}\label{lem:capd_fgr_origin}
Let $\rorigin=0.1$, let $\lambda^2\in\Lambda_{\rm FGR}$ be the internal
eigenvalue, and set
\[
  k_\lambda:=\sqrt{4\lambda^2-1}.
\]
For $(i,j)\in\{(1,1),(2,2),(1,2)\}$, write
\[
  \bm q_{ij}
  :=
  \begin{pmatrix}
    \mathcal Q_{ij}^{(1)}\\
    \mathcal Q_{ij}^{(2)}
  \end{pmatrix},
\]
where the two entries are given in the table immediately preceding
\eqref{eq:Qijscalar}, and set
$\bm q_{21}:=\bm q_{12}$. For $\ell=1,2$, define
\[
  \mathcal A_{ij,\ell}^{\rm org}
  :=
  \int_0^{\rorigin}
  \sqrt r\,
  \overline{
    i\,a_\ell(k_\lambda)^{-1}
    \upi_\ell(r,k_\lambda)
  }^{\,t}
  \bm q_{ij}(r)\,dr,
\]
and put
\[
  \mathcal A_{ij}^{\rm org}
  :=
  \begin{pmatrix}
    \mathcal A_{ij,1}^{\rm org}\\
    \mathcal A_{ij,2}^{\rm org}
  \end{pmatrix}.
\]
Then
\[
  \max_{\ell=1,2}
  \bigl|\mathcal A_{ij,\ell}^{\rm org}\bigr|
  <8.43\cdot10^{-4}.
\]
For $\mu\in\Lambda_{\rm FGR}$, set
\[
 c(\mu):=\frac{\sqrt{4\mu-1}}{64\mu^2},
 \qquad
 B_{\rm origin}:=1.5\cdot10^{-6}.
\]
Then\footnote{All bounds here are uniform in $\mu\in \Lambda_{\rm FGR}$.}
\begin{equation}\label{eq:B_origin}
 c(\lambda^2)
 \bigl\|\mathcal A_{ij}^{\rm org}\bigr\|_{\bbC^2}^2
 <B_{\rm origin}.
\end{equation}
\end{lemma}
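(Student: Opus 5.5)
The plan is to bound the integrand of $\mathcal A^{\rm org}_{ij,\ell}$ pointwise on $(0,\rorigin)$ by explicit polynomials in $r$ and then integrate. Three inputs are needed.

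First, on $[0,0.1]$ we have $U(r)\le \uprz r\le 0.61r$ and $U'\le \uprz\le 0.61$, both from Lemma~\ref{lem:Wberror} with $W\le 1$. We also have $0\le b(r)=(1-a)/r\le 1/r$. For $\psi_0=h_\mu(r^2)$ the Frobenius bounds of Lemma~\ref{lem:Acheck} give $|\psi_0|\le 1+\epsilon$ and $|\psi_0'|\le c\,r$, with $c$ slightly above $\mu/2$. Substituting these into the table preceding \eqref{eq:Qijscalar} gives $|\mathcal Q^{(1)}_{ij}|\lesssim r$ and $|\mathcal Q^{(2)}_{ij}|\lesssim r$, with explicit constants of order one. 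For example, the dominant term of $\mathcal Q^{(1)}_{22}$ is $-\lambda^2U\psi_0^2\approx -0.78\cdot 0.61\,r$.

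Second, I need to bound $a_\ell(k_\lambda)^{-1}\upi_\ell$ near the origin. Since $\Phi_1=r^{5/2}h_1$ and $\Phi_2=r^{1/2}h_2$ with $h_\ell(0)=1$, formula \eqref{FGR_num_app_upi} together with the certified polynomial bounds of Lemma~\ref{lem:Acheck} gives the following. The first column satisfies $\upi_{1,1}=-(3r^{3/2}+O(r^{7/2}))-b\Phi_1$ with $|b\Phi_1|\le r^{3/2}$, and $|\upi_{1,2}|\le 0.61\,r^{7/2}$. The second column satisfies $|\upi_{2,1}|\le 0.61\,r^{3/2}$ and $\upi_{2,2}=-2r^{3/2}h_2'(r^2)=O(r^{3/2})$. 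So every entry is $O(r^{3/2})$ with explicit constants.

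Third, I need the connection coefficients $|a_\ell(k_\lambda)|^{-1}$. These come from the interval enclosures of the Wronskians in \eqref{def:a}, computed by the C++ code at the matching radius. They use the forward Frobenius data at $\rorigin$ and the backward outgoing data from Lemma~\ref{lem:capd_outgoing_start}, uniformly for $\mu\in\Lambda_{\rm FGR}$, and they give a certified upper bound $A_\ell$ on $|a_\ell|^{-1}$.

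With these three inputs, the integrand $\sqrt r\,|a_\ell^{-1}\upi_\ell\cdot \bm q_{ij}|$ is bounded by $C_{ij,\ell}\,r^{3}$, where $C_{ij,\ell}$ is explicit. Integrating over $(0,0.1)$ contributes a factor $10^{-4}/4$, so the claimed bound $8.43\cdot10^{-4}$ amounts to $C_{ij,\ell}A_\ell<33.7$. This should hold comfortably, with $C$ of order $10$ and $A_\ell$ of order one. I expect this to be the main obstacle, since it is the only place where the global connection coefficient enters and it needs a validated Wronskian bounded away from zero. If the crude constant is too large, I would sharpen it using the interval enclosure of $\mathcal A^{\rm org}$ on subintervals rather than a single monomial bound.

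Finally, $c(\mu)\le \sqrt{4\mu-1}/(64\mu^2)<0.0205$ on $\Lambda_{\rm FGR}$. Hence $c\|\mathcal A^{\rm org}\|^2\le 0.0205\cdot 2\cdot(8.43\cdot10^{-4})^2<3\cdot10^{-8}<B_{\rm origin}$, which proves \eqref{eq:B_origin}. All numerical constants would be evaluated with outward rounding.
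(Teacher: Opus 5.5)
Your proposal is correct and follows essentially the paper's route: pointwise bounds on $(0,\rorigin)$ for $\psi_0,\psi_0'$ and for the columns $\upi_\ell=O(r^{3/2})$, the certified Wronskian bound $\max_\ell|a_\ell(k_\mu)|^{-1}<2.507$, and direct integration. There are two minor differences. First, the paper obtains the pointwise bounds on all of $[0,\rorigin]$ by an elementary Gronwall-type integration of $(r\psi_0')'=r(U^2-\lambda^2)\psi_0$ and of the analogous equations for $g_1,g_2$. Lemma~\ref{lem:Acheck} as stated only encloses values at the single point $x_*$, so you would need either that argument or the coefficient-level bounds inside its proof. Second, the paper uses the cruder constant source bound $q_0<0.2018$ together with $c(\lambda^2)<1$, where you keep the $O(r)$ factor in the sources. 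One harmless numerical slip: on $\Lambda_{\rm FGR}$ one has $c(\mu)\approx 0.0375$, not $c<0.0205$. Your final bound $c\,\|\mathcal A^{\rm org}_{ij}\|^2\lesssim 5.4\cdot 10^{-8}$ is still far below $B_{\rm origin}$.
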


\begin{proof}
We first control $\psi_0$. It satisfies
\[
 (r\psi_0')'=r(U^2-\lambda^2)\psi_0,
 \qquad \psi_0(0)=1,\qquad \psi_0'(0)=0.
\]
Since $|U^2-\lambda^2|\le1$, if
$m(r):=\sup_{0\le s\le r}|\psi_0(s)|$, then
\[
 |\psi_0'(r)|\le\frac r2m(r),
 \qquad
 m(r)\le1+\frac{r^2}{4}m(r).
\]
Thus, on $[0,\rorigin]$,
\[
 |\psi_0|\le M,\qquad |\psi_0'|\le P,
 \qquad
 M:=\left(1-\frac{\rorigin^2}{4}\right)^{-1},
 \qquad P:=\frac{M\rorigin}{2}.
\]
We next estimate $\upi_1$ and $\upi_2$ on $(0,\rorigin]$. The Frobenius
normalizations in Lemma~\ref{lem:Acheck} allow us to write, at energy
$E=4\lambda^2$,
\[
 \Phi_1(r)=r^{5/2}g_1(r),\qquad
 \Phi_2(r)=r^{1/2}g_2(r),
 \qquad g_j(0)=1,\quad g_j'(0)=0.
\]
Substitution in \eqref{eq:Phi12} gives
\[
\begin{aligned}
 (r^5g_1')'
 &=r^3\left[-4a+a^2
   +r^2\left(\frac{1+U^2}{2}-E\right)\right]g_1,\\
 (rg_2')'&=r(U^2-E)g_2.
\end{aligned}
\]
On $0\le r\le \rorigin$ we have
\[
 0\le U(r)\le r,\qquad
 0\le a(r)\le\frac{r^2}{4},\qquad
 0\le U'(r)\le1,\qquad
 3<E=4\lambda^2<4.
\]
Here the bound on $U'$ follows for $r>0$ from
$U'=(1-a)U/r$, and at $r=0$ from $U'(0)=\uprz<1$. The bounds on $E$ follow from Lemma~\ref{lem:fgr_eigenvalue_box}. It follows then that
\[
 \left|-4a+a^2+r^2\left(\frac{1+U^2}{2}-E\right)\right|
 \le5r^2,
 \qquad |U^2-E|\le4.
\]
Writing $G_j(r):=\sup_{0\le s\le r}|g_j(s)|$ and integrating from the
origin, we obtain that
\[
 |g_1'(r)|\le\frac56rG_1(r),
 \qquad |g_2'(r)|\le2rG_2(r),
\]
and hence
\[
 G_1(\rorigin)\le\left(1-\frac{5\rorigin^2}{12}\right)^{-1}<1.005,
 \qquad
 G_2(\rorigin)\le(1-\rorigin^2)^{-1}<1.011.
\]
Using $b=(1-a)/r$ in \eqref{FGR_num_app_upi}, this gives
\begin{equation}\label{eq:fgr_origin_column_bound}
\begin{aligned}
 \upi_1
 &=\begin{pmatrix}
  -(4-a)r^{3/2}g_1-r^{5/2}g_1'\\
  Ur^{5/2}g_1
 \end{pmatrix},
 &
 |\upi_{1,1}|+|\upi_{1,2}|&<5r^{3/2},\\
 \upi_2
 &=\begin{pmatrix}
  Ur^{1/2}g_2\\
  -r^{1/2}g_2'
 \end{pmatrix},
 &
 |\upi_{2,1}|+|\upi_{2,2}|&<4r^{3/2}.
\end{aligned}
\end{equation}
We now apply the bounds $|U|\le \rorigin $, $|U'|\le1$,
$\lambda^2\le1$, $|\psi_0|\le M$, and $|\psi_0'|\le P$ to the explicit
formulas for the entries introduced in \eqref{eq:Qijscalar}. Their majorants are
\begin{equation}\label{eq:fgr_origin_source_majorants}
{\renewcommand{\arraystretch}{1.25}
\begin{array}{c|c|c}
 & |\mathcal Q_{ij}^{(1)}|&|\mathcal Q_{ij}^{(2)}|\\ \hline
(1,1)&\frac32\rorigin ^3M^2+2MP+\rorigin P^2
     &\rorigin M^2+2\rorigin ^2MP\\
(2,2)&\frac12\rorigin ^3M^2+2MP+\rorigin P^2+\rorigin M^2
     &\rorigin M^2\\
(1,2)&\rorigin ^3M^2+2MP+\rorigin P^2+\frac12\rorigin M^2
     &\rorigin ^2MP.
\end{array}}
\end{equation}
Substituting $P=M\rorigin /2$ into the six quantities in \eqref{eq:fgr_origin_source_majorants} gives
\[
\begin{array}{c|c|c}
 & |\mathcal Q_{ij}^{(1)}|&|\mathcal Q_{ij}^{(2)}|\\ \hline
(1,1)&M^2(\rorigin +\frac74\rorigin ^3)&M^2(\rorigin +\rorigin ^3)\\
(2,2)&M^2(2\rorigin +\frac34\rorigin ^3)&M^2\rorigin \\
(1,2)&M^2(\frac32\rorigin +\frac54\rorigin ^3)&\frac12M^2\rorigin ^3.
\end{array}
\]
Since $0<\rorigin <1$, the largest entry is the $(2,2)$ first-component majorant. Hence, the maximum of these six quantities is
\[
 q_0=M^2\left(2\rorigin +\frac34\rorigin ^3\right)
 <0.201757527.
\]
The certified Wronskian enclosures recorded in the transcript give,
uniformly over $\Lambda_{\rm FGR}$,
\[
 A_*:=\sup_{\mu\in\Lambda_{\rm FGR}}
 \max\{|a_1(k_\mu)|^{-1},|a_2(k_\mu)|^{-1}\}
 <2.506371585,
 \qquad k_\mu:=\sqrt{4\mu-1}.
\]
By the definition of $\mathcal A_{ij,\ell}^{\rm org}$,
\begin{equation}\label{eq:easyint}
    \begin{aligned}
 |\mathcal A_{ij,\ell}^{\rm org}|
 &\le q_0A_*
   \int_0^{\rorigin }
   \bigl(|\upi_{\ell,1}(r)|+|\upi_{\ell,2}(r)|\bigr)\sqrt r\,dr\\
 &\le\frac53q_0A_*\rorigin ^3
 <8.43\cdot 10^{-4}. 
\end{aligned}
\end{equation}
The column sum in \eqref{eq:fgr_origin_column_bound} accounts for both
terms in this scalar product. It follows from \eqref{eq:easyint} that
\[
 \bigl\|\mathcal A_{ij}^{\rm org}\bigr\|_{\bbC^2}^2
 =
 \sum_{\ell=1}^2
 \bigl|\mathcal A_{ij,\ell}^{\rm org}\bigr|^2
 <
 2\bigl(8.43\cdot10^{-4}\bigr)^2.
\]
Moreover, since
$\lambda^2\in\Lambda_{\rm FGR}\subset(\frac12,1)$, we have
\[
 \sqrt{4\lambda^2-1}<2\lambda^2
\]
and therefore
\[
 0<c(\lambda^2)
 =\frac{\sqrt{4\lambda^2-1}}{64\lambda^4}
 <\frac{1}{32\lambda^2}<1.
\]
Consequently,
\[
\begin{aligned}
 c(\lambda^2)
 \bigl\|\mathcal A_{ij}^{\rm org}\bigr\|_{\bbC^2}^2
 &<
 2\bigl(8.43\cdot10^{-4}\bigr)^2 =1.421298\cdot10^{-6}
 <1.5\cdot10^{-6}
 =B_{\rm origin}.
\end{aligned}
\]
This proves \eqref{eq:B_origin}.
\end{proof}

\begin{proof}[Completion of the proof of Proposition~\ref{prop:FGR1}]
For an interval $I\subset(0,\infty)$, let $\mathcal A_{ij}(I)$ be defined
as $\mathcal A_{ij}^{\rm org}$ in Lemma~\ref{lem:capd_fgr_origin}, with $(0,\rorigin )$ replaced by $I$.
The radial integral is evaluated by interval quadrature on $[0.1,16]$ and
the intervals $(0,0.1)$ and $(16,\infty)$ are treated separately. For $\alpha\in\{1,2,12\}$, let $(i,j)$ denote the corresponding pair,
namely
\[
  (i,j)=(1,1),(2,2),(1,2),
\]
respectively, and let $\widehat D_{\alpha,[0.1,16]}^{(0)}$ denote the
quantity obtained from $\widehat D_\alpha^{(0)}$ by restricting the
radial integral in the distorted Fourier transform to $[0.1,16]$. Before
accounting for the two remaining intervals, the C++ code certifies
\begin{equation}\label{numerics_fgr1}
\begin{aligned}
 \widehat D_{1,[0.1,16]}^{(0)}
 &\in[0.0176262802754,0.0332324717579],\\
 \widehat D_{2,[0.1,16]}^{(0)}
 &\in[0.0477905323094,0.0546253884722],\\
 \widehat D_{12,[0.1,16]}^{(0)}
 &\in[0.0233179905311,0.0287364386010].
\end{aligned}
\end{equation}
Denote the lower endpoints in \eqref{numerics_fgr1} by
\[
 L_1=0.0176262802754,\qquad
 L_2=0.0477905323094,\qquad
 L_{12}=0.0233179905311.
\]
Then \eqref{numerics_fgr1}, Lemma~\ref{lem:capd_fgr_origin}, and
\eqref{FGR_rg15_numerics} give
\begin{equation}\label{eq:Luse}
\begin{aligned}
  c(\lambda^2)
  \bigl\|\mathcal A_{ij}([\rorigin ,16])\bigr\|_{\bbC^2}^2
  &\ge L_\alpha,\\
  c(\lambda^2)
  \bigl\|\mathcal A_{ij}((0,\rorigin ))\bigr\|_{\bbC^2}^2
  &<B_{\rm origin},\\
  c(\lambda^2)
  \bigl\|\mathcal A_{ij}((16,\infty))\bigr\|_{\bbC^2}^2
  &\le B_{\rm tail},
\end{aligned}
\end{equation}
then the triangle inequality gives
\begin{equation}\label{eq:Dhat_triangle_ineq}
 \widehat D_\alpha^{(0)}
 \ge
 \left(
  \sqrt{L_\alpha}
  -\sqrt{B_{\rm origin}}
  -\sqrt{B_{\rm tail}}
 \right)_+^2,
 \qquad (x)_+:=\max\{x,0\}.
\end{equation}
Here $B_{\rm origin}$ is given in \eqref{eq:B_origin}, and $B_{\rm tail}$ is defined immediately
before \eqref{eq:Btail_Bessel_majorant} and estimated in \eqref{FGR_rg15_numerics}. The final certified lower
bounds for $\widehat D_\alpha^{(0)}$ are stated in \eqref{eq:Dhat_lower_bounds}.
Together with the interval bounds computed by the C++ code, this proves
Proposition~\ref{prop:FGR1}.
\end{proof}

\section{The validated numerical computations}\label{sec:capd_certificate}

This section summarizes the computational work that supports the results of this paper. Near the regular singularity
$r=0$, the Frobenius estimates proved above give rigorous intervals (or Cartesian products of intervals) for all initial data
at $\rorigin=0.1$. On bounded subintervals of $(0,\infty)$, we rely on the CAPD library and interval arithmetic
to solve the differential equations and evaluate the required integrals.
For large $r$, analytic estimates based on barriers, comparison arguments,
and Volterra equations reduce each claim to inequalities at a fixed finite radius~$r$. Thus, the conclusions on unbounded intervals follow analytically
from inequalities at a fixed $r$ (such as $r=\rfour=\rzeroUbound$, $r=16$) which are certified to hold by our C++ code.

The code uses the CAPD multiprecision interval types
\texttt{MpInterval} and \texttt{MpFloat}, with $220$ bits of precision and
Taylor order $40$. This means that the ODE solver divides an interval $[r_{\rm a},r_{\rm b}]$ on which we wish to propagate a flow into many small intervals and then returns Taylor polynomials of degree at most $40$ on each of them together with a rigorous remainder bound. The true solution then lies in the tubular neighborhood defined by the Taylor approximations together with the rigorous remainder bound.

All arithmetic operations are rounded outward, and the resulting
enclosures by intervals are valid throughout each integration step. Decimal inputs are
also rounded outward when converted to intervals, so each displayed input
value is by definition contained in the interval used by the computation. The program is fail-fast: a block depending on
earlier calculations is executed only after all of its prerequisite blocks
have passed. The C++ code and the transcripts containing all results are, respectively,
\[
\begin{gathered}
 \texttt{capd\_cpp/src/lppssi\_cert\_v6.cpp},\\
 \texttt{capd\_cpp/transcripts/capd\_certificate\_v6\_220bit\_macos.txt},\\
\texttt{capd\_cpp/transcripts/capd\_certificate\_v6\_220bit\_linux.txt}.
\end{gathered}
\]
Each transcript begins with the SHA--256 hash of the source file used in
the run and records the CAPD and compiler versions. The file
\texttt{SHA256SUMS} records the hashes of every other versioned file in the
certificate.
The aforementioned files are contained in the repository~\cite{AYMHPartICertificateRepo}, together
with a hash manifest and instructions for repeating the calculation.  The
transcripts contain the principal enclosures and a passing result for every
certificate block. Information about CAPD, which is open source,  is available at
\url{http://capd.ii.uj.edu.pl/}.

\subsection{The division between analysis and computation}

The initial intervals for $U,a$ and their derivatives with respect to the
shooting parameter follow from the coefficient bounds in
Lemmas~\ref{lem:Wberror} and~\ref{lem:Wbderror}.  For the regular solutions
$\psi_0,\Phi_1,\Phi_2$ and the regular threshold solution
$\Phi_{2,0}^{(0)}$, the coefficient recurrences are treated by
Lemmas~\ref{lem:capd_origin_frobenius} and~\ref{lem:Acheck}.  The C++ code uses degree $100$ for $\psi_0$ and degree $60$ for  $\Phi_1$ and~$\Phi_2$. It then verifies that the next $20$ coefficients satisfy the desired geometric bound with $\rho=0.95$. From that point on, 
the  majorant in
Lemma~\ref{lem:capd_origin_frobenius} bounds all remaining coefficients.
As a result, the intervals at $\rorigin=0.1$ contain the exact regular solutions
and their derivatives.

For the endpoints $\upr_-,\upr_+$ of $J_0$, the ODE solver proves the lower bounds
$a(27.75,\upr_-)>2$ and $U(31.6,\upr_+)>2$.
These are the two alternatives in the shooting argument of
Proposition~\ref{prop:exuniqUa},  and Proposition~\ref{prop:J0slope} then concludes that
$\uprz\in J_0$.  On the same interval $J_0$, we verify by interval arithmetic that the 
four inequalities in \eqref{app_num_condition_a6} at $r=\rfour$ hold.
Lemma~\ref{lem_1-U2} and Corollary~\ref{cor:vortex_tail_bridge} then prove upper and lower bounds on $a$ and $U$ for every $r\ge\rfour$. They become highly accurate for large~$r$, say $r\ge16$. 

For the narrower enclosure used later, a Newton iteration argument first locates  the
zero $\upr_{20}$ of the auxiliary function $F(\upr)=U(20;\upr)-1$ to high precision.  This  zero is not the true
shooting parameter.  The vortex tail of Lemma~\ref{lem_1-U2} and the lower bound for
$\partial_\upr U(20;\upr)$ imply that
$|\uprz-\upr_{20}|<6.32\cdot10^{-17}$, and hence the enclosure in
Lemma~\ref{lem:Newton}.  All interval
integrations use  $I_{\rm cert}$ from that lemma.

The remaining finite calculations and their purpose in the spectral proofs are
listed in the two tables.  The second column contains what is established
by interval arithmetic, and the third lists the analytic results.

\begin{table}[htbp]
\centering
\footnotesize
\renewcommand{\arraystretch}{1.3}
\begin{tabular}{|>{\raggedright\arraybackslash}p{0.19\textwidth}|>{\raggedright\arraybackslash}p{0.35\textwidth}|>{\raggedright\arraybackslash}p{0.36\textwidth}|}
\hline
Part of the proof & Numerical work & Analytic conclusion \\
\hline
Spectrum of $\mathcal L_1$ &
$V_1(r)-1>0.0548$ on $[\sqrt6,\rfour]$. &
The estimates on $(0,\sqrt6)$ and $(\rfour,\infty)$ are analytic, using
Corollary~\ref{cor:vortex_tail_bridge} for the tail.  Lemma~\ref{lem_appV1g1}
excludes both discrete spectrum and a threshold
resonance. \\
\hline
Eigenvalue count for $\mathcal L_2$ &
The vortex is enclosed on $[0.1,15]$. These enclosures are combined with rigorous interval upper bounds for the
integrals of $|\log(r/s)|$ over each pair of subintervals; diagonal cells are handled by an exact closed formula.
The elementary estimate near $r=0$ completes the bound
$3.17965$ in \eqref{mainintegral_LT}. &
The two moments over compact intervals and the vortex tail give a complement smaller than
$0.002642$.  Hence $\Lambda<3.182292<3.2$, and
Proposition~\ref{theoLT2} gives at most one eigenvalue using Set\^{o}'s bound.  \\
\hline
Threshold of $\mathcal L_2$ &
The subordinate solution near~$0$ started from its Frobenius enclosure at $0.1$ and the subordinate solution near~$\infty$ 
enclosed by the Volterra equation at $16$ are propagated to $r_{\rm m}=10$,
where the Wronskian satisfies \eqref{noresonance_wronskian}. &
The Wronskian does not vanish, so Lemma~\ref{Lemspec0} excludes a threshold
resonance. \\
\hline
Internal eigenvalue &
The analogous Wronskian of the subordinate solutions at $0/\infty$ has opposite signs at the endpoints of
$\Lambda_{\rm FGR}=[0.777471875,0.77747375]$. The solutions are computed
from $0.1$ forward, and $16$ backward, respectively, and are matched at $10$. &
Lemma~\ref{lem:fgr_eigenvalue_box} gives an eigenvalue in
$\Lambda_{\rm FGR}$, and Proposition~\ref{theoLT2} gives uniqueness. \\
\hline
\end{tabular}
\vspace{4pt}
\caption{The spectral calculations on compact intervals and their analytic completion.}
\end{table}

To approximate the eigenvalue~$\lambda^2$, we 
start from the less accurate interval $[0.77747,0.77753]$. Then  five certified
midpoint evaluations successively shrink the sign-changing bracket. At the
sixth midpoint the interval enclosure computed for the Wronskian at that point contains zero, making it inconclusive. Thus, the code keeps
the last certified bracket and then verifies opposite signs directly at the
two exact endpoints of
$\Lambda_{\rm FGR}=[0.777471875,0.77747375]$. This interval is used
uniformly in all later estimates.

\subsection{Certified numerical proof of Proposition~\ref{prop:FGR1}}

This subsection summarizes how the certified interval arithmetic output implies the
lower bounds in \eqref{eq:Dhat_lower_bounds}, and hence $D_1,D_2,D_{12}<0$.
The definitions and normalization appear in Section~\ref{sec:FGR}, while the reduction to the compact interval,
origin, and tail computations appears in Section~\ref{sec_NumVer_FGR}.
All estimates involving the spectral parameter are uniform for
$\mu\in\Lambda_{\rm FGR}$ and are then applied at $\mu=\lambda^2$.
Interval quadrature on $[0.1,16]$ gives the enclosures in \eqref{numerics_fgr1}.
Denoting their lower endpoints by
\[
 L_1=0.0176262802754,
 \qquad
 L_2=0.0477905323094,
 \qquad
 L_{12}=0.0233179905311,
\]
We next explain how we account for the remaining intervals $(0,0.1)$ and $(16,\infty)$.
Set
\[
 c(\mu):=\frac{\sqrt{4\mu-1}}{64\mu^2}.
\]
For $\alpha=1,2,12$, let $(i,j)=(1,1),(2,2),(1,2)$, respectively, and let
$\mathcal A_{ij}(J)$ be the vector integral over $J$ introduced in the proof of
Proposition~\ref{prop:FGR1}. On $[0.1,16]$ the code computes
\[
 c(\lambda^2)\bigl\|\mathcal A_{ij}([0.1,16])\bigr\|_{\bbC^2}^2\ge L_\alpha.
\]
The origin estimate in Lemma~\ref{lem:capd_fgr_origin} and the tail estimate in
Lemma~\ref{lem:capd_fgr_tail_upper_bound} imply that
\[
\begin{aligned}
 c(\lambda^2)\bigl\|\mathcal A_{ij}((0,0.1))\bigr\|_{\bbC^2}^2
 &<B_{\rm origin}
   =1.5\cdot10^{-6},\\
 c(\lambda^2)\bigl\|\mathcal A_{ij}((16,\infty))\bigr\|_{\bbC^2}^2
 &<B_{\rm tail}
   =3.174707028\cdot10^{-10}.
\end{aligned}
\]
Since the integral over $(0,\infty)$ is the sum of these three vectors, the
triangle inequality gives \eqref{eq:Dhat_triangle_ineq}.
\begin{table}[htbp]
\centering
\footnotesize
\renewcommand{\arraystretch}{1.3}
\begin{tabular}{|>{\raggedright\arraybackslash}p{0.19\textwidth}|>{\raggedright\arraybackslash}p{0.35\textwidth}|>{\raggedright\arraybackslash}p{0.36\textwidth}|}
\hline
Part of the proof & Numerical work & Analytic conclusion \\
\hline
Internal-mode tail &
At $\reight=8.001$, uniformly for $\mu\in\Lambda_{\rm FGR}$, the three margins
in \eqref{eq:psi_K0_endpoint_margins} have the positive lower bounds
$5.5262\cdot10^{-4}$, $2.8955\cdot10^{-4}$, and
$3.8992\cdot10^{-7}$, respectively. &
Lemma~\ref{lem:psi_K0} gives the comparison with $1.4K_0(\kappa_\lambda r)$ for
$r\ge\reight$, and Lemma~\ref{lem:capd_psi_prime_tail} gives the derivative
bound used for $r\ge16$. \\
\hline
Solutions normalized at infinity &
At $R=16$, the degree--$10$ outgoing expansions satisfy the residual and
contraction bounds in Lemma~\ref{lem:capd_outgoing_start}. &
The Volterra equation controls the omitted terms and gives
$\sup_{r\ge16}\max_j|\upsi_j(r)|\le1.253$.  Backward propagation from $16$
to the matching radius $10$ determines $a_1,a_2$. \\
\hline
\end{tabular}
\vspace{4pt}
\caption{The estimates at infinity used in the Fermi Golden Rule calculation.}
\end{table}

The independently generated macOS and Linux transcripts agree.  The lower
endpoints of the certified intervals give
\[
 \widehat D_1^{(0)}>0.017297889,\qquad
 \widehat D_2^{(0)}>0.047248801,\qquad
 \widehat D_{12}^{(0)}>0.022940050.
\]
Finally, the physical coefficients in \eqref{FGRODE14} are recovered from
$\widehat D_\alpha^{(0)}$ by Remark~\ref{rem:fgr_code_normalization}:
\[
 D_\alpha=-\frac{4\lambda}{k_\lambda\nu_\psi^4}\,\widehat D_\alpha^{(0)},
 \qquad \alpha\in\{1,2,12\}.
\]
Since
\[
 \frac{4\lambda}{k_\lambda\nu_\psi^4}>0,
\]
we conclude that
$D_1,D_2,D_{12}<0$, proving Proposition~\ref{prop:FGR1}.

The interval arithmetic calculations thus exactly give the finite assertions used in
the preceding sections. The Frobenius estimates, vortex barriers, comparison
arguments, and Volterra equations provide the remaining analytic steps.

\begin{refcontext}[sorting=nyt]
  \printbibliography
\end{refcontext}

\end{document}